\newcommand{\overbar}[1]{\mkern 1.5mu\overline{\mkern-1.5mu#1\mkern-1.5mu}\mkern 1.5mu}
\newtheorem{thm}{Theorem}[section]
\newtheorem{Theorem}[thm]{Theorem}
\newtheorem{Lemma}[thm]{Lemma}
\newtheorem{Proposition}[thm]{Proposition}
\newtheorem{Corollary}[thm]{Corollary}
\newtheorem{Conjecture}[thm]{Conjecture}
\theoremstyle{definition}
\newtheorem{Definition}[thm]{Definition}
\newtheorem{Question}[thm]{Question}
\newtheorem{Example}[thm]{Example}
\newtheorem{Remark}[thm]{Remark}
\newtheorem{notation}[thm]{Notation}
\newcommand{\C}{\mathbb{C}}
\newcommand{\F}{\mathbb{F}}
\newcommand{\G}{\mathbb{G}}
\newcommand{\N}{\mathbb{N}}
\renewcommand{\O}{\mathcal{O}}
\renewcommand{\P}{\mathbb{P}}
\newcommand{\Q}{\mathbb{Q}}
\newcommand{\Z}{\mathbb{Z}}
\newcommand{\m}{\mathfrak{m}}
\newcommand{\uZ}{\underline{\Z}}
\newcommand{\uZp}{\underline{\Z}_p}
\newcommand{\uQp}{\underline{\Q}_p}
\newcommand{\whZ}{{\wh\Z}}
\newcommand{\RGamma}{\mathrm R\Gamma}
\newcommand{\RHom}{\mathrm R\Hom}
\newcommand{\Hom}{\mathrm{Hom}}
\newcommand{\Map}{\operatorname{Map}}
\newcommand{\Mor}{\operatorname{Mor}}
\newcommand{\Mapc}{\operatorname{Map}_{\cts}}
\newcommand{\Ext}{\operatorname{Ext}}
\newcommand{\HOM}{\underline{\mathrm{Hom}}}
\newcommand{\End}{\operatorname{End}}
\newcommand{\Aut}{\operatorname{Aut}}
\newcommand{\Perf}{\operatorname{Perf}}
\newcommand{\Spa}{\operatorname{Spa}}
\newcommand{\Spd}{\operatorname{Spd}}
\newcommand{\cts}{{\operatorname{cts}}}
\newcommand{\an}{\mathrm{an}}
\newcommand{\et}{{\mathrm{\acute{e}t}}}
\newcommand{\qproet}{\mathrm{qpro\acute{e}t}}
\newcommand{\fet}{{\mathrm{f\acute{e}t}}}
\newcommand{\aeq}{\stackrel{a}{=}}
\newcommand{\wh}{\widehat}
\newcommand{\bOx}{\overbar{\O}^{\times}}
\newcommand{\Pic}{\operatorname{Pic}}
\newcommand{\uP}{\underline{\Pic}}
\newcommand{\NS}{\mathrm{NS}}
\newcommand{\wt}{\widetilde}
\newcommand{\perf}{\mathrm{perf}}
\renewcommand{\diamond}{\diamondsuit}
\newcommand{\rk}{\mathrm{rk}}
\newcommand{\Char}{\operatorname{char}}
\renewcommand{\lim}{\varprojlim}
\newcommand{\Rlim}{\mathrm{R}\!\lim}
\newcommand{\cH}{{\ifmmode \check{H}\else{\v{C}ech}\fi}}
\newcommand{\tf}{[\tfrac{1}{p}]}
\renewcommand{\tt}{\mathrm{tt}}
\newcommand*\isomarrow{%
	\xrightarrow{\raisebox{-0.35em}{\smash{\ensuremath{\sim}}}}
}  
\tikzset{
	labelrotate/.style={anchor=south, rotate=90, inner sep=.5mm}} 
\newcommand{\doublewidetilde}[1]{{%
		\mathpalette\double@widetilde{#1}%
}}
\newcommand{\double@widetilde}[2]{%
	\sbox\z@{$\m@th#1\widetilde{#2}$}%
	\ht\z@=.85\ht\z@
	\widetilde{\box\z@}%
}
\def\mathcenterto#1#2{\mathclap{\phantom{#1}\mathclap{#2}}\phantom{#1}}
\let\old@widetilde\doublewidetilde
\def\widetildeto#1#2{\mathcenterto{#2}{\doublewidetilde{\mathcenterto{#1}{#2\,}}}}
\newcommand{\wtt}[1]{\widetildeto{I}{#1}}
\begin{document}
\title{pro-\'etale uniformisation of abelian varieties}
\author{Ben Heuer}
\date{}
\maketitle
\begin{abstract}
	For an abelian variety $A$ over an algebraically closed non-archimedean field $K$ of residue characteristic $p$, we show that the isomorphism class of the pro-\'etale perfectoid  cover $\wt A=\varprojlim_{[p]}A$ is locally constant as $A$ varies $p$-adically in the moduli space. This gives rise to a pro-\'etale uniformisation of abelian varieties as diamonds
	\[A^\diamond=\wt A/T_pA\]
	 that works uniformly for all $A$ without any assumptions on the reduction of $A$.
	 
	 More generally, we determine all morphisms between pro-finite-\'etale covers of abeloid varieties.  For example, over $\C_p$, all abeloids can be uniformised in terms of universal covers that only depend on the isogeny class of the semi-stable reduction over $\overline{\F}_p$.
\end{abstract}

\setcounter{tocdepth}{2}
\section{Introduction}
Let $A$ be a complex abelian variety, then $A$ admits a uniformisation as a complex torus
\begin{equation}\label{eq:intro-complex-uniform}
A(\C)\cong \C^g/\Lambda
\end{equation}
in terms of its topological universal cover $\C^g$ and some lattice $\Lambda\cong \Z^{2g}$, where $g$ is the dimension of $A$. More generally, this works for connected compact complex Lie groups.

An analogous uniformisation exists in the rigid analytic category for an abelian variety $A$ over an algebraically closed non-archimedean field $K$ of residue characteristic $p$: By work of Raynaud, Bosch and L\"utkebohmert, there is a semi-abelian rigid space $E$, i.e.\ an extension
\[ 0\to T\to E\to B\to 0\]
of an abelian variety $B$ of good reduction by a rigid torus $T$ of rank $r$, and a discrete lattice $\Z^r\cong M\subseteq E(K)$ of rank $r$, such that $A$ is isomorphic as a rigid group variety to the quotient
\begin{equation}\label{eq:intro-rigid-uniform}
A\cong E/M.
\end{equation}
 The space $E$ is called a Raynaud extension. It can be thought of as a rigid analytic ``universal cover'' of $A$, in the sense that $E$ is connected and $H^1_{\an}(E,\uZ)=0$ (see \cite[Theorem~1.2.(c)]{BL-Degenerating-AV}).
This ``Raynaud uniformisation'' works more generally for abeloid varieties over $K$, i.e.\ connected smooth proper rigid group varieties over $K$, the rigid analogues of complex tori.

However, this is a ``uniformisation'' of $A$ in a weaker sense of the word:
While in the complex case, the universal cover $\C^g$ is isomorphic among all abelian varieties of dimension $g$, there is in contrast a very large number of different Raynaud extensions $E$ for abelian varieties of the same dimension.
 For example, if $A$ has good reduction, we simply have $A=E=B$, so one could say that there is not really any uniformisation in this case.  In particular, it is in general not possible to study $p$-adic families of abelian varieties in terms of the same rigid cover, as one might want to do to study, say, variations in the Picard rank.
 
The goal of this article is to prove that in the category of diamonds over $K$ recently introduced by Scholze \cite{etale-cohomology-of-diamonds}, there is a new kind of ``pro-\'etale uniformisation'' of abeloids which at least in this sense is stronger than rigid analytic uniformisation:

Let $A$ be an abeloid variety over $K$, for example an abelian variety. Consider $A$ as a diamond over $K$.
 By \cite[Theorem~1]{perfectoid-covers-Arizona}, there is then a perfectoid space $\wt A$ such that
\[\widetilde{A}= \textstyle\varprojlim_{[p]}A\]
as $v$-sheaves. The pro-\'etale cover $\wt A\to A$ has a topological universal property which we take as an analogue of saying that $\wt A$ is simply connected in the $v$-topology: It is connected, and for the pro-constant $v$-sheaf $\uZp:=\varprojlim {\Z/p^n\Z}$ we have 	(see  \cite[Proposition~4.2]{heuer-Picard-good-reduction})
	\[H^i_v(\wt A,\uZp)=0\quad \text{for }i>0.\]
	We thus get a uniformisation of $A$ as a diamond (see \cite[\S5.1]{perfectoid-covers-Arizona}): an isomorphism of $v$-sheaves
	\begin{equation}\label{eq:A=wt A/T_pA}
A^\diamondsuit=\wt A/T_pA
\end{equation}
in topological analogy to the complex uniformisation \eqref{eq:intro-complex-uniform} and the rigid uniformisation \eqref{eq:intro-rigid-uniform}. 

There is variant of this diamantine uniformisation using instead the adelic cover
\[\wtt A:=\textstyle\varprojlim_{[N],N\in\N}A,\]
which is also perfectoid. This is the universal pro-finite-\'etale cover in the sense of \cite[\S4.3]{heuer-v_lb_rigid}. It enjoys the analogous topological universal property for $\underline{\wh \Z} =\varprojlim_{N\in\N} \Z/N\Z$-coefficients,
and we get a diamantine uniformisation $A^\diamond=\wtt A/TA$ using the adelic Tate module $TA$.

\subsection{Isomorphisms between universal covers}
Our main result is that this is in fact a uniformisation in a rather strong sense: In the pro-\'etale situation, there are many more isomorphisms between universal covers than in the rigid setup.
To make this precise, we first state the simplest case of our main result, for $\C_p$:
\begin{Theorem}\label{t:intro-adelic-covers-over-C_p}
	Let $A$ and $A'$ be two abeloids over $K=\C_p$ or $\C_p^\flat$. Let $T\to E\to B$ and $T'\to E'\to B'$ be the associated Raynaud extensions, and $\overbar{B},\overbar{B}'$ the special fibres of $B, B'$ over $\overline{\F}_p$.
	Then the following are equivalent:
	\begin{enumerate}
		\item There is an isomorphism of perfectoid groups $\wtt A\isomarrow \wtt A'$.
		\item There is an isogeny $\overbar B\to \overbar B'$ and $\rk(T)=\rk(T')$.
	\end{enumerate}
\end{Theorem}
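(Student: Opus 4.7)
The plan is to use Raynaud uniformisation to reduce to two separate pieces: the torus part and the good-reduction part. Given the Raynaud extension $0\to T\to E\to B\to 0$ and the uniformisation $A=E/M$, I expect the discrete lattice $M$ to be absorbed into the Tate module when one passes to $\wtt{A}=\varprojlim_{[N]}A$: any inclusion of lattices with finite cokernel becomes invertible in the $\varprojlim_{[N]}$ system, so $\wtt{A}\cong\wtt{E}$ as perfectoid spaces (differing in group structure only by the $M$-action). The extension of $E$ by the torus should sheafify to an extension $0\to\wtt{T}\to\wtt{E}\to\wtt{B}\to 0$ of perfectoid groups, so it suffices to compare the toric and good-reduction factors separately.

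For $(2)\Rightarrow(1)$: a split torus $T$ of rank $r$ has $\wtt{T}\cong(\wtt{\G_m})^r$, which depends only on $r$. For the good-reduction part, the strategy is to lift an isogeny $\overline{B}\to\overline{B}'$ over $\overline{\F}_p$ to an isogeny $B\to B'$ after possibly replacing $B'$ by an isogenous abelian variety over $\O_K$; this uses either that a finite flat subgroup scheme of a smooth formal abelian scheme lifts uniquely from its special fibre, or alternatively the local-constancy statement for $\wtt{B}$ advertised in the abstract. Since $\wtt{}$ inverts isogenies by construction -- the map $[N]$ is merely a shift of the inverse system -- this yields $\wtt{B}\cong\wtt{B}'$. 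Combining the toric and good-reduction isomorphisms across the Raynaud extension then produces $\wtt{A}\cong\wtt{A}'$.

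For $(1)\Rightarrow(2)$ I need to extract both invariants from $\wtt{A}$ alone. To recover the torus rank, I expect to use the character sheaf $\Hom(\wtt{A},\G_m)$, or equivalently the Hodge--Tate decomposition of $T_pA$: its multiplicative part has dimension exactly $r$, and this should be readable off intrinsically from $\wtt{A}$. Once the toric ranks match, one reduces to the good-reduction case and an isomorphism $\wtt{B}\cong\wtt{B}'$. Applying the Scholze--Weinstein classification of $p$-divisible groups over $\O_{\C_p}$, together with the observation that $\wtt{B}$ remembers $TB$ and hence $B[p^\infty]$ up to isogeny, one concludes $B[p^\infty]\sim B'[p^\infty]$; specialising to $\overline{\F}_p$ and invoking Tate's isogeny theorem for abelian varieties over finite fields then gives $\overline{B}\sim\overline{B}'$.

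The main obstacle is establishing the Raynaud-extension decomposition of $\wtt{A}$ as an intrinsic canonical structure on the perfectoid group, rather than merely as an induced structure from the Raynaud uniformisation of $A$ itself. Concretely, I expect the hard step to be showing that the toric subgroup $\wtt{T}\subset\wtt{A}$ is canonically distinguished -- most likely via a Hodge--Tate weight argument -- and that the resulting quotient is canonically the universal cover of an abelian variety of good reduction. In the tilted case $K=\C_p^\flat$ the whole argument must further be transferred via tilting, which will require a separate verification since Scholze--Weinstein lives naturally on the untilted side.
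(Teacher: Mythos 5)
Your proposal has several genuine gaps, and the overall structure does not match how the equivalence is actually proved.

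The first and most basic error is the claim that $\wtt A\cong\wtt E$ as perfectoid spaces because ``any inclusion of lattices with finite cokernel becomes invertible'' in $\varprojlim_{[N]}$. The lattice $M\subset E$ does \emph{not} have finite cokernel---$E/M=A$ is a proper rigid space. What is actually true (Theorem~\ref{t:perfectoid-covers-of-abelian-varieties}.3, equation~\eqref{eq:intro-E-A-Mp/M}) is that $\wt E$ embeds as a proper open subgroup of $\wt A$ with cokernel $M\otimes_\Z\uZp/\uZ$, a non-trivial $v$-sheaf whenever $M\ne 0$. So for an abeloid with bad reduction, $\wtt A\not\cong\wtt E$, and the lattice does not simply disappear. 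In fact the reason the Main Theorem~\ref{t:intro-adelic-covers-over-C_p} makes no mention of $M$ is a non-trivial arithmetic fact special to $\C_p$: condition~3(c) of Theorem~\ref{t:main-theorem-adelic-case} is automatically satisfiable because the value group is $\Q$ and the residue field is $\overline{\F}_p$, so $\wh E'(K)\cdot M'$ is co-torsion in $E'(K)$. That argument (Corollary~\ref{c:adelic-covers-over-C_p}) has no counterpart in your sketch.

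The direction $(1)\Rightarrow(2)$ for the good-reduction part has a fatal flaw: $B[p^\infty]$ up to isogeny over $\overline{\F}_p$ does \emph{not} determine $\overbar B$ up to isogeny. All ordinary abelian varieties of dimension $d$ over $\overline{\F}_p$ have the same $p$-divisible group $(\mu_{p^\infty}\times\Q_p/\Z_p)^d$ while falling into infinitely many isogeny classes; and Tate's isogeny theorem is formulated over \emph{finite} fields, where the Galois action carries all the content---over $\overline{\F}_p$ there is no Galois action and the statement collapses to a triviality. Nor does $\wtt B$ merely ``remember $TB$''; if it did, all $\wtt B$ of the same dimension would be isomorphic. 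The real content of the hard direction is geometric: one must show that every morphism of perfectoid spaces $\wt B\to\wt B'$ arises from a morphism of formal models. This is Theorem~\ref{t:isomorphisms-between-ab-var-of-good-reduction}, whose proof goes through the diamantine Picard functor (Theorems~\ref{t:Pict} and~\ref{t:Pic-functor-wtB}) to translate a perfectoid morphism into a line bundle on $\wt B\times B'^\vee$ and then descends that to the special fibre. Your proposed alternative via $p$-divisible groups is closer in spirit to Remark~\ref{r:strategy-via-p-divisible-groups}, but even that route requires the non-trivial equation~\eqref{eq:Hom-UC-of-p-div-groups} and a Serre--Tate modification, not a direct application of Tate's theorem.

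Finally, in $(2)\Rightarrow(1)$: the statement that ``a finite flat subgroup scheme of a smooth formal abelian scheme lifts uniquely from its special fibre'' is false ($\alpha_p$ in the supersingular case has no lift to characteristic zero), and in general an isogeny $\overbar B\to\overbar B'$ does not lift to an isogeny $B\to B'$ over $\O_K$. The correct argument (proof of Theorem~\ref{t:isomorphisms-between-ab-var-of-good-reduction}, direction~$c)\Rightarrow a)$) lifts the isogeny only modulo $\varpi^\epsilon$, applies $\varprojlim_{[p]}$ to the formal models, and then invokes the tilting equivalence for the resulting perfectoid formal schemes. You should also be aware that even given $\wtt T\cong\wtt T'$ and $\wtt B\cong\wtt B'$, passing to $\wtt E\cong\wtt E'$ requires matching the extension classes after reduction, which is the content of Corollary~\ref{c:ext-classes-of-univ-covers-agree-iff-condition-on-reduction-of-M^vee->B^vee} and Theorem~\ref{t:isoms-of-universal-covers-of-Raynaud-ext}, and is not automatic.
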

In other words, while the Raynaud uniformisation of $A$ is in terms of the semi-abelian rigid space $E$, the diamantine uniformisation of $A$ over $\C_p$ only depends on the semi-stable reduction of $A$ up to isogeny. In particular, the isomorphism class of $\wtt A$ is locally constant in $A$, and like in the complex case, one can uniformise families of abeloid varieties by quotienting out different $p$-adic lattices out of the same covering space $\wtt A$. As a corollary, there are many more ``pro-\'etale isogenies'' between abeloid varieties, i.e.\ diagrams with pro-\'etale covers
\[ \begin{tikzcd}[row sep ={0cm}]
	& Z \arrow[ld] \arrow[rd] &  \\
	A &  & A'
\end{tikzcd}\]
by some adic space $Z$, than there are \'etale isogenies of this form. In fact, working in diamonds, one can even take $A$ and $A'$ to be of different characteristics (see Theorem~\ref{t:tilting-abeloids}).

To give the precise statement over general fields, we first need to introduce some notation:
Let $\O_K$ be the ring of integers of $K$, let $\m$ be the maximal ideal, and $k$ the residue field. The Raynaud extension $E$ corresponds to a formal semi-abelian scheme over $\O_K$ whose special fibre over $k$ is a semi-abelian scheme $\overbar{E}$. This is the semi-stable reduction of $A$.

The following theorems now first identify the isomorphism classes of the perfectoid cover
\[ \wt E=\textstyle\varprojlim_{[p]}E\]
in terms of the reduction and then use this to describe the isomorphism classes of $\wt A$.
\begin{Theorem}\label{t:intro-Hom(E)}
	Let $E$ and $E'$ be Raynaud extensions. Then there is a natural isomorphism
	\[ \Hom(\wt E,\wt E')=\Hom(\overbar{E},\overbar{E}')\tf.\]
\end{Theorem}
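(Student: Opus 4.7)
The plan is to construct the natural map $\alpha\colon \Hom(\overbar E, \overbar E')[\tfrac{1}{p}] \to \Hom(\wt E, \wt E')$ explicitly and then prove it is an isomorphism by reducing to two building-block cases via the Raynaud extension structure.

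To construct $\alpha$, I use that each Raynaud extension $E$ is the rigid generic fibre of a canonical formal semi-abelian scheme $\mathcal E$ over $\Spf \O_K$ whose special fibre is $\overbar E$. A homomorphism $\overbar\phi\colon \overbar E \to \overbar E'$ lifts canonically to a homomorphism of formal semi-abelian schemes $\mathcal E \to \mathcal E'$ --- the toric part lifts uniquely as a formal torus, and the abelian quotient lifts via Serre--Tate theory --- and applying $\varprojlim_{[p]}$ to the generic fibre map yields the sought $\wt\phi\colon \wt E \to \wt E'$. Inverting $p$ is harmless since multiplication by $p$ is an automorphism on $\Hom(\wt E,\wt E')$ by definition of the $[p]$-inverse limit.

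For bijectivity, the extension $0 \to T \to E \to B \to 0$ yields, by functoriality of the toric/abelian decomposition, a commutative diagram of exact sequences
\begin{equation*}
0 \to \Hom(B, T') \to \Hom(E, E') \to \Hom(T, T') \oplus \Hom(B, B') \to \Ext^1(B, T'),
\end{equation*}
one for the semi-abelian schemes over $k$ and one for the perfectoid covers. The leftmost term vanishes because an abelian variety admits no non-trivial homomorphisms to a torus, and similarly $\Hom(\wt B, \wt T') = 0$. It therefore suffices to verify the theorem in the \emph{torus case} --- where the perfectoid torus $\wt T$ has ``character lattice'' $\Z^r[\tfrac{1}{p}]$ and a direct computation yields $\Hom(\wt T, \wt T') = \Hom(\overbar T, \overbar T')[\tfrac{1}{p}]$ --- and in the \emph{good-reduction case} --- where the equality $\Hom(\wt B, \wt B') = \Hom(\overbar B, \overbar B')[\tfrac{1}{p}]$ should follow by identifying $\wt B$ with the universal cover of the $p$-divisible group $\overbar B[p^\infty]$ (cf.\ \cite{heuer-Picard-good-reduction}) and invoking Tate's isogeny theorem over $\overline{\F}_p$.

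The main obstacle is to establish that a morphism $\wt\phi\colon \wt E \to \wt E'$ automatically respects the toric filtration, i.e.\ that $\wt\phi(\wt T) \subseteq \wt T'$; this is what makes the decomposition into the two cases above valid in the perfectoid setting. It is the analogue of the classical fact that morphisms of semi-abelian schemes preserve the toric part, and should follow from an intrinsic characterisation of $\wt T$ inside $\wt E$ --- for instance, as the maximal multiplicative perfectoid subgroup --- that matches the toric part of $\overbar E$ under reduction. Once this is in place, a diagram chase through the two exact sequences, together with the compatibility of the extension-class maps into $\Ext^1$ up to $\tfrac{1}{p}$, completes the proof.
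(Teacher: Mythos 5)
Your high-level strategy — reduce along the Raynaud filtration $T\to E\to B$ to the torus and good-reduction cases, then track compatibility via $\Ext^1$ — matches the paper's Theorem~\ref{t:isoms-of-universal-covers-of-Raynaud-ext}. But several steps contain genuine gaps.

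First, the construction of the map $\alpha$ is based on a false premise. You claim that $\overbar\phi\colon\overbar E\to\overbar E'$ ``lifts canonically to a homomorphism of formal semi-abelian schemes $\mathcal E\to\mathcal E'$'' over $\O_K$, with the abelian part lifting via Serre--Tate. This is simply not true: a morphism of abelian varieties over $k$ does not lift to a morphism of formal abelian schemes over $\O_K$ in general — for instance, Frobenius on an ordinary elliptic curve does not lift to a non-canonical lift. If it did, $\Hom(B,B')$ would agree with $\Hom(\overbar B,\overbar B')$, and there would be nothing to prove. The paper's construction (proof of Theorem~\ref{t:isomorphisms-between-ab-var-of-good-reduction}) carefully lifts $\overbar\varphi$ only to a morphism $\mathfrak B/\varpi^\epsilon\to\mathfrak B'/\varpi^\epsilon$ for some $\epsilon>0$ (Lemma~\ref{l:lifting-from-mod-m}); the passage to a morphism over $\O_K$ happens only \emph{after} applying $\varprojlim_{[p]}$, using that the resulting formal schemes are perfectoid and morphisms of perfectoid $\O_K$-algebras lift uniquely mod $\varpi^\epsilon$ in the almost category. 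The order of operations — lift, then take the universal cover — is exactly wrong.

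Second, the good-reduction case is the technical heart of the theorem, and ``identifying $\wt B$ with the universal cover of the $p$-divisible group $\overbar B[p^\infty]$ and invoking Tate's isogeny theorem over $\overline{\F}_p$'' is not an argument. The perfectoid space $\wt B$ is a $g$-dimensional space and is not the Scholze--Weinstein universal cover of a $p$-divisible group, and Tate's isogeny theorem concerns the Galois action on Tate modules over finite or global function fields, not $\overline{\F}_p$. The paper proves this case via the diamantine Picard functor (Theorem~\ref{t:Pict}, Theorem~\ref{t:Pic(wt B)}, Lemma~\ref{l:Pic(wtBxB')}), translating $\Hom(\wt B,\wt B')$ into a statement about line bundles on $B'^\vee\times\wt B$. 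There is an alternative, more arithmetic strategy sketched in Remark~\ref{r:strategy-via-p-divisible-groups} that genuinely uses $p$-divisible groups and Serre--Tate lifting, but it is considerably more delicate than what you indicate (and as noted there, it proves less).

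Third, you correctly identify that preservation of the toric filtration — $\wt\phi(\wt T)\subseteq\wt T'$ — is the main structural obstacle, but your proposed resolution (``an intrinsic characterisation of $\wt T$ as the maximal multiplicative perfectoid subgroup'') is only a plan, not a proof. In the paper this is Proposition~\ref{p:morphisms-between-univ-covers-of-Raynaud-extensions}, which in particular requires showing that every map $\wt T\to\wt B'$ is constant (Proposition~\ref{p:Map(wtT,B)}); this is proved by another line-bundle argument, not by an intrinsic characterisation of $\wt T$. Finally, the ``compatibility of the extension-class maps into $\Ext^1$ up to $\tfrac{1}{p}$'' that you invoke at the end is itself a substantial result: it requires computing $\Ext^1_{\an}(\wt B,T)=\Hom(M^\vee,\overbar B^\vee(k)[\tfrac{1}{p}])$ via Theorem~\ref{t:Pic(wt B)} and comparing it with $\Ext^1_{\Zar}(\overbar B,\overbar T)=\Hom(M^\vee,\overbar B^\vee(k))$, as in Corollary~\ref{c:ext-classes-of-univ-covers-agree-iff-condition-on-reduction-of-M^vee->B^vee}. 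In short, the skeleton is right but every load-bearing step is either incorrect or unproved.
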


For a rigid or perfectoid group $G$ we have introduced in \cite[\S2]{heuer-diamantine-Picard} the ``topological $p$-torsion subgroup'' $\wh G\subseteq G$, a sub-$v$-sheaf (often an adic subgroup) whose $K$-points are given by
	\[ \wh G(K)=\{x\in G(K)\mid p^n\Z\cdot x\xrightarrow{n\to \infty} 0\}=\Hom_{\cts}(\Z_p,G(K)).\]
\begin{Definition}
	Let $F,G$ be rigid or perfectoid groups. We say that two maps $f,g:F\to G$ agree up to topological $p$-torsion if $f-g$ factors through $\wh G$.
	 In particular, this gives a notion of what it means for a diagram of abelian $v$-sheaves to commute up to topological $p$-torsion.
\end{Definition}
\begin{Theorem}\label{t:intro-Hom(A,A')}
	Let $A$ and $A'$ be two abeloids with Raynaud uniformisations $A=E/M$ and $A'=E'/M'$.
	Choose any lifts $g:M\to \wt E$, $g':M'\to \wt E'$.
	Then we have:
	\[
		\Hom(\wt A,\wt A')={\left\{\begin{array}{@{}l@{}l}	
				\varphi_E&\in \Hom(\wt E,\wt E')\end{array}\middle|\, 
			\begin{array}{lr}
				\exists \,\varphi_M:M[\tfrac{1}{p}]\to M'[\tfrac{1}{p}]\\	
				\text{such that the following }\\
				\text{diagram commutes up}\\\text{to topological $p$-torsion:}\end{array}
			\quad
			{\begin{tikzcd}
					M[\tfrac{1}{p}] \arrow[d,"\varphi_M"'{yshift=0.36ex}] \arrow[r,"g"] & \wt E \arrow[d,"\varphi_E"'{yshift=-0.36ex}] \\
					M'[\tfrac{1}{p}] \arrow[r,"g'"]& \wt E'  
			\end{tikzcd}}
			\right \}}\]
	In particular, the following are equivalent:
	\begin{enumerate}
		\item There is an isomorphism of perfectoid groups $\wt A\isomarrow \wt A'$.
		\item \label{enum:t-cond-2} There is a $p$-isogeny $\overbar\varphi:\overbar  E\to \overbar E'$ between the semi-stable reductions and an isomorphism  $\varphi_M:M[\tfrac{1}{p}]\isomarrow M'[\tfrac{1}{p}]$ such that the above diagram commutes up to topological $p$-torsion,
		where the map on the right is the one corresponding to $\overbar\varphi$ via Theorem~\ref{t:intro-Hom(E)}.
		\end{enumerate}
\end{Theorem}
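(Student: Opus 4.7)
The plan is to reduce $\Hom(\wt A,\wt A')$ to $\Hom(\wt E,\wt E')$ via the natural morphism $\pi:\wt E\to\wt A$ obtained by applying $\varprojlim_{[p]}$ to the Raynaud exact sequence $0\to M\to E\to A\to 0$. Since $M$ is torsion-free and the transition maps are multiplication by $p$, the inverse limit $\varprojlim_{[p]}M$ vanishes, so $\pi$ is injective; its cokernel is a topological $p$-torsion $v$-sheaf governed by $\varprojlim^1_{[p]}M\cong(M\otimes\Z_p)/M$, reflecting that points of $\wt A$ lift to $\wt E$ only after $p$-adic completion of $M$. This is the source of the ``up to topological $p$-torsion'' ambiguity in the theorem.

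\emph{Construction of $(\varphi_E,\varphi_M)$ from $\psi$, and its inverse.} Given $\psi:\wt A\to\wt A'$, precompose with $\pi$ to obtain $\wt E\to\wt A'$. Since the cokernel of $\pi':\wt E'\to\wt A'$ is topologically $p$-torsion, this map lifts to $\varphi_E:\wt E\to\wt E'$, unique up to maps into the topological $p$-torsion subgroup $\wh{\wt E'}$. To produce $\varphi_M$, restrict $\varphi_E$ to $g(M)\subseteq\wt E$: since $\pi(g(M))=0$, the image $\varphi_E(g(M))$ lies in $\ker(\pi')\subseteq\wt E'$, which contains $g'(M'[\tfrac{1}{p}])$ as a distinguished discrete sub-lattice. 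Projecting and extending by $p$-divisibility yields $\varphi_M:M[\tfrac{1}{p}]\to M'[\tfrac{1}{p}]$, and the square commutes up to topological $p$-torsion precisely because $\varphi_E$ was determined only modulo $\wh{\wt E'}$. Conversely, from a compatible pair $(\varphi_E,\varphi_M)$, the commutativity condition forces $\pi'\circ\varphi_E$ to kill $g(M)$ --- the $\wh{\wt E'}$ indeterminacy being absorbed by further projection via $\pi'$ --- so $\varphi_E$ descends along $\pi$ to the desired $\psi$.

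\emph{Equivalence of (1) and (2), and main obstacle.} For the equivalence, combine with Theorem~\ref{t:intro-Hom(E)}: an isomorphism $\wt A\isomarrow\wt A'$ yields $\varphi_E$ an isomorphism, corresponding to a $p$-isogeny $\overbar\varphi:\overbar E\to\overbar E'$, together with an isomorphism $\varphi_M$ making the diagram commute up to topological $p$-torsion; this is exactly (2), and the reverse direction runs the construction backwards. The main technical challenge is to verify that the projection of $\varphi_E(g(M))$ modulo $\wh{\wt E'}$ lands in the discrete $g'(M'[\tfrac{1}{p}])$ rather than in some larger $p$-adic completion, so that $\varphi_M$ is genuinely valued in $M'[\tfrac{1}{p}]$ and not merely in its closure. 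This should follow from a discreteness argument using that $M$ and $M'$ are honest $\Z$-lattices in their Raynaud extensions, combined with the rigid-analytic nature of morphisms between abeloid varieties.
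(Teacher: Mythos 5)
Your proposal has a genuine structural gap, and it also skips the hardest input.

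\textbf{The lifting step is not established.} You assert that ``since the cokernel of $\pi':\wt E'\to\wt A'$ is topologically $p$-torsion, this map lifts to $\varphi_E$.'' The cokernel of $\wt E'\hookrightarrow\wt A'$ is $M'_p/M'$, a uniquely $p$-divisible, non-representable $v$-sheaf; this fact alone does not produce a lift. The paper's Theorem~\ref{t:isom-of-tilde-A-fixes-G} obtains the factorisation through $\wt E'$ by evaluating $H^0(\wt E,-)$ on the short exact sequence $0\to\wt E'\to\wt A'\to M'_p/M'\to 0$, and the crucial ingredient that makes the relevant connecting map vanish is $H^1_v(\wt E,\underline{\Z})=0$. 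That vanishing is Proposition~\ref{p:ext-E-Z-vanishes}, which in turn needs Proposition~\ref{p:Z-torsors-over-perfectoids} (comparison of analytic and $v$-$\Z$-torsors via the Tate curve) together with Bosch--L\"utkebohmert's $H^1_{\an}(E,\uZ)=0$. None of this is in your sketch; without it the existence of $\varphi_E$ is not known.

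\textbf{The construction of $\varphi_M$ confuses $E\to A$ with $\wt E\to\wt A$.} You write ``since $\pi(g(M))=0$, the image $\varphi_E(g(M))$ lies in $\ker(\pi')\subseteq\wt E'$, which contains $g'(M'[\tfrac{1}{p}])$ as a distinguished discrete sub-lattice.'' But $\pi':\wt E'\to\wt A'$ is \emph{injective} (it sits at the left of the short exact sequence $0\to\wt E'\to\wt A'\to M'_p/M'\to 0$), so $\ker(\pi')=0$; it certainly does not contain $g'(M')$. Likewise $\pi\circ g:M\to\wt A$ is not zero — from the pushout diagram~\eqref{eq:wtE-wtA-vs-Ainfty} its image lies in the copy of $M_p\subseteq\wt A$, not at the identity. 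You are implicitly reasoning as if $\wt E\to\wt A$ were the Raynaud projection $E\to A$ with kernel $M$, but in the universal-cover picture that map has been replaced by an injection with $p$-adically completed cokernel. The argument as written therefore does not produce $\varphi_M$.

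The paper instead extracts $\varphi_M$ as the induced map $M_p/M\to M'_p/M'$ of cokernels, identifies $\Hom(M_p/M,M'_p/M')=\Hom(M,M')[\tfrac{1}{p}]$ via Lemma~\ref{l:isoms-of-Zp/Z}.\ref{enum:End(uZp/uZ)}, and then characterises which pairs $(\varphi_E,\varphi_M)$ glue by comparing extension classes $\varphi_{E*}[\wt A]=\varphi_M^*[\wt A']$ in $\Ext^1_v(M_p/M,\wt E')$. The obstruction is an element of $\Hom(M_p,\wt E')$, and by Proposition~\ref{p:morphisms-from-Z,Z_p,Z_p/Z} this group equals $\Hom(M,\wh{\wt E'})$ --- this is where ``commutativity up to topological $p$-torsion'' gets its precise meaning. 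Your closing paragraph explicitly defers the analogous verification (``This should follow from a discreteness argument\dots''); that is exactly the content that needs to be, and in the paper is, proved.
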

The fact that condition~2 only requires commutativity up to topological $p$-torsion means that the lattices $M$ and $M'$ only need to be ``$p$-adically close'' with respect to the ambient isomorphism $\wt E\to \wt E'$. In particular, $\wt A$ is ``locally constant'', e.g.\ in the following sense:
\begin{Corollary}\label{c:isom-class-in-moduli-space}
 The isomorphism class of $\wt A$ is locally constant on the Siegel moduli space $\mathcal A_g(K)$ endowed with the non-archimedean topology inherited from $K$.
 \end{Corollary}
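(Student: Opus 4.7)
The plan is to invoke Theorem~\ref{t:intro-Hom(A,A')} and verify condition~\ref{enum:t-cond-2} uniformly on a small $p$-adic neighborhood of an arbitrary point. Fix $[A_0]\in\mathcal{A}_g(K)$ with Raynaud uniformisation $A_0=E_0/M_0$ and semi-stable reduction $\overbar{E}_0$ over $k$, and choose a lift $g_0:M_0\hookrightarrow \wt E_0$. The goal is to produce a $p$-adic open neighborhood $U\subseteq \mathcal{A}_g(K)$ of $[A_0]$ on which $\wt A\cong \wt A_0$ holds for every $[A]\in U$.

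After shrinking $U$, the semi-stable reduction type is constant on $U$ and the semi-stable reductions satisfy $\overbar{E}_A \cong \overbar{E}_0$ for every $[A]\in U$, since the formal completion of the N\'eron model varies rigid-analytically over the moduli and is locally constant up to isomorphism in the $p$-adic topology. Such an isomorphism is in particular a $p$-isogeny, fulfilling the first half of condition~\ref{enum:t-cond-2}, and it induces, via Theorem~\ref{t:intro-Hom(E)}, an isomorphism $\varphi_E:\wt E_A\isomarrow \wt E_0$.

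It remains to show that the lattice $M_A$, pulled into $\wt E_0$ via $\varphi_E$ and a choice of lift $g_A:M_A\hookrightarrow \wt E_A$, is a small perturbation of $g_0(M_0)$. Concretely, picking generators $m_1,\ldots,m_r$ of $M_0$ and corresponding generators $m_1^A,\ldots,m_r^A$ of $M_A$, one can arrange, after further shrinking $U$, that the differences $\varphi_E(g_A(m_i^A))-g_0(m_i)$ lie in an arbitrarily small $p$-adic neighborhood of $0\in \wt E_0(K)$. Since any sufficiently small such element $x$ satisfies $p^nx\to 0$ and is therefore topologically $p$-torsion in $\wt E_0$, the isomorphism $\varphi_M:M_A[\tfrac{1}{p}]\isomarrow M_0[\tfrac{1}{p}]$ sending $m_i^A\mapsto m_i$ makes the diagram of Theorem~\ref{t:intro-Hom(A,A')} commute up to topological $p$-torsion, yielding $\wt A\cong \wt A_0$.

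The main obstacle is making precise that the lattice $M_A$ really does vary $p$-adically continuously in $A$ over the moduli space. This relies on a relative version of the Bosch--L\"utkebohmert Raynaud uniformisation, producing a rigid analytic family of Raynaud extensions equipped with lattices over a small enough base, whose fibrewise Raynaud quotients recover the parametrised abelian varieties. Once this family is in place, the convergence $m_i^A \to m_i$ in the $p$-adic topology as $[A]\to [A_0]$ is automatic, and the argument above closes.
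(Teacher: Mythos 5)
The paper itself gives no formal proof of this corollary — it is presented as an immediate consequence of Theorem~\ref{t:intro-Hom(A,A')}, preceded only by the informal remark that condition~\ref{enum:t-cond-2} requires the lattices to be ``$p$-adically close'' with respect to the ambient isomorphism $\wt E\to\wt E'$. Your argument fleshes out exactly that sketch: fix $A_0=E_0/M_0$, shrink until the semi-abelian reduction is constant (giving $\varphi_E:\wt E_A\isomarrow\wt E_0$ via Theorem~\ref{t:intro-Hom(E)}), and then exploit that the $\wh{E}_0\subseteq E_0$ appearing in Theorem~\ref{t:main-theorem}.3(c) is an \emph{open} subgroup, so that sufficiently small differences $\varphi_E(g_A(m_i^A))-g_0(m_i)$ are automatically topologically $p$-torsion. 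This is the intended reading, and the mechanism you use — smallness $\Rightarrow$ membership in the open topological $p$-torsion subgroup — is precisely the point the paper's wording gestures at.

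You are also right to flag the one genuine input left implicit on both sides: the continuity, in the non-archimedean topology on $\mathcal A_g(K)$, of the formal semi-abelian model $\mathfrak E_A$ (hence of $\overbar E_A$) and of the lattice $M_A$ together with lifts $g_A$. This is supplied by the relative theory of degenerating abelian varieties of Bosch--L\"utkebohmert (Raynaud uniformisation in families), and without it the phrase ``after further shrinking $U$'' has no content. Since the paper treats the corollary as a direct consequence and does not spell this out either, your proposal matches the paper's level of rigour while identifying precisely where the missing technical work lives; no contradiction or wrong turn.
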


Let us illustrate what this means explicitly at the example of Tate curves:

\begin{Corollary}
	For any $q,q'\in K^\times$ with $|q|,|q'|<1$, there is an isomorphism 
	\[\widetilde{\G_m/q^\Z}\cong \widetilde{\G_m/q'^\Z}\]
	of perfectoid groups if and only if $q'\in (1+\m)\cdot q^{\Z[\frac{1}{p}]}$. 
\end{Corollary}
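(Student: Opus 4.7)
The plan is to specialize Theorem~\ref{t:intro-Hom(A,A')} to the Tate-curve case, where the Raynaud data become explicit. For $A=\G_m/q^\Z$ and $A'=\G_m/q'^\Z$ the Raynaud extensions are $E=E'=\G_m$ (rigid tori of rank $1$, with trivial abelian quotient), the lattices are $M=q^\Z$, $M'=q'^\Z$, and the semi-stable reductions are $\overbar E=\overbar{E'}=\G_{m,k}$. I would fix compatible systems of $p$-power roots of $q$ and $q'$ in $K^\times$ to produce lifts $g,g'$ into $\wt{\G_m}(K)$.

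Next I would unwind condition~2 of Theorem~\ref{t:intro-Hom(A,A')}. By Theorem~\ref{t:intro-Hom(E)} combined with $\End(\G_{m,k})=\Z$ we have $\Hom(\wt E,\wt E')=\Z[\tfrac{1}{p}]$, and a $p$-isogeny $\overbar E\to\overbar{E'}$ corresponds to $\pm p^a\in\Z[\tfrac{1}{p}]$ with $a\ge 0$. An isomorphism $\varphi_M\colon M[\tfrac{1}{p}]\isomarrow M'[\tfrac{1}{p}]$ of rank-$1$ free $\Z[\tfrac{1}{p}]$-modules is necessarily multiplication by a unit $c\in\Z[\tfrac{1}{p}]^\times$, so $\varphi_M(q)=q'^c$. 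Evaluating the commutativity-up-to-topological-$p$-torsion condition on the generator $q\in M[\tfrac{1}{p}]$ yields $g(q)^{\pm p^a}/g'(q')^{c}\in\wh{\wt{\G_m}}$.

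The heart of the argument reduces this statement in the perfectoid cover to a single congruence in $K^\times$: the projection $\wt{\G_m}(K)\to K^\times$ onto the first coordinate sends $\wh{\wt{\G_m}}(K)$ onto $\wh{\G_m}(K)=1+\m$, so the condition becomes $q^{\pm p^a}/q'^{c}\in 1+\m$. Since $K$ is algebraically closed of residue characteristic $p$, the subgroup $1+\m$ is $p$-divisible and contains $\mu_{p^\infty}$; hence extracting a $c$-th root and absorbing the resulting root-of-unity ambiguity into the principal units gives $q'\in(1+\m)\cdot q^{r}$ with $r=\pm p^a/c\in\Z[\tfrac{1}{p}]$. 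This establishes the forward implication, and reversing the calculation — starting from a given $r$ and choosing $a$ and $c$ realizing it — produces the required $\overbar\varphi$ and $\varphi_M$ for the converse.

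I expect the only real subtlety to be the careful handling of topological $p$-torsion: one must check that the surjection $\wh{\wt{\G_m}}\to 1+\m$ faithfully records the diagram condition and that coherent choices of $p^n$-th roots of $q$ and $q'$ can be made so that the ambiguity in these roots is fully absorbed into the $(1+\m)$-factor rather than producing a new obstruction. Once that bookkeeping is settled, everything else reduces to an explicit calculation in $K^\times$.
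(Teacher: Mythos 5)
Your overall strategy is right and is essentially what the paper intends: specialize Theorem~\ref{t:intro-Hom(A,A')} (equivalently Theorem~\ref{t:main-theorem}) to Tate curves, where $E=E'=\G_m$, $\End(\wt\G_m)=\Z\tf$ with $\Aut(\wt\G_m)=\Z\tf^{\times}$, and the lattice data reduces to a statement about exponents of $q$ and $q'$.

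The gap is in the final arithmetic. You correctly identify $\varphi_E$ with multiplication by $\pm p^a$ for some $a\ge 0$ and $\varphi_M$ with multiplication by a unit $c\in\Z\tf^{\times}$, and derive $q'\in(1+\m)\cdot q^{r}$ with $r=\pm p^a/c$. But you then record only that ``$r\in\Z\tf$'', whereas $\pm p^a/c$ with $a\ge 0$, $c\in\Z\tf^{\times}$ ranges precisely over $\Z\tf^{\times}=\{\pm p^{j}:j\in\Z\}$, not over all of $\Z\tf$. So your forward direction actually proves the stronger containment $q'\in(1+\m)\cdot q^{\Z\tf^{\times}}$ (equivalently, with the sign pinned down by $|q|,|q'|<1$, $q'\in(1+\m)\cdot q^{p^{\Z}}$), and your converse step --- ``starting from a given $r$ and choosing $a$ and $c$ realizing it'' --- has no content for $r\in\Z\tf\setminus\Z\tf^{\times}$: you cannot write, say, $r=2$ (for $p\ne 2$) in the form $\pm p^{a}/c$ with $a\ge 0$ and $c$ a unit. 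And indeed no isomorphism $\widetilde{\G_m/q^\Z}\cong\widetilde{\G_m/q^{2\Z}}$ exists when $p\ne 2$: by Theorem~\ref{t:isom-of-tilde-A-fixes-G} such a map would have to restrict to an automorphism of $\wt\G_m$ (multiplication by some $\pm p^a$) and induce an automorphism of $M_p/M\cong\uZp/\uZ$ (multiplication by some $\pm p^k$, using $\End(\uZp/\uZ)=\Z\tf$), and the compatibility of extension classes --- your condition $q^{\pm p^a}\equiv q^{2\cdot(\pm p^k)}\bmod (1+\m)$ --- forces $\pm p^{a-k}=2$, which is impossible.

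So the converse direction of your argument is genuinely incomplete for the statement as written; what you actually establish is the version with $\Z\tf^{\times}$ in place of $\Z\tf$. Either the corollary should read $q'\in(1+\m)\cdot q^{p^{\Z}}$, or there is some further ingredient needed to handle non-unit exponents that your proposal (and, as far as I can see, the main theorem it invokes) does not supply. You should not pass silently from ``$r=\pm p^a/c\in\Z\tf$'' to the full group $\Z\tf$.
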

\begin{Remark}
	If $A=B$ has good reduction, the direction $2)\Rightarrow 1)$ of Theorem~\ref{t:intro-Hom(A,A')} follows easily from the tilting equivalence, and it is $1)\Rightarrow 2)$ that requires more work.

However, already for Tate curves, the direction $2)\Rightarrow 1)$ has an entirely different reason, and seems to be a genuinely ``new'' phenomenon.
For example, in general the additional isomorphisms  cannot  be seen on any intermediate perfectoid cover of $A$ (see Example~\ref{ex:non-trivial-morph-between-A_1-and-A_2}).
\end{Remark}

\subsection{The conjectural pro-\'etale uniformisation of curves}

One reason why Theorem~\ref{t:intro-Hom(A,A')} is interesting is that it provides evidence that an even stronger non-archimedean uniformisation result might hold for smooth projective curves over $K$:

Recall that in complex geometry, connected smooth proper complex curves are equivalent to compact Riemann surfaces, for which there is the following uniformisation theorem: Any compact Riemann surface $X$ is isomorphic to a quotient $\wt X/\Gamma$ of its topological universal cover $\wt X$ by the free and properly discontinuous action of a  group $\Gamma$. Moreover, there are only three different universal covers up to holomorphic isomorphism: The universal cover is $\P^1$ if the genus of $X$ is $g=0$, it is $\C$ if $g=1$, or it is the open unit disc $\mathbb D$ if $g\geq 2$.

It is a long-standing question whether a similar description is possible in the non-archimedean world over $K$.
There are well-known partial analogues in the rigid setting: In the case of $g=1$ of elliptic curves, this is the rigid uniformisation of the Tate curve $\G_m/q^\Z$, which was Tate's original motivation to lay the foundations of rigid analytic geometry. More generally, such a rigid uniformisation can be constructed for connected smooth proper curves $C$ with totally degenerated reduction, due to Mumford \cite{MumfordCurve}\cite[\S2]{Lutkebohmert_RigidCurves}: Any such $C$ can be written as a quotient \[C=\Omega_{\Gamma}/\Gamma\] where $\Omega_{\Gamma}\subseteq \P^1$ is the complement of some open discs and $\Gamma
\subseteq \mathrm{PGL}_2(K)$ is a Schottky group. This is a beautiful result, but it is weaker than the complex description, as it only captures the totally degenerate case, and as $\Omega_{\Gamma}$ has many different isomorphism classes.

Motivated by the uniformisation of abeloids, we conjecture that in the diamantine setting, there is a stronger uniformisation result which is very close to the complex situation:
Let $C$ be a connected smooth projective curve over $K$ of genus $g\geq 1$. In analogy with \eqref{eq:A=wt A/T_pA}, any choice of base-point $x\in C(K)$ leads to a canonical diamantine uniformisation of $C$ \cite[\S5.3]{perfectoid-covers-Arizona}
\[ C^\diamondsuit = \wtt C/\pi_1(C,x)\]
where $\wtt C\to C$ is a certain universal pro-\'etale perfectoid cover, and $\pi_1(C,x)$ is the \'etale fundamental group. The following conjecture was independently also suggested by Hansen:
\begin{Conjecture}\label{c:isom-classes-of-covers-of-curves}
	The isomorphism class of $\wtt C$ is locally constant in the moduli space $\mathcal M_g(K)$ of connected smooth projective curves of genus $g$ over $K$ with its non-archimedean topology.
\end{Conjecture}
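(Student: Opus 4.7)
The plan is to reduce the conjecture, as far as possible, to the abeloid case via the Abel--Jacobi embedding, and then to treat the residual non-abelian part of the fundamental group separately. The case $g=1$ is immediate from Corollary~\ref{c:isom-class-in-moduli-space} since $C$ is then itself an abeloid once a base point is chosen, so assume $g\ge 2$. Fix a base point $x\in C(K)$ and consider the Abel--Jacobi embedding $\iota_x:C\hookrightarrow J:=\mathrm{Jac}(C)$, where $J$ is an abelian variety of dimension $g$. The Torelli morphism $\mathcal M_g\to \mathcal A_g$ is a morphism of analytic moduli spaces, hence continuous for the non-archimedean topology, so Corollary~\ref{c:isom-class-in-moduli-space} applied to $J$ already shows that $\wtt J$ is locally constant as $C$ varies in $\mathcal M_g(K)$.

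The first concrete step is to upgrade this to the pullback $\iota_x^\ast\wtt J = \wtt J\times_J C$, a pro-finite-\'etale cover of $C$ corresponding to the profinite abelianisation $\pi_1(C,x)\twoheadrightarrow \pi_1(J,0)\cong TJ$; thus $\iota_x^\ast\wtt J$ captures the ``abelian part'' of $\wtt C$. To deduce its local constancy, one needs the ambient isomorphisms $\wtt J\isomarrow \wtt{J'}$ provided by Theorem~\ref{t:intro-Hom(A,A')} to be compatible, for nearby $C$ and $C'$, with the images of the Abel--Jacobi maps up to topological $p$-torsion. This should be tractable from the explicit description in Theorem~\ref{t:intro-Hom(A,A')} together with the continuous dependence of $\iota_x$ on $(C,x)$ in families.

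The main obstacle is to control the remaining cover $\wtt C\to \iota_x^\ast \wtt J$, corresponding to the commutator subgroup $[\pi_1(C,x),\pi_1(C,x)]$: for $g\ge 2$ this is a highly non-abelian profinite group, with no analogue in the abeloid setting. A natural strategy is to iterate the reduction using the lower central series, or more generally the Malcev filtration, of $\pi_1(C,x)$: the successive quotients are abelian pro-$\wh\Z$-modules, suggesting that each intermediate pro-finite-\'etale cover might be realised as the pullback to $C$ of the diamantine universal cover of a generalised Albanese variety in the sense of Deligne--Wojtkowiak. Establishing a version of Theorem~\ref{t:intro-Hom(A,A')} for these unipotent ``generalised abeloids'' appears to be the key new input required. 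Alternatively, in the totally degenerate Mumford case $C = \Omega_\Gamma/\Gamma$, one could aim for a direct description of $\wtt C$ in terms of the Bruhat--Tits tree of $\mathrm{PGL}_2$ and vary the Schottky group $\Gamma \subseteq \mathrm{PGL}_2(K)$ explicitly; this would provide a useful test case before tackling the general statement.
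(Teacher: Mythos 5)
This statement is a \emph{conjecture} in the paper, not a theorem: the paper explicitly does not prove it. What the paper offers is (i) the observation that the case $g=1$ is exactly Corollary~\ref{c:isom-class-in-moduli-space}, since an elliptic curve with a base point is an abeloid, and (ii) the remark that for $g\ge 2$ the result holds for the Jacobians by the main theorem, which is presented as ``evidence'' for the conjecture, not a proof. Section~\ref{s:curves} goes on to relate the conjecture to a $p$-adic Ehrenpreis question of Litt, again explicitly as an open problem. So there is no ``paper's own proof'' to compare against.

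Your proposal is essentially a research plan rather than a proof, and you yourself flag the gap. The reduction to the Jacobian via Torelli is exactly the observation the paper makes, and the continuity of the Torelli map is fine. But already the first upgrade --- from an abstract isomorphism $\wtt J\isomarrow \wtt{J'}$ to an isomorphism $\iota_x^\ast\wtt J\isomarrow \iota_{x'}^\ast\wtt{J'}$ of covers of $C$ and $C'$ --- is not automatic: you need the ambient isomorphism of perfectoid groups to respect the Abel--Jacobi images, and the explicit description in Theorem~\ref{t:intro-Hom(A,A')} constrains the isomorphism only up to translation by points of $\wh{E}$, which is a positive-dimensional group, so compatibility with a one-dimensional subvariety is a genuine condition, not a formality. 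More fundamentally, your ``main obstacle'' --- the pro-finite-\'etale cover $\wtt C\to \iota_x^\ast\wtt J$ corresponding to the derived subgroup of $\pi_1(C,x)$ --- is the entire content of the conjecture for $g\ge 2$. The suggestion to iterate along the lower central series using generalised Albanese varieties would require an entirely new analogue of Theorems~\ref{t:intro-Hom(E)} and~\ref{t:intro-Hom(A,A')} for unipotent or nilpotent group objects, for which none of the machinery in the paper (diamantine Picard functors, Raynaud uniformisation, tilting of abeloids) applies. Also note that even for the top abelian layer the paper only controls $H^1$-type data; the successive nilpotent quotients are not parametrised by covers of abelian varieties in any evident way, and the ``generalised abeloids'' you invoke are not proper, so the whole perfectoid-cover framework (which relies crucially on properness, via the Primitive Comparison Theorem in Proposition~\ref{p:H^1_v(A,Zp)=0}) breaks down. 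In short: the proposal correctly locates the difficulty but does not resolve it, which is consistent with the statement remaining a conjecture.
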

Corollary~\ref{c:isom-class-in-moduli-space} confirms this in the case $g=1$ of elliptic curves. For $g\geq 2$, the theorem shows that the result holds for the Jacobians, which  we see as evidence for the general case.

As we discuss in  \S\ref{s:curves}, the
Conjecture~\ref{c:isom-classes-of-covers-of-curves} has a very close relation to a $p$-adic analogue of the Ehrenpreis conjecture in complex geometry. This was first suggested by Daniel Litt \cite{Litt-MO-question} motivated by very similar considerations regarding uniformisation of  curves over $\Z_p^{\textrm{ur}}$:

In the case of genus $g=1$, the isomorphism classes of $\wtt C$ are given by the isogeny classes of the special fibre. 
Now for $g\geq 2$, there are ``more'' isogenies between curves than for $g=1$: Litt asks whether in fact any two smooth projective curves of genus $g\geq 2$ over $\overline{\F}_p$ might have a common finite \'etale cover. If that is the case, and 
Conjecture~\ref{c:isom-classes-of-covers-of-curves} is true, then  it seems conceivable that the cover $\wtt C$ is in fact isomorphic among \textit{all} curves of genus $g\geq 2$ over $\C_p$. This would give a uniformisation almost as strong as in the complex case, namely there would be the following set of isomorphism classes of universal covers of curves:
\begin{itemize}
	\item One is $\P^1$ for genus $g=0$.
	\item For genus $g=1$, there is one for all elliptic curves of semi-stable reduction, and one associated to each isogeny class of elliptic curves over $\overline{\F}_p$.
	\item There would be one capturing all curves of genus $g\geq 2$.
\end{itemize}
This would be very close to the complex uniformisation theorem, but also incorporate the phenomenon of good and bad reduction of elliptic curves, which has no analogue over $\C$.

\subsection{Morphisms of perfectoid spaces}
The perspective of uniformisation of curves motivates the question whether the answer to Theorem~\ref{t:intro-Hom(A,A')} changes if we forget the group structure on $\wt A$ for an elliptic curve $A$.

Recall that in the complex case, any injective entire function $f:\C\to \C$
is already linear, i.e.\ $f(x)=ax+b$ for some $a,b\in \C$.
Similarly, one can show in the rigid analytic setting that every morphism $f:E\to E'$
between Raynaud extensions is a group homomorphism up to translation.
We show that the analogue holds for the pro-\'etale universal covers:
\begin{Theorem}\label{t:morph-is-comp-intro}
	Let $A$ and $A'$ be two abeloids over $K$. Then any morphism of perfectoid spaces $\wtt A\to \wtt A'$ over $K$
	is the composition of a homomorphism and a translation.
\end{Theorem}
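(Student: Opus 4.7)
The plan is to reduce to the classical rigidity lemma for abeloid varieties via the universal property of the pro-finite-\'etale cover. Given $f\colon \wtt A \to \wtt A'$, set $c := f(0) \in \wtt A'(K)$ and replace $f$ by $\tau_{-c} \circ f$, the composition with translation by $-c$ in the perfectoid group $\wtt A'$. We may therefore assume $f(0) = 0$, and it suffices to prove that $f$ is a group homomorphism.

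Consider $F := \pi' \circ f\colon \wtt A \to A'$, where $\pi'\colon \wtt A' \to A'$ is the canonical projection. Writing $\wtt A$ as the cofiltered inverse limit of diamonds arising from the system of isogenies $X_i \to A$ from abeloids $X_i$, and using that $A'$ is quasi-compact and quasi-separated, the morphism $F$ factors through a finite stage: there exist an isogeny $X \to A$ of abeloids and a morphism of rigid spaces $F_X\colon X \to A'$ such that $F = F_X \circ q_X$, where $q_X\colon \wtt A \to X$ is the canonical projection, normalised so that $q_X(0) = 0_X$. Then $F_X(0_X) = F(0) = 0$, and since $X$ is an abeloid, the classical rigidity lemma for abeloids over $K$ implies that $F_X$ is a group homomorphism.

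By functoriality of the universal pro-finite-\'etale cover, $F_X$ lifts to a group homomorphism $\wt F_X\colon \wtt X \to \wtt A'$; as $X \to A$ is finite \'etale, $\wtt X$ is canonically identified with $\wtt A$ compatibly with base-points, so we obtain $\wt F_X\colon \wtt A \to \wtt A'$ satisfying $\pi' \circ \wt F_X = F = \pi' \circ f$. The difference $f - \wt F_X$ thus factors through $\ker(\pi') = TA'$; but $TA'$ is profinite and hence totally disconnected, while $\wtt A$ is connected, so $f - \wt F_X$ is constant, and its value at $0$ is $f(0) - \wt F_X(0) = 0$, forcing $f = \wt F_X$. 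The main obstacle is the factorisation used in the second step: justifying that any morphism from $\wtt A$ to a qcqs rigid space $A'$ factors through some finite \'etale cover of $A$. This should follow from the general identity $\Mor(\varprojlim X_i, Y) = \varinjlim \Mor(X_i, Y)$ for cofiltered limits of qcqs diamonds with qcqs target, applied to the pro-finite-\'etale tower defining $\wtt A$, together with some bookkeeping to ensure that isogenies between abeloids form a cofinal subsystem.
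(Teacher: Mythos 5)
The crux of your proposal is the factorisation claim in your second paragraph: that $F = \pi' \circ f \colon \wtt A \to A'$ factors through a finite stage $X \to A$ of the pro-finite-\'etale tower. You flag this yourself as ``the main obstacle,'' and indeed it is where the argument breaks down — but not in the sense of being merely unproved: it is \emph{false}, and its failure is essentially the entire content of the theorem. If every morphism $\wtt A \to A'$ factored through a finite isogeny $X \to A$, then by classical rigidity (as you argue) $\Hom(\wtt A, \wtt A')$ would be controlled by $\varinjlim_X \Hom(X, A')$, i.e. by honest rigid-analytic isogenies, and the theorem would reduce to the classical rigidity lemma. But the paper's main theorems (Theorem~\ref{t:intro-Hom(E)}, Theorem~\ref{t:intro-Hom(A,A')}) show that $\Hom(\wt A, \wt A')$ is strictly larger than this: it sees the special fibres up to $p$-power isogeny, and the paper goes out of its way (see the Remark following Theorem~\ref{t:intro-Hom(A,A')} and Example~\ref{ex:non-trivial-morph-between-A_1-and-A_2}.2) to record that the extra morphisms \emph{cannot} be seen on any intermediate perfectoid cover of $A$, let alone a finite stage of the tower.

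The general identity $\Mor(\varprojlim X_i, Y) = \varinjlim \Mor(X_i, Y)$ that you appeal to is a scheme-theoretic fact (EGA~IV.8.13, cf. Lemma~\ref{l:morph-proj-lim-schemes-to-ft-scheme}) which does not transfer to tilde-limits of adic spaces: $\wtt A$ is a perfectoid space whose structure sheaf involves a topological completion of $\varinjlim_i \O(X_i)$, and genuinely new analytic functions arise in the limit. Correspondingly, a morphism $\wtt A \to A'$ can use these ``completed'' functions in a way that factors through no finite stage. The true factorisation result in the paper is Lemma~\ref{l:fact-of-morph-from-wttA}, which shows that $H^0(\wtt A_1, A_2) = \varinjlim_{[N],\,(N,p)=1} H^0(\wt A_1, A_2)$: one can strip off the prime-to-$p$ part of the tower, but one can only descend to the $p$-adic universal cover $\wt A_1$, not any further. (Even that lemma requires the diamantine Picard machinery of \cite{heuer-diamantine-Picard}.) The residual $p$-adic case $\Hom(\wt A, A')$ is the real work of the paper: it is handled via Theorem~\ref{t:main-theorem}, which in turn rests on Theorem~\ref{t:isom-of-tilde-A-fixes-G}, Proposition~\ref{p:morphisms-between-univ-covers-of-Raynaud-extensions} and the line-bundle analysis on $\wt B \times B'^\vee$. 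Your remaining steps (lifting $F_X$ to the covers, and the connectedness argument against $TA'$) are fine, but they do not compensate for the collapse of the factorisation step; without it, the argument reduces the theorem to a statement it does not imply.
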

\subsection{Overview of proof of the Main Theorem}
To prove Theorem~\ref{t:intro-Hom(E)}, we first show that in the case of good reduction, we have
\[\Hom(\wt B_1,\wt B_2)=\Hom(\overbar B_1,\overbar B_2)\tf,\]
as follows: The tilting equivalence sets up a natural map from right to left. In order to see that this is an isomorphism, the main difficulty is showing that any morphism $\wt B_1\to\wt B_2$ has a formal model. We use the diamantine Picard functor of \cite{heuer-diamantine-Picard} to translate this into a question about line bundles on $\wt B_1\times B_2^\vee$, which is accessible by the methods of \cite{heuer-diamantine-Picard,heuer-Picard-good-reduction}.

In order to promote this to the statement of Theorem~\ref{t:intro-Hom(E)},
we use that there is an analytic short exact sequence
\[ 0\to \wt T\to \wt E\to \wt B \to 0.\]
The key observation is now that by \cite[{Theorem~4.1}]{heuer-Picard-good-reduction}, we have
$\Pic(\wt B)=\Pic(\overbar{B})\tf$, which means that for fixed $T$ and $B$, many extensions $E$ become isomorphic on universal covers. More precisely, let $M^\vee:=\Hom(T,\G_m)$, then $E$ as an extension of $B$ by $T$ defines an element
\[[E]\in \Ext_{\an}^1(B,T)= \Hom(M^\vee,B^\vee(K)),\]
whereas the isomorphism class of the universal cover is determined by the reduction
\[ [\wt E]\in\Ext^1_{\an}(\wt B,\wt T)=\Hom(M^\vee, \overbar{B}^\vee(k)\tf).\]
In order to deduce Theorem~\ref{t:intro-Hom(A,A')}, we study the limit $\wt E\to \wt A$ of $E\to A$:  Our starting point for this is \cite[Theorem~4.6.4]{perfectoid-covers-Arizona}, which implies that there is a short exact sequence on $\Perf_{K,v}$
\begin{equation}\label{eq:intro-E-A-Mp/M}
	\begin{tikzcd}
		0 \arrow[r] & \wt E \arrow[r] & \widetilde{A} \arrow[r] & M\otimes_{\Z} \uZp/\uZ\arrow[r] & 0.
	\end{tikzcd}
\end{equation}
The projection $E\to A$ thus becomes an injection $\wt E\to \wt A$. For this, we prove a  lifting result:

In the rigid analytic (or complex) setting, any group homomorphism $\varphi:A\to A'$ of abeloid varieties 
lifts uniquely to a homomorphism of the universal covers $E\to E'$ that determines $\varphi$ uniquely. We prove that a similar result holds in the perfectoid case:

\begin{Theorem}\label{t:intro-unique-lifting}
	Let $A$ and $A'$ be two abeloids over $K$ with associated Raynaud extensions $E$ and $E'$. Then
	any homomorphism of perfectoid groups $\varphi:\widetilde{A}\to \widetilde{A}'$
	restricts to a homomorphism $\wt E\to \wt E'$.
	Moreover, this restriction  determines $\varphi$ uniquely.
\end{Theorem}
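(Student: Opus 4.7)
The plan is to exploit the short exact sequence \eqref{eq:intro-E-A-Mp/M} and its analogue for $A'$: both $\wt E \subseteq \wt A$ and $\wt E' \subseteq \wt{A'}$ arise as kernels of the natural quotient maps onto sheaves of the form $M \otimes_{\Z} \uZp/\uZ$. I would reduce each of the two assertions of the theorem to a vanishing statement for $\Hom_v$ into or out of $\uZp/\uZ$.

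For the uniqueness assertion, suppose $\varphi$ restricts to zero on $\wt E$. Then $\varphi$ factors through the quotient $\wt A/\wt E \cong M \otimes \uZp/\uZ$, so it suffices to prove $\Hom_v(\uZp/\uZ, \wt{A'}) = 0$. Applying $\Hom(-, \wt{A'})$ to the SES $0 \to \uZ \to \uZp \to \uZp/\uZ \to 0$ embeds this group into the kernel of the restriction map $\Hom_v(\uZp, \wt{A'}) \to \Hom_v(\uZ, \wt{A'}) = \wt{A'}$. A $v$-sheaf homomorphism $\uZp \to \wt{A'}$ is on each test object a continuous homomorphism $\Z_p \to \wt{A'}(S)$, and any such homomorphism that vanishes on the dense subgroup $\Z \subset \Z_p$ must be trivial by continuity and separatedness of the target; thus $\Hom_v(\uZp/\uZ, \wt{A'}) = 0$.

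For the restriction assertion, I would show that the composition $\wt E \hookrightarrow \wt A \xrightarrow{\varphi} \wt{A'} \twoheadrightarrow M' \otimes \uZp/\uZ$ is zero, so that $\varphi(\wt E) \subseteq \wt E'$. This reduces to $\Hom_v(\wt E, \uZp/\uZ) = 0$. Applying $\Hom(\wt E, -)$ to the same SES, and using that $\wt E = \varprojlim_{[p]} E$ is uniquely $p$-divisible (the shift structure makes $[p]$ visibly bijective on such inverse-limit sequences), so that $\Hom(\wt E, \uZ) = \Hom(\wt E, \uZp) = 0$, one obtains an embedding $\Hom_v(\wt E, \uZp/\uZ) \hookrightarrow \Ext^1_v(\wt E, \uZ)$.

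The main obstacle is therefore the vanishing of the image in $\Ext^1_v(\wt E, \uZ)$. I would handle this by splitting $\wt E$ via the short exact sequence $0 \to \wt T \to \wt E \to \wt B \to 0$, reducing to the perfectoid torus $\wt T$ and the universal cover $\wt B$ of the good-reduction abelian part. For each factor, the cohomological input $H^i_v(-, \uZp) = 0$ for $i > 0$ from \cite{heuer-Picard-good-reduction}, together with the same SES $0\to\uZ\to\uZp\to\uZp/\uZ\to 0$, yields $H^1_v(-, \uZ) = 0$; a standard local-to-global comparison (or a direct analysis showing that there are no non-trivial abelian-group-$v$-sheaf extensions of the connected group $\wt E$ by the discrete $\uZ$) converts this cohomological vanishing into the required $\Ext^1_v$ vanishing.
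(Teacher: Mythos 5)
Your uniqueness argument is correct and in fact aligns with an alternative route that the paper itself sketches (in a remark inside the proof of the main Theorem~\ref{t:main-theorem}): if a homomorphism $\varphi$ vanishes on $\wt E$, it factors through $M_p/M\cong M\otimes\uZp/\uZ$, and every homomorphism $\uZp/\uZ\to\wt A'$ vanishes, either by Lemma~\ref{l:H^0(Z_p/Z,G)} or by your observation that $\Hom(\uZp,\wt A')\to\Hom(\uZ,\wt A')$ is injective, which is the content of Proposition~\ref{p:morphisms-from-Z,Z_p,Z_p/Z}. The paper's own proof of the body version (Theorem~\ref{t:isom-of-tilde-A-fixes-G}) instead uses a torsor-theoretic argument via $M_p\times\wt E\to\wt A$, but only because there $\varphi$ is not assumed to be a homomorphism; for the intro statement you are proving, your shorter route is fine. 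Similarly your reduction of the restriction assertion to $\Hom_v(\wt E,\uZp/\uZ)=0$, and the embedding of that group into $\Ext^1_v(\wt E,\uZ)$ using unique $p$-divisibility of $\wt E$ together with the injection $\Ext^1_v(\wt E,\uZ)\hookrightarrow H^1_v(\wt E,\uZ)$ for connected groups (the paper's Lemma~\ref{l:E_2-of-discrete-torsor-over-connected}, a Breen--Deligne argument), are all correct.

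The gap is in the final step, where you propose to obtain $H^1_v(\wt E,\uZ)=0$ by splitting $\wt E$ along $0\to\wt T\to\wt E\to\wt B\to 0$ and invoking $H^i_v(-,\uZp)=0$ from \cite{heuer-Picard-good-reduction}. That vanishing theorem depends crucially on properness of the abeloid (Primitive Comparison, or the perfection trick in characteristic $p$), and it fails for the non-proper perfectoid torus: $H^1_v(\wt{\G}_m,\uZp)\neq 0$, since after tilting it becomes $H^1_v(\G_{m,K^\flat}^\perf,\uZp)=H^1_v(\G_{m,K^\flat},\uZp)$, which is huge because of Artin--Schreier covers of annuli. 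So the torus factor breaks the proposed $\uZp$-coefficient route, and even if it did not, reassembling $H^1_v(\wt E,\uZ)$ from the two factors via Leray would require controlling $R^1\pi_\ast\uZ$ along $\wt E\to\wt B$. The paper avoids decomposing $\wt E$ entirely: Proposition~\ref{p:Z-torsors-over-perfectoids} shows that $H^1_v(X,\uZ)=H^1_{\an}(X,\uZ)$ for \emph{any} perfectoid $X$ (using the Tate-curve sequence $0\to q^\Z\to\G_m\to\G_m/q^\Z\to 0$ and Kedlaya--Liu's comparison of line bundles), and then invokes Bosch--L\"utkebohmert's analytic result $H^1_{\an}(E,\uZ)=0$ passed to the limit. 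The conceptual point your proof misses is that $\uZ$-torsors do not proliferate in the $v$-topology the way $\uZp$-torsors do, so the non-properness of the torus is harmless for $\uZ$-coefficients but fatal for a strategy that passes through $\uZp$.
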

A more conceptual way to think about this is in terms of the $M$-torsor
\[X:=M_p\times\wt E\to \wt A\]
where $M_p:=M\otimes \underline{\Z}_p$. Theorem~\ref{t:intro-unique-lifting} implies that for $n\gg0$, the map $[p^n]\varphi$ lifts uniquely to a morphism $X\to X'$.
 From this perspective, the reason why there are many isomorphisms of universal covers is that $\Aut(X)$ is large as there are many homomorphisms $M_p\to \wt E$. This is ultimately where the tolerance of topological $p$-torsion in Theorem~\ref{t:intro-Hom(A,A')} comes from.

The article ends with an appendix on a few tangential topics like $\Ext$-groups of $v$-sheaves.

\subsection*{Acknowledgements}
We would like to thank Peter Scholze for many helpful discussions and comments, in particular for his ideas for the proofs of Proposition~\ref{p:higher-cohomolgy-of-profinite=0} and equation \eqref{eq:Hom-UC-of-p-div-groups},  and for noticing the connection to the $p$-adic Ehrenpreis conjecture.
We would also like to thank Johannes Ansch\"utz, Alex Ivanov, Daniel Litt, Alex Torzewski and Peter Wear for useful conversations.

This work was funded by the Deutsche Forschungsgemeinschaft (DFG, German Research Foundation) under Germany's Excellence Strategy-- EXC-2047/1 -- 390685813. The author was supported by the DFG via the Leibniz-Preis of Peter Scholze.
\subsection*{Notation}
Throughout, we work over a complete algebraically closed non-archimedean field $K$ of residue characteristic $p>0$, with  ring of integers $\O_K$, maximal ideal $\m$ and residue field $k$. We fix a pseudo-uniformiser $\varpi\in\O_K$. If $\Char K=0$, we assume that $p\in \varpi\O_K$.
We use almost mathematics in the sense of Gabber--Ramero with respect to $(\O_K,\mathfrak m)$. For any $\O_K$-modules $M$, $M'$ we write $M\aeq M'$
if there is a natural almost isomorphism between them.

By a rigid space over $K$, we always mean an adic space of topologically finite type over $\Spa(K,\O_K)$.
By a rigid group over $K$, we mean a group object in rigid spaces over $K$, which by convention we always assume to be commutative. Similarly, a perfectoid group over $K$ will mean a commutative group object in the category of perfectoid spaces over $K$.
\section{Preparations: adic groups as $v$-sheaves}

The aim of this section is to collect some preliminaries that we will use freely throughout. These include generalities on the $v$-site from \cite[\S8]{etale-cohomology-of-diamonds} and the topological $p$-torsion subsheaf of \cite[\S2]{heuer-diamantine-Picard}. We also study the quotient sheaf $\uZp/\uZ$  in the short exact sequence \eqref{eq:intro-E-A-Mp/M}.

\subsection{Recollections on the $v$-site}
\begin{Definition}
	We denote by $\Perf_{K,v}$ the site of perfectoid spaces over $K$, equipped with the $v$-topology in the sense of \cite[Definition~8.1]{etale-cohomology-of-diamonds}. The tilting equivalence \cite[Theorem 5.2]{perfectoid-spaces} defines an equivalence of sites
	\[ \Perf_{K,v}=\Perf_{K^{\flat},v}\]
	which allows us to switch freely between characteristic $0$ and $p$.
	In particular, in either characteristic, we can identify the underlying category $\Perf_{K}$ with the slice category $\Perf|_{\Spd(K)}$ of perfectoid spaces in characteristic $p$ equipped with a morphism to $\Spa(K^\flat)$.
\end{Definition}
We recall a few result from \cite{etale-cohomology-of-diamonds}:
Given any analytic adic space $Z$ over $\Spa(K,\O_K)$, we get an associated presheaf $Z^\diamondsuit$ on $\Perf_{K,v}$ by sending any perfectoid space $X$ over $K$ to the set of morphisms $X\to Z$ over $K$.

\begin{Theorem}[{\cite[Theorem 8.7]{etale-cohomology-of-diamonds}}, {\cite[Theorem 10.1.5, Proposition 10.2.3]{ScholzeBerkeleyLectureNotes}}]\label{l:ff-embedding-into-v-sheaves}\leavevmode
	\begin{enumerate}
		\item The presheaf $Z^{\diamondsuit}$ is already a $v$-sheaf. 
		\item Assume that $K$ is of characteristic $0$ and let $Z$ be a semi-normal (e.g.\ smooth) rigid space $Z$ over $K$. Then sending $Z\mapsto Z^\diamondsuit$ defines a fully faithful functor
		\[\{ \text{semi-normal rigid spaces over } K \}\hookrightarrow\{
		v\text{-sheaves on }\Perf_{K} \}. \]
	\end{enumerate}
\end{Theorem}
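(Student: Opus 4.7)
The plan is to prove the two parts separately, reducing each to descent results for the structure sheaf.

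\textbf{Part 1.} Being a $v$-sheaf is a local property on $Z$, so I may assume $Z = \Spa(A,A^+)$ is affinoid, since any morphism from a quasi-compact perfectoid $X$ to $Z$ factors through a finite affinoid cover of $Z$. For affinoid perfectoid $X = \Spa(B,B^+)$, a morphism $X \to Z$ is the same as a continuous ring homomorphism $A \to B$ carrying $A^+$ into $B^+$. Hence the $v$-sheaf property of $Z^{\diamondsuit}$ reduces to $v$-descent for the sheaves $\O$ and $\O^+$ on $\Perf_{K,v}$, which is the structure sheaf descent result at the foundation of Scholze's diamond formalism, ultimately resting on the almost purity theorem. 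Non-affinoid $Z$ is then handled by formal gluing, since any morphism from a perfectoid $X$ to $Z$ is Zariski-locally on $X$ a morphism into an affinoid open of $Z$.

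\textbf{Part 2.} For faithfulness, suppose $f,g \colon Z_1 \to Z_2$ induce the same morphism of $v$-sheaves. Choose a $v$-cover $X \to Z_1$ by an affinoid perfectoid $X$. Then $f$ and $g$ agree on $X$, hence on $|Z_1|$ (a quotient of $|X|$) and on sections of $\O_{Z_1}$, which embeds into $\O_X$ via pushforward along $X \to Z_1$. For fullness, given $\phi \colon Z_1^{\diamondsuit} \to Z_2^{\diamondsuit}$, pick $X \to Z_1$ as above; the composite $X \to Z_1^{\diamondsuit} \to Z_2^{\diamondsuit}$ corresponds by Part 1 to a morphism $X \to Z_2$ of adic spaces, compatible along $X \times_{Z_1} X$. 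It therefore descends to a continuous map $|Z_1| \to |Z_2|$ together with a morphism of sheaves $\phi^{-1}\O_{Z_2} \to \nu_* \O_{Z_1^{\diamondsuit}}$, where $\nu$ is the natural morphism from the $v$-site of $Z_1^{\diamondsuit}$ to the analytic site of $Z_1$.

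The main obstacle is the identification $\nu_* \O_{Z_1^{\diamondsuit}} \cong \O_{Z_1}$. In general, the $v$-site sees only the semi-normalisation of $Z_1$: reduced perfectoid test objects cannot distinguish $\O_{Z_1}$ from its semi-normal modification, so for non-semi-normal $Z_1$ one would only obtain a morphism $Z_1^{\mathrm{sn}} \to Z_2$. This is exactly where the hypothesis enters: if $Z_1$ is already semi-normal, then $\nu_* \O_{Z_1^{\diamondsuit}} = \O_{Z_1}$, and $\phi$ descends to the desired morphism $Z_1 \to Z_2$ of rigid spaces. To prove this identification, I would argue locally by computing $\nu_* \O$ explicitly on a pro-\'etale perfectoid cover obtained by adjoining $p$-power roots of a system of coordinates, which reduces the claim to the classical characterisation of semi-normal rings as those fixed by the semi-normalisation construction.
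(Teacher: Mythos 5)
This statement is a pure recollection: the paper gives no proof of its own, citing Theorem 8.7 of \cite{etale-cohomology-of-diamonds} for Part 1 and Theorem 10.1.5, Proposition 10.2.3 of \cite{ScholzeBerkeleyLectureNotes} for Part 2. So there is no internal argument to compare against, only the standard proofs in those references — and your proposal does track them in outline. Part 1's reduction to affinoid $Z$ and then to $v$-descent for $\O$ and $\O^+$ on $\Perf_{K,v}$ is precisely the mechanism in Scholze's Theorem 8.7. Part 2's diagnosis — that the $v$-site only sees the semi-normalisation, and that semi-normality is exactly the hypothesis making $\nu_*\O_{Z^\diamondsuit}\cong\O_Z$ — matches Proposition 10.2.3.

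That said, a couple of burdens are under-examined in a way that matters if this were to stand as a complete proof. First, the crucial identification $\nu_*\O_{Z^\diamondsuit}\cong\O_Z$ for semi-normal $Z$ is not a formal consequence of the ring-theoretic characterisation of semi-normality you gesture at. The hard direction is showing $\nu_*\O$ is not \emph{bigger} than $\O_Z$, and this rests on the Kedlaya--Liu acyclicity results for the pro-\'etale structure sheaf on affinoid rigid spaces (an honest quantitative statement about completed colimits along towers of rational subsets, not just a pointwise algebraic criterion). Second, your descent of the topological map and your injectivity argument for faithfulness both implicitly use that $|Z|=|Z^\diamondsuit|$ and that $|Z^\diamondsuit|$ is the correct topological quotient of $|X|$ by $|X\times_Z X|$; this is itself a theorem (a piece of Theorem 10.1.5), not a tautology. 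Neither issue is a wrong turn — both are established — but a self-contained proof must either cite them explicitly or reprove them, since they carry most of the weight that your sketch attributes to routine gluing.
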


This point of view is crucial to our considerations: It implies that when studying smooth adic $K$-groups and their perfectoid covers, we can regard these as abelian sheaves on $\Perf_K$ in a faithful way. This is desirable because in contrast to the category of adic groups, the category of abelian sheaves on $\Perf_K$ is abelian, and allows us to employ homological algebra. 

In contrast, in characteristic $p$, going from rigid spaces to $v$-sheaves is far from lossless:

\begin{Definition}[{\cite[Definition III.2.18.(ii)]{torsion}}]\label{d:perf}
	Let $G$ be a  rigid space over $K$ of characteristic $p$. Then there is a unique perfectoid tilde-limit $G^{\perf}\sim\varprojlim_{F}G$ where $F$ is the absolute Frobenius. The natural map $G^{\perf}\to G$ induces an isomorphism of $v$-sheaves.
\end{Definition}
From now on, we will freely identify analytic adic spaces over $K$ with their associated $v$-sheaves, i.e.\ write $Z$ for what we denoted by $Z^\diamondsuit$ above.

\begin{Definition}[{\cite[Definition 3.1.1]{bhatt-scholze-proetale}}]\label{d:replete-BS}
	A site $\mathcal C$ is replete if for any inverse system $(F_n)_{n\in\N}$ of sheaves on $\mathcal C$ with surjective transition maps, each projection $\varprojlim_{n\in \N} F_n\to F_m$ is surjective.
\end{Definition}
\begin{Lemma}[{\cite[{Proposition~3.1.10}]{bhatt-scholze-proetale}, \cite[Lemma~2.6]{heuer-Picard-good-reduction}}]\label{l:qproet-replete}
	Let $X$ be a diamond. Then $X_{\qproet}$ and $X_{v}$ are replete. In particular, on either site, for $(F_n)_{n\in\N}$ as in Definition~\ref{d:replete-BS} we have
\[ \textstyle\Rlim_nF_n=\varprojlim_nF_n.\]
\end{Lemma}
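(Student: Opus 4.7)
The plan is to verify repleteness directly in both cases, and then to derive the identification of $\Rlim$ with $\varprojlim$ as a formal consequence on any replete site.

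For repleteness, let $(F_n)_{n\in\N}$ be an inverse system on $X_{\qproet}$ or $X_v$ with surjective transition maps $F_{n+1}\to F_n$. I need to show that the natural projection $\varprojlim_n F_n\to F_m$ is an epimorphism of sheaves for every $m$. Pick a perfectoid space $U$ over $X$ and a section $s_m\in F_m(U)$. Using surjectivity of $F_{m+1}\to F_m$, I can choose a cover $U_1\to U$ in the respective site together with a lift $s_{m+1}\in F_{m+1}(U_1)$; iterating yields a tower of covers $\cdots\to U_2\to U_1\to U$ and compatible sections $s_{m+k}\in F_{m+k}(U_k)$. The key point is that the cofiltered limit $V\sim\varprojlim_k U_k$ exists as a perfectoid space (in the tilde-limit sense for affinoid pieces), because the category of perfectoid spaces is closed under cofiltered limits of affinoids, and the structural map $V\to U$ is itself a $v$-cover, respectively a quasi-pro-\'etale cover. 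On $V$ the sections $s_{m+k}$ assemble into a section of $\varprojlim_n F_n$ whose image in $F_m$ is the pullback of $s_m$. This proves surjectivity locally and hence repleteness of both sites.

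For the ``in particular'' statement, I use the fact that on any replete site, the functor $\varprojlim$ is exact on inverse systems of abelian sheaves with surjective transition maps. Concretely, $\Rlim_n F_n$ can be computed as the fiber of the shift-minus-identity map
\[\textstyle\prod_n F_n\xrightarrow{\mathrm{id}-\mathrm{shift}}\prod_n F_n,\]
and repleteness forces this map to be surjective: at each stage one lifts using the surjectivity of the transition maps, assembling the lifts into a global preimage exactly via the argument of the previous paragraph. Hence the fiber is the kernel, which is $\varprojlim_n F_n$, and concentrated in degree zero. Equivalently, the Mittag-Leffler obstruction $\mathrm R^1\!\varprojlim F_n$ vanishes on replete sites whenever the transitions are surjective, and the higher $\mathrm R^i\!\varprojlim$ vanish as well since $\prod$ preserves injectives.

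The main substantive input, and also the main potential obstacle, is the stability of the $v$- and qpro-\'etale sites on a diamond under cofiltered limits of covers. For the qpro-\'etale site this is essentially built into the definition of pro-\'etale covers as cofiltered pro-systems. For the $v$-site, one reduces to affinoid perfectoid covers and invokes the fact that a cofiltered limit of affinoid perfectoid spaces is again affinoid perfectoid, which turns the inverse limit of $v$-covers into a genuine $v$-cover in the appropriate limit. Once this stability is in hand, the rest of the argument is a clean translation of the pro-\'etale repleteness argument of Bhatt--Scholze to the diamond setting.
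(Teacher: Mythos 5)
The lemma in question is stated in the paper without proof, as a citation of Bhatt--Scholze (Prop.~3.1.10) and of the author's earlier paper, so the comparison is against those references rather than an in-text argument.

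Your repleteness argument matches the standard one from the cited references: reduce to affinoid perfectoid objects, build the tower of covers $\cdots\to U_2\to U_1\to U$ by successively lifting sections, form the tilde-limit $V$, and observe that $V\to U$ is again a cover. The two points you correctly identify as the load-bearing inputs are (i) stability of affinoid perfectoid spaces under cofiltered (tilde-)limits, and (ii) the fact that $V\to U$ is again a $v$-cover, respectively a quasi-pro-\'etale cover. For (ii) in the $v$-case, after reducing to the affinoid perfectoid situation this is the surjectivity of $|V|=\varprojlim|U_k|\to|U|$, which is a compactness (Tychonoff-for-spectral-spaces) argument on fibres; for the qpro\'etale case one additionally checks that a cofiltered limit of quasi-pro-\'etale maps is quasi-pro-\'etale after pulling back to a strictly totally disconnected space. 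One minor point worth making explicit: a $v$- or qpro\'etale cover may a priori be a family of maps, and you should reduce to a single affinoid perfectoid cover (possible by quasi-compactness, taking a finite subfamily and then the disjoint union) before iterating.

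The derivation of $\Rlim=\varprojlim$ from repleteness is exactly Bhatt--Scholze Prop.~3.1.10, and your torsor-style surjectivity argument for $\mathrm{id}-\mathrm{shift}$ is the right one. However, the one genuinely imprecise step is your justification of the vanishing of $R^i\varprojlim$ for $i\geq 2$: ``$\prod$ preserves injectives'' by itself does not give that $\Rlim$ is computed by the two-term complex $\prod F_n\to\prod F_n$. That identification requires knowing that countable products of abelian sheaves are exact (so that $R\!\prod=\prod$), which is not automatic in a general topos and is itself a consequence of repleteness proved separately in Bhatt--Scholze. Since the paper cites that proposition directly, this is a cosmetic rather than structural gap, but if you want your argument to be self-contained you should first establish countable exactness of products from repleteness, and only then pass to the two-term complex.
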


\subsection{The topologically $p$-torsion subsheaf}
To any locally profinite space $S$, one can associate a perfectoid space $\underline{S}$, and in particular a $v$-sheaf, whose underlying topological space is $S$. If $S=\varprojlim S_i$ is profinite, this is done by
\[ \underline{S}:=\varprojlim \underline{S_i}\]
where $\underline{S_i}$ is the constant sheaf. We refer to \cite[Example 11.12]{etale-cohomology-of-diamonds}\cite[\S2]{heuer-diamantine-Picard} for more details.
\begin{notation}
	For typesetting convenience, we will write $\uZp$ (rather than $\underline{\Z_p}$) for the profinite sheaf associated to $\Z_p$. This is justified by the canonical isomorphism $\uZp=\varprojlim \uZ/p^n$.
\end{notation}
\begin{Lemma}[{\cite[Lemma~2.4]{heuer-diamantine-Picard}}]\label{l:underline-is-left-adjoint}
	Let $H$ be a locally profinite set, and let $G$ be any adic space over $(K,K^+)$. Then evaluation on points defines an isomorphism, functorial in $H$ and $G$:
	\[ H^0(\underline{H},G)=\Mapc(H,G(K,K^+)).\]
\end{Lemma}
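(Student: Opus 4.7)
The plan is to reduce to the case where $H$ is profinite and $G$ is affinoid, where the result can be checked directly from the presentation of $\underline{H}$ as the spectrum of continuous functions on $H$. First, I would decompose a locally profinite $H$ as a disjoint union $H = \bigsqcup_{j} H_j$ of profinite clopen pieces, use that $\underline{H} = \bigsqcup_{j} \underline{H_j}$ as $v$-sheaves, and observe that both sides of the claimed equality split as products indexed by $j$; this reduces the problem to $H$ profinite.

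For profinite $H = \varprojlim_{i} H_i$ written as an inverse limit of finite discrete sets, one has $\underline{H} \sim \varprojlim_{i} \underline{H_i}$ in perfectoid spaces, where each $\underline{H_i}$ is a finite disjoint union of copies of $\Spa(K, K^+)$. The tilde-limit is representable by an affinoid perfectoid space $\Spa(A, A^+)$ with $A = \mathrm{Cts}(H, K)$ (continuous functions equipped with the sup norm) and $A^+ = \mathrm{Cts}(H, K^+)$, following \cite[Example 11.12]{etale-cohomology-of-diamonds}. In particular, $\underline{H}$ is quasi-compact and quasi-separated, so any morphism $\underline{H} \to G$ factors through a quasi-compact open subspace of $G$. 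Covering such an open by finitely many affinoids and using that both $G \mapsto H^0(\underline{H}, G)$ and $G \mapsto \Mapc(H, G(K, K^+))$ satisfy an obvious sheaf property for analytic covers of $G$, one reduces further to $G = \Spa(R, R^+)$ affinoid.

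For affinoid $G$, a morphism $\underline{H} = \Spa(A, A^+) \to \Spa(R, R^+)$ is the same datum as a continuous $K$-algebra homomorphism $\varphi \colon R \to \mathrm{Cts}(H, K)$ sending $R^+$ into $\mathrm{Cts}(H, K^+)$. By the universal property of continuous function algebras on a profinite set, such $\varphi$ are in natural bijection with continuous maps $H \to \Hom_{\cts}((R, R^+), (K, K^+)) = G(K, K^+)$, given explicitly by $h \mapsto (r \mapsto \varphi(r)(h))$. This is precisely the evaluation-on-points map, so the claimed isomorphism follows.

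The main technical input is the identification $\underline{H} = \Spa(\mathrm{Cts}(H, K), \mathrm{Cts}(H, K^+))$ for profinite $H$ and the verification that this affinoid perfectoid algebra has the expected universal property as a ring of continuous functions; this is Scholze's construction of the constant $v$-sheaf attached to a profinite set. Once this is in hand, the remaining reductions and the adjunction identifying continuous algebra homomorphisms with continuous maps of points are essentially formal, so no serious further obstacle arises.
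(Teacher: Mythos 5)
This lemma is quoted verbatim from \cite[Lemma~2.4]{heuer-diamantine-Picard} and is not proved in the present paper, so there is no in-text argument to compare against. On its own terms, your sketch is essentially correct: the description $\underline{H}=\Spa(\mathrm{Cts}(H,K),\mathrm{Cts}(H,K^+))$ for profinite $H$, the reduction from locally profinite to profinite via a clopen decomposition, and the identification of continuous $K$-algebra maps $(R,R^+)\to(\mathrm{Cts}(H,K),\mathrm{Cts}(H,K^+))$ with continuous maps $H\to G(K,K^+)$ are the right ingredients, and this is the natural way one would prove such a statement.

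Two places deserve tightening. First, the reduction to affinoid $G$ is not quite a matter of $G\mapsto H^0(\underline{H},G)$ and $G\mapsto\Mapc(H,G(K,K^+))$ ``satisfying an obvious sheaf property for analytic covers of $G$'': $\Hom(\underline{H},-)$ is not a sheaf in the target variable. What actually makes the reduction work is compactness of $H$: after covering the quasi-compact image of $\underline{H}$ in $G$ by finitely many affinoids $V_i$, the open preimages in $|\underline{H}|\cong H$ can be refined to a finite clopen partition $H=\bigsqcup_\alpha H_\alpha$ with each $H_\alpha$ mapping into a single $V_{i(\alpha)}$; the identical refinement applies on the $\Mapc$ side, and both sides then split as finite products over the $H_\alpha$ with affinoid target. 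Second, calling the final bijection ``the universal property of continuous function algebras'' hides the small but non-formal verifications that it is well defined in both directions: that the map $H\to G(K,K^+)$, $h\mapsto(r\mapsto\varphi(r)(h))$, is continuous (which uses that $|\underline{H}|\cong H$ consists of rank-one points with residue field $K$, so the map on adic spectra restricts to a continuous map into $G(K,K^+)\subseteq|G|$), and conversely that $r\mapsto(h\mapsto\psi(h)(r))$ is a continuous ring homomorphism sending $R^+$ into $\mathrm{Cts}(H,K^+)$ (using that $R^+$ contains a ring of definition, so the $R^+$-integrality of the target already forces continuity). These checks are routine but should be recorded; with them in place the argument is sound.
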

Let $G$ be a rigid or perfectoid $K$-group, then by \cite[Proposition~A.6]{heuer-diamantine-Picard}, the $K$-points $G(K):=G(K,\O_K)\subseteq |G|$ inherit the structure of a complete Hausdorff topological group.
\begin{Proposition}[{\cite[Proposition~2.15.2]{heuer-diamantine-Picard}}]\label{p:morphisms-from-Z,Z_p,Z_p/Z}
	For $G$ a rigid or perfectoid group over $K$,  
	\begin{alignat*}{3}
		\Hom(&\uZ,&G)&=&&\Hom(\Z,G(K))=G(K),\\
		\Hom(&\uZp,&G)&=&&\Hom_{\cts}(\Z_p,G(K))=\{x\in G(K)\mid p^n\Z\cdot x\xrightarrow{n\to \infty} 0\}.
	\end{alignat*}
\end{Proposition}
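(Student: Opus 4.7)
The plan is to deduce both identities directly from Lemma~\ref{l:underline-is-left-adjoint} together with the fact that $G(K)$ carries the structure of a Hausdorff topological group (which is the content of \cite[Proposition~A.6]{heuer-diamantine-Picard} cited in the paper). The only non-routine point is to check that, under the identification of the lemma, sheaf-theoretic group homomorphisms correspond exactly to continuous group homomorphisms on $K$-points; once that is in hand, both formulas reduce to elementary statements about continuous characters of $\Z$ and of $\Z_p$.

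First I would apply Lemma~\ref{l:underline-is-left-adjoint} with $H=\Z$ and $H=\Z_p$ (both locally profinite) to obtain, functorially in $G$, bijections
\[ \Hom_{v}(\uZ,G)=\Mapc(\Z,G(K))=\Map(\Z,G(K)),\qquad \Hom_{v}(\uZp,G)=\Mapc(\Z_p,G(K)), \]
where on the left I temporarily write $\Hom_{v}$ for morphisms of $v$-sheaves (of sets). The group structure on both $\uZp$ and $G$ is inherited from the corresponding structures on $\Z_p$ and $G(K)$ via the isomorphism $\uZp\times\uZp=\underline{\Z_p\times\Z_p}$ (and similarly for $\uZ$); hence, by applying the same lemma to $\underline{\Z_p^2}$, the commutativity of the diagram expressing ``group homomorphism'' for a map $\uZp\to G$ of $v$-sheaves is equivalent to the analogous commutativity on $K$-points. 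This yields
\[ \Hom(\uZ,G)=\Hom(\Z,G(K)),\qquad \Hom(\uZp,G)=\Hom_{\cts}(\Z_p,G(K)). \]

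To finish, I would identify the right-hand sides with the claimed concrete descriptions. For $\uZ$, every group homomorphism $\Z\to G(K)$ is determined by the image of $1$, and any element of $G(K)$ is admissible, giving $\Hom(\Z,G(K))=G(K)$. For $\uZp$, a continuous group homomorphism $\varphi\colon\Z_p\to G(K)$ into a Hausdorff topological group is again determined by $x:=\varphi(1)$, since $\Z\subseteq\Z_p$ is dense. The image must satisfy $p^n x=\varphi(p^n)\to\varphi(0)=0$. Conversely, given $x\in G(K)$ with $p^n x\to 0$, the formula $\varphi(\sum_{n\geq 0}a_n p^n):=\lim_{N\to\infty}\sum_{n=0}^N a_n p^n x$ (for $a_n\in\{0,\dots,p-1\}$) converges in the complete Hausdorff group $G(K)$ and defines the required continuous extension.

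The step I expect to be the main (though still modest) obstacle is the compatibility of the Yoneda-type identification with group structures, i.e.\ verifying that the bijection of Lemma~\ref{l:underline-is-left-adjoint} is an isomorphism of abelian groups when $G$ is an abelian group object. This hinges on the fact that the functor $H\mapsto\underline{H}$ commutes with finite products of locally profinite sets, so that both the source and target of the group law on $\uZp$ (respectively $\uZ$) sheafify correctly, and on the evaluation side one gets precisely the topological group structure on $G(K)$ guaranteed by \cite[Proposition~A.6]{heuer-diamantine-Picard}.
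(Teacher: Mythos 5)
The paper does not prove this Proposition itself; it is quoted verbatim from \cite[Proposition~2.15.2]{heuer-diamantine-Picard}, so there is no in-text argument to compare against. On its own terms, your strategy — invoke Lemma~\ref{l:underline-is-left-adjoint} to identify $v$-sheaf morphisms $\underline H \to G$ with continuous maps $H \to G(K)$, then use functoriality in $H,G$ and compatibility of $\underline{(-)}$ with finite products to promote this to a bijection on group homomorphisms — is the natural route and almost certainly the one taken in the reference.

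There is, however, a gap in the $\Z_p$ case, and it is exactly at the step you flagged as \emph{not} being the obstacle. The proposition's right-hand side is
$\{x \in G(K) \mid p^n\Z\cdot x \to 0\}$, which is the condition that the \emph{cyclic subgroups} $p^n\Z\cdot x$ shrink to zero. You only establish necessity of the a priori weaker statement $p^n x \to 0$, and then use that weaker statement as the hypothesis in the sufficiency direction. In an arbitrary complete Hausdorff topological group these conditions are not equivalent, and more to the point, the convergence of your defining series is not automatic from $p^n x\to 0$: the tail $\sum_{n=N+1}^{M} a_n p^n x$ lies in $p^{N+1}\Z\cdot x$, not in a small neighbourhood of $0$, unless one knows that the sets $p^n\Z\cdot x$ themselves become small. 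The correct argument runs through the stronger condition: for necessity, use continuity of $\varphi$ at $0$ to find $N$ with $\varphi(p^N\Z_p)\subseteq U$, hence $p^n\Z\cdot x \subseteq U$ for $n\geq N$; for sufficiency, assume $p^n\Z\cdot x \to 0$ so that the partial sums are genuinely Cauchy. (Alternatively, one can argue that the topology on $G(K)$ for an adic group always has a neighbourhood basis of open subgroups at the identity — which makes $p^n x \to 0$ and $p^n\Z\cdot x\to 0$ equivalent — but that is an extra structural input you should cite or prove, not assume silently.) You should also note why the continuous extension you construct is a group homomorphism; this is cleanest by observing that it is the unique continuous extension of the homomorphism $\Z\to G(K)$, $m\mapsto mx$, from the dense subgroup $\Z\subseteq\Z_p$, rather than by working directly with the digit-expansion formula.
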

This motivates the following definition, see \cite[Definition~2.16]{heuer-diamantine-Picard} for more details:
\begin{Definition}\label{d:G-hat}
	For any adic group $G$, we denote by $\widehat{G}\to G$ the image of the evaluation map $\HOM(\uZp,G)\to G$ at $1\in \Z_p$. We call this the topologically $p$-torsion subsheaf.
\end{Definition}
	In all cases that we will encounter in the following, $\widehat{G}\subseteq G$ will be represented by an adic subgroup. For example, if $\Char K=0$ and $G$ is a rigid group over $K$, then $\widehat{G}$ may be identified with an open subgroup considered by Fargues in the context of analytic $p$-divisible groups \cite[\S1.6 Th\'eor\'eme~1.2]{Fargues-groupes-analytiques}. See \cite[\S2]{heuer-diamantine-Picard} for details. We note the following special case:
\begin{Lemma}[{\cite[Example~2.23.1]{heuer-diamantine-Picard}}]\label{l:K-points-of-whB}
	Let $B$ be an abeloid variety of good reduction over $K$ with special fibre $\overbar{B}$ over $k$. Then the $K$-points of the topological torsion group fit into  an exact sequence
	\[ 0\to \wh B(K)\to B(K)\to \overbar{B}(k)\tf\to 0\]
\end{Lemma}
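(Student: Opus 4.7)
The plan is to use the reduction map from $B(K)$ to $\overbar B(k)$ to reduce everything to understanding the formal kernel. Since $B$ has good reduction, there is a formal abelian scheme $\mathfrak B$ over $\O_K$ whose generic fibre is $B$ and whose special fibre is $\overbar B$. The reduction map gives a short exact sequence
\[ 0\to B^\circ(K)\to B(K)\to \overbar B(k)\to 0,\]
where $B^\circ(K)\subseteq B(K)$ is the kernel of reduction, i.e.\ the $\O_K$-points of the formal open subgroup of $\mathfrak B$ at the identity.

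The first step is to check that $B^\circ(K)\subseteq \wh B(K)$. This is the statement that multiplication by $p$ is topologically nilpotent on $B^\circ(K)$, which is a classical fact for any formal Lie group over $\O_K$: in terms of formal coordinates, $[p]$ is given by a tuple of power series whose constant term is zero, and whether $\Char K=0$ or $p$, iterating $[p^n]$ contracts any element with strictly positive valuation to $0$. Hence $p^n x\to 0$ for every $x\in B^\circ(K)$.

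Next I would characterise $\wh B(K)$ exactly as the preimage under reduction of $\overbar B(k)[p^\infty]$. Indeed, if $x\in \wh B(K)$, then $p^n x\to 0$, so eventually $p^n x\in B^\circ(K)$ because the kernel of reduction is open in $B(K)$; this means $p^n\overbar x=0$, i.e.\ $\overbar x\in \overbar B(k)[p^\infty]$. Conversely, if $p^N x\in B^\circ(K)$, then by the first step $p^{N+n}x=p^n(p^N x)\to 0$, so $x\in \wh B(K)$.

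Finally, since $k$ is algebraically closed of characteristic $p$, the isogeny $[p]$ is surjective on $\overbar B(k)$, so the canonical map $\overbar B(k)\to \overbar B(k)\tf$ is surjective with kernel exactly $\overbar B(k)[p^\infty]$. Composing with the reduction gives a surjection $B(K)\twoheadrightarrow \overbar B(k)\tf$ whose kernel is precisely $\wh B(K)$ by the previous paragraph, yielding the desired exact sequence. The only nontrivial ingredient is the topological nilpotency of $[p]$ on $B^\circ(K)$, which is a standard property of formal groups over $\O_K$.
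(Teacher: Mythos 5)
Your argument is correct. The paper itself gives no proof of this lemma --- it is imported from {\cite[Example~2.23.1]{heuer-diamantine-Picard}} --- so there is no in-text proof to compare against, but your reasoning is the expected one and it is sound. The decomposition via the reduction sequence $0\to B^\circ(K)\to B(K)\to\overbar B(k)\to 0$, the identification of $\wh B(K)$ as the preimage of $\overbar B(k)[p^\infty]$, and the use of $p$-divisibility of $\overbar B(k)$ (valid since $k$ is algebraically closed) to identify $\overbar B(k)[p^\infty]$ with the kernel of $\overbar B(k)\to\overbar B(k)\tf$ together give exactly the asserted exact sequence.

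One small point worth spelling out, since it is the only place real content enters: the topological nilpotency of $[p]$ on $B^\circ(K)$ is not just a consequence of $[p]$ having zero constant term --- that is true of any endomorphism of a formal group. What you actually need is that the linear part of $[p]$ is $p\cdot\mathrm{id}$, hence a non-unit matrix (indeed $0$ if $\Char K=p$), so that $[p]$ strictly increases valuation on $\m^g$; one then checks that the valuation grows to $\infty$ under iteration (at least doubling until it exceeds $v(p)/(p-1)$, then increasing by at least $v(p)$ per step, or multiplying by $p$ each step in characteristic $p$). You also use, correctly, that $B^\circ(K)$ is an \emph{open} subgroup of $B(K)$ so that $p^nx\to 0$ forces $p^nx\in B^\circ(K)$ for $n\gg 0$, and that the topology on $B(K)$ has a basis of open subgroups, which is why ``$p^nx\to 0$'' and ``$p^n\Z\cdot x\to 0$'' agree. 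With those details filled in, the proof is complete.
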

This sequence in fact has a geometric incarnation as a short exact sequence of $v$-sheaves that naturally came up in \cite[\S5]{heuer-Picard-good-reduction} in the context of Picard functors of universal covers:
\begin{Definition}\label{d:diamond-on-special-fibre}
	To any $k$-scheme $X$, we associate the $v$-sheaf $X^\diamond$ defined by sheafification of the functor on $\Perf_K$
	\[ (S,S^+)\mapsto X(S^+/\mathfrak m),\]
	where $S^+/\mathfrak m$ is considered as a $k$-algebra (recall that $\mathfrak m\subseteq \O_K$ is the maximal ideal). In fact, by \cite[Lemma~5.2]{heuer-Picard-good-reduction} it suffices to form the analytic sheafification, which is already a $v$-sheaf.
\end{Definition}
\begin{Proposition}[{\cite[Corollary~5.6]{heuer-Picard-good-reduction}}]\label{p:ses-whB-B-barBtf}
	There is a short exact sequence on $\Perf_{K,v}$
	\[ 0\to \wh{B}\to B\to \overbar{B}^\diamondsuit\tf\to 0.\]
\end{Proposition}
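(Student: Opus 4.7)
The plan is to construct the map on the right as a specialisation morphism followed by the natural localisation at $p$, and then verify exactness by combining the formal smoothness of a good model with the $p$-divisibility of $B$ and the $v$-sheaf characterisation of $\wh B$ from Proposition~\ref{p:morphisms-from-Z,Z_p,Z_p/Z}.

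First I would construct the specialisation map $\mathrm{sp}\colon B\to \overbar B^\diamondsuit$. Since $B$ has good reduction, it admits a smooth proper formal $\O_K$-model $\mathfrak B$ whose special fibre is $\overbar B$. For any perfectoid pair $(S,S^+)$ over $(K,\O_K)$, properness of $\mathfrak B$ together with the valuative criterion implies that any morphism $\Spa(S,S^+)\to B$ extends uniquely to $\Spf S^+\to \mathfrak B$. Reducing modulo $\m$ yields a morphism $\Spec(S^+/\m)\to \overbar B$, and analytic sheafification (which suffices by Definition~\ref{d:diamond-on-special-fibre}) produces $\mathrm{sp}$. Composing with $\overbar B^\diamondsuit\to \overbar B^\diamondsuit\tf$ defines the morphism $\phi\colon B\to \overbar B^\diamondsuit\tf$ of the statement.

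For surjectivity, let $s$ be a section of $\overbar B^\diamondsuit\tf$ over $(S,S^+)$. After passing to a $v$-cover I may assume $s=p^{-n}\overbar t$ for some $\overbar t\in \overbar B(S^+/\m)$. Formal smoothness of $\mathfrak B$ then allows me to refine further and lift $\overbar t$ to a section $\tilde t\in \mathfrak B(S^+)\subseteq B(S,S^+)$. Since $[p]\colon B\to B$ is finite flat and hence $v$-surjective, I can pull back $\tilde t$ through the $n$-fold composition of $[p]$ on a further $v$-cover, producing a preimage $\tilde s\in B(S,S^+)$ with $\phi(\tilde s)=s$.

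The main work, and the step I expect to be the hardest, is to identify $\ker\phi$ with $\wh B$. The inclusion $\wh B\subseteq \ker\phi$ is immediate from Definition~\ref{d:G-hat}: any section of $\wh B$ is the image of $1\in\Z_p$ under a map $\uZp\to B$, so its further image under $\mathrm{sp}$ factors through a continuous homomorphism $\Z_p\to \overbar B(S^+/\m)$ whose image lies in the $p$-power torsion of $\overbar B$ and is thus killed by $\tf$. For the reverse inclusion I would first identify $\ker\mathrm{sp}$ with a subsheaf of $\wh B$ by observing that $\mathfrak B$ is \'etale-locally a formal open polydisc, and that the $v$-sections of its formal completion along the unit section land in $\wh B$ via Proposition~\ref{p:morphisms-from-Z,Z_p,Z_p/Z}; this last point is the main obstacle, since on $K$-points it is essentially Lemma~\ref{l:K-points-of-whB}, but promoting it to arbitrary perfectoid test objects requires a concrete grip on the formal completion of $\mathfrak B$ as an adic subgroup of $B$. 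Granted this, any section $x$ of $\ker\phi$ has reduction of bounded $p$-power order, so $p^n x\in \ker\mathrm{sp}\subseteq \wh B$ for some $n$; since $\wh B$ is preserved under division by $p$, it follows that $x\in\wh B$.
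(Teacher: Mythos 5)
Note first that the paper does not actually prove this proposition here; it is cited directly to \cite[Corollary~5.6]{heuer-Picard-good-reduction}, so there is no in-text proof to compare against, and the bundling of this citation with Theorem~\ref{t:Pic-functor-wtB} suggests the original argument in that reference may run through the diamantine Picard functor rather than the direct specialisation-map route you propose. Your sketch is nevertheless a plausible direct approach, but two steps need to be flagged. First, in the surjectivity argument, ``formal smoothness of $\mathfrak B$ then allows me to refine further and lift $\overbar t$'' glosses over the fact that $\m\subseteq\O_K$ is neither nilpotent nor finitely generated, so formal smoothness does not apply directly to the surjection $S^+\to S^+/\m$; one must first invoke finite presentation (Lemma~\ref{l:morph-proj-lim-schemes-to-ft-scheme}) to spread $\overbar t$ out to a section over $S^+/\varpi^\epsilon$ for some $\epsilon>0$ and then lift inductively over powers of $\varpi$, exactly as in Lemma~\ref{l:lifting-from-mod-m}.

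Second, and more substantively, the step you yourself single out as ``the main obstacle''---showing at the $v$-sheaf level that $\ker\mathrm{sp}$, i.e.\ the rigid generic fibre of the formal completion of $\mathfrak B$ along its unit section, is a sub-$v$-sheaf of $\wh B$---is indeed where the actual content lies, and as written your proof does not supply it. One has to produce, for a $U$-section of $\ker\mathrm{sp}$ (after $v$-localisation), a genuine morphism $\uZp\times U\to B$ extending it, which requires controlling the convergence of $[p^n]$ on the formal group uniformly over perfectoid test objects; in characteristic $0$ this amounts to identifying $\wh B$ with Fargues' analytic $p$-divisible group, which is the content referenced after Definition~\ref{d:G-hat} but not something you can simply assert. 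Relatedly, your closing assertion that ``$\wh B$ is preserved under division by $p$'' is a saturation statement about the sheaf $\wh B$ under $[p]$ which is also not automatic from Definition~\ref{d:G-hat}; it should become available once $\wh B$ is recognised as an open adic subgroup with the expected $[p]$-behaviour, but as presented it is an unproved claim. So the overall strategy is sound and you have correctly located where the work lies, but the two sheaf-theoretic ingredients you isolate are genuine gaps rather than routine formalities, and the proof as written is incomplete.
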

This is another way to say that $\wh B$ is associated to the $p$-divisible group of $B$. It can be used to give a description of $\wh{B}$ in terms of an analytic Weil pairing that will be useful:
\begin{Proposition}[{\cite[\S4]{tate1967p}, \cite[Theorem~5.1]{heuer-diamantine-Picard}}]\label{p:analytic-Weil}
	Assume $\Char K=0$ and let $A$ be an abeloid of good reduction over $K$. Then the Weil pairing  induces a split exact sequence of $v$-sheaves
	\[ 0\to \wh A\to \HOM(T_pA,\wh \G_m)\to H^0(A,\Omega^1(-1))\otimes_K \G_a\to 0.\]
	In particular, any choice of basis $\Z_p^{2d}\cong T_pA$ induces a split injection $\wh A\hookrightarrow \wh \G_m^{2d}$.
\end{Proposition}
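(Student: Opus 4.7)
The plan is to realise the claimed short exact sequence as the $v$-sheafification of Tate's classical Hodge--Tate decomposition for the $p$-divisible group $A[p^\infty]$, which is defined over $\O_K$ thanks to good reduction. All three terms are natural sheafifications of objects already present in $p$-adic Hodge theory, so the content lies in producing the maps as morphisms of $v$-sheaves and upgrading exactness and splitting from $K$-points to the $v$-topology.

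First I would use Proposition~\ref{p:ses-whB-B-barBtf} to identify $\wh A$ with the $v$-sheaf attached to the analytic $p$-divisible group $A[p^\infty]$: the kernel of the reduction $A \to \overbar{A}^\diamondsuit\tf$ is precisely the formal group of the N\'eron model extended by the $p$-power torsion. Under this identification, the Weil pairing $A[p^n] \times A^\vee[p^n] \to \mu_{p^n}$ combined with the analytic Cartier duality of Tate \cite[\S4]{tate1967p} produces an embedding $\wh{A^\vee} \hookrightarrow \HOM(T_pA, \wh\G_m)$, which becomes the left-hand inclusion after identifying $\wh A \cong \wh{A^\vee}$ via a polarisation (or symmetrically by swapping $A$ and $A^\vee$). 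For the right-hand map, I would compose the logarithm $\HOM(T_pA, \wh\G_m) \to \HOM(T_pA, \G_a) = (T_pA)^\vee \otimes \G_a$, coming from $0 \to \mu_{p^\infty} \to \wh\G_m \xrightarrow{\log} \G_a \to 0$, with the Hodge--Tate surjection $(T_pA)^\vee \otimes \C_p \twoheadrightarrow H^0(A, \Omega^1(-1)) \otimes \C_p$ extracted from the Hodge--Tate decomposition of $H^1_{\et}(A, \C_p)$.

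Exactness at each term reduces on $K$-points to Tate's classical theorems in \cite{tate1967p}. To promote this to a short exact sequence of $v$-sheaves, I would check surjectivity of the right-hand map $v$-locally on affinoid perfectoids, using the topological $p$-torsion formalism of Proposition~\ref{p:morphisms-from-Z,Z_p,Z_p/Z} together with the observation that both sides are smooth analytic $v$-sheaves whose geometric stalks recover the classical Hodge--Tate comparison. A splitting is then produced by choosing any $K$-linear splitting of the Hodge--Tate decomposition, which exists since $K$ is algebraically closed. The final assertion is immediate: a basis $\Z_p^{2d} \cong T_pA$ combined with Proposition~\ref{p:morphisms-from-Z,Z_p,Z_p/Z} gives $\HOM(T_pA, \wh\G_m) = \wh\G_m^{2d}$, and composing with the splitting yields the claimed split injection $\wh A \hookrightarrow \wh\G_m^{2d}$. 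The main obstacle is precisely this upgrade from a pointwise to a sheafwise statement: Tate's theorems are classically phrased as identifications of $\Gamma_K$-modules, and reinterpreting them as morphisms of $v$-sheaves whose exactness can be checked on a $v$-cover requires combining the replete-topos machinery of Lemma~\ref{l:qproet-replete} with a careful analysis of how $\wh\G_m$-valued cohomology behaves on perfectoids.
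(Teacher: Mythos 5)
Your plan diverges from the paper's proof in an important way: the paper does \emph{not} re-derive this sequence from Tate's classical theory; it simply cites \cite[Theorem~5.1]{heuer-diamantine-Picard}, which already produces the analogous short exact sequence of $v$-sheaves (with the full topological torsion subgroup and adelic Tate module), and then passes to topological $p$-torsion subsheaves, using that $\wh{(-)}$ kills the prime-to-$p$ part. The only additional remark the paper makes is that in the good-reduction case the left map can be written down directly at the formal level, with \cite[\S4]{tate1967p} as a reference. In other words, the paper's proof is a two-line reduction to an already-established $v$-sheaf theorem, not an independent sheafification of Tate's Hodge--Tate decomposition.

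Your alternative route has two concrete gaps. First, the identification $\wh A \cong \wh{A^\vee}$ ``via a polarisation'' does not work as stated: a polarisation is an isogeny, not an isomorphism, and its kernel may contain $p$-torsion, in which case the induced map $\wh A \to \wh{A^\vee}$ is not injective. The Weil/Cartier pairing naturally produces an embedding with $T_pA^\vee$ as the source of the $\HOM$, and passing to $T_pA$ requires either accepting the duality twist as part of the statement or giving a polarisation-free argument; you cannot just quotient by a polarisation. Second, and more seriously, the passage from exactness on $K$-points to exactness as $v$-sheaves is exactly the content the paper is outsourcing to \cite[Theorem~5.1]{heuer-diamantine-Picard}, and your proposal does not actually supply it. Appeals to ``smooth analytic $v$-sheaves'' and ``geometric stalks'' are not applicable here --- $\HOM(T_pA, \wh\G_m)$ is not a smooth rigid group, and $v$-sheaf exactness is not checked on stalks; one would need to exhibit an explicit $v$-cover (or use the Cartan--Leray / Breen--Deligne machinery the paper develops elsewhere) to verify surjectivity of the logarithm map. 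As you yourself flag, this is ``the main obstacle,'' but the proposal stops at naming it rather than closing it. Without invoking \cite[Theorem~5.1]{heuer-diamantine-Picard} or an equivalent substitute, the proof is incomplete.
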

\begin{proof}
	This follows from the cited Theorem by passing to topological $p$-torsion subsheaves, thus killing prime-to-$p$-torsion. We note that if $A$ has good reduction, the first map can easily be constructed explicitly on the level of formal schemes, see for example \cite[\S4]{tate1967p}.
\end{proof}

In contrast to $p$-torsion, for any prime $l\neq p$, topological $l$-torsion is the same as $l$-torsion:
\begin{Lemma}[{\cite[Lemma~2.24]{heuer-diamantine-Picard}}]\label{l:morphism-from-pro-coprime-to-p}
	Let $H=\varprojlim H_i$ be a profinite group for which the order of each $H_i$ is coprime to $p$. Let $G$ be a quasi-compact smooth rigid group. Then
	\[\Hom(\underline{H},G)=\textstyle\varinjlim_{i}\Hom(H_i,G(K)).\]
\end{Lemma}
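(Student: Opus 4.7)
The plan is to show that every continuous group homomorphism $\varphi:H\to G(K)$ factors through some finite quotient $H_i$; the asserted identification then follows, since the natural map from right to left is clearly injective, and by Lemma~\ref{l:underline-is-left-adjoint} elements of $\Hom(\underline H,G)$ respecting the group structure are precisely such continuous group homomorphisms.

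First, I would use the topological $p$-torsion subgroup $\wh G\subseteq G$ of Definition~\ref{d:G-hat}: since $G$ is quasi-compact and smooth, $\wh G$ is represented by an open adic subgroup, so $\wh G(K)\subseteq G(K)$ is an open subgroup. By continuity, $\varphi^{-1}(\wh G(K))$ is open in the profinite group $H$, and hence contains an open normal subgroup $N\trianglelefteq H$. Open subgroups of pro-prime-to-$p$ profinite groups are again pro-prime-to-$p$, so $N$, and hence the compact image $\varphi(N)\subseteq\wh G(K)$, are pro-prime-to-$p$; the image is moreover profinite because $\wh G(K)$ is totally disconnected. Multiplication by $p$ is therefore a topological isomorphism on $\varphi(N)$, so every $c\in\varphi(N)$ admits expressions $c=p^nc_n$ with $c_n\in\varphi(N)$ for each $n\geq 0$.

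The crux is then to show $\varphi|_N=0$, which reduces to the statement that multiplication by $p^n$ converges to $0$ uniformly on the compact subset $\varphi(N)\subseteq\wh G(K)$: for then $c=p^nc_n\to 0$ forces $c=0$. I expect this uniform-convergence upgrade of the defining pointwise convergence of $\wh G$ to be the main obstacle. I would verify it by choosing a smooth formal model of $G$ and exploiting the formal group law near the identity -- linearising $\wh G$ via the logarithm in characteristic $0$, and using direct estimates on the formal group law in characteristic $p$ (as in the Tate curve calculation $(1+u)^{p^n}=1+u^{p^n}$). Alternatively, in the good-reduction abelian case, one can embed $\wh G$ into a product of copies of $\wh{\G}_m$ via Proposition~\ref{p:analytic-Weil}, reducing to this explicit situation. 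Granting $\varphi|_N=0$, the kernel $\ker\varphi\supseteq N$ is open, so $\varphi$ factors through the finite quotient $H/\ker\varphi$; and since $H=\varprojlim H_i$, any continuous map from $H$ to a finite set factors through some $H_i$, finishing the proof.
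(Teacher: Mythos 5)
The paper does not actually prove this lemma; it cites {\cite[Lemma~2.24]{heuer-diamantine-Picard}}, so there is no internal proof to compare against. Judged on its own merits, your strategy is sound: identify $\Hom(\underline H,G)$ with continuous group homomorphisms $\varphi:H\to G(K)$, use openness of $\wh G(K)$ to produce an open normal $N\trianglelefteq H$ with $\varphi(N)\subseteq\wh G(K)$, and then show $\varphi|_N=0$ so that $\ker\varphi$ is open and $\varphi$ factors through a finite quotient. The observations that $N$ is pro-prime-to-$p$ and that $\varphi(N)$ is a compact, totally disconnected, hence profinite pro-prime-to-$p$ subgroup of $\wh G(K)$ on which $[p]$ is bijective are all correct.

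Where I would push back is on identifying ``$p^n\to 0$ uniformly on $\varphi(N)$'' as the crux. That reduction works, but it sends you into formal-group-law estimates (and in characteristic $p$ you would still owe a separate argument that $\wh G$ is open, since the paper's discussion after Definition~\ref{d:G-hat} only explicitly asserts this in characteristic $0$). There is a shorter route from the point you have already reached: for each $x\in\varphi(N)\subseteq\wh G(K)$, Proposition~\ref{p:morphisms-from-Z,Z_p,Z_p/Z} gives a \emph{continuous homomorphism} $\alpha_x:\Z_p\to G(K)$ with $\alpha_x(1)=x$. Its image is the closure of $\Z\cdot x$, which lies in the closed subgroup $\varphi(N)$, so $\alpha_x$ is a continuous homomorphism from the pro-$p$ group $\Z_p$ to the pro-prime-to-$p$ group $\varphi(N)$. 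Every finite continuous quotient of $\Z_p$ is a $p$-group while every finite quotient of $\varphi(N)$ has order prime to $p$, so $\alpha_x=0$ and hence $x=\alpha_x(1)=0$. This kills $\varphi(N)$ without any uniform-convergence or logarithm input, and it uses exactly the same facts you have already assembled. Your plan is not wrong, just heavier than it needs to be; if you do keep the uniform-convergence route, you should spell out the joint continuity of the $\Z_p$-action on $\wh G(K)$ (not just pointwise continuity), since that is what compactness of $\varphi(N)$ actually converts into uniformity.
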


\subsection[The quotient sheaf Zp/Z]{The quotient sheaf \texorpdfstring{$\uZp/\uZ$}{Zp/Z}}

Motivated by \eqref{eq:intro-E-A-Mp/M} in the introduction, we will be interested in the sequence of abelian sheaves
\begin{equation}\label{eq:ses-Z-Zp-Zp/Z}
0\to \underline{\Z}\to \uZp\to \uZp/\underline{\Z}\to 0
\end{equation}
on $\Perf_{K,v}$
where the right term is defined as the cokernel. We should a priori be careful which topology we work in (analytic, quasi-pro-\'etale, $v$), but in fact this is no problem:
\begin{Lemma}\label{l:uZp/uZ on an vs v} 
	Let $\lambda:\Perf_{K,v}\to \Perf_{K,\an}$ be the natural morphism of sites. Then
	\[\lambda^{\ast}(\uZp/\uZ)=\uZp/\uZ, \quad \lambda_{\ast}(\uZp/\uZ)=\uZp/\uZ.\]
\end{Lemma}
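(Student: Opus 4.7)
The plan is to reduce both identities to the single assertion that the analytic cokernel $(\uZp/\uZ)_{\an}$ is already a $v$-sheaf; once that is known, universality of sheafification identifies it with $(\uZp/\uZ)_v$, and both the $\lambda^*$- and $\lambda_*$-identities follow formally, because $\uZ$ and $\uZp$ are themselves represented by (locally) profinite perfectoid spaces and hence are $v$-sheaves with the same analytic and $v$-incarnation. More concretely, applying the exact functor $\lambda^*$ to the analytic short exact sequence $0\to\uZ\to\uZp\to(\uZp/\uZ)_{\an}\to 0$ gives the first identity by uniqueness of cokernels. For the second identity, the unit map
\[(\uZp/\uZ)_{\an}\to \lambda_*\lambda^*(\uZp/\uZ)_{\an}=\lambda_*(\uZp/\uZ)_v\]
is an isomorphism precisely when $(\uZp/\uZ)_{\an}$ satisfies $v$-descent.

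The main obstacle is thus verifying this $v$-descent. My plan is to compare, for any $X\in\Perf_K$, the long exact sequences of analytic and $v$-cohomology attached to $0\to\uZ\to\uZp\to Q\to 0$. Since $\uZ(X)$ and $\uZp(X)$ agree in the two topologies, the five lemma reduces $Q_{\an}(X)\isomarrow Q_v(X)$ to injectivity of the comparison maps $H^1_{\an}(X,\uZ)\to H^1_v(X,\uZ)$ and $H^1_{\an}(X,\uZp)\to H^1_v(X,\uZp)$. Both reflect the assertion that a torsor under a locally profinite constant sheaf which is $v$-locally trivial is already analytically locally trivial, which should follow because the underlying topological space of $X$ is insensitive to the choice of topology, and the repleteness of $X_v$ (Lemma~\ref{l:qproet-replete}) allows the case of $\uZp=\varprojlim\uZ/p^n$ to be reduced to that of the finite constant sheaves $\uZ/p^n$, where the comparison with the analytic site is standard.
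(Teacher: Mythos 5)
Your treatment of $\lambda^*$ is correct and coincides with the paper's: exactness of $\lambda^*$ plus the fact that $\uZ$ and $\uZp$ are representable by perfectoid spaces, hence $\lambda^*$-fixed.

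The $\lambda_*$ part has a genuine gap. You claim the five lemma reduces $Q_{\an}(X)\isomarrow Q_v(X)$ to \emph{injectivity} of the two comparison maps $H^1_{\an}(X,\uZ)\to H^1_v(X,\uZ)$ and $H^1_{\an}(X,\uZp)\to H^1_v(X,\uZp)$. It does not: comparing the long exact sequences
\[0\to\uZ(X)\to\uZp(X)\to Q_\tau(X)\to H^1_\tau(X,\uZ)\to H^1_\tau(X,\uZp)\]
for $\tau\in\{\an,v\}$, surjectivity of $Q_{\an}(X)\to Q_v(X)$ requires the map $H^1_{\an}(X,\uZ)\to H^1_v(X,\uZ)$ to be \emph{surjective}. (The two injectivities you ask for are in fact automatic from the Leray spectral sequence, since $\lambda_*\uZ=\uZ$ and $\lambda_*\uZp=\uZp$; so they carry no content.) The missing surjectivity is precisely the hard step — equivalently $R^1\lambda_*\uZ=0$, which is how the paper actually concludes $\lambda_*(\uZp/\uZ)=\uZp/\uZ$ from the pushforward long exact sequence.

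This surjectivity is a nontrivial statement and your justification for it is too vague to count as a proof. You appeal to "the underlying topological space being insensitive to the choice of topology," but while $H^1_{\an}(X,\uZ)$ does reduce to topological cohomology of $|X|$, the group $H^1_v(X,\uZ)$ a priori sees all perfectoid $v$-covers of $X$, and there is no formal reason a $v$-locally trivial $\uZ$-torsor should be analytically locally trivial. The paper proves the required comparison as Proposition~\ref{p:Z-torsors-over-perfectoids}, using the Tate curve sequence $0\to q^\Z\to\G_m\to\G_m/q^\Z\to 0$ together with the Kedlaya--Liu isomorphism $H^1_{\an}(X,\G_m)=H^1_v(X,\G_m)$; it is not a soft topological fact. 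Your proposed reduction of the $\uZp$-case to $\uZ/p^n$ via repleteness is also misdirected: the comparison $H^1_{\an}(X,\uZ/p^n)\to H^1_v(X,\uZ/p^n)$ is in general not an isomorphism (think of Artin--Schreier or Kummer covers of a perfectoid disc), and in any case only injectivity of the $\uZp$-map enters, which is free.
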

\begin{proof}
	The sheaves $\uZ$ and $\uZp$ are both represented by perfectoid spaces, and thus $\lambda^{\ast}\uZ=\uZ$ and $\lambda^{\ast}\uZp=\uZp$. The first part therefore follows from exactness of $\lambda^{\ast}$.
	
	The second part follows from  $\lambda_{\ast}\uZ=\uZ$, $\lambda_{\ast}\uZp=\uZp$ and $R^1\lambda_{\ast}\uZ=0$, which holds by Proposition~\ref{p:Z-torsors-over-perfectoids} below. The proof is not difficult but it fits more naturally into \S\ref{s:Z-torsors}
\end{proof}
\begin{Lemma}\label{l:H^0(Z_p/Z,G)}
	Let $G$ be an adic space  over $K$. Then any morphism $\uZp/\underline{\Z}\to G$ is constant.
\end{Lemma}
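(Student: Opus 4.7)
The plan is to pull back along the natural surjection $\pi\colon \uZp\twoheadrightarrow \uZp/\uZ$ and show that $\tilde f := f\circ \pi\colon \uZp\to G$ is constant; since $\pi$ is an epimorphism of $v$-sheaves, this will force $f$ to be constant as well. By Lemma~\ref{l:underline-is-left-adjoint}, the morphism $\tilde f$ corresponds to a continuous map $\alpha\colon \Z_p\to G(K)$, so the task reduces to showing that $\alpha$ is constant.

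To use that $\tilde f$ factors through $\uZp/\uZ$, I would evaluate on the test object $\underline{\Z}$ and compare two sections in $\uZp(\underline{\Z})$: the zero section $0$ and the canonical inclusion $\iota\colon \underline{\Z}\hookrightarrow \uZp$. Since $\iota$ is the image of the identity section of $\uZ$ under $\uZ\hookrightarrow \uZp$, its class in the presheaf cokernel, and hence in $(\uZp/\uZ)(\underline{\Z})$, coincides with that of $0$. Applying $\tilde f$ and using the identifications $\uZp(\underline{\Z})=\Map_\cts(\Z,\Z_p)$ and $G(\underline{\Z})=\Map_\cts(\Z,G(K))$ from Lemma~\ref{l:underline-is-left-adjoint}, the equality of the two images reads $\alpha(n)=\alpha(0)$ for every $n\in\Z$. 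Since $\alpha$ is continuous and $\Z$ is dense in $\Z_p$, this forces $\alpha$ to be the constant map at $\alpha(0)$, completing the argument.

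The only mild subtlety I anticipate is that $\uZp/\uZ$ is a sheaf cokernel rather than a presheaf cokernel, so one has to justify that comparing sections in the presheaf quotient suffices; this is automatic, since two sections agreeing in the presheaf cokernel a fortiori agree after sheafification. Everything beyond this reduction is a standard continuity-and-density argument on $\Z_p$, with no real obstacle.
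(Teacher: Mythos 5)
Your proof is correct and takes essentially the same route as the paper's: reduce to $\tilde f\colon \uZp\to G$ corresponding to a continuous $\alpha\colon\Z_p\to G(K)$ via Lemma~\ref{l:underline-is-left-adjoint}, observe that the restriction to $\uZ$ is forced to be constant, and conclude by density of $\Z$ in $\Z_p$. The only stylistic difference is that you phrase the restriction step as a comparison of the sections $0$ and $\iota$ in $\uZp(\uZ)$, whereas the paper simply cites injectivity of $\Mapc(\Z_p,G(K))\to\Map(\Z,G(K))$.
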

\begin{proof}
	Any such function is determined by its pullback to $\uZp$, and is constant upon pullback to $\underline \Z$. But using Lemma~\ref{l:underline-is-left-adjoint},  we see that the pullback map
	\[H^0(\uZp,G)=\Map_{\cts}(\Z_p,G(K))\to H^0(\underline{\Z},G)=\Map(\Z,G(K)),\]
	is injective. Thus $f$ is already constant.
\end{proof}

\begin{Lemma}\label{l:isoms-of-Zp/Z}
	\begin{enumerate}
		\item\label{enum:uzp/uz-p-div} The sheaf $\uZp/\uZ$ is uniquely $p$-divisible (in the sense of Definition~\ref{d:p-div-sheaf}).
		\item\label{enum:uzp/uz-p=uQp/Ztf} The natural map $\uZp/\uZ\to \underline{\Q_p}/\underline{\Z[\tfrac{1}{p}]}$ is an isomorphism.
		\item\label{enum:points-of-uZp/uZ} We have $(\uZp/\uZ)(K)=\Z_p/\Z$.
		\item\label{enum:ext 1(uzp,z)} 	We have  $\Ext^1_v(\uZp,\underline{\Z})=\Q_p/\Z_p$.
		\item \label{enum:uZp-uZp/uZ-lifts}
		Every homomorphism $\varphi:\uZp\to \uZp/\uZ$ lifts uniquely to a $\uZp$-linear map $\tilde{\varphi}:\uZp\to \uQp$.
		\item\label{enum:End(uZp/uZ)} We have $\End(\uZp/\uZ)=\Z[\tfrac{1}{p}]$.
	\end{enumerate}
\end{Lemma}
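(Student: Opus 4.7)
The plan is to prove the six claims in the order (1)--(3), then (4), and finally (5)--(6). Parts (1)--(3) follow directly from the defining sequence \eqref{eq:ses-Z-Zp-Zp/Z} together with earlier results. For (1) I apply the snake lemma to multiplication by $p$ on \eqref{eq:ses-Z-Zp-Zp/Z}: since $\uZ$ and $\uZp$ are $p$-torsion free and the induced map $\uZ/p\uZ\to\uZp/p\uZp$ is the canonical identification $\underline{\F_p}=\underline{\F_p}$, both $(\uZp/\uZ)[p]$ and $(\uZp/\uZ)/p$ vanish. For (2) I use the nine-lemma on the commutative square of inclusions $\uZ\hookrightarrow\uZp$ and $\underline{\Z[\tfrac{1}{p}]}\hookrightarrow\uQp$ inside $\uQp$; the sheafified identities $\uZp\cap\underline{\Z[\tfrac{1}{p}]}=\uZ$ and $\uZp+\underline{\Z[\tfrac{1}{p}]}=\uQp$ (reflecting $\Q_p=\Z_p+\Z[\tfrac{1}{p}]$) make the induced map of quotients an isomorphism. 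For (3), Lemma \ref{l:uZp/uZ on an vs v} gives $\lambda_\ast(\uZp/\uZ)=\uZp/\uZ$; since $\Spa K$ is a point with no non-trivial analytic covers, $(\uZp/\uZ)(K)$ equals the na\"ive quotient $\Z_p/\Z$.

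For (4) I exploit the short exact sequence $0\to\uZp\xrightarrow{p^n}\uZp\to\uZ/p^n\to 0$ (from $\uZp/p^n\uZp=\underline{\Z/p^n}$) by applying $R\Hom(-,\uZ)$, which yields a fiber sequence $R\Hom(\uZ/p^n,\uZ)\to R\Hom(\uZp,\uZ)\xrightarrow{p^n}R\Hom(\uZp,\uZ)$. Since $R\Hom(\uZ,\uZ)=\uZ$ is concentrated in degree zero (the higher cohomology of $\uZ$ on $\Spa K$ vanishes, cf.\ Proposition \ref{p:Z-torsors-over-perfectoids}), one gets $R\Hom(\uZ/p^n,\uZ)=\Z/p^n[-1]$. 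Combined with $\Hom(\uZp,\uZ)=0$ from Proposition \ref{p:morphisms-from-Z,Z_p,Z_p/Z}, the long exact sequence in cohomology gives $\Ext^1(\uZp,\uZ)[p^n]=\Z/p^n$ and $\Ext^1(\uZp,\uZ)/p^n=0$ for every $n$. Moreover each prime $\ell\neq p$ acts invertibly on $\uZp$, and hence on $\Ext^1(\uZp,\uZ)$, giving this group the structure of a $\Z_{(p)}$-module; together with the bounded $p^n$-torsion this identifies $\Ext^1(\uZp,\uZ)=\Q_p/\Z_p$.

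Parts (5) and (6) then follow from (4). For (5), by (2) it suffices to lift a given $\varphi\colon\uZp\to\uQp/\underline{\Z[\tfrac{1}{p}]}$ along $0\to\underline{\Z[\tfrac{1}{p}]}\to\uQp\to\uZp/\uZ\to 0$; the obstruction lies in $\Ext^1(\uZp,\underline{\Z[\tfrac{1}{p}]})$. By quasi-compactness of $\uZp$ this equals $\varinjlim_n\Ext^1(\uZp,p^{-n}\uZ)=\varinjlim_{\cdot p}\Q_p/\Z_p$, which vanishes because every element of $\Q_p/\Z_p$ is eventually killed along the $\cdot p$ transitions. Uniqueness uses $\Hom(\uZp,\underline{\Z[\tfrac{1}{p}]})=0$, since any continuous map from $\Z_p$ into the discrete group $\Z[\tfrac{1}{p}]$ is trivial. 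For (6), given $\varphi\in\End(\uZp/\uZ)$, precomposing with $\uZp\twoheadrightarrow\uZp/\uZ$ and applying (5) yields a unique lift $\tilde\varphi\colon\uZp\to\uQp$, which by Proposition \ref{p:morphisms-from-Z,Z_p,Z_p/Z} is multiplication by some $\alpha\in\Q_p$; the descent condition $\alpha\cdot\uZ\subseteq\underline{\Z[\tfrac{1}{p}]}$ is equivalent to $\alpha\in\Z[\tfrac{1}{p}]$, and any such $\alpha$ does define an endomorphism.

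The main obstacle I anticipate is completing step (4), specifically ruling out a divisible torsion-free ($\Q$-vector space) summand in $\Ext^1(\uZp,\uZ)$: the cohomological calculation pins down the $p$-power torsion as $\Q_p/\Z_p$ and forbids additional $\ell$-torsion, but a priori a $\Q$-vector space summand is still compatible with $p$-divisibility and the $\Z_{(p)}$-structure. I expect this to require either a derived $p$-completeness input on the coefficient side or an appeal to the explicit Ext computations in the paper's appendix on $\Ext$-groups of $v$-sheaves.
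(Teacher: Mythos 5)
Your parts (1)--(3) and (6) are essentially correct. For (1), note that your snake-lemma argument tacitly uses the sheaf identity $\uZp/p^n=\underline{\Z/p^n}$; the paper secures this via Lemma~\ref{l:qproet-replete}, and you should do the same. For (2), your nine-lemma argument is a valid alternative to the paper's, which simply applies the exact functor $\varinjlim_{\cdot p}$ to \eqref{eq:ses-Z-Zp-Zp/Z} and invokes (1) to identify the right-hand term; the latter is a little shorter. For (3) the paper argues directly from $H^1_v(K,\uZ)=0$, but your route through Lemma~\ref{l:uZp/uZ on an vs v} also works.

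The genuine gap is in (4), and you correctly flag it. Your derived computation pins down $\Ext^1_v(\uZp,\uZ)$ as a divisible $\Z_p$-module whose $p$-power torsion is $\Q_p/\Z_p$, but that is compatible with an extra $\Q_p$-vector space summand. Neither of your proposed remedies would work cleanly: since you have shown $\Ext^1_v(\uZp,\uZ)/p^n=0$ for all $n$, this group is $p$-divisible and hence not derived $p$-complete unless it vanishes, so no derived-completeness input can rule out the summand. What actually closes the gap in the paper is Proposition~\ref{p:higher-cohomolgy-of-profinite=0}.\ref{enum:Ext(constant,profinite)}, proved by a Breen--Deligne computation (Theorem~\ref{t:BD}): the resolution of $\uZp$ has terms $\Z[\uZp^{n}]$ with $H^i(\uZp^n,\uZ)=0$ for $i>0$ and $H^0(\uZp^n,\uZ)=\Map_{\cts}(\Z_p^n,\Z)=\varinjlim_j\Map((\Z/p^j)^n,\Z)$, so the spectral sequence degenerates and exhibits $\Ext^1_v(\uZp,\uZ)$ as the filtered colimit $\varinjlim_j\Ext^1_{\Z}(\Z/p^j,\Z)=\varinjlim_j\Z/p^j=\Q_p/\Z_p$, in particular a torsion group. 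This is precisely the ``appendix Ext computation'' you anticipated needing, and there does not seem to be an elementary shortcut around it.

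Part (5) also has a soft spot: you invoke ``quasi-compactness of $\uZp$'' to commute $\Ext^1_v(\uZp,-)$ with the filtered colimit defining $\underline{\Z[\tfrac{1}{p}]}$. Since $\uZp$ enters here as an abelian $v$-sheaf rather than as a space, that commutation is not automatic and would itself require a Breen--Deligne-type argument. The paper sidesteps this entirely: applying $\Hom(\uZp,-)$ to \eqref{eq:ses-Z-Zp-Zp/Z} and using (4) places the obstruction in the torsion group $\Q_p/\Z_p$, so some $p^n\varphi$ lifts to $\uZp\to\uZp$, and one then divides by $p^n$ using (2). That route is shorter and avoids the colimit question.
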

\begin{proof}
	\begin{enumerate}
		\item By Lemma~\ref{l:qproet-replete}, we have $\uZp/p=\underline{\Z/p}$, so this follows from the diagram
		\begin{center}
			\begin{tikzcd}
			0 \arrow[r] & \underline{\Z} \arrow[d,"\cdot p"] \arrow[r] & \uZp \arrow[d,"\cdot p"] \arrow[r] & \underline{\Z}_p/\underline{\Z} \arrow[d,"\cdot p"] \arrow[r] & 0 \\
			0 \arrow[r] & \underline{\Z} \arrow[r] \arrow[d] & \uZp \arrow[r] \arrow[d] & \uZp/\underline{\Z} \arrow[r]           & 0 \\
			& \underline{\Z/p} \arrow[r,equal]       & \underline{\Z/p}.                   &                                                     &  
			\end{tikzcd}
		\end{center}
		\item This follows from 1 by applying $\varinjlim_{\cdot p}$ to the short exact sequence \eqref{eq:ses-Z-Zp-Zp/Z}.
		\item This follows from $H^1_v(K,\Z)=0$ which holds e.g.\ by Proposition~\ref{p:higher-cohomolgy-of-profinite=0}.\ref{enum:H^i(profinite,profinite)}.
		\item By Proposition~\ref{p:higher-cohomolgy-of-profinite=0}.\ref{enum:Ext(constant,profinite)}, we have
		\[\Ext^1_v(\uZp,\uZ)=\varinjlim_n \Ext^1_{\Z}(\Z/p^n,\Z)=\varinjlim_n\Z/p^n=\Q_p/\Z_p.\]
		\item This follows from \ref{enum:ext 1(uzp,z)} and the long exact sequence
		\[ \Hom(\uZp,\uZ)=0\to \Hom(\uZp,\uZp)=\Z_p\to \Hom(\uZp,\uZp/\uZ)\to \Ext^1_v(\uZp,\uZ)=\Q_p/\Z_p, 
		\]		
		which shows that after multiplying $\varphi$ by $p^n$ for $n\gg 0$, it lifts to an endomorphism $p^n\tilde{\varphi}:\uZp\to \uZp$. Using 2, this shows that after dividing by $p^n$, $\tilde{\varphi}$ defines a lift to $\uQp$.
		\item Applying $\Hom(-,\uZp/\uZ)$ to \eqref{eq:ses-Z-Zp-Zp/Z}, we get by \ref{enum:points-of-uZp/uZ}, \ref{enum:uZp-uZp/uZ-lifts} and Proposition~\ref{p:morphisms-from-Z,Z_p,Z_p/Z} an exact sequence 
		\[ 0\to \Hom(\uZp/\uZ,\uZp/\uZ)\to \Hom(\uZp,\uZp/\uZ)=\Q_p\to \Hom(\uZ,\uZp/\uZ)=\Z_p/\Z.\]
		The kernel is precisely $\Z[\tfrac{1}{p}]$ by the argument in \ref{enum:uzp/uz-p=uQp/Ztf}.
		\qedhere
		.	\end{enumerate}
\end{proof}

\section{Universal covers of abelian varieties as $v$-sheaves}
In this section, we collect some results on universal covers of abeloid varieties that we need. In particular, we review some results from \cite{heuer-diamantine-Picard,heuer-Picard-good-reduction,heuer-v_lb_rigid} about line bundles on universal covers.
\begin{Definition}
	Let $G$ be any abelian $v$-sheaf. We write
	\[ \widetilde{G}:=\textstyle\varprojlim_{[p]} G,\quad 
	\wtt G:=\textstyle\varprojlim_{[N]}G\]
	where $N$ ranges over $N\in\N$. We set $T_pG:=\varprojlim_{[p]}G[p^n]$. There is then a left-exact sequence
	\[ 0\to T_pG\to \widetilde{G}\to G\]
	on $\Perf_{K,v}$.
	By Lemma~\ref{l:qproet-replete}, this is right-exact if and only if $G$ is $p$-divisible.
\end{Definition}
\begin{Remark}
In our applications, $G$ is a smooth rigid group over $K$ for which $\wt G$ is represented  by a perfectoid space. In this case, if the map $[N]:G\to G$ is finite \'etale for all $N$ coprime to $p$, then $\wtt G$ is also represented by a perfectoid space, and $\wtt G\to \wt G$ is pro-\'etale.
\end{Remark}
\begin{Example}\label{ex:G_m}
	Let $G=\G_m$. Then there is a perfectoid tilde-limit
	\[ \wt \G_m\sim \textstyle\varprojlim_{[p]}\G_m.\]
	If $\Char K=p$, this is the perfection of $\G_m$, which represents the same $v$-sheaf as $\G_m$. If $\Char K=0$, it is the untilt of the former, and we have an exact sequence on $\Perf_{K,v}$
	\[0\to \Z_p(1):=T_p\G_m\to \widetilde{\G}_m\to \G_m\to 0.\]
\end{Example}
\begin{Example}
	We have $\widetilde{\underline{\Z}}=0=\wt{\uZ}_p$, and $\wt{\underline{\Q_p/\Z_p}}=\underline{\Q_p}$.
\end{Example}
\begin{Remark}
	The sheaf $\wt G$ is what is called the  ``universal cover'' of $G$ in \cite[Definition 13.1]{ScholzePSandApplications}, where it originates from the context of $p$-divisible group. If $G$ is an abeloid, then $\wtt G$ is the universal pro-finite-\'etale cover in the sense of \cite[\S4.3, Example~4.7.2]{heuer-v_lb_rigid}. We will see in \S\ref{s:top-up} in what topological sense $\wt G$ and $\wtt G$ are ``universal covers'' for an abeloid variety $G$.
\end{Remark}
\subsection{Recollections on universal covers of abeloid varieties}
Let $A$ be an abeloid variety over $K$, i.e.\ a connected smooth proper rigid group over $K$. Then by the theory of Raynaud extensions, one can associate to $A$ a short exact sequence
\begin{equation}\label{eq:Raynaud-unif-1}
0\to T\to E\to B\to 0
\end{equation}
of analytic adic spaces over $K$, where $B$ is an abelian variety with good reduction and $T$ is a rigid torus (necessarily split) of some rank $r$. This sequence is locally split in the analytic topology on $B$, and thus exact in the analytic topology. Moreover, there is a lattice $\underline{\Z}^r\cong M\subseteq E$ such that there is a short exact sequence in the analytic topology
\begin{equation}\label{eq:Raynaud-unif-2}
0\to M\to E\to A\to 0.
\end{equation}
The data of these two sequences, in the following abbreviated to ``$A=E/M$'', is commonly referred to as the Raynaud uniformisation of $A$. 
Translated into the language of $v$-sheaves, by Theorem~\ref{l:ff-embedding-into-v-sheaves}, we  regard \eqref{eq:Raynaud-unif-1} and \eqref{eq:Raynaud-unif-2} as exact sequences of abelian sheaves on $\Perf_{K,\an}$.

We now review the main results of \cite{perfectoid-covers-Arizona} on universal covers of abeloid varieties and their Raynaud extensions, translated into the category of sheaves on $\Perf_{K,v}$.

 \begin{Theorem}[{\cite[Proposition~3.8, Theorem~4.6]{perfectoid-covers-Arizona}}]\label{t:perfectoid-covers-of-abelian-varieties} Let $K$ be an algebraically closed non-archimedean field.
 	Let $A$ be an abeloid over $K$ with Raynaud uniformisation $A=E/M$.
 	\begin{enumerate}
 	\item The universal covers $\widetilde{T}$, $\widetilde{E}$, $\widetilde{B}$ of $T$, $E$, $B$ are all represented by perfectoid groups. They fit into a short exact sequence of abelian sheaves on $\Perf_{K,\an}$
 	\[0\to \widetilde{T}\to \widetilde{E}\to \wt B\to 0. \]
 	\item The universal cover $\widetilde{A}=\varprojlim_{[p]}A$ is also represented by a perfectoid group. It fits into a short exact sequence on $\Perf_{K,v}$
 	\[0\to T_pA\to \wt A\to A\to 0. \]
 	\item Let $M_p:=M\otimes_{\uZ}\uZp=\varprojlim M/p^nM\cong \uZp^r$. Then any choice of lift of the inclusion $g:M\to E$ to $\tilde{g}:M\to \wt E$ induces a short exact sequence on $\Perf_{K,v}$
 	\[0\to \wt E \to \widetilde{A} \to M_p/M\to 0.\]
\end{enumerate}
\end{Theorem}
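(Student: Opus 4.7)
The plan is to assemble $\wt A$ from the perfectoid universal covers of the Raynaud pieces, and then compute a derived inverse limit of the Raynaud sequence to relate $\wt E$ to $\wt A$.

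For part (1), I treat the three spaces separately. The torus $T$ is split, so $\wt T$ is a finite product of copies of $\wt \G_m$, which is perfectoid by Example~\ref{ex:G_m}. For $\wt B$, the map $[p]\colon B\to B$ is finite \'etale because $B$ has good reduction and $K$ is algebraically closed, so $\wt B$ is a tilde-limit of smooth proper adic spaces along finite \'etale maps and is therefore perfectoid by the standard tilde-limit criterion. For $\wt E$, I use that the Raynaud extension $0\to T\to E\to B\to 0$ is locally split in the analytic topology on $B$: analytic-locally we have $E\cong T\times U$ for an open $U\subseteq B$, so $\wt E\cong \wt T\times \wt U$ analytic-locally. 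This simultaneously yields representability of $\wt E$ by a perfectoid space and exactness of $0\to\wt T\to \wt E\to \wt B\to 0$ in the analytic topology.

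For part (2), I apply $\mathrm R\varprojlim_{[p]}$ to the pro-system $(A,[p])$ in $\Perf_{K,v}$. Since $[p]\colon A\to A$ is a $v$-cover with finite \'etale kernel $A[p]$, the transitions are surjective as $v$-sheaves, so by repleteness (Lemma~\ref{l:qproet-replete}) we get $\mathrm R^i\varprojlim_{[p]}A=0$ for $i\geq 1$ and $\varprojlim_{[p]}A=\wt A$. Combined with the finite-level sequences $0\to A[p^n]\to A\xrightarrow{[p^n]}A\to 0$ passed to the limit, this yields $0\to T_pA\to \wt A\to A\to 0$. Representability of $\wt A$ by a perfectoid space follows either from part (3), presenting $\wt A$ as a quotient of the perfectoid $\wt E$ by a discrete profinite group, or directly from the tilde-limit criterion applied to the pro-finite-\'etale $[p]$-tower.

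For part (3), I apply $\mathrm R\varprojlim_{[p]}$ to $0\to M\to E\to A\to 0$. By part (2) the derived limits of $E$ and $A$ are concentrated in degree zero, so the long exact sequence collapses to
\[0\to \varprojlim\nolimits_{[p]}M\to \wt E\to \wt A\to \mathrm R^1\varprojlim\nolimits_{[p]}M\to 0.\]
The first term vanishes because $\uZ$ has no non-trivial $p$-infinitely divisible sections. For the $\mathrm R^1$-term I compare the pro-system $(p^n\uZ\subseteq\uZ)_n$ (equivalent to $(\uZ,[p])$ as a pro-object) with the constant system $\uZ$: from $0\to(p^n\uZ)\to(\uZ)_{\mathrm{const}}\to(\uZ/p^n\uZ)\to 0$ and repleteness, applying $\mathrm R\varprojlim$ yields a distinguished triangle $\mathrm R\varprojlim(p^n\uZ)\to\uZ\to\uZp$, whence $\mathrm R^1\varprojlim_{[p]}\uZ=\uZp/\uZ$, and tensoring with $M\cong\uZ^r$ gives $M_p/M$. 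The chosen lift $\tilde g\colon M\to \wt E$ enters in making the connecting map explicit: a point of $\wt A$ is represented by a sequence $(e_n)$ in $E$ with $pe_{n+1}-e_n\in M$ modulo reparametrisation by translates via $\tilde g$, and $\wt A\to M_p/M$ reads off the class of the obstruction cocycle $(pe_{n+1}-e_n)_n$. The main obstacle I expect is the sheafy computation of $\mathrm R^1\varprojlim_{[p]}\uZ$ on $\Perf_{K,v}$: the identification with $\uZp/\uZ$ must be performed for $v$-sheaves rather than on sections, and this is precisely where repleteness is essential; once it is secured, the rest is formal.
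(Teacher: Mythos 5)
Your $\mathrm R\!\varprojlim$ computation in part~(3) is a clean, self-contained alternative to the paper's argument, which instead imports a pushout diagram of short exact sequences from Theorem~6.4 of the cited reference (relating $\wt E$, $\wt A$ and an intermediate perfectoid space $A_\infty$) and simply takes cokernels. Your identification $\mathrm R^1\!\varprojlim_{[p]}\uZ=\uZp/\uZ$ via the exact sequence of pro-systems $0\to(p^n\uZ)_n\to(\uZ)_{\mathrm{const}}\to(\uZ/p^n)_n\to 0$ together with repleteness (Lemma~\ref{l:qproet-replete}) is correct, and the resulting long exact sequence does recover the stated sequence without needing the pushout.

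Parts~(1) and~(2), however, contain genuine gaps, and these are precisely the content the paper does \emph{not} reprove but cites from the reference (Proposition~3.8 and Theorem~4.6 there). Concretely: (i)~Whether $[p]\colon B\to B$ is finite \'etale has nothing to do with good reduction or $K$ being algebraically closed; it holds exactly when $\Char K=0$ and fails when $\Char K=p$, a case the theorem allows. (ii)~Even in characteristic~$0$ there is no ``standard tilde-limit criterion'' asserting that an inverse limit of finite \'etale covers of a rigid space is perfectoid --- the prime-to-$p$ tower $\varprojlim_{[N],(N,p)=1}B$ is a counterexample. Perfectoidness of $\wt B$ really uses that $[p]$ on a suitable formal model is congruent to Frobenius mod~$p$, and propagating this to $\wt E$ and $\wt A$ is the substance of the cited work. (iii)~The analytic-local trivialization $E|_U\cong T\times U$ over an open $U\subseteq B$ is a trivialization of a $T$-torsor, not a splitting of the group extension, so it is not $[p]$-equivariant; one cannot immediately conclude $\wt E\cong\wt T\times\wt U$ over the preimage of $U$ without arranging the trivializations compatibly along the tower. (iv)~Your fallback for the representability of $\wt A$ --- ``a quotient of the perfectoid $\wt E$ by a discrete profinite group'' --- misstates part~(3), which exhibits $\wt A$ as an \emph{extension} of $M_p/M$ by $\wt E$; the relevant quotient presentation is $\wt A=(M_p\times\wt E)/M$, and establishing that this is perfectoid is again the work of the reference, not a formal consequence.
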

\begin{proof}
	We only need to explain the last part, as this is not explicitly stated in the cited article. By \cite[Theorem 6.4]{perfectoid-covers-Arizona}, there is a perfectoid space $A_\infty$ which fits into a pushout diagram of short exact sequences on $\Perf_{K,v}$
 	\begin{equation}\label{eq:wtE-wtA-vs-Ainfty}
 		\begin{tikzcd}
 		0 \arrow[r] & M \arrow[r,"\wt g"] \arrow[d, hook] & \wt E \arrow[d, hook] \arrow[r] & A_\infty \arrow[d,equal] \arrow[r] & 0 \\
 		0 \arrow[r] & M_p \arrow[r] & \widetilde{A} \arrow[r] & A_\infty\arrow[r] & 0.
 		\end{tikzcd}
 \end{equation}
 	The desired statement follows by forming cokernels.
 \end{proof}
\begin{Corollary}\label{c:interpret-of-boundary-map-for-wt-E}
	The homological boundary maps of $0\to M\to M_p\to M_p/M\to 0$,
		\begin{center}
		\begin{tikzcd}
		\Hom(M,\wt E) \arrow[r] \arrow[d] &\Ext^1_v(M_p/M,\wt E) \arrow[d, equal]\\
		\Hom(M,E) \arrow[r] & \Ext^1_v(M_p/M,E),
		\end{tikzcd}
	\end{center}
	send $g$ (respectively, any lift $\tilde{g}$ of $g$) to the extension class of $[\wt A]\in \Ext^1(M_p/M,\wt E)$.
\end{Corollary}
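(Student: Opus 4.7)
The plan is to apply the standard description of the connecting map in the $\mathrm{Hom}$--$\mathrm{Ext}$ long exact sequence: for a short exact sequence $0 \to A \to B \to C \to 0$ of abelian sheaves and any coefficient object $H$, the boundary $\partial : \Hom(A, H) \to \Ext^1(C, H)$ sends $\varphi : A \to H$ to the class of the pushout extension $0 \to H \to H \sqcup_A B \to C \to 0$.

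For the top row of the diagram, we apply this with $A = M$, $B = M_p$, $C = M_p/M$, and $H = \wt E$. This identifies $\partial(\tilde g)$ with the class of the pushout extension $\wt E \sqcup_M M_p$ of $M_p/M$ by $\wt E$. But Theorem~\ref{t:perfectoid-covers-of-abelian-varieties}(3) was proved precisely by invoking the pushout diagram \eqref{eq:wtE-wtA-vs-Ainfty}, whose left square is a pushout exhibiting $\wt A = \wt E \sqcup_M M_p$, and whose row-wise cokernels produce the short exact sequence $0 \to \wt E \to \wt A \to M_p/M \to 0$. Comparing these two identifications yields $\partial(\tilde g) = [\wt A]$ in $\Ext^1_v(M_p/M, \wt E)$.

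The bottom row then follows by naturality of the boundary map in the second variable, applied to the projection $f : \wt E \twoheadrightarrow E$ (well-defined on $\Perf_{K,v}$ since $E$ is $p$-divisible as a $v$-sheaf). Because $\tilde g$ is a lift of $g$, we have $f \circ \tilde g = g$, so
\[ \partial(g) \;=\; \partial(f \circ \tilde g) \;=\; f_* \partial(\tilde g) \;=\; f_*\bigl([\wt A]\bigr). \]
To interpret $f_*$ as the equality of $\Ext^1$-groups asserted by the right vertical arrow in the diagram, we appeal to the long exact sequence obtained by applying $\RHom(M_p/M, -)$ to the short exact sequence $0 \to T_p E \to \wt E \to E \to 0$. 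The claim reduces to the vanishing of $\Ext^i_v(M_p/M, T_p E)$ for $i = 0, 1$, which reflects the tension between $M_p/M$ being uniquely $p$-divisible (Lemma~\ref{l:isoms-of-Zp/Z}(1)) and $T_p E$ being profinite and $p$-adically complete; one expects this to be an instance of the Ext-vanishing results collected in the appendix.

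The only non-formal step is the vanishing $\Ext^i_v(M_p/M, T_p E) = 0$ underlying the identification $f_* : \Ext^1_v(M_p/M, \wt E) \isomarrow \Ext^1_v(M_p/M, E)$; everything else is pure homological algebra repackaging the pushout description already encoded in Theorem~\ref{t:perfectoid-covers-of-abelian-varieties}.
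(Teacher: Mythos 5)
Your proposal follows essentially the same path as the paper's (very terse) proof: identify the boundary map with a pushout via Lemma~\ref{l:interpretation-of-Ext1-as-extensions}, recognise the pushout square \eqref{eq:wtE-wtA-vs-Ainfty} as realising this, and then reduce the right-hand equality to an $\Ext$-vanishing against $T_pE$. Two small corrections: the $\Ext$-groups whose vanishing you actually need for the \emph{isomorphism} $\Ext^1_v(M_p/M,\wt E)\isomarrow\Ext^1_v(M_p/M,E)$ are $\Ext^i_v(M_p/M,T_pE)$ for $i=1,2$ (not $i=0,1$ as you wrote; $i=1$ kills the incoming term and $i=2$ kills the outgoing one). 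And the ``expected'' vanishing is not merely an expectation --- it is exactly Lemma~\ref{l:ext-from-p-divisible-to-complete} in the appendix, applied with $G=M_p/M$ (uniquely $p$-divisible by Lemma~\ref{l:isoms-of-Zp/Z}.1) and $H=T_pE$ (a finite free $\uZp$-module, hence derived $p$-complete), which gives $\Ext^i_v(M_p/M,T_pE)=0$ for all $i\geq0$. With those two points made precise, your argument is the paper's.
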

\begin{proof}
	Immediate from diagram~\eqref{eq:wtE-wtA-vs-Ainfty} and Lemma~\ref{l:interpretation-of-Ext1-as-extensions}. The isomorphism on the right comes from the long exact sequence of $T_pE\to \wt E\to E$ using Lemma~\ref{l:isoms-of-Zp/Z}.1 and Lemma~\ref{l:ext-from-p-divisible-to-complete}.
\end{proof}

\subsection{The topological universal property of the universal cover}\label{s:top-up}

We now recall  the topological universal property of $\wt A$:
\begin{Proposition}[{\cite[Proposition~4.2]{heuer-Picard-good-reduction}}]\label{p:H^1_v(A,Zp)=0}
	Let $A$ be an abeloid variety over $K$. Then 
	\[H^i_v(\wt A,\uZp)=H^i_{\qproet}(\wt A,\uZp)=\begin{cases}\Z_p&\quad \text{ for }i=0\\0& \quad \text{ for }i>0,\end{cases}\]
	\[H^i_{v}(\wt A,\O^+)\aeq H^i_{\qproet}(\wt A,\O^+)\aeq H^i_{\an}(\wt A,\O^+)\aeq \begin{cases}\O_K&\quad \text{ for }i=0\\0& \quad \text{ for }i>0.\end{cases}\]
	The same applies to $\wtt A$ with $\uZp$ replaced by $\underline{\whZ}$.
\end{Proposition}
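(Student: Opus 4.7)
The plan is to reduce the entire computation to the tautological cofiltered limit $\wt A = \varprojlim_{[p]} A$ and then exploit that pullback along $[p]\colon A\to A$ acts as multiplication by $p^i$ on $H^i$ of an abeloid.

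First I would establish the colimit formalism: since $\wt A$ is a qcqs diamond presented as a cofiltered limit of qcqs smooth proper diamonds (with transition $[p]$), for any locally constant torsion sheaf $F$ on $A_{v}$ one has
\[ H^i_v(\wt A,F) = \varinjlim_n H^i_v(A,F), \]
the transition being $[p]^*$. This uses repleteness of the $v$-site (Lemma~\ref{l:qproet-replete}) together with qcqsness, in the spirit of the limit formalism for diamonds in \cite{etale-cohomology-of-diamonds}.

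For the $\uZp$-statement I would take $F=\underline{\Z/p^n}$, where $H^i_v(A,\Z/p^n)=H^i_{\et}(A,\Z/p^n)$ and the abeloid structure gives the exterior-algebra description
\[ H^i_{\et}(A,\Z/p^n)\cong \textstyle\bigwedge^i\Hom(T_pA,\Z/p^n). \]
Since $[p]^*$ acts on $H^1$ as multiplication by $p$, it acts on $H^i$ as $p^i$, so after $n$ iterations the transition is zero mod $p^n$ for every $i\geq 1$. This forces $H^i_v(\wt A,\Z/p^n)=0$ for $i\geq 1$ and $=\Z/p^n$ for $i=0$. Taking $R\varprojlim_n$, noting the Mittag--Leffler condition is trivial (one side is constant, the others are zero), yields the claimed answer for $\uZp$-coefficients. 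The analogous argument for $\wtt A$ replaces $[p]$ by $[N]$ for all $N\in\N$, with the action on $H^i$ being multiplication by $N^i$; the limit over all $N$ produces $\underline{\whZ}$.

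For the $\O^+$-statement I would apply Scholze's almost primitive comparison theorem to reduce modulo $\varpi$, obtaining
\[ H^i_v(A,\O^+/\varpi) \aeq H^i_{\et}(A,\F_p)\otimes_{\F_p}\O_K/\varpi, \]
and then run exactly the same $[p]^*=p^i$ vanishing argument in the colimit, yielding almost vanishing for $i\geq 1$ and $\aeq \O_K/\varpi$ for $i=0$. Lifting back from mod $\varpi$ to $\O^+$ uses derived $\varpi$-completeness of $R\Gamma_v(\wt A,\O^+)$, which holds because $\wt A$ is qcqs perfectoid. The identification with analytic cohomology is a general fact about $\O^+$ on qcqs perfectoid spaces (analytic, quasi-pro-\'etale and $v$-cohomologies of $\O^+$ are almost isomorphic), applied to $\wt A$ itself.

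The main obstacle is the rigorous justification of (i) the commutation of $H^i_v$ with the cofiltered limit $\wt A = \varprojlim_{[p]} A$ on the $v$-site, and (ii) the passage from the mod-$\varpi$ vanishing to the $\O^+$-statement via derived completion; both are technical rather than conceptual, but they are where almost-mathematics and the repleteness of $\Perf_{K,v}$ do the real work.
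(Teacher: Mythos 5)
Your proposal captures what I believe to be the actual mechanism behind this computation: $\wt A = \varprojlim_{[p]} A$ gives a colimit $H^i_v(\wt A,\underline{\Z/p^n}) = \varinjlim_{[p]^\ast} H^i_v(A,\underline{\Z/p^n})$, and since $[p]^\ast$ multiplies $H^i$ by $p^i$ (via the exterior-algebra structure on $H^\ast_{\et}$ of an abeloid), the colimit kills everything in positive degree modulo $p^n$. The derived limit then produces the $\uZp$-statement, and the Primitive Comparison Theorem transports this to the $\O^+$-statement. This is exactly what the paper indicates the cited proof does (the remark right after the Proposition says it ``relies on the Primitive Comparison Theorem if $\Char K=0$''). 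So the route is the same.

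One real gap: you do not address $\Char K = p$ for the $\O^+$-cohomology. The Primitive Comparison Theorem is a statement about smooth proper rigid spaces over a field of characteristic $0$; in characteristic $p$ it is not available in the form you invoke. The paper handles this via the tilting result (Theorem~\ref{t:tilting-abeloids}): since $\wt A^\flat \cong \wt A'$ for an abeloid $A'$ over $K^\flat$, and both $\uZp$- and almost-$\O^+$-cohomology are insensitive to tilting on perfectoid spaces, one reduces the positive-characteristic case to the mixed-characteristic one. You should add this step. A second, smaller imprecision: your displayed formula
\[ H^i_v(A,\O^+/\varpi) \aeq H^i_{\et}(A,\F_p)\otimes_{\F_p}\O_K/\varpi \]
is not literally the statement of the Primitive Comparison Theorem, which is naturally phrased modulo $p^n$ (i.e.\ $H^i_{\et}(A,\Z/p^n)\otimes_{\Z/p^n}\O_K/p^n \aeq H^i_{\et}(A,\O^+/p^n)$); since the paper only assumes $p\in\varpi\O_K$ rather than $\varpi=p$, you should either work mod $p^n$ and take limits, or justify the passage to the quotient $\O_K/\varpi$. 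Finally, the exterior-algebra description $H^i_{\et}(A,\Z/p^n)\cong\bigwedge^i\Hom(T_pA,\Z/p^n)$ for an \emph{abeloid} (as opposed to an algebraic abelian variety), especially in characteristic $p$, deserves a citation; it follows from Raynaud uniformisation but is not a tautology.

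Subject to those additions, the argument is sound, and the qcqs/repleteness points you flag as the technical work to be done are indeed the right places to be careful.
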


\begin{Remark}\label{r:H^1(wt A,Z)}
	In contrast, we will see in Proposition~\ref{p:morphis-wtT-to-B} that $H^1_v(\wt A,\uZ)=\Hom(M,\Z)\tf$.
\end{Remark}
\begin{Corollary}\label{c:cor-to-top-univ-property}
	The cover $\wt A\to A$ has the following topological universal property:
		Let $X$ be another diamond with $H^1_{\qproet}(X,\uZp)=0$. Then any map $\varphi:X\to A$ lifts to a map
		\begin{center}
			\begin{tikzcd}
			& \wt A \arrow[d] \\
			X \arrow[r,"\varphi"] \arrow[ru, "\exists \,\tilde{\varphi}", dotted] & A.   
			\end{tikzcd}
		\end{center}
		Moreover, if $X$ is connected and if there is $x\in X(K)$ such that $\varphi(x)=0\in A(K)$, then there is a unique lift $\tilde{\varphi}$ for which $\tilde{\varphi}(x)=0$.
\end{Corollary}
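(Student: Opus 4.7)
The plan is to reinterpret the projection $\wt A\to A$ as a $T_pA$-torsor via Theorem~\ref{t:perfectoid-covers-of-abelian-varieties}.2, pull it back along $\varphi$ to $X$, and then exploit the cohomological hypothesis to trivialise the resulting torsor.

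First, I would apply $\Hom(X,-)$ to the short exact sequence
\[ 0\to T_pA\to \wt A\to A\to 0 \]
of $v$-sheaves on $\Perf_{K,v}$. A lift $\tilde\varphi$ of $\varphi\in\Hom(X,A)$ then exists precisely when the image of $\varphi$ under the connecting map to $H^1_v(X,T_pA)$ vanishes, equivalently when the pullback $T_pA$-torsor $\wt A\times_A X\to X$ is trivial. To compute this obstruction I would use that $K$ is algebraically closed and $A$ is smooth, so $A[p^n]$ is a disjoint union of copies of $\Spa(K)$ and thus coincides with the constant $v$-sheaf $\underline{A[p^n](K)}\cong \underline{(\Z/p^n\Z)^{2g}}$. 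Passing to the inverse limit (using repleteness, Lemma~\ref{l:qproet-replete}) one obtains $T_pA\cong \uZp^{2g}$ as $v$-sheaves, and therefore $H^1_v(X,T_pA)\cong H^1_v(X,\uZp)^{2g}$. The latter agrees with $H^1_{\qproet}(X,\uZp)^{2g}$, again by repleteness applied to the filtration $\uZp=\varprojlim \uZ/p^n$, and this vanishes by hypothesis. Hence the lift $\tilde\varphi$ exists.

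For the uniqueness statement, the difference $s=\tilde\varphi_1-\tilde\varphi_2$ of any two lifts defines a morphism $s:X\to T_pA$. Since $\Hom(X,-)$ preserves limits, $\Hom(X,T_pA)=\varprojlim_n\Hom(X,\underline{(\Z/p^n\Z)^{2g}})$, and each term identifies with locally constant maps $|X|\to (\Z/p^n\Z)^{2g}$. If $X$ is connected, every such map is constant, so $\Hom(X,T_pA)=\Z_p^{2g}$; the normalisation $\tilde\varphi_1(x)=\tilde\varphi_2(x)=0$ then forces $s=0$, giving the unique lift with $\tilde\varphi(x)=0$.

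The overall structure is straightforward, and the only technical point I anticipate is the cohomological comparison between the quasi-pro-\'etale hypothesis and the $v$-site used in classifying the torsor; as indicated, this should be routine from repleteness, in the same spirit as the proof of Proposition~\ref{p:H^1_v(A,Zp)=0}.
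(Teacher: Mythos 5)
Your proposal matches the paper's own argument essentially line for line: both extract the obstruction from the long exact sequence of $0\to T_pA\to \wt A\to A\to 0$, identify $T_pA\cong\uZp^{2g}$ to invoke the hypothesis $H^1_{\qproet}(X,\uZp)=0$, and then argue uniqueness by noting that two lifts differ by a map $X\to T_pA$, which is constant when $X$ is connected. The one point you flag yourself — the identification $H^1_v(X,\uZp)=H^1_{\qproet}(X,\uZp)$ — is not quite a consequence of repleteness alone (one also needs the $v$-vs-$\qproet$ comparison for finite constant coefficients, then passes to the limit), but this is a standard fact from \cite{etale-cohomology-of-diamonds} and the paper glosses over it just as briefly.
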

\begin{proof}[Proof of Corollary~\ref{c:cor-to-top-univ-property}]
	The first statement follows from the exact sequence
		\[ H^0(X,T_pA)\to H^0(X,\wt A)\to H^0(X,A)\to H^1_{\qproet}(X,T_pA)=0.\]
		
		To see the uniqueness assertion,  we note that exactness of the sequence means that any two lifts $\tilde{\varphi}_1,\tilde{\varphi}_2$ agree up to a morphism $X\to T_pA$. 
		If $X$ is connected, this means that
		$H^0(X,T_pA)=T_pA$ acts simply transitively  on the points of $\wt A(K)$ over $0\in A(K)$.
\end{proof}
\begin{Corollary}\label{c:H^1(wt A,wh A')}
	We have $H^i(\wt A,\wh\G_m)=0$ for $i> 0$. If $A'$ is another abeloid, $H^i(\wt A,\wh A')=0$.
\end{Corollary}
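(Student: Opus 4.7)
I would tackle the two claims in order, reducing the second to the first via Proposition~\ref{p:analytic-Weil} and the Raynaud uniformisation.

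\textbf{Step 1: Vanishing for $\wh\G_m$.} The strategy is to exploit the topological universal property of $\wt A$ from Proposition~\ref{p:H^1_v(A,Zp)=0}, which provides both the honest vanishing $H^i_v(\wt A,\uZp)=0$ and the almost vanishing $H^i_v(\wt A,\O^+)\aeq 0$ for $i>0$. The plan is to connect $\wh\G_m$ to $\O^+$ via the $p$-adic logarithm in characteristic $0$ (or Artin--Schreier in characteristic $p$). Concretely, one picks $n$ large enough for the logarithm to converge on the adic subgroup $U_n\subseteq \wh\G_m$ corresponding to $1+\varpi^n\O^+$, identifying $U_n$ with a sub-adic group of $\O^+$ as $v$-sheaves, while the finite quotient $\wh\G_m/U_n$ is controlled by the discrete $p$-power torsion $\mu_{p^\infty}$, whose cohomology reduces to that of $\uZp$ via $T_p\G_m$. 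Combining these two inputs in the long exact sequence, and promoting almost vanishing to honest vanishing (using that the ``almost'' discrepancy is captured by the already-vanishing $\mu_{p^\infty}$-cohomology), yields $H^i_v(\wt A,\wh\G_m)=0$.

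\textbf{Step 2: Vanishing for $\wh A'$ in the good reduction case.} If $A'$ has good reduction, Proposition~\ref{p:analytic-Weil} provides, after choosing a basis of $T_pA'$, a split injection of $v$-sheaves $\wh A'\hookrightarrow \wh\G_m^{2d'}$. Therefore $H^i_v(\wt A,\wh A')$ is a direct summand of $H^i_v(\wt A,\wh\G_m^{2d'})$, which vanishes by Step 1.

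\textbf{Step 3: Reduction of the general abeloid case.} For an arbitrary abeloid $A'$ with Raynaud extension $0\to T'\to E'\to B'\to 0$ and Raynaud uniformisation $A'=E'/M'$: the lattice $M'$ is topologically $p$-torsion-free, so $\wh{M'}=0$ and passing to topological $p$-torsion gives $\wh A'=\wh{E'}$. The Raynaud extension restricts to a short exact sequence $0\to \wh{T'}\to \wh{E'}\to \wh{B'}\to 0$ (left exactness of $\wh{(-)}$ is automatic; surjectivity on the right comes from the analytic-local splitting of the Raynaud extension). Since $\wh{T'}\cong \wh\G_m^{r'}$ is handled by Step~1 and $\wh{B'}$ is handled by Step~2, the long exact sequence in $v$-cohomology gives $H^i_v(\wt A,\wh A')=H^i_v(\wt A,\wh{E'})=0$ for $i>0$.

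\textbf{Main obstacle.} The delicate point is Step~1, where one must go from almost vanishing of $\O^+$-cohomology to honest vanishing of $\wh\G_m$-cohomology; this requires making the logarithm/exponential comparison precise enough (in both characteristics) to see that the ``almost'' gap is controlled by honestly vanishing torsion cohomology. Steps~2 and~3 are then fairly formal given Step~1.
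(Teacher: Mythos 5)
Your Steps~1 and~2 follow essentially the same route as the paper: Proposition~\ref{p:H^1_v(A,Zp)=0} provides the $\uZp$- and (almost) $\O^+$-vanishing, the logarithm sequences on $\wh\G_m$ translate this into the first claim, and Proposition~\ref{p:analytic-Weil} then gives the split injection $\wh{A'}\hookrightarrow\wh\G_m^{2d'}$ for the second. (A small imprecision in Step~1: for $U_n=1+\varpi^n\O^+$ the quotient $\wh\G_m/U_n$ is \emph{not} finite; what is actually used is a sequence of the shape $0\to\mu_{p^\infty}\to\wh\G_m\to(\O^+\text{-type additive sheaf})\to 0$, but the underlying idea is correct.)

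The genuine gap is in Step~3, in the sentence ``passing to topological $p$-torsion gives $\wh A'=\wh{E'}$''. The functor $\wh{(-)}$ is only \emph{left}-exact, so from $0\to M'\to E'\to A'\to 0$ and $\wh{M'}=0$ you only get an injection $\wh{E'}\hookrightarrow\wh{A'}$, and this inclusion is strict whenever $M'\neq 0$, because the lattice $M'$ contributes extra $p$-power torsion to $A'$. Concretely, for the Tate curve $A'=\G_m/q^\Z$ the point $q^{1/p}\bmod q^\Z$ is $p$-torsion, hence lies in $\wh{A'}(K)$, but none of its lifts $q^{1/p+k}$ (for $k\in\Z$) to $E'(K)=K^\times$ lies in $\wh\G_m(K)=1+\m$, since they all have absolute value $\neq 1$; so $\wh{E'}\subsetneq\wh{A'}$. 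To repair your reduction you would need to identify the cokernel $\wh{A'}/\wh{E'}$ --- it should be $M'\otimes\underline{\Q_p/\Z_p}$, coming from $A'[p^\infty]/E'[p^\infty]$ --- and then separately prove $H^i_v(\wt A, M'\otimes\underline{\Q_p/\Z_p})=0$ for $i>0$, e.g.\ via $H^i_v(\wt A,\underline{\Z/p^n})=0$ for $i\geq 1$ (which does follow from Proposition~\ref{p:H^1_v(A,Zp)=0} and the Bockstein sequence for $\uZp\xrightarrow{p^n}\uZp\to\underline{\Z/p^n}$) and a colimit argument. That extra step is missing as written. You should also note that Proposition~\ref{p:analytic-Weil} assumes $\Char K=0$, so in residue characteristic $p$ a further reduction via tilting (Theorem~\ref{t:tilting-abeloids}) is required, which your write-up does not address.
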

\begin{proof}
	The first part follows from Theorem~\ref{p:H^1_v(A,Zp)=0} using the logarithm short exact sequences like in \cite[Proposition~4.6]{heuer-diamantine-Picard}. The second part follows from this using
	Proposition~\ref{p:analytic-Weil}.
\end{proof}

By Lemma~\ref{l:E_2-of-discrete-torsor-over-connected} and the long exact sequence of Theorem~\ref{t:perfectoid-covers-of-abelian-varieties} we deduce:
\begin{Corollary}
	We have  $\Ext^1_v(\wt A,\uZ_p)=0$ and $\Ext^1_{v}(A,\uZp)=\Hom(T_pA,\uZp)$. In particular, $T_pA\to \wt A\to A$
	is the universal extension of $A$ by a finite free $\uZp$-module.
\end{Corollary}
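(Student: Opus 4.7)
The plan is to apply $\RHom_v(-,\uZp)$ to the short exact sequence
\[ 0 \to T_pA \to \wt A \to A \to 0 \]
provided by Theorem~\ref{t:perfectoid-covers-of-abelian-varieties}.2, and to read off both assertions from the first few terms of the resulting long exact sequence
\[ 0 \to \Hom(A,\uZp) \to \Hom(\wt A,\uZp) \to \Hom(T_pA,\uZp) \to \Ext^1_v(A,\uZp) \to \Ext^1_v(\wt A,\uZp). \]

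First I would dispose of the two Hom terms on the left: both $A$ and $\wt A$ are connected as $v$-sheaves, and $\uZp=\varprojlim \uZ/p^n$ is pro-constant, so by Lemma~\ref{l:underline-is-left-adjoint} any morphism from a connected $v$-sheaf into $\uZp$ is constant, and hence zero as a group homomorphism. Thus $\Hom(A,\uZp)=\Hom(\wt A,\uZp)=0$.

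Second, I would establish the key vanishing $\Ext^1_v(\wt A,\uZp)=0$ by combining Proposition~\ref{p:H^1_v(A,Zp)=0}, which gives $H^i_v(\wt A,\uZp)=0$ for all $i>0$, with the hinted Lemma~\ref{l:E_2-of-discrete-torsor-over-connected}. Conceptually, this lemma is what is needed to bridge the gap between the purely cohomological statement (every $\uZp$-torsor of sheaves of sets over $\wt A$ is trivial) and the group-theoretic one (every extension of abelian $v$-sheaves $0 \to \uZp \to E \to \wt A \to 0$ already splits as abelian sheaves): the underlying torsor trivialises by $H^1_v(\wt A,\uZp)=0$, and the residual $2$-cocycle obstruction is controlled on a connected base by $H^2_v(\wt A,\uZp)$, which vanishes as well.

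Granted these two inputs, the long exact sequence collapses to an isomorphism $\Hom(T_pA,\uZp) \isomarrow \Ext^1_v(A,\uZp)$, and by tracing the connecting homomorphism it sends $\varphi \colon T_pA \to \uZp$ to the pushout of $T_pA \to \wt A$ along $\varphi$. Applying the same calculation after replacing $\uZp$ by $\uZp^n$ gives $\Hom(T_pA,\uZp^n)=\Ext^1_v(A,\uZp^n)$, which is exactly the universal property claimed: every extension of $A$ by a finite free $\uZp$-module is uniquely a pushout of $T_pA \to \wt A \to A$. The main technical obstacle is the vanishing of $\Ext^1_v(\wt A,\uZp)$, since it genuinely exceeds the cohomological vanishing of Proposition~\ref{p:H^1_v(A,Zp)=0}; everything else is bookkeeping in the long exact $\Ext$-sequence.
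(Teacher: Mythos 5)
Your proposal is correct and matches the paper's argument: the paper derives the corollary precisely from the long exact $\Ext(-,\uZp)$-sequence for $0\to T_pA\to\wt A\to A\to 0$, using Proposition~\ref{p:H^1_v(A,Zp)=0} for the cohomological vanishing and Lemma~\ref{l:E_2-of-discrete-torsor-over-connected} to promote that to $\Ext^1_v(\wt A,\uZp)=0$. Two small imprecisions worth fixing. First, Lemma~\ref{l:underline-is-left-adjoint} computes maps \emph{out of} $\underline H$ into an adic space, not maps into a profinite sheaf; what you actually need here is that $A$ and $\wt A$ are connected adic spaces while $\uZp$ is totally disconnected, so any morphism to $\uZp$ is constant and hence any homomorphism vanishes (this is exactly the observation used inside the proof of Lemma~\ref{l:E_2-of-discrete-torsor-over-connected}). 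Second, the bridge from $H^1_v=0$ to $\Ext^1_v=0$ is not an $H^2$-vanishing argument: Lemma~\ref{l:E_2-of-discrete-torsor-over-connected} asserts that the whole bottom row $E_2^{\bullet 0}$ of the Breen--Deligne spectral sequence vanishes, and what you use is that $E_2^{10}=0$ forces $\Phi\colon\Ext^1_v(\wt A,\uZp)\hookrightarrow H^1_v(\wt A,\uZp)$ to be injective; together with $H^1_v(\wt A,\uZp)=0$ this already gives the vanishing, without any reference to $H^2_v$.
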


The proof of Proposition~\ref{p:H^1_v(A,Zp)=0} relies on the Primitive Comparison Theorem if $\Char K=0$, and the following tilting result if $\Char K=p$, which will also be useful for us in the following:
\begin{Theorem}[{\cite{projectwithPeterWear},\cite[\S5]{wear2020perfectoid}}]\label{t:tilting-abeloids}
	For every abeloid $A$ over $K$ there is an abeloid $A'$ over $K^\flat$ such that $\wt A^\flat\cong \wt A'$, and vice versa. We  have $\wt E^\flat\cong \wt E'$ for the associated Raynaud extensions.
\end{Theorem}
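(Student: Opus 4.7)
The plan is to tilt the Raynaud data of $A$ piece by piece. Write $A=E/M$ with Raynaud extension $0\to T\to E\to B\to 0$, where $B$ has good reduction (with formal model $\mathfrak B/\O_K$ and special fibre $\overbar B$ over $k$) and $T\cong \G_m^r$ is split. I aim to build an abeloid $A'=E'/M'$ over $K^\flat$ with Raynaud extension $0\to T'\to E'\to B'\to 0$ satisfying $\wt E^\flat\cong \wt E'$ and $\wt A^\flat\cong \wt A'$; the reverse direction is symmetric via the tilting equivalence of perfectoid sites.

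First I would tilt $\wt E$. The torus part is essentially automatic: by Example~\ref{ex:G_m}, $\wt\G_m$ over $K$ tilts to $\wt\G_m$ over $K^\flat$, so taking $T'=\G_m^r$ gives $\wt T^\flat\cong \wt T'$. For the good reduction part, lift $\overbar B/k$ to an abelian variety $B'$ over $\O_{K^\flat}$ (via Serre--Tate lifting of $\overbar B$, or by lifting the $p$-divisible group $\overbar B[p^\infty]$ and applying Scholze--Weinstein). The equivalence $\wt B^\flat\cong \wt B'$ rests on identifying $\wt B$ with the analytic generic fibre of the universal cover of $\mathfrak B[p^\infty]$ (up to contribution of the étale part) and invoking the tilting invariance of universal covers of $p$-divisible groups. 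Finally, the extension $0\to T\to E\to B\to 0$ is classified by a homomorphism $M^\vee\to B^\vee$, and its pullback along $\wt B\to B$ factors through $\overbar B^\vee(k)\tf$, a datum intrinsic to $\overbar B$; transporting this to the tilt produces the extension $E'$ with $\wt E^\flat\cong \wt E'$.

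Next I would tilt the lattice. Choose any lift $\tilde g:M\to \wt E(K)$ of $M\hookrightarrow E(K)$, possible since $M\cong \Z^r$ is free and $\wt E\to E$ is surjective on $K$-points. Since $\underline M$ is a constant sheaf, $\tilde g$ transports under tilting to a map $\tilde g':M\to \wt E'(K^\flat)$; projecting along $\wt E'\to E'$ defines a candidate lattice $M'\subseteq E'(K^\flat)$. To verify that $M'$ is discrete of full rank $r$, I would apply Lemma~\ref{l:K-points-of-whB} on both sides: the composite $M\to E(K)\to \overbar B(k)\tf$ is an invariant of the tilt and encodes the discreteness of the image. Setting $A':=E'/M'$, the pushout square~\eqref{eq:wtE-wtA-vs-Ainfty} then tilts compatibly, since the profinite sheaf $M_p/M$ is intrinsically the same in either characteristic by Lemma~\ref{l:uZp/uZ on an vs v}, yielding $\wt A^\flat\cong \wt A'$.

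The principal obstacle is the good reduction step: showing that $\wt B$ depends only on tilt-invariant data attached to $\mathfrak B$ requires a careful analysis of universal covers of $p$-divisible groups over $\O_K$ and their behaviour under tilting, developed in \cite{projectwithPeterWear, wear2020perfectoid}. Once this invariance is in hand, the remaining pieces---the torus factor (trivial), the extension class (intrinsic to $\overbar B$), and the lattice (lifted via $\tilde g$)---assemble formally using the tilting equivalence of perfectoid spaces.
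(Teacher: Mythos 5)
This theorem is not proved in the paper: it is imported verbatim from the cited references (\cite{projectwithPeterWear} and \cite[\S5]{wear2020perfectoid}), so there is no internal proof to compare against. That said, your strategy — tilt the Raynaud data of $A$ piece by piece, producing $T'$, $B'$, the extension $E'$, and finally the lattice $M'$ — is the natural route and almost certainly mirrors what the cited references do at a high level. The torus step and the handling of the extension class via $\Ext^1_{\an}(\wt B,\wt T)=\Hom(M^\vee,\overbar B^\vee(k)\tf)$ (Proposition~\ref{p:homological-interpretation-of-universal-cover}) are correctly identified, and the good-reduction step is essentially right, though it would be cleaner to argue directly with the perfectoid formal scheme $\wt{\mathfrak B}=\varprojlim_{[p]}\mathfrak B$ and the almost-tilting equivalence over $\O_K$ rather than passing through $p$-divisible groups and then re-attaching the ``\'etale part''.

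The genuine gap is in the lattice step. You need $M'\subseteq E'(K^\flat)$ to be a discrete, full-rank-$r$ subgroup so that $E'/M'$ is an abeloid, and you propose to establish this via Lemma~\ref{l:K-points-of-whB}. But that lemma describes the sequence $0\to\wh B(K)\to B(K)\to\overbar B(k)\tf\to 0$ for the good-reduction quotient $B$; it says nothing about discreteness of a lattice inside $E$. The relevant invariant is the tropicalization pairing $M\times M^\vee\to\log|K^\times|$, $(m,\chi)\mapsto-\log|\chi(m)|$, which must be nondegenerate with discrete image; this is the Bosch--L\"utkebohmert condition ensuring properness of $E/M$. The reason this transports is that the value groups $|K^\times|$ and $|K^{\flat\times}|$ are canonically identified, so the pairing on the tilted side is literally the same real-valued form and inherits nondegeneracy and discreteness. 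You should replace the appeal to Lemma~\ref{l:K-points-of-whB} with this argument; without it you have not shown $E'/M'$ is proper, hence not shown $A'$ is abeloid. The rest of the assembly (tilting the pushout square~\eqref{eq:wtE-wtA-vs-Ainfty}, with $M_p/M$ intrinsic by Lemma~\ref{l:uZp/uZ on an vs v}) is fine once this is in place.
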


\subsection{The Picard group of the universal cover}\label{s:Pic}
Finally, we collect some results on the Picard group of $\wt A$, which we studied in \cite{heuer-Picard-good-reduction}. 
\begin{Definition}
	Let $\tau$ be one of $v$ or $\et$. A $\tau$-line bundle on a diamond $X$ is by definition a $\G_m$-torsor on $X$ in the $\tau$-topology. We refer to \cite[\S2]{heuer-v_lb_rigid} for some background on line bundles on diamonds. The group of $\tau$-line bundles up to isomorphism is the $\tau$-Picard group
\[ \Pic_\tau(X)=H^1_\tau(X,\G_m).\]
\end{Definition}
If $X$ is perfectoid, the notions of line bundles in these different topologies agree:
\begin{Theorem}[Kedlaya--Liu, {\cite[Theorem 3.5.8]{KedlayaLiu-II}}]\label{t:KL-line-bundles}
	Let $X$ be a perfectoid space, then \[H^1_{\an}(X,\G_m)=H^1_{\et}(X,\G_m)=H^1_{v}(X,\G_m).\]
\end{Theorem}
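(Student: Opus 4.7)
The plan is to prove the two equalities separately, in each case by comparing sheaf cohomology of $\G_m$ across two sites via a Leray spectral sequence and reducing to the vanishing of a first higher direct image. More precisely, for the morphisms of sites $\lambda:X_{\et}\to X_{\an}$ and $\mu:X_v\to X_{\et}$, one checks that $\lambda_{\ast}\G_m=\G_m$ and $\mu_{\ast}\G_m=\G_m$ essentially by definition of $\G_m$ as a sheaf, so the Leray spectral sequences yield short exact sequences
\[0\to H^1_{\an}(X,\G_m)\to H^1_{\et}(X,\G_m)\to H^0_{\an}(X,R^1\lambda_{\ast}\G_m),\]
\[0\to H^1_{\et}(X,\G_m)\to H^1_{v}(X,\G_m)\to H^0_{\et}(X,R^1\mu_{\ast}\G_m),\]
so it suffices to show $R^1\lambda_{\ast}\G_m=0$ and $R^1\mu_{\ast}\G_m=0$, i.e.\ that every line bundle in the finer topology trivialises locally in the coarser one.

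For the first equality, I would reduce to the case where $X=\Spa(R,R^+)$ is affinoid perfectoid and show that any étale line bundle $L$ on $X$ is analytically locally trivial. Since finite étale covers of affinoid perfectoids are affinoid perfectoid (by Scholze's almost purity) and since $R$ is a Banach $K$-algebra to which classical Hilbert~90 style arguments apply, any étale $\G_m$-torsor is Zariski, hence analytically, locally trivial on a cover by rational subsets. Concretely one passes through the sequence $H^1_{\et}(X,\G_m)\hookrightarrow \varinjlim H^1(\mathrm{Spec}(S),\G_m)$ as $S$ ranges over finite étale $R$-algebras, and uses that classical étale and Zariski Picard groups of such $S$ agree after passing to a further analytic cover.

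The harder step is the $v$-descent, which is the core content of Kedlaya--Liu: one must show that for any $v$-cover $Y\to X$ of affinoid perfectoids, the \v{C}ech cohomology $\check H^1(Y/X,\G_m)$ vanishes. The key input is Scholze's $v$-descent for the structure sheaf on perfectoid spaces, i.e.\ $R\Gamma_v(X,\O^+)\aeq\O^+(X)$ in degree zero and almost-vanishing in higher degrees, together with its $\O$-analogue. To transfer this additive result to the multiplicative sheaf $\G_m$, I would exploit the filtration
\[1\to 1+\varpi\O^+\to (\O^+)^\times\to (\O^+/\varpi)^\times\to 1\]
and its analogue for $\O^{\times}$: the middle term is close to the additive sheaf $\O^+$ via the logarithm or $1+x\mapsto x$ truncation (the precise statement needs an almost mathematics argument since $\log$ only converges on a smaller subsheaf), which reduces the almost vanishing of $H^1_v(\G_m)$ to the already-known almost vanishing of $H^1_v(\O^+)$. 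Finally, one promotes the almost vanishing to an honest vanishing by using that the $v$-Picard group is torsion-free at the maximal ideal --- if a line bundle is almost trivial, then choosing a generator of the almost module of sections and bounding its norm yields an honest trivialisation. The main obstacle will be the multiplicative-to-additive transfer in characteristic zero, where one must carefully handle the radius of convergence of the logarithm and the prime-to-$p$ torsion in $\O^\times/(1+\varpi\O^+)$.
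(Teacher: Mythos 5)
This theorem is cited in the paper as \cite[Theorem 3.5.8]{KedlayaLiu-II}; the paper offers no proof of its own, so there is no in-paper argument to compare your proposal against.

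As an outline of how one would reprove the result, your Leray framing is sound: $\lambda_\ast\G_m=\G_m$ and $\mu_\ast\G_m=\G_m$ are clear, and the two edge sequences reduce matters to $R^1\lambda_\ast\G_m=0$ and $R^1\mu_\ast\G_m=0$. But the two vanishing statements carry essentially all the content, and the sketch of each has gaps. For $\et$ versus $\an$: the passage ``any \'etale $\G_m$-torsor is Zariski, hence analytically, locally trivial'' is not a Hilbert 90 consequence in the adic setting; one must compare $\Pic$ of the adic space with $\Pic$ of the underlying ring and then invoke a nontrivial GAGA-type statement for affinoid perfectoids. For $v$ versus $\et$: the proposal to pass from $\O^+$-descent to $\G_m$-descent via the logarithm is the genuinely problematic step. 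The logarithm does not converge on all of $1+\varpi\O^+$ in characteristic $0$, and it does not exist at all in characteristic $p$; moreover the quotient $\O^\times/(1+\varpi\O^+)$ (the paper's $\bOx$) is a nontrivial $v$-sheaf whose $H^1$ does not obviously vanish from the additive statement, so the filtration does not cleanly reduce multiplicative descent to additive descent. The actual proof in Kedlaya--Liu (and Scholze's variant) avoids the logarithm entirely and proves $v$-descent directly for finite projective modules via gluing and the open mapping theorem for Banach modules, then specialises to line bundles. Finally, the claim that one can promote an almost trivialisation to an honest one by ``choosing a generator and bounding its norm'' is where the uniformity of perfectoid rings has to enter in a precise way; as written it is an assertion rather than an argument. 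So the proposal is a reasonable skeleton but would not survive being fleshed out along the lines you indicate, in particular because of the characteristic-$p$ failure of the multiplicative-to-additive transfer.
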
 
\begin{Remark}
	By Theorem~\ref{t:perfectoid-covers-of-abelian-varieties}.2, this applies in particular to the perfectoid space $\wt A$, and we will therefore in the following often drop $\tau$ from notation.
	From Theorem~\ref{t:perfectoid-covers-of-abelian-varieties} and Proposition~\ref{p:H^1_v(A,Zp)=0}, one can deduce using the sequence of Example~\ref{ex:G_m} that
	$\Pic(\wt A)=\Pic(\wt A^\flat)$.
\end{Remark}
The following result from \cite{heuer-Picard-good-reduction} is crucial for us in order to describe isomorphism classes of universal covers $\wt E$ of Raynaud extensions  as extension classes in $\Ext^1_v(\wt B,\wt T)$.
\begin{Theorem}[{\cite[{Theorem~4.1, Corollary~4.13}]{heuer-Picard-good-reduction}}]\label{t:Pic(wt B)}
	Let $B$ be an abeloid variety of good reduction over $K$. Let $\overbar{B}$ be the special fibre of $B$ considered as a scheme over $k$. Then 
	\[ \Pic(\wt B)=\Pic(\overbar{B})[\tfrac{1}{p}].\]
	In particular, let $\overbar{B}^\vee$ be the dual of $\overbar{B}$, then for any torus $T$ with character group $M^\vee$, 
	\[ \Ext^1_{\an}(\wt B, T)=\Hom(M^\vee,\overbar{B}^\vee(k)\tf).\]
\end{Theorem}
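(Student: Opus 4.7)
The plan is to reduce the computation of $\Pic(\wt B)$ to line bundles on the special fibre by using the short exact sequence of abelian $v$-sheaves
\[
0\to \wh{\G_m}\to \G_m\to Q\to 0,
\]
where $Q:=\G_m/\wh{\G_m}$, together with the vanishing $H^i(\wt B,\wh{\G_m})=0$ for $i>0$ from Corollary~\ref{c:H^1(wt A,wh A')}. By Theorem~\ref{t:KL-line-bundles} we may compute $\Pic(\wt B)$ in any reasonable topology, so I would work $v$-locally. Applying $H^*(\wt B,-)$ to the above sequence and using the vanishing yields the clean identification
\[
\Pic(\wt B)\;=\;H^1(\wt B,Q).
\]

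The crux is then to identify $Q$ with the diamantine analogue of the residue sheaf $\overbar{\G_m}^{\diamondsuit}\tf$ and compute $H^1(\wt B,Q)$. The identification of $Q$ proceeds in parallel with Proposition~\ref{p:ses-whB-B-barBtf} (which is the analogous statement for abeloids): sections of $\overbar{\G_m}^{\diamondsuit}$ on a perfectoid $X$ are units in the reduction $X^+/\mathfrak m$, and modding out topologically $p$-torsion units $\wh{\G_m}\subseteq\G_m$ recovers this, at least after inverting $p$ in the value group. Computing the cohomology $H^1(\wt B,\overbar{\G_m}^{\diamondsuit}\tf)$ via the definition $\overbar{\G_m}^{\diamondsuit}(X)=\G_m(X^+/\mathfrak m)$ gives the Picard group of the reduction of $\wt B$, which is the perfection of $\overbar B$. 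Since Frobenius pullback acts as $\cdot p$ on $\Pic^0(\overbar B)$ and as $\cdot p^2$ on $\NS(\overbar B)$, both become invertible after inverting $p$, so passing to the perfection has no effect on $\Pic\tf$, and we obtain $\Pic(\overbar B)\tf$.

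The main obstacle is the second step: identifying the quotient $\G_m/\wh{\G_m}$ concretely as a $v$-sheaf and relating its $H^1$ on $\wt B$ to the Picard group of $\overbar B$. The template is the abeloid case (Proposition~\ref{p:ses-whB-B-barBtf}), but $\G_m$ is non-proper and one must carefully handle the valuation contribution to $K^\times/(1+\mathfrak m)$; the $p$-adic/tilting methods developed in \cite{heuer-Picard-good-reduction} are the tool I would use to pin down this identification and the resulting cohomology computation.

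For the corollary on extensions by a rigid torus $T=\Hom(M^\vee,\G_m)$: since $M^\vee$ is a finitely generated free abelian group,
\[
\Ext^1_{\an}(\wt B,T)=\Hom(M^\vee,\Ext^1_{\an}(\wt B,\G_m)).
\]
Group extensions of an abeloid by $\G_m$ in the category of abelian sheaves are classified by translation-invariant line bundles, i.e.\ by $\Pic^0(\wt B)$ rather than all of $\Pic(\wt B)$. By the first part, $\Pic^0(\wt B)=\Pic^0(\overbar B)\tf=\overbar B^\vee(k)\tf$, and substituting gives the claimed formula $\Ext^1_{\an}(\wt B,T)=\Hom(M^\vee,\overbar B^\vee(k)\tf)$.
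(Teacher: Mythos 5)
Since Theorem~\ref{t:Pic(wt B)} is imported from \cite{heuer-Picard-good-reduction} (Theorem~4.1, Corollary~4.13) rather than reproved in this paper, there is no in-paper proof to measure you against; I can only judge the plan on its own terms. Your overall strategy --- isolate a subsheaf of $\G_m$ with vanishing higher cohomology on $\wt B$ so that $\Pic(\wt B)$ is computed by a ``residue'' quotient, in analogy with Proposition~\ref{p:ses-whB-B-barBtf} --- is the right kind of idea. The first step is fine: from Corollary~\ref{c:H^1(wt A,wh A')} and the long exact sequence of $0\to\wh\G_m\to\G_m\to Q\to 0$ one does get $\Pic(\wt B)\cong H^1_v(\wt B,Q)$. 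But you stop exactly where the content lives: the identification of $Q$ as a $v$-sheaf and the computation $H^1_v(\wt B,Q)=\Pic(\overbar B)\tf$ are the theorem, and you flag them as the ``main obstacle'' without carrying them out. In particular the jump from ``$\overbar{\G_m}^{\diamond}(X)=\G_m(X^+/\m)$'' to ``$H^1(\wt B,\overbar{\G_m}^{\diamond}\tf)$ is the Picard group of the reduction of $\wt B$'' is a genuine descent-type assertion, not a definition-chase.

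There is also a concrete problem with the chosen quotient. On $K$-points, $\wh\G_m(K)=\mu_{p^\infty}(K)\cdot(1+\m)$, so $Q(K)\cong|K^\times|\times\bigl(k^\times/\mu_{p^\infty}(k)\bigr)$. This has a value-group factor that is absent from $\overbar{\G_m}^{\diamond}\tf(K)=k^\times\tf$, and $Q$ is also a further quotient of $\bOx=\O^\times/(1+\m\O^+)$ (Definition~\ref{d:bOx}), which is the sheaf that actually appears in this circle of ideas (cf.\ Theorem~\ref{t:Pic-functor-wtB}). Neither discrepancy is visibly harmless: the value-group piece produces a $\uZ$-type contribution that has to be controlled (compare Proposition~\ref{p:Z-torsors-over-perfectoids} and Remark~\ref{r:H^1(wt A,Z)}, where $H^1_v(\wt A,\uZ)$ is nonzero in general), and the extra $\mu_{p^\infty}$-quotient could a priori contribute through Tate modules. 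So one cannot simply substitute $\overbar{\G_m}^{\diamond}\tf$ for $Q$ ``after inverting $p$ in the value group'' without an argument. Finally, for the $\Ext^1_{\an}$ statement, the chain ``extensions of $\wt B$ by $\G_m$ $=$ translation-invariant line bundles $=\Pic^0(\wt B)=\overbar B^\vee(k)\tf$'' also needs justification on the non-finite-type perfectoid $\wt B$: the extension-versus-torsor comparison is not automatic there and in this paper is handled via the Breen--Deligne spectral sequence (Theorem~\ref{t:BD}, Lemma~\ref{l:E_2-of-discrete-torsor-over-connected}), and one still has to see that the primitive part of $\Pic(\overbar B)\tf$ under your isomorphism is exactly $\overbar B^\vee(k)\tf$.
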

Second, we will independently need line bundles to study morphisms between universal covers. These are related to line bundles on universal covers by way of the ``diamantine Picard functor'' from \cite{heuer-diamantine-Picard}, as we shall now discuss. For simplicity, we assume $\Char K=0$.

 We first recall the rigid Picard functor of abeloids, as studied by Bosch--L\"utkebohmert:
\begin{Definition}
	Let $A$ be an abeloid variety and let $X$ be any rigid or perfectoid space over $K$. Then the translation-invariant Picard group $\Pic^\tau(A\times X)$ is defined as the kernel of
	\[\Pic_{\et}(A\times X)\xrightarrow{\pi_1^\ast+\pi_2^\ast-m^\ast} \Pic_{\et}(A\times A\times X),\]
	where $\pi_1,\pi_2,m:A\times A\to A$ are the projection and multiplication maps, respectively.
	Here since $A$ is smooth, we could consider $A\times X$ as a sousperfectoid adic space in the sense of \cite[\S7]{HK_sheafiness}, but for our purposes it will be convenient to instead consider $A\times X$ as a diamond.
\end{Definition}

The translation-invariant Picard functor of $A$ is defined as the \'etale sheafification of the functor $X\to \Pic^\tau(A\times X)$ on rigid spaces $X$ over $K$. By the duality theory of abeloids of Bosch--L\"utkebohmert \cite[\S6]{BL-Degenerating-AV}, this is represented by an abeloid $A^\vee$ over $K$, called the dual abeloid. As usual, the universal line bundle on $A\times A^\vee$ induces an isomorphism $(A^\vee)^\vee=A$.

In \cite[Theorem~1.1.1]{heuer-diamantine-Picard}, we had shown that Picard functors defined on perfectoid test objects are represented by the diamondification  of the rigid Picard functor. In particular:
\begin{Theorem}[{\cite[Corollary~3.44]{heuer-diamantine-Picard}}]\label{t:Pict}
	Assume that $\Char K=0$. Let $A$ be an abeloid variety over $K$. Let $A^\vee$ be the dual abeloid of $A$. Let $X$ be any perfectoid space over $K$. Then specialisation at $0\in A(K)$ defines a natural isomorphism
	\[\Pic^{\tau}(A\times X)=  A^\vee(X)\times \Pic(X).\]
\end{Theorem}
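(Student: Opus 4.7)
The plan is to split $\Pic^\tau(A\times X)$ by restriction at the zero section, and then identify the kernel of that restriction with $A^\vee(X)$ using the Poincar\'e bundle. The restriction map $r:\Pic^\tau(A\times X)\to \Pic(X)$, $\mathcal L\mapsto \mathcal L|_{\{0\}\times X}$, admits the second projection $\pi_2^\ast:\Pic(X)\to \Pic^\tau(A\times X)$ as a section, so $\Pic^\tau(A\times X)=\ker(r)\oplus \Pic(X)$ naturally in $X$. It therefore suffices to construct a natural isomorphism $\ker(r)\cong A^\vee(X)$.

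To build the comparison, let $\mathcal P$ denote the normalised Poincar\'e bundle on $A\times A^\vee$, rigidified along both $A\times\{0\}$ and $\{0\}\times A^\vee$. For any $f\in A^\vee(X)$, the line bundle $(\mathrm{id}_A\times f)^\ast \mathcal P$ is translation-invariant in the $A$-direction and trivial along $\{0\}\times X$, so it lies in $\ker(r)$. This produces a natural transformation $A^\vee(X)\to \ker(r)$ which one must then show is an isomorphism. For rigid test objects $X$, this is exactly the representability of the rigidified translation-invariant Picard functor by $A^\vee$, due to Bosch--L\"utkebohmert; in particular, the composition in one direction is the identity by the universal property of $A^\vee$, and the other direction uses the normalisation of $\mathcal P$.

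To upgrade to perfectoid $X$, I would first verify that both sides define $v$-sheaves in $X$ on $\Perf_K$, using Theorem~\ref{t:KL-line-bundles} to equate analytic, \'etale and $v$-line bundles on perfectoid spaces, and noting that $A^\vee$ is already representable as a $v$-sheaf via its diamondification. One then checks the identification on affinoid perfectoid $X=\Spa(R,R^+)\sim \varprojlim X_i$ written as a tilde-limit of affinoid rigid spaces, where each $X_i$ is handled by the classical result. The main obstacle is compatibility with tilde-limits in the $X$-variable: given a rigidified translation-invariant line bundle on $A\times X$, one must descend it, at least after suitable \'etale or $v$-localisation on $X$, to some $A\times X_i$. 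This rests on a K\"unneth-type decomposition of $\Pic(A\times X)$ in terms of $\wh\G_m$-cohomology on $A$ and $X$, together with the vanishing and descent inputs from \cite{heuer-diamantine-Picard}, which are of the same flavour as those entering Theorem~\ref{t:Pic(wt B)}. The perfectoid hypothesis on $X$ is essential precisely at this step, since it is what makes line bundles in different topologies agree and permits the passage to the tilde-limit.
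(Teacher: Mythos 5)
This theorem is recalled from \cite[Corollary~3.44]{heuer-diamantine-Picard} and is not proved in the present paper; it is derived there from \cite[Theorem~1.1.1]{heuer-diamantine-Picard}, which identifies the diamantine Picard functor (defined on perfectoid test objects) with the diamondification of the rigid Picard functor. So there is no in-paper proof to compare against, but your sketch can still be assessed on its own terms.

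Your set-up is fine: splitting off $\Pic(X)$ by specialisation at the zero section and constructing the comparison map $A^\vee(X)\to\ker(r)$ by pulling back the rigidified Poincar\'e bundle is the right framework, and for rigid test objects $X$ it reduces to the Bosch--L\"utkebohmert representability of the rigid Picard functor by $A^\vee$.

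The gap is in your final step, the passage from rigid to perfectoid $X$. You propose writing $X\sim\varprojlim_i X_i$ as an affinoid tilde-limit and descending the relevant line bundle on $A\times X$, after localisation, to some finite level $A\times X_i$. This descent statement is false, and it must be, given what the theorem asserts: the target $A^\vee(X)$ is strictly larger than $\varinjlim_i A^\vee(X_i)$, because a morphism $X\to A^\vee$ of adic spaces over $K$ need not factor through any $X_i$. (Already with $\A^1$ in place of $A^\vee$ this fails, since $\O(X)$ is the \emph{completion} of $\varinjlim_i\O(X_i)$, not the colimit itself.) Pulling back the Poincar\'e bundle along such a morphism $f\colon X\to A^\vee$ produces a rigidified translation-invariant line bundle on $A\times X$ that does not live over any finite level $A\times X_i$, not even after \'etale or $v$-localisation on $X$, since the obstruction is the non-surjectivity of $\varinjlim_i\O(X_i)\to\O(X)$ rather than a cohomological one. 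So one cannot prove surjectivity of $A^\vee(X)\to\ker(r)$ by a tilde-limit argument in the $X$-variable, and the appeal to ``vanishing and descent inputs from \cite{heuer-diamantine-Picard} of the same flavour as Theorem~\ref{t:Pic(wt B)}'' is circular, since those inputs include the very representability statement you are trying to establish. What you flag as ``the main obstacle'' is in fact the whole content of the theorem: the argument in \cite{heuer-diamantine-Picard} has to work directly with perfectoid test objects rather than by reduction to rigid finite level.
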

Finally, there is a version of this for the universal cover which geometrises Theorem~\ref{t:Pic(wt B)}. 
This will later be useful to  see that all morphisms $\wt T\to \wt B$ are constant. On $\wt B$, line bundles turn out to be essentially the same as torsors under the following quotient sheaf:
\begin{Definition}[{\cite[Definition~2.12]{heuer-v_lb_rigid}}]\label{d:bOx}
	The $v$-sheaf $\bOx$ is defined as the quotient on $\Perf_{K,v}$
	\[ \bOx:=\O^\times/(1+\m\O^+).\]
\end{Definition}
The following result from \cite{heuer-Picard-good-reduction} makes precise the idea that the Picard functor of $\wt B$ is represented by the Picard scheme of the special fibre $\overbar{B}$, using the sheaf of Definition~\ref{d:diamond-on-special-fibre}:
\begin{Theorem}[{\cite[Corollary~5.6]{heuer-Picard-good-reduction}}]\label{t:Pic-functor-wtB}
	Assume that $\Char K=0$. Let $B$ be an abeloid over $K$ of good reduction $\overbar{B}$ over $k$. Let $\uP_{\overbar B}$ be the Picard scheme of $\overbar{B}$ over $k$. Let $X$ be any perfectoid space over $K$. Then specialisation at $0\in \wt B(K)$ defines natural isomorphisms
	\begin{alignat*}{3}
	\Pic(\wt B\times X)&=&& \uP_{\overbar B}^\diamond\tf(X)\times \Pic(X),\\
	H^1_{\et}(\wt B\times X,\bOx)&=&& \uP_{\overbar B}^\diamond\tf(X)\times H^1_{\et}(X,\bOx).
	\end{alignat*}
\end{Theorem}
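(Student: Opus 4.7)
The plan is to split $\Pic(\wt B\times X)$ along the basepoint section and then identify the rigidified factor with $\uP_{\overbar B}^\diamond\tf(X)$ by lifting the absolute computation of $\Pic(\wt B)$ (Theorem~\ref{t:Pic(wt B)}) to a statement in families. The unit $0\in\wt B(K)$ provides a section $i_0:\{0\}\times X\hookrightarrow \wt B\times X$ of $\pi:\wt B\times X\to X$, and $i_0^\ast$ is split by $\pi^\ast$. This yields a natural direct product decomposition
\[\Pic(\wt B\times X)=\Pic^0(\wt B\times X)\times \Pic(X),\]
where $\Pic^0$ collects line bundles rigidified along $\{0\}\times X$. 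It remains to identify $\Pic^0(\wt B\times X)$ with $\uP_{\overbar B}^\diamond\tf(X)$.

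Next I would construct the specialisation map $\Pic^0(\wt B\times X)\to\uP_{\overbar B}^\diamond\tf(X)$: for each $(C,C^+)$-point $x$ of $X$ with $C$ algebraically closed, the pullback of a rigidified bundle $L$ to $\wt B_C$ gives a class in $\Pic(\wt B_C)$ which under Theorem~\ref{t:Pic(wt B)} equals an element of $\uP_{\overbar B}(C^+/\mathfrak m)\tf$. Assembling these across all geometric points of $X$ and checking compatibility yields a section of $\uP_{\overbar B}^\diamond\tf$ over $X$. This assembly requires that $X\mapsto \Pic^0(\wt B\times X)$ is itself a $v$-sheaf in $X$, which follows from Theorem~\ref{t:KL-line-bundles} applied to $\wt B\times X$ together with descent of the rigidification along $\{0\}\times X$.

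To see this map is an isomorphism, I would prove injectivity by a seesaw argument: a rigidified bundle whose fibre-wise class vanishes is trivial on every $\wt B_x$, and connectedness of $\wt B$ together with the rigidification forces $L\cong\O_{\wt B\times X}$. Surjectivity I would establish by constructing, locally on $X$, a lift of a section $X\to \uP_{\overbar B}^\diamond\tf$ to $\Pic^0(\wt B\times X)$: such a section is represented, after a $v$-cover of $X$, by an $X$-family of line bundles on the reduction $\overbar B$, which lifts to a bundle on $\wt B\times X$ by running the formal-model and tilting arguments underlying Theorem~\ref{t:Pic(wt B)} in the relative setting, and then glues by $v$-descent. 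The second isomorphism for $\bOx$ I would then deduce from the defining short exact sequence $1\to 1+\mathfrak m\O^+\to\O^\times\to\bOx\to 0$: the resulting long exact cohomology sequence on $\wt B\times X$, combined with the almost-vanishing of the cohomology of $1+\mathfrak m\O^+$ along the $\wt B$ direction afforded by Proposition~\ref{p:H^1_v(A,Zp)=0}, shows that the $\uP_{\overbar B}^\diamond\tf(X)$ factor transfers unchanged while $\Pic(X)$ is replaced by $H^1_{\et}(X,\bOx)$.

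The hardest step will be the construction of the inverse map in families: the absolute case Theorem~\ref{t:Pic(wt B)} relies on a delicate analysis of formal models of line bundles on $\wt B$ via the tilting equivalence, and promoting this to an $X$-parameter in a manifestly functorial way requires proving that $X\mapsto\Pic^0(\wt B\times X)$ is already the $v$-sheafification of its restriction to geometric points, so that a global inverse assembles from the pointwise data supplied by Theorem~\ref{t:Pic(wt B)}.
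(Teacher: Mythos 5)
This theorem is not proved in the paper you were reading: it is cited verbatim as \cite[Corollary~5.6]{heuer-Picard-good-reduction}, so there is no ``paper's own proof'' to compare against here. What follows is therefore a review of your proposal on its own terms, informed by the surrounding results that the paper does use (\ref{t:Pic(wt B)}, \ref{p:H^1_v(A,Zp)=0}, \ref{l:Pic(wtBxB')}).

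The overall architecture — split off $\Pic(X)$ via the unit section, then identify the rigidified factor with $\uP_{\overbar B}^\diamond\tf(X)$ — is the right shape. However there is a genuine gap in how you construct the comparison map and its inverse. You propose to ``assemble'' a section of $\uP_{\overbar B}^\diamond\tf$ over $X$ by evaluating $L$ at every $(C,C^+)$-point and then appealing to the claim that $X\mapsto \Pic^0(\wt B\times X)$ is determined by its geometric points. A $v$-sheaf over $\Perf_K$ is not reconstructed from its values on geometric points; $\uP_{\overbar B}^\diamond$ is defined as a sheafification of $(S,S^+)\mapsto \uP_{\overbar B}(S^+/\m)$ and involves genuine relative data on $S^+/\m$, not just stalks. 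A construction that never actually produces a map of $v$-sheaves cannot supply the natural isomorphism the statement asserts. Similarly, the seesaw step for injectivity — ``trivial on every fibre plus rigidified implies trivial'' — silently requires a cohomology-and-base-change principle for $\pi_\ast L$ along the non-qcqs-proper morphism $\wt B\times X\to X$. This is not automatic for perfectoid spaces; what makes it work in practice is the almost vanishing $H^i_v(\wt B,\O^+)\aeq 0$ for $i>0$ from Proposition~\ref{p:H^1_v(A,Zp)=0}, used to control $R\pi_\ast$ after passing to $\bOx$. Your sketch invokes Proposition~\ref{p:H^1_v(A,Zp)=0} only at the very end, to get the $\bOx$ isomorphism from the $\Pic$ isomorphism, which inverts the natural order of dependency.

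Concretely, I would expect the correct route (and the one that the cited source evidently takes, judging from Lemma~\ref{l:Pic(wtBxB')} and Proposition~\ref{p:bOx-on-wtTxV} in the present paper) to argue in the opposite direction: first establish the $\bOx$-statement, because $\bOx=\O^\times/(1+\m\O^+)$ is insensitive to tilting and connects directly to formal models over $\O_K/\varpi^\epsilon$; obtain the identification of $H^1_{\et}(\wt B\times X,\bOx)$ with $\uP_{\overbar B}^\diamond\tf(X)\times H^1_{\et}(X,\bOx)$ by a Cartan--Leray or \cH-to-sheaf argument along a good-reduction cover (the same mechanism used in the proof of Proposition~\ref{p:Map(wtT,B)}), and then deduce the $\Pic$-statement from the exact sequence $1\to 1+\m\O^+\to\O^\times\to\bOx\to 1$, using the almost vanishing of $H^{>0}(\wt B\times X, 1+\m\O^+)$ along the $\wt B$-direction to transfer the $\uP_{\overbar B}^\diamond\tf(X)$ factor unchanged. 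Your acknowledgment that ``the hardest step is constructing the inverse in families'' is correct, but the proposal does not actually resolve that step: ``running the formal-model and tilting arguments in the relative setting'' is precisely the content of Corollary~5.6 of the cited paper, so as written this is circular.
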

\section{Morphisms of universal covers: Raynaud extensions}
With these preparations, we now return to our initial question:
\begin{Question}\label{q:hom-of-covers}
	For two abeloids $A$, $A'$, what are all morphisms $\widetilde{A}\to \widetilde{A}'$ of perfectoid spaces?
\end{Question}
Of course, by functoriality of the universal cover, any homomorphism $\varphi:A\to A'$ gives rise to a morphism $\widetilde{\varphi}:\widetilde{A} \to\widetilde{A}'$ which is an isomorphism if $\varphi$ is an isogeny of $p$-power degree. In this section, we will see that there are often morphisms that do not arise in this way.

\subsection{Case of good reduction}
Our first step is to give a precise answer to Question~\ref{q:hom-of-covers} in the case of good reduction:

\begin{Theorem}\label{t:isomorphisms-between-ab-var-of-good-reduction}
	Let $B,B'$ be two abeloid varieties of good reduction over $K$. Let $\overbar B$ and $\overbar B'$ be the special fibres over the residue field $k=\O_K/\m$. Then:
	\begin{enumerate}
	 \item There is a natural isomorphism, compatible with composition:
	\[\Hom(\wt B,\wt B')=\Hom(\overbar B,\overbar B')[\tfrac{1}{p}].\]
	\item Any morphism $\wt B\to \wt B'$ is the composition of a homomorphism and a translation.
	\item In particular, the following are equivalent:
	\begin{enumerate}
		\item There is an isomorphism $\widetilde{B}\to \widetilde{B}'$ of perfectoid groups over $K$.
		\item There is an isomorphism $\widetilde{B}\to \widetilde{B}'$ of perfectoid spaces over $K$.
		\item  There is an isogeny $\overbar{B}\to \overbar{B}'$ of  $p$-power degree over $k$.
	\end{enumerate}
	If $\Char K=p$, then $\Hom(\wt B,\wt B')=\Hom(B^{\perf}, B'^{\perf})[\tfrac{1}{p}]$, and these are equivalent to:
	\begin{enumerate}
		\setcounter{enumii}{3}
		\item  There is an isogeny $B^{\perf}\to B'^{\perf}$ of $p$-power degree over $K$, i.e.\ a surjection of $v$-sheaves whose kernel is a finite \'etale group scheme over $K$ of $p$-power order.
	\end{enumerate}
	\end{enumerate}
\end{Theorem}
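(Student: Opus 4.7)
The plan is to reduce everything to characteristic $p$ via the tilting equivalence (Theorem~\ref{t:tilting-abeloids}), prove parts (1) and (2) simultaneously using the diamantine Picard functor of Theorem~\ref{t:Pic-functor-wtB}, and then deduce part (3) formally. For the natural map of part (1), given $\overbar\varphi\in\Hom(\overbar B,\overbar B')$, the unique formal smooth model of $B$ carries a unique lift of $\overbar\varphi$, which passes to a homomorphism on generic fibres and hence on universal covers; inverting $p$ extends this to $\Hom\otimes\Z[\tfrac{1}{p}]$ since $[p]:\wt B'\to\wt B'$ is invertible. Injectivity of the natural map is immediate, because the composition $\wt B\to B\to \overbar B^\diamondsuit\tf$ from Proposition~\ref{p:ses-whB-B-barBtf} is surjective, so a nonzero element of $\Hom(\overbar B,\overbar B')\tf$ cannot induce the zero morphism on universal covers.

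For surjectivity in (1) together with the rigidity assertion in (2), I take any morphism of perfectoid spaces $f:\wt B\to\wt B'$, translate by $-f(0)$ to assume $f(0)=0$, and record $b:=f(0)\in \wt B'(K)$ as the translation component. Then I pull back the Poincar\'e bundle $\mathcal P$ on $B'\times B'^\vee$ along $(f\times\mathrm{id}):\wt B\times B'^\vee \to \wt B'\times B'^\vee$, obtaining a line bundle $L$ on $\wt B\times B'^\vee$ which is canonically rigidified along $\{0\}\times B'^\vee$. By Theorem~\ref{t:Pic-functor-wtB}, $L$ is classified by a pair $(\psi,N)\in \uP_{\overbar B}^\diamondsuit\tf(B'^\vee)\times \Pic(B'^\vee)$. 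The rigidification forces $N$ to be trivial and $\psi(0)=0$, so $\psi$ factors through the identity component of $\uP_{\overbar B}^\diamondsuit\tf$, namely $\overbar B^{\vee,\diamondsuit}\tf$, because $B'^\vee$ is connected. After clearing denominators by some power of $p$, full faithfulness of the diamond functor on smooth $k$-schemes (Theorem~\ref{l:ff-embedding-into-v-sheaves}) produces a genuine morphism $\overbar{B}'^\vee\to \overbar B^\vee$ of abelian schemes over $k$, equivalently, by biduality, an element $\tilde f\in\Hom(\overbar B,\overbar B')\tf$. The universal property of the Poincar\'e bundle then forces the image of $\tilde f$ under the natural map of part (1) to differ from $f$ only by the translation $b$, which simultaneously yields the surjectivity of (1) and the hom-plus-translation decomposition of (2).

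Part (3) is now formal: (a)$\Leftrightarrow$(b) is exactly part (2); (a)$\Leftrightarrow$(c) follows from part (1) because an isomorphism $\wt B\isomarrow\wt B'$ corresponds to an invertible element of $\Hom(\overbar B,\overbar B')\tf$, i.e., a $p$-power quasi-isogeny; and in the case $\Char K = p$, (c)$\Leftrightarrow$(d) follows by applying the same Picard argument to $B^{\perf}$ (which represents the same $v$-sheaf as $B$ by Definition~\ref{d:perf}), together with the observation that a $p$-power quasi-isogeny of special fibres extends uniquely to one between perfections and vice versa. The principal obstacle is the descent step via Theorem~\ref{t:Pic-functor-wtB}: translating a line bundle on $\wt B\times B'^\vee$ into a genuine morphism of Picard schemes over $k$. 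Once this descent is in hand, the remainder reduces to bookkeeping with the Poincar\'e bundle and standard duality theory for abelian schemes; a secondary technical point is verifying compatibility of the rigidification on $L$ with specialisation at $0\in\wt B(K)$ in the Picard functor decomposition.
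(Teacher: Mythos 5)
Your overall strategy---reducing the morphism question to a question about line bundles via the Poincar\'e bundle and the diamantine Picard functor of $\wt B$---is the same as the paper's, but several steps have genuine gaps.

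First, the reduction is in the wrong direction. You propose to reduce to characteristic $p$ and then invoke Theorem~\ref{t:Pic-functor-wtB}, but that theorem (and Theorem~\ref{t:Pict}) explicitly assumes $\Char K=0$. The paper reduces to $\Char K=0$ via Theorem~\ref{t:tilting-abeloids} precisely so that it can use the characteristic-zero Picard theory.

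Second, your construction of the natural map $\Hom(\overbar B,\overbar B')\tf\to\Hom(\wt B,\wt B')$ is incorrect as stated: a homomorphism of special fibres $\overbar\varphi:\overbar B\to\overbar B'$ does \emph{not} in general lift to a homomorphism of the smooth formal models over $\O_K$ (that would contradict Serre--Tate deformation theory). What is true, and what the paper uses, is that $\overbar\varphi$ lifts mod $\varpi^\epsilon$ for some $\epsilon>0$ (Lemma~\ref{l:lifting-from-mod-m}), and one then applies the tilting equivalence to the perfectoid formal schemes $\wt{\mathfrak B},\wt{\mathfrak B}'$ to promote the mod-$\varpi^\epsilon$ map of covers to a genuine morphism. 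You have suppressed the tilting step, which is the whole point.

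Third, you apply Theorem~\ref{t:Pic-functor-wtB} to $\Pic(\wt B\times B'^\vee)$, but $B'^\vee$ is a rigid space, not a perfectoid space, so the theorem is not applicable. To make your idea work one must pull back along $\wt B'^\vee\times\wt B\to B'^\vee\times\wt B$, and that pullback has a kernel which the paper computes to be $\wh B'(K)$ in Lemma~\ref{l:Pic(wtBxB')}; the final argument then reconstructs $B'(\wt B)$ from $\Hom(\overbar B,\overbar B')\tf$ and $B'(K)$ by a diagram chase that tracks this kernel and compares with the sequence of Lemma~\ref{l:K-points-of-whB}. Your proof has no analogue of this bookkeeping, so the claim that $L$ is ``classified by a pair $(\psi,N)$'' with $\psi$ landing in $\overbar B^{\vee,\diamond}\tf$ does not follow from the stated ingredients.

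Finally, you cite Theorem~\ref{l:ff-embedding-into-v-sheaves} for ``full faithfulness of the diamond functor on smooth $k$-schemes.'' That theorem concerns semi-normal rigid spaces over $K$, not schemes over the residue field. Full faithfulness of $\overbar B\mapsto\overbar B^\diamond\tf$ is stated as a Corollary \emph{after} Theorem~\ref{t:isomorphisms-between-ab-var-of-good-reduction} and is deduced from it, so invoking it here would be circular. The paper avoids this by descending via the classical theory of Picard functors of abelian schemes over $k$, i.e.\ the decomposition $\Pic(\overbar{B}'^\vee\times\overbar B)=\Pic(\overbar{B}'^\vee)\times\Pic(\overbar B)\times\Hom(\overbar B,\overbar B')$, rather than by a full-faithfulness argument at the level of $v$-sheaves.
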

That $(d)\Rightarrow (a)\Rightarrow (b)$ is clear.
The direction $c)\Rightarrow a)$ is already contained in \cite[Proposition 13.2]{ScholzePSandApplications}. Indeed, $c)\Rightarrow a)$ and $c)\Rightarrow d)$ easily follow from the tilting equivalence \cite[Theorem~5.2]{perfectoid-spaces}, combined with the following two standard  lemmas on schemes:
\begin{Lemma}[{\cite[Proposition 8.13.1]{EGA-IV-1}}]\label{l:morph-proj-lim-schemes-to-ft-scheme}
	Let $S$ be a scheme.
	Let $(X_i)_{i\in I}$ be a cofiltered inverse system of qcqs $S$-schemes with affine transition maps. Let $Y\to S$ be of finite presentation. Then
	\[\Mor_S(\varprojlim_{i\in I}X_i,Y)=\varinjlim_{i \in I} \Mor_S(X_i,Y). \]
\end{Lemma}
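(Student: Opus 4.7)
The plan is to prove this classical EGA approximation statement by a sequence of reductions that eventually bring it down to an assertion about global sections of the structure sheaf. Since the transition maps in $(X_i)$ are affine, the inverse limit $X:=\varprojlim_i X_i$ exists in schemes and each projection $X\to X_i$ is affine; in particular $X$ is an $S$-scheme, and the statement is local on $S$, so I would first reduce to $S=\Spec A$ affine.

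Next, I would reduce to the case that $Y$ is affine. Since $Y\to S$ is of finite presentation, $Y$ admits a finite cover by affine opens $V_1,\dots,V_r$, each of finite presentation over $S$. Given a morphism $f:X\to Y$, the preimages $f^{-1}(V_k)$ form a finite quasi-compact open cover of $X$. By the standard descent of quasi-compact opens through cofiltered limits of qcqs schemes with affine transition maps, these covers descend to finite quasi-compact open covers of some $X_i$; quasi-separatedness of $Y/S$ then allows the gluing data on overlaps to be descended as well. Uniqueness is handled similarly: two $S$-morphisms $X_i\to Y$ agreeing on $X$ have their equaliser a quasi-compact locally closed subscheme of $X_i$ (using quasi-separatedness of $Y/S$), and quasi-compact subsets descend through the system.

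The heart of the argument is then the affine case: write $Y=\Spec A[t_1,\dots,t_n]/(f_1,\dots,f_m)$, as finite presentation allows. A morphism $X\to Y$ over $S$ is then the datum of an $n$-tuple $(a_1,\dots,a_n)\in\Gamma(X,\O_X)^n$ satisfying the $m$ polynomial relations $f_j(a_1,\dots,a_n)=0$. The required input is the identity
\[\Gamma(X,\O_X)=\varinjlim_{i}\Gamma(X_i,\O_{X_i}),\]
which follows from the compatibility of affine morphisms with pushforward of structure sheaves, together with the commutation of filtered colimits with finite limits (applied via a finite affine open cover of one $X_{i_0}$, which is qcqs by assumption). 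Each $a_k$ therefore descends to some $a_k^{(i_k)}$, and after passing to a further index the finitely many polynomial relations hold among their images, yielding the desired morphism $X_i\to Y$.

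The main obstacle is the bookkeeping in the reduction to the affine case: one must exploit the qcqs hypothesis on the $X_i$ both to descend the quasi-compact opens encoding where $f$ lands in each $V_k$ and to check equality of morphisms at a single common index. Once that is in place, the affine case is immediate from the global-sections identity above, and the finite-presentation hypothesis on $Y/S$ is used precisely to ensure that only finitely many generators and relations must be descended.
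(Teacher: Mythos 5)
The paper does not prove this lemma; it cites it directly from EGA IV, Proposition 8.13.1. Your sketch follows the standard route of that proof (and of its Stacks Project counterpart): reduce to $S$ affine, then to $Y$ affine, and then invoke the identity $\Gamma(\varprojlim_i X_i,\O)=\varinjlim_i\Gamma(X_i,\O)$, which holds because the $X_i$ are qcqs and the transition maps are affine. The overall argument is correct.

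Two places where a full writeup needs more care than your phrasing suggests. First, ``the statement is local on $S$'' is not quite right as a justification for the first reduction: a given $X_i$ need not land in a single affine chart of $S$, so one cannot simply restrict. The standard fix is to first base-change everything to $S':=X_{i_0}$ for a fixed index $i_0$ and replace $Y$ by $Y\times_S X_{i_0}$, reducing to the case where $S$ itself is qcqs; the subsequent finite-affine-cover argument on the base is then of the same nature as your reduction to $Y$ affine, so it is not free of content. Second, in the uniqueness step, the equalizer of two $S$-morphisms $X_i\rightrightarrows Y$ is a quasi-compact immersion because $Y\to S$ is quasi-separated (so the diagonal of $Y/S$ is), and spreading out the containment of $X$ in this equalizer uses both a constructibility argument for the open part and a filtered-colimit argument on finitely generated quasi-coherent ideals for the closed part. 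Neither is a genuine gap, but these are exactly the spots where the qcqs hypotheses on the $X_i$ and the finite-presentation hypothesis on $Y/S$ pay their way.
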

\begin{Lemma}\label{l:lifting-from-mod-m}
	Let $X,Y$ be two qcqs schemes over $\O_K$ and assume that  $Y$ is of finite presentation over $\O_K$. Then any morphism $f:X\times_{\O_K}k\to Y\times_{\O_K}k$ can be lifted to a morphism \[\tilde{f}:X\times_{\O_K}\O_K/\varpi^\epsilon\to Y\times_{\O_K}\O_K/\varpi^\epsilon\]
	for some $\epsilon>0$. If $X$ and $Y$ are group schemes and $f$ is a morphism of group schemes, one can arrange for $\tilde{f}$ to be a morphism of group schemes.
\end{Lemma}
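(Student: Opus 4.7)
My plan is a standard approximation argument. Since $K$ is algebraically closed, $\varpi^\epsilon \in \O_K$ is well-defined for all positive rationals $\epsilon$, and unwinding the definition $\mathfrak m = \{x \in \O_K : |x| < 1\}$ gives $\mathfrak m = \bigcup_{\epsilon > 0} \varpi^\epsilon \O_K$. Equivalently, the closed immersion $\Spec k \hookrightarrow \Spec \O_K$ is the cofiltered limit of the closed immersions $\Spec \O_K/\varpi^\epsilon \hookrightarrow \Spec \O_K$, where the transition maps $\Spec \O_K/\varpi^{\epsilon_1} \hookrightarrow \Spec \O_K/\varpi^{\epsilon_2}$ (for $\epsilon_1 \leq \epsilon_2$) are themselves closed immersions, hence affine. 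Base-changing by the qcqs $\O_K$-scheme $X$ yields $X \times_{\O_K} k = \varprojlim_{\epsilon > 0} X \times_{\O_K} \O_K/\varpi^\epsilon$ as a cofiltered limit of qcqs schemes over $S := \Spec \O_K$ with affine transition maps.

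The first assertion now follows from Lemma~\ref{l:morph-proj-lim-schemes-to-ft-scheme} applied to this limit with the finitely presented $S$-scheme $Y$ as target: viewing $f$ as an $S$-morphism to $Y$, it factors through an $S$-morphism $\tilde f : X \times_{\O_K} \O_K/\varpi^\epsilon \to Y$ for some $\epsilon > 0$, which automatically lands in the closed subscheme $Y \times_{\O_K} \O_K/\varpi^\epsilon$ by the universal property of the fibre product (since the structure map to $S$ factors through $\Spec \O_K/\varpi^\epsilon$).

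For the group scheme case, $\tilde f$ is a homomorphism if and only if the two $S$-morphisms $\tilde f \circ \mu_X$ and $\mu_Y \circ (\tilde f \times \tilde f)$ from $X \times_{\O_K} X \times_{\O_K} \O_K/\varpi^\epsilon$ to $Y$ coincide, where $\mu_X, \mu_Y$ denote the respective multiplication maps. These two morphisms agree after restriction along the closed immersion $X \times_{\O_K} X \times_{\O_K} k \hookrightarrow X \times_{\O_K} X \times_{\O_K} \O_K/\varpi^\epsilon$, since $f$ is a group homomorphism. Because $X \times_{\O_K} X$ is again qcqs, Lemma~\ref{l:morph-proj-lim-schemes-to-ft-scheme} applies a second time and identifies $\Mor_S(X \times_{\O_K} X \times_{\O_K} k, Y)$ with the filtered colimit $\varinjlim_{\epsilon'} \Mor_S(X \times_{\O_K} X \times_{\O_K} \O_K/\varpi^{\epsilon'}, Y)$; since two elements of a filtered colimit coinciding in the colimit already coincide at some finite stage, the two displayed morphisms must already agree after pullback along $X \times_{\O_K} X \times_{\O_K} \O_K/\varpi^{\epsilon'} \to X \times_{\O_K} X \times_{\O_K} \O_K/\varpi^\epsilon$ for some $0 < \epsilon' \leq \epsilon$. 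Replacing $\epsilon$ by $\epsilon'$ gives a lift by a morphism of group schemes.

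The only real content beyond Lemma~\ref{l:morph-proj-lim-schemes-to-ft-scheme} is the expression of $\mathfrak m$ as an ascending union of the principal ideals $\varpi^\epsilon \O_K$, which crucially uses that $K$ is algebraically closed (so that arbitrary rational powers of $\varpi$ exist); without this, the rank-one valuation on $\O_K$ would force us to work with the discrete system $(\varpi^n)_{n\in\N}$, which only captures the $\varpi$-adic topology rather than the $\mathfrak m$-adic one and so would not yield a closed immersion $\Spec k$ as a limit of the $\Spec \O_K/\varpi^n$.
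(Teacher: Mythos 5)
Your proof is correct and follows essentially the same route as the paper's: both rest on the colimit identity $k=\varinjlim_{\epsilon\to 0}\O_K/\varpi^\epsilon$ combined with Lemma~\ref{l:morph-proj-lim-schemes-to-ft-scheme} (EGA IV 8.13.1), applied once more on $X\times_{\O_K}X$ to descend the homomorphism condition. You simply spell out the details of why the lift lands in $Y\times_{\O_K}\O_K/\varpi^\epsilon$ and why two maps agreeing in the colimit already agree at a finite stage, which the paper compresses into ``The other statements follow from this.''
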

\begin{proof}
	This follows from Lemma~\ref{l:morph-proj-lim-schemes-to-ft-scheme}: We have $k=\varinjlim_{\epsilon\to 0} \O_K/\varpi^{\epsilon}$. Consequently,
	\[\Mor_{\O_K}(X\times_{\O_K}k,Y)= \Mor_{\O_K}(\varprojlim_{\epsilon\to 0}(X\times_{\O_K}\O_K/\varpi^{\epsilon}),Y)=\varinjlim_{\epsilon\to 0}\Mor_{\O_K}(X\times_{\O_K}\O_K/\varpi^{\epsilon},Y).\]
	The other statements follow from this.
\end{proof}
\begin{proof}[Proof of Theorem~\ref{t:isomorphisms-between-ab-var-of-good-reduction}]
We first prove the direction $c)\Rightarrow a)$: 

Let $\mathfrak B$, $\mathfrak B'$ be the formal models of $B$ and $B'$ over $\O_K$, then by Lemma~\ref{l:lifting-from-mod-m}, any homomorphisms $\overbar{\varphi}:\overbar{B}\to \overbar{B}'$ lifts to a homomorphism $\mathfrak B/\varpi^\epsilon\to \mathfrak B'/\varpi^\epsilon$ for some $\epsilon>0$. Passing to the universal covers $\wt{\mathfrak B}:=\varprojlim_{[p]} \mathfrak B$ and $\wt{\mathfrak B}':=\varprojlim_{[p]} \mathfrak B'$, this induces by applying $\varprojlim_{[p]}$ a homomorphism
\[ \wt{\mathfrak B}/\varpi^\epsilon\to \wt{\mathfrak B}'/\varpi^\epsilon\]
that is an isomorphism if $\overbar{\varphi}$ is a $p$-isogeny (by considering the dual isogeny).
Since $\wt{\mathfrak B}$ and $\wt{\mathfrak B}'$ are perfectoid over $\O_K$, the tilting equivalence implies that this lifts to an isomorphism in the almost category
\[ \wt{\mathfrak B}\to \wt{\mathfrak B}',\]
and we obtain the desired result on the generic fibre.

The direction  $b)\Rightarrow c)$ is more subtle: The above steps are reversible, except for the very last: It is not obvious that every morphism $\wt B\to \wt B'$ has a formal model $\wt{\mathfrak B}\to \wt{\mathfrak B}'$. To see this, we now translate this into a question about line bundles on universal covers. These are well-understood by the tools developed in the series \cite{heuer-diamantine-Picard,heuer-Picard-good-reduction,heuer-v_lb_rigid}, as summarised in \S\ref{s:Pic}.

Via untilting of abeloids, Theorem~\ref{t:tilting-abeloids}, we can reduce to the case that $\Char K=0$.
	By Proposition~\ref{p:H^1_v(A,Zp)=0} and Corollary~\ref{c:cor-to-top-univ-property}, we have a short exact sequence
	\[0\to T_pB'\to \wt B'(\wt B)\to B'(\wt B)\to 0 \]
	so it suffices to describe the morphisms $\wt B\to B'$.  

In the first part of the proof, we have already seen that there is a natural map
\[ \Hom(\overbar B,\overbar B')\tf\times B'(K)\to B'(\wt B)\]
where the first factor goes to homomorphisms $\Hom(\wt B,\wt B')$ and the second is the translation on the target. We need to see that this is an isomorphism, then all desired statements follow.

Let $B'^\vee$ be the dual abeloid of $B'$.
By Theorem~\ref{t:Pict}, any morphism $\wt B\to B'$ defines a $B'^\vee$-invariant line bundle on $B'^\vee\times\wt B$. More precisely, using the natural isomorphism $(B'^\vee)^\vee=B'$, the theorem says that we have a natural isomorphism
\[  \Pic^{\tau}(B'^\vee\times \wt B)=B'(\wt B)\times \Pic(\wt B).\]
By Theorem~\ref{t:Pic(wt B)}, we have  $\Pic(\wt B)=\Pic(\overbar{B})\tf$. On the other hand,
the group on the left is computed by the following lemma (set $B_1:=B'^\vee$, $B_2:=B$):
\begin{Lemma}\label{l:Pic(wtBxB')}
	Assume that $\Char K=0$ and let $B_1$, $B_2$ be abeloids of good reduction over $K$.  Then pullback along $\wt B_1\times \wt B_2\to B_1\times\wt B_2$ defines a left-exact sequence
	\[ 0\to \wh{B}_1^\vee(K) \to \Pic_{\et}(B_1\times\wt B_2)\to \Pic_{\et}(\wt B_1\times \wt B_2).\]
\end{Lemma}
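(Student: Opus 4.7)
The plan is to combine the representability of the Picard functor on perfectoid test objects with the analytic Weil pairing. The argument proceeds in three steps, and the main difficulty concentrates in the last one.

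First, I reduce to the translation-invariant part. Any line bundle $\mathcal L$ on $B_1\times\wt B_2$ whose pullback to $\wt B_1\times \wt B_2$ is trivial has vanishing class in $\NS(B_1)\tf$: restricting to a fibre $B_1\times \{x\}$ and using that $\NS(B_1)\tf\to \NS(\wt B_1)\tf=\NS(\overbar B_1)\tf$ is injective (by Theorem~\ref{t:Pic(wt B)} and good reduction of $B_1$), such an $\mathcal L$ lies in $\Pic^\tau(B_1\times \wt B_2)$. By Theorem~\ref{t:Pict}, specialisation at $0\in B_1(K)$ identifies this with $B_1^\vee(\wt B_2)\times \Pic(\wt B_2)$, and the $\Pic(\wt B_2)$-summand manifestly injects into $\Pic(\wt B_1\times \wt B_2)$. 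Hence the sought kernel lies in the summand $B_1^\vee(\wt B_2)$.

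Second, I identify the restricted pullback. By Theorem~\ref{t:Pic-functor-wtB}, the pullback from $B_1^\vee(\wt B_2)$ factors through specialisation as
\[B_1^\vee(\wt B_2)\to \overbar B_1^{\vee\diamond}\tf(\wt B_2)\hookrightarrow \uP_{\overbar B_1}^\diamond\tf(\wt B_2),\]
where the second map is injective because $\overbar B_1^\vee$ is the identity component of $\uP_{\overbar B_1}$. By Proposition~\ref{p:ses-whB-B-barBtf} applied to $B_1^\vee$, the kernel of the first map equals $H^0(\wt B_2,\wh B_1^\vee)$.

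Third, I claim $H^0(\wt B_2,\wh B_1^\vee)=\wh B_1^\vee(K)$. To see this I compare the split short exact sequence from the analytic Weil pairing (Proposition~\ref{p:analytic-Weil}) for $A=B_1^\vee$,
\[0\to \wh B_1^\vee\to \HOM(T_pB_1^\vee,\wh\G_m)\to V\otimes_K\G_a\to 0,\quad V:=H^0(B_1^\vee,\Omega^1(-1)),\]
evaluated at $\wt B_2$ versus at $K$. By Proposition~\ref{p:H^1_v(A,Zp)=0} we have $H^0(\wt B_2,\O)=K$, so $H^0(\wt B_2,V\otimes\G_a)=V$; the rigid-analytic logarithm identifies $\wh\G_m$ with an open subsheaf of $\G_a$, which combined with $H^0(\wt B_2,\O)=K$ forces $H^0(\wt B_2,\wh\G_m)=\wh\G_m(K)$. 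Profiniteness of $T_pB_1^\vee$ then gives $H^0(\wt B_2,\HOM(T_pB_1^\vee,\wh\G_m))=\Hom(T_pB_1^\vee,\wh\G_m(K))$. The middle and right terms of the two sequences therefore agree, and the five-lemma gives the claim.

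The main obstacle is Step~3, specifically the identification $H^0(\wt B_2,\wh\G_m)=\wh\G_m(K)$. Although intuitively clear, it requires using the logarithm to convert the computation of topologically $p$-torsion units on a perfectoid cover into an additive one, so that the structure-sheaf vanishing of Proposition~\ref{p:H^1_v(A,Zp)=0} can be invoked. Once this is granted, the rest of the argument is a straightforward matter of assembling the representability statements in Theorems~\ref{t:Pict},~\ref{t:Pic(wt B)} and~\ref{t:Pic-functor-wtB}.
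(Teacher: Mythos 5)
Your proof takes a genuinely different route from the paper. The paper uses the Cartan--Leray sequence for the $T_pB_1$-torsors $\wt B_1\to B_1$ and $\wt B_1\times\wt B_2\to B_1\times\wt B_2$ (together with $\O(\wt B_1\times\wt B_2)=K$) to reduce to the kernel of $\Pic_{\et}(B_1)\to\Pic_{\et}(\wt B_1)$, then quotes Theorem~\ref{t:Pic(wt B)} and Lemma~\ref{l:K-points-of-whB}. You instead invoke the representability results (Theorems~\ref{t:Pict} and~\ref{t:Pic-functor-wtB}) and Proposition~\ref{p:ses-whB-B-barBtf} to identify the kernel with $H^0(\wt B_2,\wh B_1^\vee)$, and then compute this group via the analytic Weil pairing. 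This is a legitimate alternative; it is closer in spirit to how Theorems~\ref{t:Pict}, \ref{t:Pic-functor-wtB} are used in the remainder of the surrounding proof, while the paper's Cartan--Leray argument is more self-contained and does not require the compatibility between the two representability isomorphisms.

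Two points, one of which is a genuine error. First, the claim in Step~3 that the logarithm ``identifies $\wh\G_m$ with an open subsheaf of $\G_a$'' is false: the logarithm $\wh\G_m\to\G_a$ has kernel $\mu_{p^\infty}$, so it is not injective. The conclusion $H^0(\wt B_2,\wh\G_m)=\wh\G_m(K)$ is nevertheless correct, and for a simpler reason that avoids the logarithm entirely: $\wh\G_m$ is an \emph{open} subgroup of $\G_m$ in characteristic $0$, and $H^0(\wt B_2,\O)=K$ by Proposition~\ref{p:H^1_v(A,Zp)=0}, so $H^0(\wt B_2,\G_m)=K^\times$ and every morphism $\wt B_2\to\G_m$ is constant; such a constant factors through the open subsheaf $\wh\G_m$ if and only if its value lies in $\wh\G_m(K)$. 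Once this is fixed, the rest of Step~3 (using that $T_pB_1^\vee\cong\uZp^{2d}$, profiniteness, and the five-lemma) goes through. Second, in Step~2 you assert without proof that, under the identifications of Theorems~\ref{t:Pict} and~\ref{t:Pic-functor-wtB}, pullback along $\wt B_1\times\wt B_2\to B_1\times\wt B_2$ is given on the $B_1^\vee(\wt B_2)$ factor by composition with the specialisation map $B_1^\vee\to\overbar B_1^{\vee\diamond}\tf\hookrightarrow\uP_{\overbar B_1}^\diamond\tf$. This is a real compatibility check between two separately-proved representability statements (both are normalised by specialisation at $0$, but in different variables), and it should be stated and justified rather than folded into a citation of a single theorem; the paper's Cartan--Leray route sidesteps exactly this point.
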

\begin{proof}
	Since $\O(\wt B_1\times \wt B_2)=K$, the Cartan--Leray sequence (see \cite[Proposition~2.8]{heuer-v_lb_rigid}) for the $T_pB_1$-torsors $\wt B_1\to B_1$ and $\wt B_1\times \wt B_2\to B_1\times \wt B_2$ yields exact sequences, related via pullback,
	\[
	\begin{tikzcd}	
		0\arrow[r]&{\Hom_{\cts}(T_pB_1,K^\times)} \arrow[r] \arrow[d,equal]          & {H^1_{v}( B_1,\O^\times)}\arrow[d] \arrow[r]                        & {H^1_v(\wt B_1,\O^\times)}           \arrow[d]                \\
		0\arrow[r]&{\Hom_{\cts}(T_pB_1,K^\times)} \arrow[r] & {H^1_{v}(B_1\times \wt B_2,\O^\times)}  \arrow[r]& {H^1_{v}(\wt B_1\times \wt B_2,\O^\times)}.      
	\end{tikzcd}\]
	Specialisation at $0\in \wt B_2(K)$ defines a splitting of the vertical maps. Comparing to the \'etale cohomology, we see that the kernel of the sequence in the lemma gets identified with the kernel of 
	\[\Pic_{\et}(B_1)\to \Pic_{\et}(\wt B_1),\]
	which is $\wh B_1^\vee(K)$ by Theorem~\ref{t:Pic(wt B)} and Lemma~\ref{l:K-points-of-whB}.
\end{proof}
Combining Lemma~\ref{l:Pic(wtBxB')} with Theorem~\ref{t:Pic(wt B)} applied to $\wt B'^\vee\times \wt B$, this shows that pullback along $\wt B'^\vee\times \wt B\to B'^\vee\times\wt B$ defines a commutative diagram with left-exact middle row and column
\[
\begin{tikzcd}
&                                              & \Pic(\wt B) \arrow[r,equal]                        & \Pic(\overbar{B})\tf                                 \\
0 \arrow[r] & \wh B'(K) \arrow[r]                        & \Pic^\tau(B'^\vee\times\wt B) \arrow[r] \arrow[u] & \Pic(\overbar{B}'^\vee\times \overbar{B})\tf \arrow[u] \\
0\arrow[r]&\wh B'(K)\arrow[r]\arrow[u,equal]& {B'(\wt B).} \arrow[u]                     &                                    
\end{tikzcd}\]
Recall that by the classical theory of the Picard functor of abelian varieties, \[\Pic(\overbar{B}'^\vee\times\overbar{B})=\Pic(\overbar{B}'^\vee)\times \Pic(\overbar{B})\times \Hom(\overbar B,\overbar B').  \]
As we are working with $B'^\vee$-invariant line bundles, whose image in the first factor lands in $\overbar{B}'(k)$, we deduce that the image of $B'(\wt B)$ in the middle row generates a left-exact sequence
\[0\to \wh B'(K)\to B'(\wt B)\to \overbar{B}'(k)\tf\times \Hom(\overbar B,\overbar B')\tf.\]
Comparing to the short exact sequence of Lemma~\ref{l:K-points-of-whB}
\[0\to \wh B'(K)\to B'(K)\to \overbar{B}'(k)\tf\to 0,\]
this shows that $B'(K)$ and $\Hom(\overbar B,\overbar B')\tf$ generate all of $B'(\wt B)$, as we wanted to see.
\end{proof}

\begin{Remark}\label{r:strategy-via-p-divisible-groups}
	An alternative, and arguably more ``arithmetic'' strategy for the proof of $a)\Rightarrow c)$ is via $p$-divisible groups. This does not prove part 2 of the Theorem, but is independent of the results of \cite{heuer-diamantine-Picard}, although it still crucially invokes diamonds at some point.
	
	 We sketch the argument: Let $G$, $G'$ be the $p$-divisible group of $\mathfrak B$ and  $\mathfrak B'$. We can form the associated analytic universal covers $\wt{G}$ and  $\wt{G}'$ over $\Spa(K)$ in the sense of Scholze--Weinstein \cite[\S3.1]{ScholzeWeinstein}.
	Via untilting we can reduce to the case of $\Char K=0$.
	Suppose we have an isomorphism $\varphi:\wt B\to \wt B'$. 
	 By passing to topological torsion subgroups, $\varphi$ restricts to an isomorphism
	\[\varphi:\wt G\isomarrow \wt G'.\]
	Assume for a moment that $\varphi$ comes from an isogeny $G\to G'$. If this is the case, then $\varphi$ sends the Tate module of $G$ into the Tate module of $G'$. But these are precisely the Tate modules of $B$ and $B'$, respectively. Consequently, $\varphi$ induces a morphism of short exact sequences
		\[
		\begin{tikzcd}
			T_pB \arrow[d, "\varphi|_{T_pB}"'] \arrow[r] & \wt B \arrow[d, "\varphi"'] \arrow[r] & B \arrow[d, dashed] \\
			T_pB' \arrow[r]                     & \wt B' \arrow[r]                     & B'                 
		\end{tikzcd}\]
	of $v$-sheaves, 
		inducing the desired morphism $B\to B'$, which one easily checks is an isogeny.
	
	We are left to reduce to the case that $\varphi$ comes from an isogeny $G\to G'$. For this, we modify $B$ via Serre--Tate theory: One first proves that reduction defines an isomorphism
	\begin{equation}\label{eq:Hom-UC-of-p-div-groups}
	\Hom(\wt G_,\wt G')=\Hom(\wt G_{\O_K/\varpi},\wt G_{\O_K/\varpi}')=\Hom(G_{\O_K/\varpi},G_{\O_K/\varpi}')[\tfrac{1}{p}]
	\end{equation}
	(we thank Peter Scholze for explaining this to us).
	It follows that $\varphi$ induces a homomorphism
	\[\varphi:G_{\O_K/\varpi}\to  G'_{\O_K/\varpi}\]
	which possibly after enlarging the ideal $(\varpi)$ we can assume to be an isogeny. Let $H$ be its kernel $H$.
	Replacing $B$ by any lift $B_1$ of the abelian scheme $B_{\O_K/\varpi}/H$ to $\O_K$, the isogeny $ B_{\O_K/\varpi}\to  B_{\O_K/\varpi}/H$ lifts by $c)\Rightarrow a)$ to an isomorphism $\wt B\to \wt B_1$. We may thus without loss of generality assume $\varphi$ is an isomorphism.
	We now apply Serre--Tate lifting to
	\[ (B_{\O_K/\varpi},\varphi:G_{\O_K/\varpi}\isomarrow  G'_{\O_K/\varpi}).\]
	This produces a formal abelian scheme $B_2$ over $\O_K$ whose reduction is isomorphic to $B_{\O_K/\varpi}$ but whose $p$-divisible group is $G'$. In particular, again using $c)\Rightarrow a)$, we get an isomorphism
	\[\wt B_2\isomarrow\wt B \isomarrow\wt B'\]
	that restricts to the identity $\wt G_1\to \wt G_1$, as desired.\qed
\end{Remark}

Theorem~\ref{t:isomorphisms-between-ab-var-of-good-reduction} means that the perfectoid space $\wt B$ is determined by the $p$-isogeny class of the special fibre  $\overbar B$  of $B$. This raises the question whether one can explicitly recover the special fibre up to isogeny from $\wt B$ in a functorial way without the additional datum of $B$. One way to do so is by formal models, which requires one to define a natural analogue of a N\'eron model for $\wt B$. Another way is via the Picard functor of $\wt B$, which by Theorem~\ref{t:Pic-functor-wtB} is given by $\Pic_{\overbar B}^\diamond$. It thus suffices to note that passing from $\overbar B$ to $\overbar B^\diamond$ is fully faithful, by the following corollary of Theorem~\ref{t:isomorphisms-between-ab-var-of-good-reduction}:
\begin{Corollary}
	Let $B$ and $B'$ be two abeloids of good reduction over $K$. Then
	\[ H^0(\overbar{B}^\diamond\tf,\overbar{B}'^\diamond\tf)=H^0(\overbar{B},\overbar{B}')\tf.\] 
\end{Corollary}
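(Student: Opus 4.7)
The plan is to pull back along the natural surjection $\wt B\to \overbar{B}^\diamond\tf$, which is the composition $\wt B\twoheadrightarrow B\twoheadrightarrow \overbar{B}^\diamond\tf$ of the $p$-divisibility quotient with the surjection of Proposition~\ref{p:ses-whB-B-barBtf}. As this is a surjection of $v$-sheaves, pullback induces an injection
\[H^0(\overbar{B}^\diamond\tf,\overbar{B}'^\diamond\tf)\hookrightarrow H^0(\wt B,\overbar{B}'^\diamond\tf),\]
through which the natural map from $H^0(\overbar{B},\overbar{B}')\tf$ visibly factors. It will therefore suffice to compute the right-hand side and check that $H^0(\overbar{B},\overbar{B}')\tf$ surjects onto it.

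To compute $H^0(\wt B,\overbar{B}'^\diamond\tf)$, I apply $H^0(\wt B,-)$ to the short exact sequence $0\to \wh B'\to B'\to \overbar{B}'^\diamond\tf\to 0$ from Proposition~\ref{p:ses-whB-B-barBtf}. Since $H^1(\wt B,\wh B')=0$ by Corollary~\ref{c:H^1(wt A,wh A')}, this yields
\[H^0(\wt B,\overbar{B}'^\diamond\tf)=B'(\wt B)/\wh B'(\wt B).\]
The proof of Theorem~\ref{t:isomorphisms-between-ab-var-of-good-reduction} provides the direct sum decomposition $B'(\wt B)=\Hom(\overbar{B},\overbar{B}')\tf\oplus B'(K)$, the two summands arising from homomorphisms (via the lift $\wt B\to \wt B'\to B'$) and translations respectively; directness follows because any group homomorphism $\wt B\to B'$ that is constant must vanish, using connectedness of $\wt B$. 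To identify $\wh B'(\wt B)$, I invoke Proposition~\ref{p:analytic-Weil} to embed $\wh B'$ as a split summand of $\wh\G_m^{2d}$, and use $\wh \G_m(\wt B)=\wh \G_m(K)$, which is immediate from $\O(\wt B)=K$ in Proposition~\ref{p:H^1_v(A,Zp)=0}; comparing the split analytic Weil sequences over $\wt B$ and over $K$ yields $\wh B'(\wt B)=\wh B'(K)$.

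Combining these with Lemma~\ref{l:K-points-of-whB}, which identifies $B'(K)/\wh B'(K)$ with $\overbar{B}'(k)\tf$, gives
\[H^0(\wt B,\overbar{B}'^\diamond\tf)=\Hom(\overbar{B},\overbar{B}')\tf\oplus \overbar{B}'(k)\tf.\]
By the rigidity lemma for abelian varieties, $H^0(\overbar{B},\overbar{B}')=\Hom(\overbar{B},\overbar{B}')\oplus \overbar{B}'(k)$, and inverting $p$ produces the same group. Both summands visibly come from morphisms $\overbar{B}^\diamond\tf\to \overbar{B}'^\diamond\tf$ (functoriality of $(-)^\diamond$ after inverting $p$ for homomorphisms, constant maps for $K$-points), so the composite $H^0(\overbar{B},\overbar{B}')\tf\to H^0(\overbar{B}^\diamond\tf,\overbar{B}'^\diamond\tf)\hookrightarrow H^0(\wt B,\overbar{B}'^\diamond\tf)$ is the identity, forcing equality throughout. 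The subtle point is the identification $\wh B'(\wt B)=\wh B'(K)$: the non-quasi-compact perfectoid $\wt B$ could a priori support many $\wh B'$-valued sections, and it is precisely the analytic Weil embedding into $\wh\G_m^{2d}$ combined with the vanishing $\O(\wt B)=K$ that rules this out.
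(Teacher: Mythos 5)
Your proof is correct, but it takes a genuinely different route from the paper. The paper starts from a given morphism $\overbar{B}^\diamond\tf\to\overbar{B}'^\diamond\tf$, composes with $\wt B\to\overbar{B}^\diamond\tf$, and then lifts twice: first to $\wt B\to B'$ using $H^1_v(\wt B,\wh B')=0$, then to $\wt B\to\wt B'$ using the universal lifting property of Corollary~\ref{c:cor-to-top-univ-property}, at which point $\Hom(\wt B,\wt B')=\Hom(\overbar B,\overbar B')\tf$ from Theorem~\ref{t:isomorphisms-between-ab-var-of-good-reduction} finishes the job. You instead pull back into $H^0(\wt B,\overbar{B}'^\diamond\tf)$ and compute that group head-on as $B'(\wt B)/\wh B'(\wt B)$. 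The same two inputs drive both arguments, but your route forces an extra computation that the paper's lifting argument sidesteps entirely: identifying $\wh B'(\wt B)=\wh B'(K)$. You handle this correctly via the split Weil embedding $\wh B'\hookrightarrow\wh\G_m^{2d}$ and $\O(\wt B)=K$, and you rightly flag it as the subtle point. Two minor remarks: the cited Proposition~\ref{p:H^1_v(A,Zp)=0} literally gives $H^0(\wt B,\O^+)\aeq\O_K$, so $\O(\wt B)=K$ requires a short additional step (torsion-freeness of ideals in $\O_K$, qcqs-ness of $\wt B$); and the compatibility that the composite $H^0(\overbar B,\overbar B')\tf\to H^0(\wt B,\overbar{B}'^\diamond\tf)$ really is the evident identification deserves a word, since the lift $\tilde\psi$ does not descend to a map $B\to B'$ and one has to trace through formal models mod $\m$ — though the paper's proof is equally terse on this point. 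Overall your version is sound and more computational; the trade-off is a longer argument in exchange for an explicit description of $H^0(\wt B,\overbar{B}'^\diamond\tf)$ that the paper never needs.
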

\begin{proof}
	It is clear by functoriality that we have a map from right to left. To construct an inverse, let $\overbar{B}^\diamond\tf\to \overbar{B}'^\diamond\tf$ be any map. Recall from Proposition~\ref{p:ses-whB-B-barBtf} that there is a short exact sequence
	\[0\to \wh B\to B\to \overbar{B}^\diamond\tf\to 0.\]
	We form the composition
	\[ \wt B\to B\to \overbar{B}^\diamond\tf\to \overbar{B}'^\diamond\tf.\]
	It now suffices to see that this map lifts to $B'$, as by the universal lifting property Corollary~\ref{c:cor-to-top-univ-property} it then automatically lifts further to $\wt B'$, which implies the desired result by Theorem~\ref{t:isomorphisms-between-ab-var-of-good-reduction}. For this it suffices to see that $H^1_v(\wt B,\wh B')=1$. But this holds by Corollary~\ref{c:H^1(wt A,wh A')}.
\end{proof}
\subsection{Morphisms from universal covers of tori}
In order to get from the case of good reduction of Theorem~\ref{t:isomorphisms-between-ab-var-of-good-reduction} to the general case, our next goal is to describe morphisms between universal covers of Raynaud extensions. The first step in this direction is the following Proposition, which we prove in this subsection:
\begin{Proposition}\label{p:morphisms-between-univ-covers-of-Raynaud-extensions}
	Let $T\to E\to B$ and $T'\to E'\to B'$ be Raynaud extensions over $K$. Let \[\varphi:\wt E\to \wt E'\] be any morphism of perfectoid spaces over $K$ such that $\varphi(0)=0$. Then $\varphi$ is a homomorphism and there are unique homomorphisms $\varphi_T$ and $\varphi_B$ making the following diagram commute:
	\[
	\begin{tikzcd}
		0\arrow[r]&\wt T \arrow[d,"\varphi_T",dashed]\arrow[r] &  \wt E \arrow[r] \arrow[d,"\varphi"] & \wt B \arrow[d,"\varphi_B",dashed]\arrow[r]&0\\
		0\arrow[r]&\wt T'\arrow[r]&  \wt E' \arrow[r] &  \wt B'\arrow[r]&0.
	\end{tikzcd}
	\]
	The homomorphism $\varphi$ is uniquely determined by $(\varphi_T,\varphi_B)$.
\end{Proposition}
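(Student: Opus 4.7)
The plan is to reduce the entire proposition to the following \emph{key vanishing}: for every perfectoid space $Y$ over $K$, every morphism $f : Y \times \wt T \to \wt B'$ that is zero on $Y \times \{0\}$ is identically zero. This relative pointed vanishing will imply all three assertions of the proposition --- the existence of the homomorphisms $\varphi_T,\varphi_B$, the fact that $\varphi$ itself is a homomorphism, and the uniqueness of $\varphi$ given $(\varphi_T,\varphi_B)$.

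To prove the key vanishing, I would proceed in two steps. First, reduce from morphisms into $\wt B'$ to morphisms into $B'$ using the topological universal property Corollary~\ref{c:cor-to-top-univ-property}; this requires the vanishing $H^1_{\qproet}(\wt T, \uZp) = 0$, which for $T = \G_m^r$ follows by a Cartan--Leray computation for the pro-\'etale $T_p T$-torsor $\wt T \to T$, analogous to the argument behind Proposition~\ref{p:H^1_v(A,Zp)=0}. Second, classify pointed morphisms $Y \times \wt T \to B'$ via the diamantine Picard functor: Theorem~\ref{t:Pict} applied with $A = B'^\vee$ realises them via specialisation at $0 \in \wt T$ as classes in $\Pic^\tau(B'^\vee \times Y \times \wt T)$ trivial along $B'^\vee \times Y \times \{0\}$. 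Descending along the pro-\'etale $T_p T$-cover $\wt T \to T$, the classes pulled back from $B'^\vee \times Y \times T$ already account for $B'(Y)$, because $T$ is affine and $B'$ proper (so $\Hom_\ast(T,B') = 0$ relative to $Y$), and the additional classes correspond via Cartan--Leray to continuous homomorphisms $T_p T \to \wh{B'}(Y)$. Using the analytic Weil pairing Proposition~\ref{p:analytic-Weil} to embed $\wh{B'} \hookrightarrow \wh{\G_m}^{2d}$, together with the torsion-freeness of $T_p T$, one verifies that none of these extra contributions produce new pointed morphisms to $B'$. This Picard-theoretic bookkeeping is the main technical obstacle.

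Given the key vanishing, the remainder is essentially formal. Applying it to $\wt T \hookrightarrow \wt E \xrightarrow{\varphi} \wt E' \twoheadrightarrow \wt B'$ shows that $\varphi|_{\wt T}$ factors through $\ker(\wt E' \to \wt B') = \wt T'$, defining $\varphi_T$. Applying it with $Y = \wt E$ to the morphism $\sigma(x,t) := [\varphi(x+t)] - [\varphi(x)] : \wt E \times \wt T \to \wt B'$, which vanishes on $\wt E \times \{0\}$ and, by the previous step, on $\{0\} \times \wt T$, forces $\sigma = 0$; thus $[\varphi] : \wt E \to \wt B'$ is $\wt T$-invariant and factors as $\wt E \to \wt B \xrightarrow{\varphi_B} \wt B'$. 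The induced $\varphi_B$ is a homomorphism by Theorem~\ref{t:isomorphisms-between-ab-var-of-good-reduction}(2), and $\varphi_T$ is one by an analogous elementary statement for tori. Finally, the defect $m(x,y) := \varphi(x+y) - \varphi(x) - \varphi(y) : \wt E \times \wt E \to \wt E'$ vanishes on both axes, is trivial modulo $\wt T'$ (since $\varphi_B$ is a homomorphism) and on $\wt T \times \wt T$ (since $\varphi_T$ is one); a parallel vanishing statement for pointed morphisms $\wt E \times \wt E \to \wt T'$ with prescribed vanishing on the axes --- provable by reducing to $\wt \G_m$-valued maps and applying a Picard argument analogous to the one above --- then forces $m = 0$. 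Uniqueness of $\varphi$ given $(\varphi_T,\varphi_B)$ follows by the same circle of ideas applied to the difference of two extensions.
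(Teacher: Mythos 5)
Your overall strategy---reduce everything to vanishing statements about pointed morphisms from torus-type spaces, and attack those via the diamantine Picard functor---is indeed the same in spirit as what the paper does. But there are two genuine gaps where the execution diverges from a correct proof.

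\textbf{First gap: the scope of the key vanishing.} You state the key vanishing for an \emph{arbitrary} perfectoid $Y$ over $K$, and then apply it with $Y=\wt E$ to the map $\sigma:\wt E\times\wt T\to\wt B'$. The paper proves the corresponding statement (Proposition~\ref{p:Map(wtT,B)}, namely $H^0(\wt T\times U,B')=H^0(U,B')$) only for $U$ an \emph{affinoid perfectoid space of good reduction}, and that hypothesis is used essentially: the proof goes through the sequence $0\to\wh B'\to B'\to\overbar{B}'^\diamond\tf\to0$ of Proposition~\ref{p:ses-whB-B-barBtf}, and the $\overbar{B}'^\diamond\tf$-part requires the $\bOx$-cohomology computation of Proposition~\ref{p:bOx-on-wtTxV}, which is stated and proved only for affinoid perfectoid of good reduction, as does $H^1_v(U,\wh B')=0$. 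The paper therefore does \emph{not} need or prove the vanishing for $Y=\wt E$: instead, to get the existence of $\varphi_B$ it works \emph{locally on $\wt B$}, using that $\wt E|_U=\wt T\times U$ for $U\subseteq\wt B$ affinoid of good reduction, applies the vanishing there, and glues. Your direct Picard-theoretic route for general $Y$ (descend $\Pic^\tau(B'^\vee\times Y\times\wt T)$ along $\wt T\to T$, claim the classes from $T$ give only $B'(Y)$, claim the $T_pT$-contribution yields no new pointed morphisms) is left vague precisely where it matters---you acknowledge it as the main technical obstacle but the descent and the claim ``$\Hom_*(T,B')=0$ relative to $Y$'' are not established, and it is not clear that the needed cohomological controls hold for arbitrary $Y$. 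A correct proof either follows the paper and localizes on $\wt B$ to reduce to the good-reduction affinoid case, or proves a general-$Y$ statement, which would require significant extra work.

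\textbf{Second gap: morphisms into $\wt T'$.} Both the uniqueness of $\varphi$ given $(\varphi_T,\varphi_B)$ and the final step forcing $m:\wt E\times\wt E\to\wt T'$ to vanish require controlling morphisms from (products of) $\wt E$ into $\wt T'$, which is a different kind of target from $\wt B'$: the key vanishing tells you nothing here. The paper's Lemma~\ref{l:fcts-of-wtE} handles this by a direct structure-theory argument: using the local triviality of $\pi:\wt E\to\wt B$ and Lemma~\ref{l:wt-T}.2, one finds a short exact sequence $0\to\O_{\wt B}^\times\to\pi_*\O_{\wt E}^\times\to\underline{M^\vee}\to0$, which on global sections gives $\O^\times(\wt E)\hookrightarrow\O^\times(\wt T)$; characters of $T'$ then show $H^0(\wt E,\wt T')\hookrightarrow H^0(\wt T,\wt T')$. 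Your phrase ``applying a Picard argument analogous to the one above'' does not address this: the Picard functor controls maps into the \emph{abelian} factor $B'$, while $\G_m$-valued maps are an $\O^\times$-computation, not a $\Pic$-computation. Without something like Lemma~\ref{l:fcts-of-wtE} the uniqueness claim and the final $m=0$ claim are unsupported. (Also note that, once one has the injectivity $H^0(\wt E\times\wt E,\wt T')\hookrightarrow H^0(\wt T\times\wt T,\wt T')$, the vanishing of $m$ on the axes is actually unnecessary; the essential input is the vanishing of $m|_{\wt T\times\wt T}$, i.e.\ that $\varphi_T$ is a homomorphism.)
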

\begin{proof}
	We start by showing that if there are any morphisms of perfectoid spaces $\varphi_T$ and $\varphi_B$ making the diagram commute, then these determine $\varphi$ uniquely: Indeed, for any two $\varphi,\varphi'$ for which $\varphi_B$ agrees, the difference $\varphi-\varphi'$ factors through $\wt T'$. It thus suffices to see:
	\begin{Lemma}\label{l:fcts-of-wtE}
		The restriction map $H^0(\wt E,\wt T')\to H^0(\wt T,\wt T')$ is injective.
	\end{Lemma}
For the proof, we use a technical lemma on functions on tori that we again need later:
\begin{Lemma}[{\cite[Lemma~3.2]{heuer-Picard_perfectoid_covers}}]\label{l:wt-T}
	Let $T$ be any torus over $K$. Let $N:=\Hom(T,\G_m)$ be the character lattice . Let $U$ be any perfectoid space over $K$. Then
	\begin{enumerate}
		\item\label{e:O+-on-torus} $\O^+(\wt T\times U)=\O^+(U)$,
		\item\label{e:Ox-on-torus} $\O^{\times}(\wt T\times U)=\O^\times(U)\times \underline{N}\tf(U)$, 
		\item\label{e:bOx-on-torus}  $\bOx(\wt T\times U)=\bOx(U)\times \underline{N}\tf(U)$, where $\bOx=\O^\times/(1+\m\O^+)$ is from Definition~\ref{d:bOx}.
	\end{enumerate}
\end{Lemma}
\begin{proof}[Proof of Lemma~\ref{l:fcts-of-wtE}]
	Set  $M'^\vee:=\Hom(T',\G_m)$ and 
	consider the projection $\pi:\wt E\to \wt B$. As this is locally over affinoids $U\subseteq \wt B$ isomorphic to the projection map $\wt T\times U\to U$, it follows from Lemma~\ref{l:wt-T}.2 that we have a short exact sequence of sheaves on $\wt B$
	\[ 0\to \O_{\wt B}^\times\to \pi_{\ast}\O_{\wt E}^\times\to \underline{M'^\vee}\to 0.\]
	On global sections, this shows $\O^\times(\wt E)\subseteq \O^\times(\wt T)$. Since any morphism $\wt E\to \wt T'$ is determined by its compositions with the characters $\wt T'\to \G_m$, this implies the desired statement.
\end{proof}
	Next, we show that if there are morphism $\varphi_T$, $\varphi_B$ making the diagram commute, then $\varphi$ is a homomorphism: We have already seen that any morphism $\varphi_B:\wt B\to \wt B'$ such that $\varphi_B(0)=0$ is a homomorphism. To deduce that this is also true for $\varphi$, the uniqueness assertion applied to $E\times E$ shows that it suffices to prove that $\varphi_T$ is a homomorphism:
	\begin{Lemma}\label{l:H0(wtT,wtT')}
		Let $T$ and $T'$ be two tori over $K$. Then any morphism $\wt T\to \wt T'$ is the composition of a homomorphism and a translation. Moreover, we have $\Hom(\wt T,\wt T')=\Hom(T,T')\tf$.
	\end{Lemma}
\begin{proof}
	After a choice of splitting $T'=\G_m^r$, we may assume $T'=\G_m$. Then by Lemma~\ref{l:wt-T}.\ref{e:Ox-on-torus},
\[\textstyle H^0(\wt T,\wt \G_m)=\varprojlim_{[p]}H^0(\wt T,\G_m)=K^\times\times \Hom(T,\G_m)\tf.\]
The first factor accounts precisely for the translation by the points $\G_m(K)=K^\times$.
\end{proof}

For the proof of Proposition~\ref{p:morphisms-between-univ-covers-of-Raynaud-extensions}, it thus suffices to show that $\varphi:\wt E\to \wt E'$ induces morphisms $\varphi_T:\wt T\to \wt T'$ and $\varphi_B:\wt B\to \wt B'$ making the diagram commute. 
To produce $\varphi_T$, it suffices to show that any morphism $\wt T\to \wt B'$ is constant. Second, we need to see that any morphism
\[ \wt E\to \wt B'\]
factors through a map $\varphi_B:\wt B\to \wt B'$. Recall that locally over $U\subseteq \wt B$, we have $\wt E_{|U}=\wt T\times U$.
Both of these statements are therefore captured by the following more general result:
	\begin{Definition}
		An affinoid perfectoid space $X$ over $K$ is of good reduction if it can be written as an affinoid tilde-limit $U\sim\varprojlim_{i\in I} U_i$ of a cofiltered inverse system of generic fibres of smooth affine formal schemes over $\O_K$ such that $\varinjlim_{i\in I}\O(U_i)\to \O(U)$ has dense image.
	\end{Definition}
	\begin{Proposition}\label{p:Map(wtT,B)}
		Assume $\Char K=0$. Let $T$ be a torus, let $B$ be an abeloid of good reduction and let $U$ be a connected affinoid perfectoid space of good reduction. Then pullback defines an isomorphism
		\[H^0(\wt T\times U,B)=H^0(U,B).\]
	\end{Proposition}
\begin{proof}
	We evaluate  the  short exact sequence of Proposition~\ref{p:ses-whB-B-barBtf} at the projection $\wt T\times U\to U$ which is split by specialisation at $0\in \wt T$. This yields a morphism of left-exact sequences
	\[\begin{tikzcd}
		0 \arrow[r] & \wh B(U) \arrow[d] \arrow[r] & B(U) \arrow[d] \arrow[r] & \overbar{B}^\diamond\tf(U) \arrow[d] \arrow[r] & 0 \\
		0 \arrow[r] & \wh B(\wt T\times U) \arrow[r] & B(\wt T\times U) \arrow[r]& \overbar{B}^\diamond\tf(\wt T\times U) & 
	\end{tikzcd}\]
	As $U$ is affinoid perfectoid, the top sequence is short exact since $H^1_v(U,\wh B)=0$ by  \cite[Corollary~5.7]{heuer-Picard-good-reduction}. It therefore suffices to prove that the outer vertical maps are isomorphisms.
	
	For the map on the left, this is easy: We use the injection $\wh B\hookrightarrow \wh \G_m^{2d}$ of Proposition~\ref{p:analytic-Weil}, which in turn injects into a closed ball of radius $2d$. It now follows from Lemma~\ref{l:wt-T}.\ref{e:O+-on-torus} that any morphism $\varphi:\wt T\times U\to \wh B$ factors through the projection to $U$.
	
	We have thus reduced to seeing that the morphism on the right is an isomorphism. Since the definition of $\overbar{B}^\diamond\tf$ involves sheafification, it is not immediately clear how to check this directly. Instead, similarly to the proof of Theorem~\ref{t:isomorphisms-between-ab-var-of-good-reduction}, we again translate this into a statement about line bundles, but this time on the universal cover $\wt B$:
	
	By Theorem~\ref{t:Pic-functor-wtB}, the sheaf $\Pic_{\overbar{B}^\vee}^\diamond\tf$  represents the Picard functor of $\wt B^\vee$. It fits into an exact sequence
	\[ 0\to \overbar{B}^\diamond\tf\to \Pic_{\overbar{B}^\vee}^\diamond\tf\to \underline{\NS(\overbar{B}^\vee)}\tf\to 0. \]
	 Let $Q:=\underline{\NS(\overbar{B}^\vee)}\tf(U)$, then this means that we have a morphism of exact sequences
	\[\begin{tikzcd}
		0 \arrow[r] & \overbar{B}^\diamond\tf(U) \arrow[d] \arrow[r] & {H^1_{\et}(U\times \wt B^\vee,\bOx)} \arrow[d] \arrow[r] & {H^1_{\et}(U,\bOx)\times Q} \arrow[d] \arrow[r]&0\\
		0 \arrow[r] & \overbar{B}^\diamond\tf(\wt T\times U) \arrow[r] & {H^1_{\et}(\wt T\times U\times \wt B^\vee,\bOx)} \arrow[r] & {H^1_{\et}(\wt T\times U,\bOx)\times Q}\arrow[r]&0.
	\end{tikzcd}\]
	We now use the following result to show that the vertical morphisms are isomorphisms:

\begin{Proposition}[{\cite[Proposition~3.17]{heuer-Picard_perfectoid_covers}}]\label{p:bOx-on-wtTxV}
	Let $V$ be an affinoid perfectoid space of good reduction over $K$. Then pullback along $\wt T\times V\to V$ defines an isomorphism
	\[ H^1_{\et}(\wt T\times V,\bOx)=H^1_{\et}( V,\bOx).\]
\end{Proposition}
By our assumption on $U$, setting $V:=U$ shows immediately that the morphism on the right is an isomorphism. To see this for the morphism in the middle, recall that locally on $B^\vee$, the space $\wt B^\vee$ is made out of affinoid perfectoid spaces $V\subseteq \wt B^\vee$ of good reduction. More precisely, we can choose a cover of $\wt B^\vee$ by opens such that all intersections are of this form. We then obtain covers $\mathfrak U$ of $U\times \wt B^\vee$ and $\wt T\times \mathfrak U$ of $U\times \wt B^\vee$  with \cH-to-sheaf sequence
\[ 0\to \cH^1(\wt T\times \mathfrak U,\bOx)\to H^1(\wt T\times U\times \wt B,\bOx)\to   \cH^0(\wt T\times \mathfrak U,H^1(-,\bOx))\to \cH^2(\wt T\times\mathfrak U,\bOx).\]
Comparing to the cover $\mathfrak U$ via pullback and specialisation at $0\in \wt T$, we see from Proposition~\ref{p:bOx-on-wtTxV} that the third terms are isomorphic for both covers. It thus suffices to show
\[\cH^1(\wt T\times \mathfrak U,\bOx)=\cH^1(\mathfrak U,\bOx).\]
As $\bOx(\wt T\times U\times V)=\bOx(U\times V)\times M^\vee\tf(U\times V)$ by Lemma~\ref{l:wt-T}.3, it suffices to see that
\[ H^1_{\an}(\wt B^\vee\times V,\uZ)=0.\]
For this we first note that $\textstyle H^1_{\an}(\wt B^\vee,\uZ)=\varinjlim_{[p]}H^1_{\an}(B^\vee,\uZ)=0$
by \cite[Proposition 8.5]{BL-Degenerating-AV}, which we  recall below in
 Proposition~\ref{p:morph-from-T-to-B=0}. By the same limit argument, we have $H^1_{\an}(V,\uZ)=0$ by \cite[Lemma~8.11.c]{BL-stable-reduction-II}. The vanishing now follows from the Leray sequence for $\wt B^\vee\times V\to \wt B^\vee$.

This finishes the proof of Proposition~\ref{p:Map(wtT,B)},
\end{proof}
which in turn finishes the proof of Proposition~\ref{p:morphisms-between-univ-covers-of-Raynaud-extensions}.
\end{proof}
Before we go on, we  take a short detour to note that an independent and much simpler proof is possible if we are only interested in Main Theorem~\ref{t:intro-Hom(A,A')} rather than Theorem~\ref{t:morph-is-comp-intro}, that is if we know a priori that $\varphi$ is a homomorphism: 
In this case, for the existence of $\varphi_T$ and $\varphi_B$, it suffices to see that the induced homomorphism $\wt T\to \wt B'$ is trivial.
This is a perfectoid analogue of the following statement from the rigid theory:

Let $A$ be an abeloid variety with Raynaud uniformisation $A=E/M$ where $E$ is as before. Let $X_{\ast}(T)$ be the cocharacter lattice of $T$ and let $A^\vee$ be the dual abeloid to $A$.

\begin{Proposition}[{\cite[Proposition 8.5]{BL-Degenerating-AV}}]\label{p:morph-from-T-to-B=0}
	Let $A$ be an abeloid variety. Then
	\[ \Hom(\G_m,A)=\Hom(\G_m,T)=X_{\ast}(T)=H^1_{\an}(A^\vee,\uZ).\]
	In particular, we have $\Hom(\G_m,B)=H^1_{\an}(B^\vee,\uZ)=0$.
\end{Proposition}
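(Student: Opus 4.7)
The strategy is to reduce all four equalities to two vanishing statements for an abeloid $B$ of good reduction, namely $\Hom(\G_m,B)=0$ and $H^1_{\an}(B,\uZ)=0$. The equality $\Hom(\G_m,T)=X_*(T)$ is immediate from the definition, since $T$ is a split torus. For $\Hom(\G_m,A)=\Hom(\G_m,T)$, given $f:\G_m\to A$, the Raynaud uniformisation $A=E/M$ allows one to lift to $\tilde f:\G_m\to E$: the obstruction is the class of the pulled-back $M$-torsor in $H^1_{\an}(\G_m,M)$, which vanishes by a direct \v{C}ech computation using the cover $\G_m=\{|T|<s\}\cup\{|T|>r\}$ by two open discs with annular intersection. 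The composition $\G_m\to E\to B$ must then be constant by the first vanishing, so $\tilde f$ factors through a fibre of $E\to B$, that is, through a translate of $T$.

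For the duality $X_*(T)=H^1_{\an}(A^\vee,\uZ)$, I would use the dual Raynaud uniformisation $A^\vee=E^\vee/X^*(T)$, where $E^\vee$ is an extension of $B^\vee$ by the torus $T^\vee$ of character lattice $M$. The Cartan--Leray sequence of the discrete cover $E^\vee\to A^\vee$ yields a left-exact sequence
\[0\to \Hom(X^*(T),\Z)\to H^1_{\an}(A^\vee,\uZ)\to H^1_{\an}(E^\vee,\uZ),\]
whose first term is precisely $X_*(T)$. The vanishing of $H^1_{\an}(E^\vee,\uZ)$ then follows from the torus extension $T^\vee\to E^\vee\to B^\vee$ together with $H^1_{\an}(T^\vee,\uZ)=0$ and the second vanishing $H^1_{\an}(B^\vee,\uZ)=0$.

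The core of the argument is therefore the two vanishings for $B$ of good reduction, which I would prove by a common formal-model technique. For $\Hom(\G_m,B)=0$, given $f:\G_m\to B$ translated so that $f(1)=0$, fix a proper smooth formal model $\mathfrak B/\O_K$ of $B$ with abelian special fibre $\overbar B$. For each $r\in\sqrt{|K^\times|}$ the annulus $A_r=\{|T|=r\}$ has a canonical smooth affine formal model $\mathfrak A_r$ with special fibre $\G_{m,k}$, and $f|_{A_r}$ extends to $\mathfrak A_r\to\mathfrak B$ by the valuative criterion of properness. Its reduction $\G_{m,k}\to\overbar B$ is constant since no torus maps non-trivially to an abelian variety; hence $f(A_r)$ lies in the tube of a single $k$-point, which by continuity and connectedness of $\G_m$ must be $0$. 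So $f(\G_m)\subseteq\wh B$, and via the embedding $\wh B\hookrightarrow\wh\G_m^{2d}$ of Proposition~\ref{p:analytic-Weil}, $f$ is encoded by $2d$ analytic functions $\G_m\to 1+\m\O^+$, which are bounded and hence constant because bounded global analytic functions on the rigid $\G_m$ are scalars. The second vanishing $H^1_{\an}(B,\uZ)=0$ is similar: a $\Z$-torsor on $B$ pulls back to a $\Z$-torsor on $\mathfrak B$ whose reduction is a $\Z$-cover of the abelian variety $\overbar B$, which is trivial because $\pi_1^{\et}(\overbar B)$ is profinite.

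The main obstacle is the first vanishing. Rigid GAGA is unavailable because $\G_m$ is not proper, so one cannot simply algebraise $f$ and invoke the classical fact that tori admit no non-trivial maps to abelian varieties. The formal-model argument circumvents this by working annulus by annulus, reducing to the classical statement over the residue field, and then using the topological torsion subgroup $\wh B$ together with the analytic Weil pairing of Proposition~\ref{p:analytic-Weil} to promote a pointwise reduction statement into global constancy on $\G_m$.
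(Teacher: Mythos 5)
The paper does not prove this proposition at all: it is imported verbatim from Bosch--L\"utkebohmert \cite[Proposition~8.5]{BL-Degenerating-AV}, and the same goes for the companion vanishing $H^1_{\an}(E,\uZ)=0$, which the paper quotes as Theorem~\ref{t:BL-12c}. So you are not reconstructing an argument from this paper but rather sketching BL's original proof, which is a legitimate thing to do but worth flagging.

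Your outline follows the right strategy (reduce to the good-reduction vanishings via Raynaud uniformisation; prove the vanishings by reduction to the special fibre), and the reduction steps --- lifting $\G_m\to A$ to $E$, the Cartan--Leray/Hochschild--Serre sequence for the $M^\vee$-torsor $E^\vee\to A^\vee$, the identification $H^1(M^\vee,\Z)=X_*(T)$ --- are all correct. That said, a few individual steps are either too hasty or logically circular as written.

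First, the \v{C}ech argument for $H^1_{\an}(\G_m,M)=0$ using the cover by two punctured discs is circular: a punctured disc in $\G_m$ is exactly as hard as $\G_m$ itself, so you cannot assume its higher cohomology vanishes. You should instead exhaust $\G_m$ by closed affinoid annuli and invoke the vanishing of $H^1_{\an}$ with constant coefficients on connected smooth affinoids (this is in \cite[Lemma~8.11.c]{BL-stable-reduction-II}, which the paper also cites), combined with a $\Rlim^1$ argument over the exhaustion.

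Second, in the formal-model argument for $\Hom(\G_m,B)=0$, ``by continuity and connectedness of $\G_m$'' conceals the real work: the annuli $A_r$ for distinct $r$ are disjoint, so constancy of each reduction says nothing a priori about the constants agreeing. The fix is to use a closed annulus $\{r\leq|T|\leq 1\}$, whose natural formal model has a special fibre that is a chain of rational curves; a morphism from such a curve to an abelian variety $\overbar B$ is still constant, and this forces all the point reductions $\bar b_r$ to equal $\bar 0$.

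Third, once you know $f(\G_m)\subseteq\ ]\bar 0[$ (the formal tube, an open polydisc), you do not need the Weil-pairing embedding $\wh B\hookrightarrow\wh\G_m^{2d}$ of Proposition~\ref{p:analytic-Weil} --- and you should not use it, since it is stated under $\Char K=0$ while the statement must hold in characteristic $p$ as well. Simply say that $f$ is given by $d$ bounded analytic functions on $\G_m$, and bounded global functions on $\G_m$ are constant.

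Fourth, the claim that a $\Z$-torsor on $B_{\an}$ ``pulls back to a $\Z$-torsor on $\mathfrak B$'' is the hardest step and is asserted rather than argued; coherent GAGA does not hand you this for free. This is essentially where the technical heart of BL's proof lives, and should be treated with more care than one sentence.

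Finally, once you have $H^1_{\an}(B^\vee,\uZ)=0$, rederiving $H^1_{\an}(E^\vee,\uZ)=0$ from the torus extension by Leray is doable but unnecessary: this is precisely \cite[Theorem~1.2.(c)]{BL-Degenerating-AV}, already quoted in the paper as Theorem~\ref{t:BL-12c}, so you may simply cite it.
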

We now have the following analogue for the perfectoid covers:
\begin{Proposition}\label{p:morphis-wtT-to-B}
	Let $A$ be an abeloid variety. Then
	\[\Hom(\wt \G_m,\wt A)=\Hom(\wt\G_m,\wt T)=X_{\ast}(T)[\tfrac{1}{p}]=H^1_{\an}(\wt A^\vee,\uZ).\]
	In particular, we have $\Hom(\wt \G_m,\wt B)=H^1_{\an}(\wt B^\vee,\uZ)=0$.
\end{Proposition}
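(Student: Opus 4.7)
The strategy is the perfectoid analogue of the classical Proposition~\ref{p:morph-from-T-to-B=0}. I plan to chain the four identifications in the statement as follows. The middle equality $\Hom(\wt\G_m,\wt T)=X_*(T)\tf$ is immediate from Lemma~\ref{l:H0(wtT,wtT')} applied with source $\G_m$ and target $T$. For the first equality, I begin with the vanishing $\Hom(\wt\G_m,\wt B)=0$: Proposition~\ref{p:Map(wtT,B)} applied to $T=\G_m$ and $U=\Spa(K,\O_K)$ gives $H^0(\wt\G_m,B)=B(K)$, so every morphism $\wt\G_m\to B$ is constant and every group homomorphism is zero; by the topological universal property of Corollary~\ref{c:cor-to-top-univ-property}, this lifts uniquely to the zero homomorphism $\wt\G_m\to\wt B$. (If $\Char K=p$, first tilt via Theorem~\ref{t:tilting-abeloids}.) Applying $\Hom(\wt\G_m,-)$ to the analytically exact sequence $0\to\wt T\to\wt E\to\wt B\to 0$ from Theorem~\ref{t:perfectoid-covers-of-abelian-varieties}(1) then yields $\Hom(\wt\G_m,\wt T)\isomarrow\Hom(\wt\G_m,\wt E)$ by left exactness.

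To conclude the first equality, I still need $\Hom(\wt\G_m,\wt E)=\Hom(\wt\G_m,\wt A)$. Applying $\Hom(\wt\G_m,-)$ to the short exact sequence $0\to\wt E\to\wt A\to M_p/M\to 0$ from Theorem~\ref{t:perfectoid-covers-of-abelian-varieties}(3) and using $M_p/M\cong(\uZp/\uZ)^r$ reduces this to showing $\Hom(\wt\G_m,\uZp/\uZ)=0$. This is the main obstacle. The key input is that $[p]:\wt\G_m\to\wt\G_m$ is an isomorphism (it acts as the shift on $\varprojlim_{[p]}\G_m$), which forces $\Hom(\wt\G_m,\uZ/p^n)=0$ for all $n$ via $f(\wt\G_m)=f(p^n\wt\G_m)=p^n\cdot f(\wt\G_m)=0$, hence $\Hom(\wt\G_m,\uZp)=0$; writing $\uQp=\varinjlim p^{-n}\uZp$ then gives $\Hom(\wt\G_m,\uQp)=0$ as well. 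Using the identification $\uZp/\uZ=\uQp/\uZ[\tfrac{1}{p}]$ of Lemma~\ref{l:isoms-of-Zp/Z}(2) combined with the long exact sequence of $\Hom(\wt\G_m,-)$, the claim should then reduce to a Cartan--Leray type vanishing $H^1_v(\wt\G_m,\uZ)=0$, which is the analogue of Remark~\ref{r:H^1(wt A,Z)} in the case of trivial period lattice.

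Finally, the last equality $X_*(T)\tf=H^1_\an(\wt A^\vee,\uZ)$ follows from the tilde-limit presentation $\wt A^\vee\sim\varprojlim_{[p]}A^\vee$: since $\uZ$ is a constant sheaf, analytic cohomology commutes with this limit, giving
\[H^1_\an(\wt A^\vee,\uZ)=\varinjlim_{[p]^*}H^1_\an(A^\vee,\uZ)=\varinjlim_{\cdot p}X_*(T)=X_*(T)\tf,\]
via the classical Proposition~\ref{p:morph-from-T-to-B=0} and the fact that $[p]^*$ acts as multiplication by $p$ on $H^1$. The concluding ``in particular'' statement is then the good-reduction case $T=0$.
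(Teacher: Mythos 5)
Your proof is correct, but the way you establish $\Hom(\wt\G_m,\wt B)=0$ --- by applying Proposition~\ref{p:Map(wtT,B)} with $U=\Spa(K,\O_K)$ --- takes a genuinely different route from the paper's, and this difference matters for the architecture of the section. The paper instead passes to topological $p$-torsion subgroups: using the analytic Weil pairing $\wh B\hookrightarrow\wh\G_m^{2d}$ of Proposition~\ref{p:analytic-Weil} together with the computation $\Hom(\wt{\mu_{p^\infty}},\wt{\mu_{p^\infty}})=\Q_p$ from \eqref{eq:Hom-UC-of-p-div-groups}, it shows that $\Hom(\wh\G_m,\wh B)\to\Hom(\wh{\wt\G}_m,\wh B)$ has $p$-torsion cokernel, whence $\Hom(\wt\G_m,B)$ is $p$-torsion; since it is also a $\Z\tf$-module (because $[p]$ is invertible on $\wt\G_m$), it vanishes. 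This distinction is not merely aesthetic: Proposition~\ref{p:morphis-wtT-to-B} is explicitly offered in the paper as part of an ``independent and much simpler'' alternative to the proof of Proposition~\ref{p:morphisms-between-univ-covers-of-Raynaud-extensions}, whose whole purpose is to bypass the diamantine Picard-functor machinery (Theorem~\ref{t:Pic-functor-wtB}, Proposition~\ref{p:bOx-on-wtTxV}) and in particular Proposition~\ref{p:Map(wtT,B)} itself. Your route is not circular (Proposition~\ref{p:Map(wtT,B)} is proved earlier), but it re-imports exactly the heavy input that the alternative is designed to avoid. One smaller gap: the step ``$\Hom(\wt\G_m,\uQp)=0$ since $\uQp=\varinjlim p^{-n}\uZp$'' tacitly assumes $\Hom(\wt\G_m,-)$ commutes with this filtered colimit of sheaves, which is not automatic; the cleaner argument applies $\Hom(\wt\G_m,-)$ to $0\to\uZ\to\uZp\to\uZp/\uZ\to 0$ directly, giving $\Hom(\wt\G_m,\uZp/\uZ)\hookrightarrow\Ext^1_v(\wt\G_m,\uZ)\hookrightarrow H^1_v(\wt\G_m,\uZ)=0$ by Lemma~\ref{l:E_2-of-discrete-torsor-over-connected} and Proposition~\ref{p:ext-E-Z-vanishes}. (The paper phrases the first equality using the rigid sequence $0\to M\to E\to A\to 0$ and $H^1_{\an}(\wt\G_m,\uZ)=0$ rather than the perfectoid sequence $0\to\wt E\to\wt A\to M_p/M\to 0$ that you use; both are valid.)
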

\begin{proof}
	The second equality holds by Lemma~\ref{l:H0(wtT,wtT')}, the last by Proposition~\ref{p:morph-from-T-to-B=0} using that $H^1_{\an}(\wt A^\vee,\uZ\tf/\uZ)=0$.
	To see the first equality, we may by Theorem~\ref{t:tilting-abeloids} assume $\Char K=0$. The natural map
	\[ \Hom(\wt \G_m,E)=\Hom(\wt \G_m,A)\] 
	is an isomorphism as
	$\textstyle H^1_{\an}(\wt \G_m,\Z)=0$
	(see Proposition~\ref{p:ext-E-Z-vanishes} below). The exact sequence
	\[ 0\to  \Hom(\wt \G_m,T)\to \Hom(\wt \G_m,E)\to \Hom(\wt \G_m,B)\]
	now reduces us to showing that the last term vanishes.
	To see this, we pass to topological $p$-torsion parts to reduce to a statement about $p$-divisible groups: Consider  the diagram
	\begin{center}
		\begin{tikzcd}
			0\arrow[r]&\Hom(\G_m,B)\arrow[r]\arrow[d]&\Hom(\wt \G_m,B)\arrow[r]\arrow[d]& \Hom(\Z_p(1),B)\arrow[d,equal]\\
			0\arrow[r]&\Hom(\wh{\G}_m,\wh B)\arrow[r]& \Hom(\wh{\wt \G}_m,\wh B)\arrow[r]& \Hom(\Z_p(1),\wh B)
		\end{tikzcd}
	\end{center}
	of left-exact sequences. As the top left term vanishes by Proposition~\ref{p:morph-from-T-to-B=0}, it suffices to prove that the bottom left map has $p$-torsion cokernel.
	
	To see this, recall that the analytic Weil pairing of Proposition~\ref{p:analytic-Weil} sets up an injective homomorphism $\wh B\hookrightarrow \wh \G_m^{2d}$. The bottom row therefore injects into the sequence
	\[0\to {\Hom(\wh\G_m,\wh\G_m^{2d})}\to {\Hom(\wt{\wh \G}_m,\wh\G_m^{2d})}\to {\Hom(\Z_p(1),\wh{\G}^{2d}_m)}.\]
	But the left map has $p$-torsion cokernel since $\wt{\wh \G}_m=\wt{\mu_{p^\infty}}$ and by \eqref{eq:Hom-UC-of-p-div-groups}:
	\[\Hom(\wt{\mu_{p^\infty}},\wt{ \mu_{p^\infty}})=\Hom(\mu_{p^\infty,\O_K/p},\mu_{p^\infty,\O_K/p})\tf=\Hom(\Q_p/\Z_p,\Q_p/\Z_p)\tf=\Q_p,\]
	which is indeed equal to $\Hom(\wh \G_m,\wh\G_m)=\Z_p$ after inverting $p$.
\end{proof}
This concludes the alternative proof of  Proposition~\ref{p:morphisms-between-univ-covers-of-Raynaud-extensions} if $\varphi$ is a priori a homomorphism.

\subsection{Universal covers of Raynaud extensions in homological terms}
In light of Proposition~\ref{p:morphisms-between-univ-covers-of-Raynaud-extensions}, it remains to determine when a pair of homomorphisms $(\varphi_T,\varphi_B)$ consisting of $\varphi_T:\wt T\to \wt T'$ and $\varphi_B:\wt B\to \wt B'$ can be combined to a morphism $\varphi:\wt E\to \wt E'$.

To answer this question,  we start by reviewing some facts from \cite[\S3]{BL-Degenerating-AV} about the rigid analytic situation: Let $B$ be an abeloid variety of good reduction, $T$ a torus and let $M^\vee:=\Hom(T,\G_m)$ be the character lattice. For any $m\in M^\vee$, we get a natural map $\Ext^1_{\an}(B,T)\to \Ext^1_{\an}(B,\G_m)=B^\vee(K)$ (see  e.g.\ \cite[Theorem A.2.8]{Lutkebohmert_RigidCurves}).
This defines a natural map
\begin{equation}\label{eq:Raynaud-ext-=-morph-M^vee->B^vee}
\Ext^1_{\an}(B,T)\isomarrow \Hom(M^\vee,B^\vee(K)),
\end{equation}
which is in fact an isomorphism: Indeed, choose an isomorphism $T\cong\G_m^r$, then
\[\Ext^1_{\an}(B,T)\cong\Ext^1_{\an}(B,\G_m)^r=B^\vee(K)^r\cong\Hom(M^\vee,B^{\vee}(K)).\]
Thus any Raynaud extension $T\to E\to B$ gives rise to a homomorphism $f_E:M^\vee\to B^\vee(K)$ characterising its extension class.
Bosch--L\"utkebohmert \cite[Proposition~3.5]{BL-Degenerating-AV} use this to show:
\begin{Proposition}
 For any two Raynaud extensions $T\to E\to B$ and $T'\to E'\to B'$, 
\begin{equation}\label{eq:BL-Hom(E,E')}
\Hom(E,E')={\left\{
	\begin{array}{@{}l@{}l}
		\multicolumn{2}{@{}l}{\text{pairs }(\varphi_B,\varphi_{M^\vee}) \text{ where}}\\	
		\varphi_B&\in \Hom(B,B'),\\
		\varphi_{M^\vee}&\in \Hom(M'^\vee,M^\vee)
	\end{array}\middle|
	{\begin{tikzcd}[row sep =0.5cm]
			M^\vee  \arrow[r,"f_E"] & \overbar{B}^\vee(K)  \\
			M'^\vee \arrow[r,"f_{E'}"]\arrow[u,"\varphi_{M^\vee}"']& \overbar{B}'^\vee(K)  \arrow[u,"\varphi_B^\vee"']      
	\end{tikzcd}}
	\text{ commutes}
	\right \}}.
\end{equation}
\end{Proposition}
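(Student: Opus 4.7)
The plan is to view this as a version of the five lemma combined with the functoriality of $\Ext^1$. Given any homomorphism $\varphi:E\to E'$, the first step is to show that $\varphi(T)\subseteq T'$. Equivalently, the composition $T\hookrightarrow E\xrightarrow{\varphi} E'\twoheadrightarrow B'$ vanishes, which is the statement that $\Hom(T,B')=0$. Since $T$ is a split torus, this reduces to $\Hom(\G_m,B')=0$, which is part of Proposition~\ref{p:morph-from-T-to-B=0}. Thus $\varphi$ restricts to $\varphi_T:T\to T'$ and descends to $\varphi_B:B\to B'$. Cartier duality of tori then identifies $\varphi_T$ with a homomorphism $\varphi_{M^\vee}:M'^\vee\to M^\vee$ of character lattices, defining the forward map $\varphi\mapsto (\varphi_B,\varphi_{M^\vee})$.

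For the commutativity of the diagram, I would argue via functoriality of $\Ext^1$: the morphism of short exact sequences determined by $\varphi$ implies that the pushout $(\varphi_T)_\ast [E]\in \Ext^1_{\an}(B,T')$ coincides with the pullback $\varphi_B^\ast [E']$. Translating this identity through the isomorphism \eqref{eq:Raynaud-ext-=-morph-M^vee->B^vee} and tracking how pushout/pullback transport to the right-hand side yields exactly the asserted commutativity. Conversely, given a pair $(\varphi_B,\varphi_{M^\vee})$ making the diagram commute, dualising $\varphi_{M^\vee}$ provides a homomorphism $\varphi_T:T\to T'$, and commutativity of the diagram says precisely that $(\varphi_T)_\ast [E]=\varphi_B^\ast [E']$ in $\Ext^1_{\an}(B,T')$. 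Hence the pushout and pullback extensions are isomorphic, and the resulting morphism of extensions supplies a homomorphism $\varphi:E\to E'$ inducing the given pair. For uniqueness of the lift, the difference $\psi$ of two such $\varphi$'s projects to $0$ on $B'$ and restricts to $0$ on $T$, so it factors as $E\twoheadrightarrow B\to T'\hookrightarrow E'$; but any homomorphism from the proper connected group $B$ to the affine torus $T'$ is constant and hence trivial.

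The main obstacle I expect is carefully tracking how the pushout/pullback operations on extensions translate under the identification \eqref{eq:Raynaud-ext-=-morph-M^vee->B^vee}, which in turn requires unwinding how the Cartier duality $T\leftrightarrow M^\vee$ interacts with the boundary map $\Ext^1_{\an}(B,\G_m)=B^\vee(K)$ evaluated on characters. Once this compatibility is established, everything else is essentially formal; the key geometric input is the vanishing $\Hom(T,B')=0$, which allows us to split the problem into the torus piece and the abeloid piece.
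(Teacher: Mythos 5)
The statement you are proving is cited in the paper from Bosch--L\"utkebohmert (their Proposition~3.5); the paper itself gives no proof. Your reconstruction is correct and is in fact the standard argument. The two geometric inputs you isolate --- that $\Hom(T,B')=0$ (so any $\varphi$ restricts to $\varphi_T:T\to T'$ and descends to $\varphi_B:B\to B'$) and that $\Hom(B,T')=0$ (so the lift of $(\varphi_B,\varphi_{M^\vee})$ to $\varphi$ is unique, and equivalently the isomorphism $(\varphi_T)_\ast E\cong\varphi_B^\ast E'$ of extensions is unique) --- are exactly what make the bijection work.

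The bookkeeping you flag as the main obstacle does go through cleanly, and it may be worth recording: under the identification \eqref{eq:Raynaud-ext-=-morph-M^vee->B^vee}, pushout of $[E]$ along $\varphi_T$ corresponds, for each character $m'\in M'^\vee$, to pushing $[E]$ along $m'\circ\varphi_T=\varphi_{M^\vee}(m')\in M^\vee$, which gives $f_E\circ\varphi_{M^\vee}$; and pullback of $[E']$ along $\varphi_B$ corresponds to postcomposing $f_{E'}$ with the map $\varphi_B^\vee:B'^\vee(K)\to B^\vee(K)$ induced on $\Ext^1(-,\G_m)$. Thus $(\varphi_T)_\ast[E]=\varphi_B^\ast[E']$ is literally the commutativity $f_E\circ\varphi_{M^\vee}=\varphi_B^\vee\circ f_{E'}$ of the displayed square. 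One cosmetic remark: the diagram in the statement has $\overbar{B}^\vee(K)$, but the target of $f_E$ as set up in \eqref{eq:Raynaud-ext-=-morph-M^vee->B^vee} is $B^\vee(K)$ (no overbar); the reduction mod $\m$ only enters in the perfectoid version, Theorem~\ref{t:isoms-of-universal-covers-of-Raynaud-ext}.
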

We would like to have an analogous description for $\Hom(\wt E,\wt E')$. For this we first deduce:
\begin{Proposition}\label{p:homological-interpretation-of-universal-cover}
	Let $B$ be an abelian variety of good reduction and $T$ a torus. Then we have an isomorphism of short exact sequences
	\[\begin{tikzcd}[column sep = 0.3cm]
		0 \arrow[r] &  \Ext^1_{\an}(B,\wh T) \arrow[d, "\sim"labelrotate]\arrow[r] & \Ext^1_{\an}(B,T) \arrow[d, "\sim"labelrotate] \arrow[r] &\Ext^1_{\an}(\wt B,\wt T) \arrow[d, "\sim"labelrotate] \arrow[r] & 0 \\
		0 \arrow[r] & \Hom(M^\vee,\wh{B}^\vee(K))  \arrow[r] & \Hom(M^\vee,B^\vee(K)) \arrow[r] & \Hom(M^\vee,B^\vee(k)[\tfrac{1}{p}])\arrow[r] & 0.
	\end{tikzcd}
	\]
	where the map on the top right sends an extension $[E]$ to its cover $[\wt E]$ from Theorem~\ref{t:perfectoid-covers-of-abelian-varieties}.1.
\end{Proposition}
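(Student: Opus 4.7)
The plan is to assemble both rows from earlier identifications and combine them via a diagram chase.

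I would first construct the bottom row by applying the exact functor $\Hom(M^\vee,-)$ to the short exact sequence of $K$-points
\[ 0\to \wh B^\vee(K)\to B^\vee(K)\to \overbar B^\vee(k)\tf\to 0\]
coming from Lemma~\ref{l:K-points-of-whB} applied to $B^\vee$. Exactness is preserved because $M^\vee\cong \Z^r$ is finite free. The middle vertical is then the isomorphism \eqref{eq:Raynaud-ext-=-morph-M^vee->B^vee}.

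Next, I would build the right vertical in two steps. From the analytic short exact sequence $0\to T_pT\to \wt T\to T\to 0$, applying $\Ext^{\bullet}_{\an}(\wt B,-)$ and using the vanishings $\Ext^i_{\an}(\wt B,T_pT)\cong H^i(\wt B,\uZp)^r=0$ for $i\geq 1$ (Proposition~\ref{p:H^1_v(A,Zp)=0}, together with $T_pT\cong \uZp^r$ once compatible $p$-power roots of unity in the algebraically closed field $K$ are chosen) gives $\Ext^1_{\an}(\wt B,\wt T)\isomarrow \Ext^1_{\an}(\wt B,T)$. Theorem~\ref{t:Pic(wt B)} then identifies the latter with $\Hom(M^\vee,\overbar B^\vee(k)\tf)$.

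The decisive check is commutativity of the right square. Under the identification above, the class $[\wt E]\in \Ext^1_{\an}(\wt B,\wt T)$ becomes, after pushout along $\wt T\hookrightarrow T$, the pullback of $[E]\in \Ext^1_{\an}(B,T)$ along $\wt B\to B$. On $\Ext^1_{\an}(-,\G_m)=\Pic^0(-)$, this pullback is precisely the specialisation map $B^\vee(K)\to \overbar B^\vee(k)\tf$ underlying Theorem~\ref{t:Pic(wt B)}. Unwinding via a basis identification $T\cong \G_m^r$ — so that $M^\vee$ decomposes and both extension functors are computed componentwise — yields commutativity of the right square.

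Finally, exactness of the top row and the identification of the left vertical follow by the five lemma: the kernel of the top middle-to-right map is naturally isomorphic to the kernel of the bottom map, namely $\Hom(M^\vee,\wh B^\vee(K))$, and the top map is surjective because the bottom is. To identify this kernel with $\Ext^1_{\an}(B,\wh T)$, I would apply $\Hom_{\an}(B,-)$ to the torus analogue $0\to \wh T\to T\to \overbar T^\diamond\tf\to 0$ of Proposition~\ref{p:ses-whB-B-barBtf}, using $\Hom(B,\overbar T^\diamond\tf)=0$ (since $B$ is connected and proper while $\overbar T^\diamond\tf$ is essentially discrete in character). The main obstacle is this last step: cleanly tracking the identifications through both squares, and in particular verifying that the pushout along $\wt T\hookrightarrow T$ combined with the pullback along $\wt B\to B$ really does reproduce $[\wt E]$ as an extension class.
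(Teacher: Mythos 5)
Your construction of the bottom row, the middle vertical isomorphism via \eqref{eq:Raynaud-ext-=-morph-M^vee->B^vee}, and the commutativity check for the right square (via pullback to $\wt B$ then lifting along $\wt T\to T$) all match the paper's argument. However, your justification of the right vertical is not quite right: you write $\Ext^i_{\an}(\wt B,T_pT)\cong H^i(\wt B,\uZp)^r$, but $\Ext^i$ of abelian sheaves is not sheaf cohomology --- there is only the comparison map $\Phi$ of \S\ref{s:BD}, and even forcing it through the Breen--Deligne spectral sequence would require controlling $H^j$ over the products $\wt B^n$, not just $\wt B$. The paper's route is both cleaner and actually correct: $\wt B$ is uniquely $p$-divisible and $T_pT\cong\uZp^r$ is derived $p$-complete, so Lemma~\ref{l:ext-from-p-divisible-to-complete} gives $\Ext^i_{\an}(\wt B,T_pT)=0$ for all $i$ without any cohomology computation.

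The serious gap is the one you flag, in the left column and top-row exactness. The paper simply cites \cite[Theorem~4.14, Corollary~4.15]{heuer-Picard-good-reduction} for exactness of $0\to\Ext^1_{\an}(B,\wh T)\to\Ext^1_{\an}(B,T)\to\Ext^1_{\an}(\wt B,\wt T)\to 0$; this is a genuine external input, not something derivable from the rest of the present paper. Your proposed replacement fails at both steps. First, the ``torus analogue'' $0\to\wh T\to T\to\overbar T^\diamond\tf\to 0$ of Proposition~\ref{p:ses-whB-B-barBtf} is not a short exact sequence: already for $T=\G_m$ one has $\wh\G_m(K)=1+\m$ inside $\G_m(K)=K^\times$, so the quotient $K^\times/(1+\m)$ carries a large valuation part and is nothing like $\overbar\G_m^\diamond\tf(K)=k^\times\tf$; Proposition~\ref{p:ses-whB-B-barBtf} relies essentially on $B$ being proper. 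Second, even granting some correct sequence giving an injection $\Ext^1_{\an}(B,\wh T)\hookrightarrow\Ext^1_{\an}(B,T)$, you would still need to identify its image with the kernel of $\Ext^1_{\an}(B,T)\to\Ext^1_{\an}(\wt B,\wt T)$. Applying $\Ext^\bullet_{\an}(-,T)$ to $0\to T_pB\to\wt B\to B\to 0$ and using Proposition~\ref{p:morphisms-from-Z,Z_p,Z_p/Z} shows that kernel is the image of $\Hom(T_pB,T)\cong\wh T(K)^{2\dim B}$, and matching this up with $\Ext^1_{\an}(B,\wh T)$ is precisely the content of the cited theorem. Without that input the argument does not close.
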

\begin{proof}
	The middle isomorphism is \eqref{eq:Raynaud-ext-=-morph-M^vee->B^vee}.
	 The right isomorphism comes from Theorem~\ref{t:Pic(wt B)}  which shows
	\[\Ext^1_{\an}(\wt B,\wt T)=\Ext^1_{\an}(\wt B,T)=\Hom(M^\vee,B^\vee(k)[\tfrac{1}{p}])\]
	where the first equality follows from Lemma~\ref{l:ext-from-p-divisible-to-complete}.
	The bottom exact sequence is $\Hom(M^\vee,-)$ applied to the one of Lemma~\ref{l:K-points-of-whB}. 
	Exactness of the top row is shown in \cite[{Theorem~4.14}, Corollary~4.15]{heuer-Picard-good-reduction}.
	The last sentence holds because by Theorem~\ref{t:perfectoid-covers-of-abelian-varieties}.1, $\wt E$ can be described by forming the pullback of $T\to E\to B$ to $\wt B$ and then lifting uniquely to a $\wt T$-torsor.
\end{proof}
\begin{Corollary}\label{c:ext-classes-of-univ-covers-agree-iff-condition-on-reduction-of-M^vee->B^vee}
	For two extensions $[E], [E']\in \Ext^1(B,T)$, the following are equivalent:
	\begin{enumerate}
		\item The images $[\wt E],[\wt E']$ in $\Ext^1(\wt B,\wt T)$ define isomorphic extension classes.
		\item The associated maps $f,f':M^\vee\to B^\vee(K)$ agree up to $p$-torsion in the special fibre:
		\[\bar f_{E}=\bar f_{E'}:M^\vee\rightrightarrows B^{\vee}(K)\to \overbar B^\vee(k)[\tfrac{1}{p}].\] 
		\item There is an isogeny $\overbar E\to \overbar E'$ of $p$-power degree between the special fibres.
	\end{enumerate}
\end{Corollary}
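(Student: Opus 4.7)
My plan is to handle $(1) \Leftrightarrow (2)$ and $(2) \Leftrightarrow (3)$ separately. The first equivalence is immediate from Proposition~\ref{p:homological-interpretation-of-universal-cover}: its last sentence says that in the displayed diagram the top map $\Ext^1_{\an}(B,T)\to \Ext^1_{\an}(\wt B, \wt T)$ sends $[E]$ to $[\wt E]$, while the right vertical isomorphism identifies $\Ext^1_{\an}(\wt B, \wt T)$ with $\Hom(M^\vee, \overbar B^\vee(k)\tf)$, and commutativity of the right square means $[\wt E]$ corresponds precisely to $\bar f_E$. Hence $[\wt E]=[\wt E']$ in $\Ext^1_{\an}(\wt B, \wt T)$ if and only if $\bar f_E = \bar f_{E'}$.

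For $(2) \Leftrightarrow (3)$, I would invoke the classical Grothendieck--Raynaud classification of extensions of an abelian variety by a torus over a field, which yields $\Ext^1(\overbar B, \overbar T) = \Hom(M^\vee, \overbar B^\vee(k))$ with the class of $\overbar E$ corresponding to $\bar f_E$ (without inverting $p$). Since $M^\vee$ is finitely generated free, $\Hom(M^\vee,-)$ commutes with inverting $p$, giving a natural identification
\[\Ext^1(\overbar B, \overbar T)\tf = \Hom(M^\vee, \overbar B^\vee(k)\tf).\]
Thus condition $(2)$ is equivalent to the existence of some $n \geq 0$ with $p^n[\overbar E] = p^n[\overbar E']$ in $\Ext^1(\overbar B, \overbar T)$. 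For $(2) \Rightarrow (3)$: such an $n$ produces an isomorphism of extensions between the pushouts $[p^n]_\ast \overbar E$ and $[p^n]_\ast \overbar E'$ along the isogeny $[p^n]: \overbar T \to \overbar T$; the natural pushout morphisms $\overbar E \to [p^n]_\ast \overbar E$ and $\overbar E' \to [p^n]_\ast \overbar E'$ are $p$-isogenies with kernels $\overbar T[p^n]$, and composing the first with a dual isogeny of the second yields a $p$-isogeny $\overbar E \to \overbar E'$. Conversely, a $p$-isogeny of extensions is killed by its degree, which gives $(3) \Rightarrow (2)$.

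The delicate point, which I would expect to require the most care, is pinning down the precise meaning of ``isogeny of $p$-power degree'' in condition $(3)$. A general isogeny of bare semi-abelian schemes $\overbar E \to \overbar E'$ preserves the maximal torus and induces canonical $p$-isogenies $\alpha:\overbar T\to \overbar T$ and $\beta:\overbar B\to \overbar B$ which need not be identity. In the natural reading of $(3)$ as an isogeny of extensions (i.e. compatible with the extension structure, restricting to identity on $\overbar T$ and inducing the identity on $\overbar B$), the argument above applies verbatim; in the weaker reading, one would additionally observe that $\alpha$ and $\beta$ become invertible in the endomorphism algebras after inverting $p$, so the twisting they induce disappears in $\Hom(M^\vee, \overbar B^\vee(k)\tf)$ when we view the condition as equality of extension classes up to pullback/pushforward by automorphisms.
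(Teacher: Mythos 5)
Your handling of $(1)\Leftrightarrow(2)$ matches the paper exactly: both read off the equivalence from the commutative diagram in Proposition~\ref{p:homological-interpretation-of-universal-cover}. Your $(2)\Leftrightarrow(3)$ is also in the same spirit as the paper's one-line remark (invoking $\Ext^1_{\Zar}(\overbar B,\overbar T)=\Hom(M^\vee,\overbar B^\vee(k))$), and your construction for $(2)\Rightarrow(3)$ via pushout along $[p^n]:\overbar T\to\overbar T$ and a complementary isogeny is a correct way to produce the $p$-isogeny $\overbar E\to\overbar E'$ (the word ``dual isogeny'' is slight loose here; what you want is the complementary isogeny $c$ with $c\circ\psi=[p^n]$, which exists because the kernel $\overbar T[p^n]$ has exponent $p^n$).

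However, your closing paragraph contains a genuine slip. You propose reading $(3)$ as ``an isogeny of extensions, restricting to the identity on $\overbar T$ and inducing the identity on $\overbar B$''. By the five lemma, a morphism of short exact sequences inducing the identity on both the sub and the quotient is automatically an isomorphism, so under that reading $(3)$ would collapse to ``$\overbar E\cong\overbar E'$ as extensions'', i.e.\ $[\overbar E]=[\overbar E']$ without inverting $p$, which is strictly stronger than $(2)$. The reading of $(3)$ that is literally equivalent to $(2)$ is an isogeny $\overbar E\to\overbar E'$ restricting to $[p^m]$ on $\overbar T$ and inducing $[p^m]$ on $\overbar B$ \emph{for the same} $m$; this is exactly what the pushout/complementary-isogeny construction produces, and conversely such an isogeny gives $p^m\bar f_E=p^m\bar f_{E'}$. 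Your remark that in the genuinely weak reading (arbitrary $p$-isogeny) the induced twists $\alpha$, $\beta$ can spoil literal equality of $\bar f_E$ and $\bar f_{E'}$ is correct — and for this reason $(3)\Rightarrow(2)$ is not unconditional — but the fix is the ``$[p^m]$ on both sides'' condition, not ``identity on both sides''. This subtlety does not affect the paper's use of the corollary, which only invokes $(1)\Leftrightarrow(2)$ in the proof of Theorem~\ref{t:isoms-of-universal-covers-of-Raynaud-ext}.
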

\begin{proof}
	The equivalence of 1 and 2 follows from the diagram in Proposition~\ref{p:homological-interpretation-of-universal-cover}.
	
	The equivalence of 2 and 3 follows from $\Ext_{\mathrm{Zar}}^1(\overbar  B,\overbar T)=\Hom(M^\vee,B^\vee(k))$, which can be seen in exactly the same way as the isomorphism \eqref{eq:Raynaud-ext-=-morph-M^vee->B^vee} above.
\end{proof}

\subsection{Morphisms between universal covers of Raynaud extensions}

Putting together all the preparations of this section, we can now prove the following analogue of Bosch--L\"utkebohmert's description \eqref{eq:BL-Hom(E,E')} of $\Hom(E,E')$ at perfectoid level:

\begin{Theorem}\label{t:isoms-of-universal-covers-of-Raynaud-ext}
		Let $T\to E\to B$ and $T'\to E'\to B'$ be two Raynaud extensions over $K$ with corresponding morphisms $f_E:M^\vee\to B^\vee(K)$ and $f_{E'}:M'^\vee\to B'^\vee(K)$. Then
	\[\Hom(\wt E,\wt E')=\Hom(\overbar E,\overbar E')[\tfrac{1}{p}]\]
	\[={\left\{
		\begin{array}{@{}l@{}l}
		\multicolumn{2}{@{}l}{\text{pairs }(\varphi_B,\varphi_{M^\vee}) \text{ where}}\\	
		\varphi_B&\in \Hom(\overbar B,\overbar B')[\tfrac{1}{p}],\\
		\varphi_{M^\vee}&\in \Hom(M'^\vee,M^\vee)[\tfrac{1}{p}]
		\end{array}\middle|
		{\begin{tikzcd}
			M^\vee\tf  \arrow[r,"f_E"] & \overbar{B}^\vee(k)[\tfrac{1}{p}]  \\
			M'^\vee\tf \arrow[r,"f_{E'}"]\arrow[u,"\varphi_{M^\vee}"']& \overbar{B}'^\vee(k)[\tfrac{1}{p}]  \arrow[u,"\varphi_B^\vee"']      
			\end{tikzcd}}
		\text{ commutes}
		\right \}}.\]
	Moreover, any morphism $\wt E\to \wt E'$ over $K$ is a homomorphism composed with a translation.
\end{Theorem}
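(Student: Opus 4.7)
The plan is to reduce everything to the two main structural inputs already in place: Proposition~\ref{p:morphisms-between-univ-covers-of-Raynaud-extensions} (which classifies morphisms $\wt E \to \wt E'$ in terms of pairs of morphisms on $\wt T$ and $\wt B$), and Proposition~\ref{p:homological-interpretation-of-universal-cover} (which describes extension classes on universal covers in terms of character lattices and the special fibre). First, I would dispose of the ``composition of homomorphism and translation'' statement: given any morphism $\varphi:\wt E\to \wt E'$, set $x:=\varphi(0)$ and let $\varphi_0 := \tau_{-x}\circ\varphi$. Then $\varphi_0(0)=0$, and by Proposition~\ref{p:morphisms-between-univ-covers-of-Raynaud-extensions}, $\varphi_0$ is already a homomorphism. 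So the content is the description of $\Hom(\wt E, \wt E')$ as a group.

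Next, by Proposition~\ref{p:morphisms-between-univ-covers-of-Raynaud-extensions}, a homomorphism $\varphi:\wt E\to \wt E'$ is the same datum as a pair $(\varphi_T,\varphi_B)$ of homomorphisms that fit into the extension diagram, and such a $\varphi$ is uniquely determined by the pair. By Lemma~\ref{l:H0(wtT,wtT')} the first component corresponds to $\varphi_{M^\vee}\in \Hom(M'^\vee,M^\vee)\tf$, and by Theorem~\ref{t:isomorphisms-between-ab-var-of-good-reduction} the second corresponds to $\varphi_{\overbar B}\in\Hom(\overbar B,\overbar B')\tf$. The heart of the proof is thus to identify which pairs $(\varphi_T,\varphi_B)$ actually come from a homomorphism of extensions. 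The standard obstruction-theoretic argument says: $(\varphi_T,\varphi_B)$ lifts iff the pushout $\varphi_{T,*}[\wt E]$ and the pullback $\varphi_B^\ast[\wt E']$ agree in $\Ext^1_{\an}(\wt B,\wt T')$. (Uniqueness of the lift once the pair is fixed, beyond what is given in Proposition~\ref{p:morphisms-between-univ-covers-of-Raynaud-extensions}, reduces to showing $\Hom(\wt B,\wt T')$ consists of constants, which follows from $\O(\wt B)=K$ together with $\Hom(\wt B,T_p\G_m)=0$.)

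To translate the obstruction into the stated diagram, I would apply Proposition~\ref{p:homological-interpretation-of-universal-cover}, which gives
\[
\Ext^1_{\an}(\wt B,\wt T')=\Hom(M'^\vee,\overbar B^\vee(k)\tf).
\]
Under this identification, $\varphi_{T,*}[\wt E]$ is represented by $f_E\circ \varphi_{M^\vee}$ (naturality of \eqref{eq:Raynaud-ext-=-morph-M^vee->B^vee} in $T$), while $\varphi_B^\ast[\wt E']$ is represented by $\overbar\varphi_B^\vee\circ f_{E'}$ (naturality in $B$, combined with Theorem~\ref{t:isomorphisms-between-ab-var-of-good-reduction} to interpret $\varphi_B^\vee$ as a morphism of reductions up to $\tf$). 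Equality of these two morphisms $M'^\vee\to \overbar B^\vee(k)\tf$ is precisely the commutativity of the square in the theorem statement.

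Finally, for the identification with $\Hom(\overbar E,\overbar E')\tf$: the classical scheme-theoretic analogue of \eqref{eq:Raynaud-ext-=-morph-M^vee->B^vee} (which is implicitly used in Corollary~\ref{c:ext-classes-of-univ-covers-agree-iff-condition-on-reduction-of-M^vee->B^vee}) provides the exact same description of $\Hom(\overbar E,\overbar E')$ in terms of pairs $(\varphi_{\overbar B},\varphi_{M^\vee})$ compatible with $\bar f_E$ and $\bar f_{E'}$. Inverting $p$ on both sides and invoking $\Hom(\overbar B,\overbar B')\tf = \Hom(\wt B,\wt B')$ and $\Hom(T,T')\tf = \Hom(\wt T,\wt T')$ yields the desired equality. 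I expect the main obstacle to be a careful verification that pushout and pullback of extensions on the perfectoid side are compatible with the identification of Proposition~\ref{p:homological-interpretation-of-universal-cover}, and specifically that the natural map $[\wt E]\mapsto \bar f_E$ is contravariantly functorial in $T$ and $B$ in the expected way — everything else is a formal assembly of prior results.
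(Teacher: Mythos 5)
Your proposal follows the paper's proof essentially step by step: reduce to pairs $(\varphi_T,\varphi_B)$ via Proposition~\ref{p:morphisms-between-univ-covers-of-Raynaud-extensions}, identify the two components via Lemma~\ref{l:H0(wtT,wtT')} and Theorem~\ref{t:isomorphisms-between-ab-var-of-good-reduction}, and characterise which pairs lift by comparing $\varphi_{T,*}[\wt E]$ and $\varphi_B^*[\wt E']$ in $\Ext^1(\wt B,\wt T')$ using Proposition~\ref{p:homological-interpretation-of-universal-cover} (the paper phrases this by first pushing/pulling to reduce to $T=T'$, $B=B'$ and then invoking Corollary~\ref{c:ext-classes-of-univ-covers-agree-iff-condition-on-reduction-of-M^vee->B^vee}, which is the same argument). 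Your parenthetical about uniqueness is redundant, since Proposition~\ref{p:morphisms-between-univ-covers-of-Raynaud-extensions} already states that $\varphi$ is determined by the pair.
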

\begin{Corollary}
	The following are equivalent:
	\begin{enumerate}
		\item There is an isomorphism of perfectoid groups $\widetilde{E}\isomarrow \widetilde{E}'$ over $K$.
		\item There is an isomorphism of perfectoid spaces $\widetilde{E}\isomarrow \widetilde{E}'$ over $K$.
		\item There is an isogeny $\overbar{E}\to \overbar{E}'$  of $p$-power degree between the special fibres of $E, E'$.
		\item There is a pair $(\varphi_B,\varphi_{M^\vee})$ of an isogeny $\varphi_B:\overbar B\to \overbar B'$ of $p$-power degree and an isomorphism $\varphi_{M^\vee}:	M'^\vee\tf \to M^\vee\tf $  such that the above diagram commutes.
	\end{enumerate}
\end{Corollary}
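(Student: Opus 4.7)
My plan is to derive all four equivalences directly from Theorem~\ref{t:isoms-of-universal-covers-of-Raynaud-ext}. The implication $(a)\Rightarrow (b)$ is immediate. For $(b)\Rightarrow (a)$, I invoke the \emph{moreover} clause of the Theorem: any isomorphism $\varphi\colon \wt E\to \wt E'$ of perfectoid spaces decomposes as $\varphi = \tau_x\circ \psi$ with $\tau_x$ the translation by some $x\in \wt E'(K)$ and $\psi$ a homomorphism. Since $\tau_x$ is itself an isomorphism of perfectoid spaces, $\psi$ is too, and being a group homomorphism, $\psi$ is an isomorphism of perfectoid groups.

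For $(a)\Leftrightarrow (c)$, I would use the first equality
\[
\Hom(\wt E,\wt E')=\Hom(\overbar E,\overbar E')\tf
\]
of the Theorem. This identification is natural in $E,E'$ and hence compatible with composition, so isomorphisms of perfectoid groups on the left correspond precisely to units on the right. An element $\overbar\varphi\in \Hom(\overbar E,\overbar E')$ is a unit in $\Hom(\overbar E,\overbar E')\tf$ if and only if there exist $\overbar\psi\in \Hom(\overbar E',\overbar E)$ and $n\geq 0$ with $\overbar\psi\circ \overbar\varphi = p^n\cdot \mathrm{id}=\overbar\varphi\circ \overbar\psi$. This forces $\overbar\varphi$ to be an isogeny with kernel annihilated by $p^n$, hence of $p$-power degree; conversely, any isogeny of $p$-power degree has a dual isogeny which inverts it after inverting $p$.

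For $(c)\Leftrightarrow (d)$, I would then apply the second equality of the Theorem identifying $\Hom(\overbar E,\overbar E')\tf$ with the set of compatible pairs $(\varphi_B,\varphi_{M^\vee})$. Again by naturality the identification respects composition, so a pair is invertible iff each component is: $\varphi_B$ is a unit in $\Hom(\overbar B,\overbar B')\tf$ (equivalently, by the previous paragraph, an isogeny of $p$-power degree) and $\varphi_{M^\vee}$ is a unit in $\Hom(M'^\vee,M^\vee)\tf$ (equivalently, an isomorphism after inverting $p$), which matches condition $(d)$. The whole Corollary is thus a formal unpacking of Theorem~\ref{t:isoms-of-universal-covers-of-Raynaud-ext}; the only technical point to double-check is naturality, i.e.\ that the Theorem's Hom-identifications are compatible with composition, which one must track through the Theorem's proof.
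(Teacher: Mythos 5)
Your proof is correct, and it supplies exactly the formal unpacking that the paper leaves implicit (the Corollary is stated without proof, as an immediate consequence of Theorem~\ref{t:isoms-of-universal-covers-of-Raynaud-ext}). The decomposition $\varphi = \tau_x\circ\psi$ from the Moreover-clause does give $(b)\Leftrightarrow(a)$ since translations are automorphisms of the perfectoid space; and passing invertibility through the Hom-identifications does reduce $(a)$, $(c)$, $(d)$ to characterising units in $\Hom(\overbar E,\overbar E')\tf$, respectively in the factors $\Hom(\overbar B,\overbar B')\tf$ and $\Hom(M'^\vee,M^\vee)\tf$.

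Two small points worth tightening. First, an invertible element of $\Hom(\overbar E,\overbar E')\tf$ is a priori of the form $p^{-m}\overbar\varphi$ with $\overbar\varphi\in\Hom(\overbar E,\overbar E')$; one should note that clearing denominators (multiplying by $p^m$) does not affect invertibility or the diagram condition, so one may indeed take $\overbar\varphi$ to be an honest homomorphism, as condition $(c)$ (and $(d)$ for $\varphi_B$) requires. Second, the compatibility with composition you flag at the end does hold and is not difficult: the identification factors through the decomposition $\varphi\mapsto(\varphi_B,\varphi_T)$ of Proposition~\ref{p:morphisms-between-univ-covers-of-Raynaud-extensions}, which is visibly functorial; the $B$-component is handled by Theorem~\ref{t:isomorphisms-between-ab-var-of-good-reduction}, where compatibility with composition is asserted explicitly; the $T$-component and the Bosch--L\"utkebohmert description \eqref{eq:BL-Hom(E,E')} are likewise functorial by construction. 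So the point you defer is real but routine, and your argument is sound.
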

\begin{proof}[Proof of Theorem~\ref{t:isoms-of-universal-covers-of-Raynaud-ext}]
	By Proposition~\ref{p:morphisms-between-univ-covers-of-Raynaud-extensions}, any homomorphism $\varphi:\wt E\to \wt E'$ induces a pair of homomorphisms 
	$\varphi_T:\wt T\to \wt T'$ and $\varphi_B:\wt B\to \wt B'$.
	By Theorem~\ref{t:isomorphisms-between-ab-var-of-good-reduction}, we have \[\Hom(\wt B,\wt B')=\Hom(\overbar B,\overbar B')[\tfrac{1}{p}].\]
	By Lemma~\ref{l:H0(wtT,wtT')}, we have
	\[\Hom(\wt T,\wt T')=\Hom(T,T')\tf=\Hom(M'^\vee,M^\vee)\tf.\]
	
	It remains to see that $\varphi_B$, $\varphi_T$ give rise to a  homomorphism $\wt E\to \wt E'$ if and only if the displayed diagram commutes.
	After pullback along $\varphi_B$ and pushout along $\varphi_T$, we can assume $T=T'$ and $B=B'$.
	 Then a morphism $\varphi$ that fits into the diagram
		\[
	\begin{tikzcd}
	0\arrow[r]&\wt T \arrow[d,equal]\arrow[r] &  \wt E \arrow[r] \arrow[d,"\varphi",dotted] & \wt B \arrow[d,equal]\arrow[r]&0\\
	0\arrow[r]&\wt T\arrow[r]&  \wt E' \arrow[r] &  \wt B\arrow[r]&0
	\end{tikzcd}
	\]
	is the same as an isomorphism of extensions of $v$-sheaves.
	By Lemma~\ref{l:interpretation-of-Ext1-as-extensions}, such an isomorphism exists if and only if the classes $[\wt E]$ and $[\wt E']$ in $\Ext^1(\wt B,\wt T)$ agree. By Corollary~\ref{c:ext-classes-of-univ-covers-agree-iff-condition-on-reduction-of-M^vee->B^vee}, this happens if and only if the reductions $\overbar f_E,\overbar f_{E'}:M^\vee\to B(K)\to B^\vee(k)\tf$ agree.
	
	In terms of extension classes in $\Ext^1(\wt B,\wt T)$, pullback along $\varphi_B$ corresponds to composition of $\overbar f_{E'}$ with $\varphi_B^\vee:B'^\vee(k)\tf\to B^\vee(k)\tf$, while pushout along $\varphi_T$ corresponds to precomposing $\overbar f_E$ with $\varphi_{M^\vee}:=\varphi_T^\vee:M'^\vee\to M^\vee$. This shows that $(\varphi_B,\varphi_{M^\vee})$ defines an isogeny $\wt E\to \wt E'$ if and only if the diagram displayed in the theorem commutes.

	The last sentence of the theorem has already been shown in Proposition~\ref{p:morphisms-between-univ-covers-of-Raynaud-extensions}.
\end{proof}
\begin{Corollary}\label{c:Hom(wtE,wtE')-fin-free}
	$\Hom(\wt E,\wt E')$ is a finite free $\Z[\tfrac{1}{p}]$-module. 
\end{Corollary}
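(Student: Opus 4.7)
The plan is to use the explicit description of $\Hom(\wt E,\wt E')$ from Theorem~\ref{t:isoms-of-universal-covers-of-Raynaud-ext} and reduce the statement to the classical fact that $\Hom$-groups of abelian varieties over an algebraically closed field are finitely generated free $\Z$-modules. In particular, I will \emph{not} try to prove this directly via the perfectoid geometry; the work has already been done in the preceding Theorem.

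First, I would recall from Theorem~\ref{t:isoms-of-universal-covers-of-Raynaud-ext} that $\Hom(\wt E,\wt E')$ sits inside the product
\[\Hom(\overbar B,\overbar B')[\tfrac{1}{p}]\ \times\ \Hom(M'^\vee,M^\vee)[\tfrac{1}{p}]\]
as the $\Z[\tfrac{1}{p}]$-submodule cut out by the commutativity of the diagram involving $f_E$ and $f_{E'}$; note that this condition is $\Z[\tfrac{1}{p}]$-linear in the pair $(\varphi_B,\varphi_{M^\vee})$, so it does indeed cut out a submodule.

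Next, I would verify that the ambient module is finite free over $\Z[\tfrac{1}{p}]$. The lattices $M^\vee$ and $M'^\vee$ are free abelian groups of finite rank (being character lattices of split tori of finite rank), so $\Hom(M'^\vee,M^\vee)$ is a finite free $\Z$-module. The group $\Hom(\overbar B,\overbar B')$ is a finitely generated free $\Z$-module by the classical theory of abelian varieties over an algebraically closed field (e.g.\ Mumford, Abelian Varieties). Inverting $p$ on both factors yields a finite free $\Z[\tfrac{1}{p}]$-module.

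Finally, since $\Z[\tfrac{1}{p}]$ is a principal ideal domain, any submodule of a finite free $\Z[\tfrac{1}{p}]$-module is itself finite free, which gives the claim. The only step that might require a brief justification is the $\Z[\tfrac{1}{p}]$-linearity of the commutativity condition; this is immediate because both $\varphi_B^\vee \circ f_{E'}$ and $f_E \circ \varphi_{M^\vee}$ are $\Z$-linear in their respective arguments and we are comparing them inside the abelian group $\overbar B^\vee(k)[\tfrac{1}{p}]$, so no genuine obstacle appears.
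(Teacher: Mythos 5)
Your proof is correct and takes essentially the same route as the paper's one-line argument, which simply asserts that $\Hom(\overbar B,\overbar B')$ and $\Hom(M'^\vee,M^\vee)$ are finite free $\Z$-modules and leaves the rest implicit. You have made explicit the two silent ingredients: that the commutativity condition cuts out a $\Z[\tfrac{1}{p}]$-submodule of the product, and that submodules of finite free modules over the PID $\Z[\tfrac{1}{p}]$ are again finite free.
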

\begin{proof}
	Both $\Hom(\overbar{B},\overbar{B}')$ and $\Hom(M'^\vee,M^\vee)$ are finite free $\Z$-modules.
\end{proof}

\section{Morphisms of universal covers: abeloids}
In this section, we prove the Main Theorems~\ref{t:intro-Hom(A,A')} and \ref{t:morph-is-comp-intro} from the introduction by describing all morphisms between universal covers of abeloids. 
\subsection{Lifting morphisms on perfectoid covers}
For any abeloid $A=E/M$, there is by Theorem~\ref{t:perfectoid-covers-of-abelian-varieties}.3 a short exact sequence of $v$-sheaves
\[ 0\to \wt E\to \wt A\to M_p/M\to 0.\]
As we have already described morphisms between universal covers of Raynaud extensions in the last section, the final step for describing morphisms of $\wt A$ is the following lifting result: 
\begin{Theorem}\label{t:isom-of-tilde-A-fixes-G}
	Let $A,A'$ be abeloid varieties over $K$ with associated Raynaud extensions $E,E'$.
	Then any morphism $\varphi:\widetilde{A}\to \widetilde{A}'$ such that $\varphi(0)=0$
	restricts to a morphism
	\begin{center}
		\begin{tikzcd}
			\wt E \arrow[d,hook] \arrow[r,dotted] & 	\wt E' \arrow[d,hook] \\
			\widetilde{A} \arrow[r,"\varphi"] & \widetilde{A}'
		\end{tikzcd}
	\end{center}
	between the covers of the Raynaud extensions.
	This restriction determines $\varphi$ uniquely. 
\end{Theorem}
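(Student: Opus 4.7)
The plan is to use the short exact sequence of $v$-sheaves $0 \to \wt E \to \wt A \to M_p/M \to 0$ from Theorem~\ref{t:perfectoid-covers-of-abelian-varieties}.3 (and its analogue for $A'$), together with the observation that the pushout diagram~\eqref{eq:wtE-wtA-vs-Ainfty} exhibits $\wt A$ as the $M$-torsor quotient of the $v$-cover $\alpha\colon M_p \times \wt E \to \wt A$, $(m,x) \mapsto \iota(m) + x$, where $\iota\colon M_p \to \wt A$ is the extension of $\tilde g$ provided by the pushout.

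For the existence of the restriction, I would show that the composition
\[ \wt E \hookrightarrow \wt A \xrightarrow{\varphi} \wt A' \twoheadrightarrow M'_p/M' \]
is the zero morphism of $v$-sheaves, so that $\varphi|_{\wt E}$ factors uniquely through the kernel $\wt E' \hookrightarrow \wt A'$. Since $M'_p/M' \cong (\uZp/\uZ)^{r'}$ as $M'$ is free, it suffices to show that every morphism of $v$-sheaves $f\colon \wt E \to \uZp/\uZ$ with $f(0) = 0$ is zero. Applying $H^0(\wt E,-)$ to $0 \to \uZ \to \uZp \to \uZp/\uZ \to 0$, and using the connectedness of $\wt E$ together with Proposition~\ref{p:morphisms-from-Z,Z_p,Z_p/Z} to identify $H^0(\wt E, \uZ) = \Z$ and $H^0(\wt E, \uZp) = \Z_p$, this reduces to the vanishing $H^1_v(\wt E, \uZ) = 0$: then $H^0(\wt E, \uZp/\uZ) = \Z_p/\Z$ consists only of constant morphisms, and $f(0) = 0$ forces $f = 0$.

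For the uniqueness, suppose $\varphi_1, \varphi_2\colon \wt A \to \wt A'$ both send $0$ to $0$ and agree on $\wt E$. Since $\alpha$ is a $v$-cover, it suffices to show that the pullbacks $\tilde\varphi_i := \varphi_i \circ \alpha\colon M_p \times \wt E \to \wt A'$ coincide. For $m \in M$ we have $\iota(m) = \tilde g(m) \in \wt E$, so $\tilde\varphi_i(m, x) = (\varphi_i|_{\wt E})(\tilde g(m) + x)$; hence $\tilde\varphi_1 = \tilde\varphi_2$ on $M \times \wt E$. A relative form of Lemma~\ref{l:underline-is-left-adjoint} identifies morphisms $M_p \times \wt E \to \wt A'$ with continuous maps $\Z_p^r \to \wt A'(\wt E)$. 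Since $\wt A'(K)$ is a complete Hausdorff topological group, $\wt A'(\wt E)$ inherits a separated topology, and the continuous maps corresponding to $\tilde\varphi_1$ and $\tilde\varphi_2$ agree on the dense subset $M = \Z^r \subseteq \Z_p^r$, forcing them to coincide.

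I expect the main obstacle to be the vanishing $H^1_v(\wt E, \uZ) = 0$ in Step~1. In contrast to $\wt A$, whose first $v$-cohomology with constant $\uZ$-coefficients is in general non-zero and equals $\Hom(M, \Z)\tf$ by Remark~\ref{r:H^1(wt A,Z)}, for $\wt E$ one must exploit that a Raynaud extension has no discrete quotient. I would prove this vanishing via the Leray spectral sequence for the analytic extension $0 \to \wt T \to \wt E \to \wt B \to 0$ of Theorem~\ref{t:perfectoid-covers-of-abelian-varieties}.1, reducing to the analogous vanishings for $\wt T$ and $\wt B$ individually, which should follow along the lines of Lemma~\ref{l:wt-T} and Proposition~\ref{p:H^1_v(A,Zp)=0}, respectively.
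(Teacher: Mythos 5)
Your overall strategy is the same as the paper's: existence is reduced, via the sequence $0 \to \wt E' \to \wt A' \to M'_p/M' \to 0$, to showing morphisms $\wt E \to \uZp/\uZ$ killing $0$ are zero, which in turn rests on the key vanishing $H^1_v(\wt E,\uZ)=0$; uniqueness uses the $M$-torsor $M_p \times \wt E \to \wt A$ and density of $M$ in $M_p$. The uniqueness step is morally the paper's Lemma~\ref{l:M-stable-morphisms-from-M_pxX}, though the paper works affinoid-locally and uses injectivity of $\Mapc(M_p,R)\to\Map(M,R)$ rather than appealing to a topology on $\wt A'(\wt E)$, which you would still need to construct and verify is Hausdorff.

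The genuine gap is in your treatment of the vanishing $H^1_v(\wt E,\uZ)=0$, which you correctly identify as the main obstacle but leave essentially unproved. The two results you point to do not apply: Lemma~\ref{l:wt-T} computes $\O^+$, $\O^\times$, $\bOx$ on $\wt T\times U$ and says nothing about $\uZ$-torsors; and Proposition~\ref{p:H^1_v(A,Zp)=0} is a Primitive Comparison statement whose proof rests on almost vanishing of $\O^+$-cohomology and is special to the ($p$-adically complete) coefficient sheaf $\uZp$ — there is no analogous argument for the discrete sheaf $\uZ$, and indeed $H^1_v(\wt A,\uZ)\neq 0$ in general (Remark~\ref{r:H^1(wt A,Z)}), so any proof must distinguish $\wt E$ from $\wt A$. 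The Leray spectral sequence route also needs the fibrewise vanishing $H^1(\wt T,\uZ)=0$ (to control $R^1\pi_\ast\uZ$) and a matching statement over $\wt B$, both of which are not addressed. The paper's route is different and more direct: it proves a general comparison $H^1_v(X,\uZ)=H^1_{\an}(X,\uZ)$ for \emph{all} perfectoid spaces $X$ (Proposition~\ref{p:Z-torsors-over-perfectoids}, via the Tate curve short exact sequence and Kedlaya--Liu's Theorem~\ref{t:KL-line-bundles}), after which $H^1_{\an}(\wt E,\uZ)=0$ is an immediate limit of the Bosch--Lütkebohmert vanishing $H^1_{\an}(E,\uZ)=0$ (Theorem~\ref{t:BL-12c}). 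To close your argument, you would need to either reproduce this $v$-versus-analytic comparison for $\uZ$-torsors, or work out the Leray argument including the local vanishing on $\wt T$.
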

\begin{Corollary}\label{c:isom-ex-iff-ex-isom-fixing-G_m,infty}
	Let $A$, $A'$ be abeloid varieties over $K$. If there is an isomorphism $\widetilde{A}\isomarrow \widetilde{A}'$, this restricts to an isomorphism
	$\wt{E}\isomarrow \wt{E}'$.
\end{Corollary}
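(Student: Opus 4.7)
The plan is to exploit the short exact sequence of $v$-sheaves
\[ 0 \to \wt E \to \wt A \to M_p/M \to 0 \]
from Theorem~\ref{t:perfectoid-covers-of-abelian-varieties}.3 (and the analogous sequence for $A'$), reducing both claims to vanishing statements for $\Hom$-groups involving the uniquely $p$-divisible quotient $M_p/M$.

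For the existence of the restriction, I would show that the composition
\[ \psi: \wt E \hookrightarrow \wt A \xrightarrow{\varphi} \wt A' \twoheadrightarrow M'_p/M' \]
is the zero morphism. Since $M'_p/M' \cong (\uZp/\uZ)^{\rk M'}$ by Lemma~\ref{l:isoms-of-Zp/Z}, it suffices to show that every $v$-sheaf morphism $\wt E \to \uZp/\uZ$ is constant; then $\varphi(0) = 0$ pins this constant to $0$. From the long exact sequence of $\Hom$ attached to $0 \to \uZ \to \uZp \to \uZp/\uZ \to 0$, together with the identifications $H^0(\wt E, \uZ) = \Z$ and $H^0(\wt E, \uZp) = \Z_p$ arising from connectedness of $\wt E$ and Lemma~\ref{l:underline-is-left-adjoint}, this reduces to the cohomology vanishing $H^1_v(\wt E, \uZ) = 0$. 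I plan to establish the latter using the locally analytically split fibration $\wt T \to \wt E \to \wt B$ from Theorem~\ref{t:perfectoid-covers-of-abelian-varieties}.1 and a Leray argument, combining $H^1(\wt T, \uZ) = 0$ (for a torus $T = \G_m^r$, from Proposition~\ref{p:ext-E-Z-vanishes} and Künneth) with $H^1(\wt B, \uZ) = 0$ (from $H^1_{\et}(B, \uZ) = \Hom(\pi_1^{\et}(B), \Z) = 0$ for $B$ of good reduction, upon passing to the colimit along $[p]$).

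For uniqueness, suppose $\varphi_1, \varphi_2: \wt A \to \wt A'$ both send $0$ to $0$ and agree on $\wt E$. I would first argue that each $\varphi_i$ is a group homomorphism: the restrictions $\wt E \to \wt E'$ are homomorphisms by Proposition~\ref{p:morphisms-between-univ-covers-of-Raynaud-extensions}, and this propagates to $\wt A$ via a rigidity argument using the vanishing from the previous step applied to suitable difference morphisms. The difference $\delta := \varphi_1 - \varphi_2$ is then a homomorphism vanishing on $\wt E$, and therefore descends to a homomorphism $f : M_p/M \to \wt A'$ with $f(0) = 0$. Pulling $f$ back along the surjection $M_p \twoheadrightarrow M_p/M$ produces $\tilde f : M_p \to \wt A'$; by Lemma~\ref{l:underline-is-left-adjoint} applied to the profinite $v$-sheaf $M_p$, this corresponds to a continuous map $\Z_p^r \to \wt A'(K, \O_K)$ into a complete Hausdorff topological group. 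Since $\tilde f$ vanishes on the dense subgroup $M = \Z^r \subset M_p = \Z_p^r$ (the latter with its profinite topology), continuity forces $\tilde f = 0$, hence $f = 0$ and $\varphi_1 = \varphi_2$.

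The main obstacle is the cohomology vanishing $H^1_v(\wt E, \uZ) = 0$ in the first step; this is delicate because $H^1_v(\wt A, \uZ) \neq 0$ in general by Remark~\ref{r:H^1(wt A,Z)}, so one must carefully exploit that this non-triviality originates from the quotient $M_p/M$ and not from $\wt E$ itself. A secondary difficulty in the uniqueness step is justifying that morphisms of perfectoid spaces $\wt A \to \wt A'$ fixing $0$ are automatically group homomorphisms, which one handles by propagating the homomorphism property from $\wt E$ to $\wt A$ via the same $M_p/M$-based vanishing.
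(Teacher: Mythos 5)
Your overall reduction is the same as the paper's: use the short exact sequence $0 \to \wt E' \to \wt A' \to M'_p/M' \to 0$ and reduce the existence of the restriction to the cohomology vanishing $H^1_v(\wt E, \uZ) = 0$. That much is on track, and your observation that the nonvanishing of $H^1_v(\wt A,\uZ)$ originates from the quotient $M_p/M$ is exactly the right heuristic.

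The problem is in how you propose to establish $H^1_v(\wt E, \uZ) = 0$. You route through a Leray argument for $\wt T \to \wt E \to \wt B$, but this has two gaps. First, you cite Proposition~\ref{p:ext-E-Z-vanishes} for the torus case $E = T$; that proposition \emph{is} the vanishing $H^1_v(\wt E,\uZ)=0$ for Raynaud extensions, so invoking it here is circular — you would need an independent computation for $\wt T$. Second, the Leray spectral sequence requires $R^1\pi_\ast\uZ = 0$, i.e.\ the vanishing $H^1_v(\wt T\times U,\uZ) = H^1_v(U,\uZ)$ for small affinoid $U\subseteq\wt B$, which is a relative statement not covered by $H^1(\wt T,\uZ)=0$ alone. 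The paper's argument is quite different and avoids both issues: Proposition~\ref{p:Z-torsors-over-perfectoids} shows $H^1_v(X,\uZ)=H^1_{\an}(X,\uZ)$ for any perfectoid $X$ via the Tate curve sequence $0\to q^\Z\to\G_m\to\G_m/q^\Z\to0$ and Kedlaya--Liu, after which one passes to the limit and invokes Bosch--L\"utkebohmert's Theorem~\ref{t:BL-12c} that $H^1_{\an}(E,\uZ)=0$.

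Your uniqueness step has a more serious logical gap. You plan to first establish that $\varphi$ is a group homomorphism (so that $\delta=\varphi_1-\varphi_2$ descends to $M_p/M\to\wt A'$), and propose to propagate the homomorphism property from $\wt E$ to $\wt A$ "via a rigidity argument using the vanishing from the previous step applied to suitable difference morphisms." But the relevant difference morphism is $d(\varphi) = m'\circ(\varphi\times\varphi) - \varphi\circ m \colon \wt A\times\wt A\to\wt A'$, which fixes $(0,0)$ and vanishes on $\wt E\times\wt E$; to conclude $d(\varphi)=0$ you would need precisely the uniqueness-of-lift statement (for $A\times A$) that you are trying to prove. The paper sidesteps this circularity entirely: Lemmas~\ref{l:res-of-wtA-to-wtE} and~\ref{l:M-stable-morphisms-from-M_pxX} show that $H^0(\wt A, Y)\to H^0(\wt E, Y)$ is injective for \emph{any} perfectoid $Y$, with no homomorphism hypothesis, by pulling back along the $M$-torsor $M_p\times\wt E\to\wt A$ and using that the restriction $\Map_{\cts}(M_p,R)\to\Map(M,R)$ is injective for every affinoid ring $R$, because $M$ is dense in $M_p$. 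This is the same density mechanism you use at the end, but applied at the sheaf-of-sets level instead of to a descended group homomorphism, and it must come \emph{before} the homomorphism claim, not after. Note finally that the Corollary itself only requires the existence part of Theorem~\ref{t:isom-of-tilde-A-fixes-G}: apply it to $\varphi$ and $\varphi^{-1}$ and use that restriction along $\wt E\hookrightarrow\wt A$ is monomorphic to see the restrictions are mutually inverse. So the uniqueness gap is not fatal for the Corollary per se, but it is a genuine gap in the proposed path to Theorem~\ref{t:isom-of-tilde-A-fixes-G}.
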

\begin{proof}[Proof of Theorem~\ref{t:isom-of-tilde-A-fixes-G}]
	The proof will occupy the rest of this as well as the next subsection.
	
	We start by proving that the restriction of $\varphi$ to $\wt E$ determines $\varphi$ uniquely. which is easy:
	\begin{Lemma}\label{l:res-of-wtA-to-wtE}
		Let $Y$ be any perfectoid space. Then pullback defines an injection
		\[H^0(\wt A,Y)\to H^0(\wt E,Y). \]
	\end{Lemma}
	\begin{proof}
		We use the $M$-torsor $M_p\times \wt E\to \wt A$.
		Pulling back, the map $\wt A\to Y$ becomes an $M$-equivariant map
		$M_p\times \wt E\to Y$.
		The statement then follows from the following lemma:
	\end{proof}
	\begin{Lemma}\label{l:M-stable-morphisms-from-M_pxX}
		Let $Y$ and $Z$ be perfectoid spaces with $M$-action. Then  we have
		\[ H^0(M_p\times Z,Y)^{M}\subseteq H^0(Z,Y).\]
	\end{Lemma}
	\begin{proof}
		For any affinoid open $\Spa(R,R^+)\subseteq Z$, the restriction 
		$\Map_{\cts}(M_p,R)\to  \Map(M,R)$
		is injective, so $M_p\times Z\to Y$ is uniquely determined by its pullback $\phi:M\times Z\to M_p\times Z\to Y$.
		The $M$-equivariance means that it is already determined by its restriction to $Z$.
	\end{proof}
	
	It thus remains to prove the ``lifting'' assertion of Theorem~\ref{t:isom-of-tilde-A-fixes-G}. For this we apply $H^0(\wt E,-)$ to the short exact sequence of Theorem~\ref{t:perfectoid-covers-of-abelian-varieties}.3 for $A'$
	and get a long exact sequence
	\[0\to H^0(\wt E,\wt E') \to H^0(\wt E,\wt A')\to  H^0(\wt E,M'_p/M').\]
	The last term in turn sits in an exact sequence
	\[ H^0(\wt E,M')\to H^0(\wt E,M_p')\to H^0(\wt E,M_p'/M')\to  H^1_v(\wt E,M')\]
	of which the first two terms equal $M\to M_p$. The quotient $M_p/M\subseteq H^0(\wt E,M_p/M)$ accounts precisely for the different maps induced by translation on $\wt A$.
	It thus suffices to prove that the $H^1$-group on the right vanishes. This is the goal of the next subsection.
	
	\subsection{Triviality of $\mathbb Z$-torsors}\label{s:Z-torsors}
	As mentioned in the introduction, we can think of $E$ as the rigid analytic universal cover of $A$ in the following topological sense:
	\begin{Theorem}[Bosch--L\"utkebohmert, {\cite[Theorem~1.2.(c)]{BL-Degenerating-AV}}]\label{t:BL-12c} 	Let $0\to T\to E\to B\to 0$ be any Raynaud extension. Then
		$E$ is connected and we have $H^1_{\an}(E,\uZ)=0$.
	\end{Theorem}
	The goal of this subsection is to deduce an analogous result for the universal cover $\wt E$.

	\begin{Proposition}\label{p:ext-E-Z-vanishes}
		Let $0\to T\to E\to B\to 0$ be any Raynaud extension, then $H^1_v(\wt E,\underline{\Z})=0$.
	\end{Proposition}
	This follows easily from the following more general statement:
	\begin{Proposition}\label{p:Z-torsors-over-perfectoids}
		Let $X$ be any perfectoid space over $K$, then
		$H^1_{v}(X,\uZ)=H^1_{\an}(X,\uZ)$.
	\end{Proposition}
	\begin{proof}
		Consider for any $q\in K^\times$ with $|q|<1$ the short exact sequence of the Tate curve
		\[0\to q^{\Z}\to \G_m\to \G_m/q^{\Z}\to 0.\]
		Its long exact sequences for the analytic and $v$-topology fit into a commutative diagram
		\begin{center}
			\begin{tikzcd}[column sep = 0.2cm]
				{H^0_{v}(X,\G_m)} \arrow[r]           & {H^0_{v}(X,\G_m/q^\Z)} \arrow[r]             & {H^1_{v}(X,\underline{\Z})} \arrow[r] & {H^1_{v}(X,\G_m)} \arrow[r]           & {H^1_{v}(X,\G_m/q^{\Z})}           \\
				{H^0_{\an}(X,\G_m)} \arrow[r] \arrow[u] & {H^0_{\an}(X,\G_m/q^{\Z})} \arrow[u] \arrow[r] & {H^1_{\an}(X,\uZ)} \arrow[u] \arrow[r]   & {H^1_{\an}(X,\G_m)} \arrow[u] \arrow[r] & {H^1_{\an}(X,\G_m/q^{\Z})}. \arrow[u]
			\end{tikzcd}
		\end{center}
		The first two maps are isomorphisms. The fourth is an isomorphism by Theorem~\ref{t:KL-line-bundles}. The last map is injective by the Leray sequence, so the statement follows from the $5$-Lemma.
	\end{proof}
	\begin{proof}[Proof of Proposition~\ref{p:ext-E-Z-vanishes}]
		Combining  Proposition~\ref{p:Z-torsors-over-perfectoids} and Theorem~\ref{t:BL-12c}, we see that we have \[\textstyle H^1_{v}(\wt E,\underline{\Z})=H^1_{\an}( \wt E,\underline{\Z})=\varprojlim_{[p]} H^1_{\an}(E,\underline{\Z})=0,\]
		as we wanted to see.
	\end{proof}

This finishes the proof of Theorem~\ref{t:isom-of-tilde-A-fixes-G}.
\end{proof}

\begin{Corollary}\label{c:ext(E,Z)}
	We have $\Ext^1_v( \wt E,\underline{\Z})=\Ext^1_v( E,\underline{\Z})=0$.
\end{Corollary}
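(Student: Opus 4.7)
The plan is to upgrade the vanishing of $H^1_v(F,\uZ)$ --- established via Proposition~\ref{p:ext-E-Z-vanishes} for $F=\wt E$ and via the combination of Theorem~\ref{t:BL-12c} with Proposition~\ref{p:Z-torsors-over-perfectoids} for $F=E$ --- to the vanishing of $\Ext^1_v(F,\uZ)$ by a normalised-cocycle argument. Since both $E$ and $\wt E$ are connected abelian $v$-sheaves over $K$, the two cases can be handled uniformly.

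Concretely, I would start from an arbitrary extension of abelian $v$-sheaves $0\to \uZ\to H\to F\to 0$. Because $\uZ$ is representable, the surjection $H\to F$ is in particular a $\uZ$-torsor, and the vanishing of $H^1_v(F,\uZ)$ yields a section $s\colon F\to H$ of $v$-sheaves, which after translation I would normalise so that $s(0)=0$. The obstruction to $s$ being a group homomorphism is then the symmetric normalised $2$-cocycle
\[ c\colon F\times F\to \uZ,\qquad c(x,y)=s(x+y)-s(x)-s(y), \]
and splitting the extension becomes equivalent to showing $c\equiv 0$. Since $F\times F$ is connected as a $v$-sheaf and $K$ is algebraically closed, $H^0(F\times F,\uZ)=\Z$, so $c$ is constant; the normalisation forces $c(0,0)=0$ and hence $c\equiv 0$, so $s$ is a group homomorphism and the extension splits.

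The step I expect to be the main obstacle is the connectedness of $\wt E\times \wt E$ as a $v$-sheaf: for $F=E$ this is immediate from Theorem~\ref{t:BL-12c} together with $K$ being algebraically closed, but for $F=\wt E$ one needs the tilde-limit presentation $\wt E\sim\varprojlim_{[p]}E$ along surjective finite \'etale maps with a compatible system of preimages of $0$, so that $|\wt E\times\wt E|=\varprojlim|E\times E|$ is an inverse limit of connected topological spaces. Alternatively, one could appeal to the material in the appendix on $\Ext$-groups of $v$-sheaves to relate $\Ext^1_v(F,\uZ)$ directly to $H^1_v(F,\uZ)$ for connected abelian $v$-sheaves $F$, bypassing the explicit cocycle computation.
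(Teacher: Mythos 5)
Your approach is essentially the paper's own, just unpacked in low degree: the paper invokes Lemma~\ref{l:E_2-of-discrete-torsor-over-connected} (Breen--Deligne, giving $E_2^{10}=0$ and hence an injection $\Phi:\Ext^1_v(F,\uZ)\hookrightarrow H^1_v(F,\uZ)$) and then concludes from $H^1_v(\wt E,\uZ)=0$; your explicit cocycle computation $c(x,y)=s(x+y)-s(x)-s(y)$ is exactly what that lemma encodes in degree one. Two points are worth making precise.

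First, the connectedness of $\wt E\times\wt E$ that worries you is indeed needed, but it is no more subtle here than in the paper's Lemma~\ref{l:E_2-of-discrete-torsor-over-connected}, whose proof already requires every morphism $G^n\to\underline{H}$ to be constant. Since $K$ is algebraically closed and $\wt E$ is connected with a $K$-point $0\in\wt E(K)$, it is geometrically connected, so all finite powers $\wt E^n$ are connected; you do not actually need the tilde-limit description of $|\wt E\times\wt E|$ (which in any case is not an inverse limit of \emph{compact} spaces, so topological connectedness of the limit would not be automatic).

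Second, your claim that $H^1_v(E,\uZ)=0$ for the rigid space $E$ follows from Theorem~\ref{t:BL-12c} combined with Proposition~\ref{p:Z-torsors-over-perfectoids} does not quite work in characteristic $0$, because Proposition~\ref{p:Z-torsors-over-perfectoids} is stated for perfectoid $X$ and $E$ is only a rigid group there. A clean way to get $H^1_v(E,\uZ)=0$ is via the Cartan--Leray sequence for the pro-finite-\'etale $T_pE$-torsor $\wt E\to E$: since $\wt E$ is connected, $H^0_v(\wt E,\uZ)=\Z$ with trivial $T_pE$-action, and $H^1_{\cts}(T_pE,\Z)=\Hom_{\cts}(T_pE,\Z)=0$ because $T_pE$ is pro-$p$ and $\Z$ is discrete torsion-free; combined with $H^1_v(\wt E,\uZ)=0$ this gives $H^1_v(E,\uZ)=0$. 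With these two repairs your argument is complete and agrees with the paper's.
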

\begin{proof}
	Since $\wt E$ is connected and $\underline{\Z}$ is a  constant sheaf, Lemma~\ref{l:E_2-of-discrete-torsor-over-connected} says that we have an injection $\Ext^1_{v}( \wt E,\underline{\Z})\hookrightarrow H^1_{v}( \wt E,\underline{\Z})$. The same works for $E$.
\end{proof}

\begin{Remark}
	The $\Ext^2_v$ groups do not vanish: Indeed, it follows from the proposition and Lemma~\ref{l:isoms-of-Zp/Z}.\ref{enum:ext 1(uzp,z)} that the $p$-divisible group $E[p^\infty]=\Ext^1_v(T_pE,\uZ)$ injects into $\Ext^2_v(E,\underline{\Z})$.
\end{Remark}
\begin{Remark}
We consider Theorem~\ref{t:isom-of-tilde-A-fixes-G} to be an analogue of the rigid statement that any morphism $A\to A'$ lifts to a morphism $E\to E'$. This analogy is perhaps more transparent in the following variant in terms of $X:=M_p\times \wt E$, which was mentioned in the introduction:
\begin{Corollary}
	For $n\gg 0$, the map $[p^n]\varphi$ lifts uniquely to a homomorphism
	\[ X\to X'.\]
\end{Corollary}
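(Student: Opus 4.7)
The plan is to construct $\phi$ explicitly as a homomorphism of abelian $v$-sheaves $M_p\oplus \wt E \to M_p'\oplus \wt E'$, by assembling the pieces already established in Theorems~\ref{t:isom-of-tilde-A-fixes-G} and~\ref{t:intro-Hom(A,A')}. First, $\varphi$ restricts to a homomorphism $\varphi_E:\wt E\to \wt E'$ by Theorem~\ref{t:isom-of-tilde-A-fixes-G}, while Theorem~\ref{t:intro-Hom(A,A')} supplies a homomorphism $\varphi_M:M[\tfrac{1}{p}]\to M'[\tfrac{1}{p}]$ such that the map $\epsilon(m):=\varphi_E(g(m))-g'(\varphi_M(m))$ takes values in $\wh{\wt E'}$. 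I would choose $n$ large enough that $p^n\varphi_M$ sends $M$ into $M'$, and extend $p^n\varphi_M$ uniquely $\uZp$-linearly to a map $\varphi_{M,n}:M_p\to M_p'$.

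Next, I would build a correction term. The map $p^n\epsilon:M\to \wh{\wt E'}$ should extend uniquely to a $\uZp$-linear map $\delta:M_p\to \wh{\wt E'}$, because Proposition~\ref{p:morphisms-from-Z,Z_p,Z_p/Z} identifies $\Hom(M_p,\wh{\wt E'})$ with $\wh{\wt E'}(K)^r$, which is the same group that parametrises $\Z$-linear maps $M\to \wh{\wt E'}$. With $\delta$ in hand I then define
\[ \phi:X\to X',\qquad (m_p,e)\mapsto \bigl(\varphi_{M,n}(m_p),\; [p^n]\varphi_E(e)+\delta(m_p)\bigr), \]
which is visibly a homomorphism of abelian $v$-sheaves.

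The verification that $\phi$ lifts $[p^n]\varphi$ reduces, using $\varphi|_{\wt E}=\varphi_E$ and the description $X\to \wt A$, $(m_p,e)\mapsto m_p+e$, to the identity $\varphi_{M,n}(m_p)+\delta(m_p)=[p^n]\varphi(m_p)$ in $\wt A'$. On $m\in M$ this is immediate from the definition of $\delta$, together with the identification of $g(m)\in \wt E$ and $m\in M_p$ inside $\wt A$ that is encoded in the diagram in Theorem~\ref{t:perfectoid-covers-of-abelian-varieties}.3. To pass from $M$ to $M_p$, I invoke Proposition~\ref{p:morphisms-from-Z,Z_p,Z_p/Z} once more: both sides define morphisms $M_p\to \wt A'$ whose images lie in the subsheaf $\wh{\wt A'}$ (as $M_p$ is topologically $p$-torsion), so they are uniquely determined by their values on $M$, and hence agree on all of $M_p$.

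For uniqueness the difference of any two such lifts is a homomorphism $X\to X'$ factoring through the kernel $M'$ of $X'\to \wt A'$; its restriction to $\wt E$ vanishes by connectedness of $\wt E$ versus discreteness of $M'$, while its restriction to $M_p$ must vanish on $M$ (both lifts induce the same $\varphi_{M,n}$ on the kernels of the torsor projections), hence on all of $M_p$ because $\Hom(M_p,M')\hookrightarrow \Hom(M,M')$ is injective. The hard part will be the existence, and in particular the construction of $\delta$ and the extension of the projection identity from $M$ to $M_p$; both rest squarely on the interplay between the complete $\Z_p$-module $\wh{\wt E'}$ and the profinite $\Z_p$-module $M_p$ supplied by Proposition~\ref{p:morphisms-from-Z,Z_p,Z_p/Z}.
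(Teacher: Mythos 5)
Your proof is correct, but it takes a genuinely different route from the paper's, and it is worth noting where that route sits in the logical order.

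The paper proves this in one line by abstract obstruction theory: apply $\Hom(X,-)$ to $0\to M'\to X'\to\wt A'\to 0$; the obstruction to lifting the composite $X\to\wt A\xrightarrow{[p^n]\varphi}\wt A'$ lies in $\Ext^1_v(X,M')=\Ext^1_v(M_p,M')\oplus\Ext^1_v(\wt E,M')$; the second summand vanishes by Corollary~\ref{c:ext(E,Z)}, and the first equals $\Hom(M,M')\otimes\Q_p/\Z_p$ by Proposition~\ref{p:higher-cohomolgy-of-profinite=0}.\ref{enum:Ext(constant,profinite)}, which is $p$-power torsion, so $[p^n]$ kills the obstruction for $n\gg0$. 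Uniqueness is immediate from $\Hom(X,M')=\Hom(M_p,M')\oplus\Hom(\wt E,M')=0$ (the first by $\Hom_{\cts}(\Z_p,\Z)=0$, the second by connectedness). This argument is self-contained at its place in the text, just after Theorem~\ref{t:isom-of-tilde-A-fixes-G}.

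Your proof, by contrast, builds the lift explicitly from the data $(\varphi_E,\varphi_M)$ provided by Theorem~\ref{t:intro-Hom(A,A')} (i.e.\ Theorem~\ref{t:main-theorem}). This is more concrete and shows exactly what the lift looks like, and the key steps — the construction of $\delta$ from the topological $p$-torsion error term and the passage from $M$ to $M_p$ via the injectivity of $\Hom(M_p,\wt A')\hookrightarrow\Hom(M,\wt A')$ — are all sound; I checked the verification on $M$ and it does close up. However, Theorem~\ref{t:main-theorem} is stated and proved \emph{after} this Corollary in the paper, so you are invoking a later result. This is not circular (the paper's proof of Theorem~\ref{t:main-theorem} does not use the Corollary), but it reverses the intended logical order; the paper deliberately proves the Corollary with only the material available at that point. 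What your approach buys is a constructive description of the lift rather than a pure existence statement, but at the cost of depending on the heaviest theorem of the section.

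One minor flaw in your uniqueness argument: the parenthetical claim that ``both lifts induce the same $\varphi_{M,n}$ on the kernels of the torsor projections'' is not justified as stated — two lifts of $[p^n]\varphi$ need not a priori agree on $M$. Fortunately it is also unnecessary: the difference $\psi$ of two lifts lands in $M'$ and kills $\wt E$, hence factors through $M_p$, and $\Hom(M_p,M')=0$ directly since $\Hom_{\cts}(\Z_p,\Z)=0$. You should replace the detour through $M$ by this one-step argument.
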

\begin{proof}
	This follows via long exact sequences and using Proposition~\ref{p:higher-cohomolgy-of-profinite=0}.2 which shows that $\Ext^1_v(M_p,M')=\Hom(M,M')\otimes \Ext^1_v(\uZ_p,\uZ)=\Hom(M,M')\otimes\Q_p/\Z_p$ is $p$-torsion.
\end{proof}
	In particular, $M_p\times \wt E\to \wt A$
	is the universal cover of $\wt A$ for $\Z$-coefficients. This is the sense in which $X\to \wt A$ is an analogue of the cover $E\to A$ in the rigid analytic setting.
\end{Remark}

\subsection{Main Theorem}
Using Theorem~\ref{t:isoms-of-universal-covers-of-Raynaud-ext}, we can now prove Main Theorems~\ref{t:intro-Hom(A,A')} and \ref{t:morph-is-comp-intro} from the introduction:
\begin{Theorem}\label{t:main-theorem}
	Let $A=E/M$ and $A'=E'/M'$ be two abeloids over $K$.
		Then  any morphism $\wt A\to \wt A'$ over $K$ is the composition of a homomorphism and a translation. Moreover, let  $g:M\to \wt E$ and $g':M\to \wt E'$ be any choices of lifts, then there is a natural isomorphism
	\begin{equation}\label{eq:isog-in-main-thm}
	\Hom(\wt A,\wt A')={\left\{\begin{array}{@{}l@{}l}	
		\varphi_E&\in \Hom(\wt E,\wt E')\end{array}\middle|\, 
		\begin{array}{lr}
		\exists \,\varphi_M:M[\tfrac{1}{p}]\to M'[\tfrac{1}{p}]\\	
		\text{such that the following }\\
		\text{diagram commutes up}\\\text{to topological $p$-torsion:}\end{array}
		\quad 
		{\begin{tikzcd}
			M[\tfrac{1}{p}] \arrow[d,"\varphi_M"'{yshift=0.36ex}] \arrow[r,"g"] & \wt E \arrow[d,"\varphi_E"'{yshift=-0.36ex}] \\
			M'[\tfrac{1}{p}] \arrow[r,"g'"]& \wt E'   
			\end{tikzcd}}
		\right \}}
\end{equation}
	(see Definition~\ref{d:G-hat} for ``topological $p$-torsion''). 
	In particular, the following are equivalent:
	\begin{enumerate}
		\item There is an isomorphism of perfectoid groups $\varphi:\wt A\isomarrow \wt A'$ over $K$.
		\item There is an isomorphism of perfectoid spaces $\varphi:\wt A\isomarrow \wt A'$ over $K$.
		\item There is a triple $(\varphi_B,\varphi_T,\varphi_M)$ consisting of
		\begin{enumerate}
			\item an isogeny $\varphi_B:\overbar B\to \overbar B'$ of $p$-power degree between the special fibres of $B$, $B'$,
			\item an isogeny $\varphi_T:T\to T'$ of $p$-power degree s.t.\ the following diagram commutes:
			\[
			\begin{tikzcd}
			M^\vee  \arrow[r,"\overbar f"]& \overbar{B}^\vee(k)[\tfrac{1}{p}]  \\
			M'^\vee \arrow[r,"\overbar f'"]\arrow[u,"\varphi_{M^\vee}"',"\sim"labelrotate]           & \overbar{B}'^\vee(k)[\tfrac{1}{p}]\arrow[u,"\varphi_B^\vee"',"\sim"labelrotate]
			\end{tikzcd}
			\]
			where $\varphi_{M^\vee}$ is the morphism induced on the character lattices by $\varphi_T:T\to T'$. In particular, by Theorem~\ref{t:isoms-of-universal-covers-of-Raynaud-ext}, $(\varphi_B,\varphi_{M^\vee})$ induces an isogeny $\varphi_E:\wt E\to \wt E'$,
			\item an isogeny of $p$-power degree $\varphi_M:M\rightarrow M'$ for which 
			\[\varphi_E\circ g- g'\circ\varphi_M:M\to \wt E(K)\to E(K)\]
			has image contained in the open subgroup $\wh{E}(K)$.
		\end{enumerate}
	\end{enumerate}
\end{Theorem}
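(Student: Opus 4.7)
The plan is to deduce the theorem from the two main results established in the preceding sections: the description of $\Hom(\wt E,\wt E')$ in Theorem~\ref{t:isoms-of-universal-covers-of-Raynaud-ext} and Theorem~\ref{t:isom-of-tilde-A-fixes-G}, which says that any morphism $\varphi\colon \wt A\to \wt A'$ with $\varphi(0)=0$ is uniquely determined by its restriction $\varphi_E\colon \wt E\to \wt E'$. First I show that every such $\varphi$ is a homomorphism: after translating the target so that $\varphi(0)=0$, the restriction $\varphi_E$ satisfies $\varphi_E(0)=0$ and is hence a homomorphism by the last sentence of Theorem~\ref{t:isoms-of-universal-covers-of-Raynaud-ext}; then applying the uniqueness clause of Theorem~\ref{t:isom-of-tilde-A-fixes-G} to the map $(x,y)\mapsto \varphi(x+y)-\varphi(x)-\varphi(y)$ on $\wt A\times \wt A=\widetilde{A\times A}$, which vanishes on the Raynaud extension $\wt E\times \wt E$ because $\varphi_E$ is a homomorphism, forces $\varphi$ itself to be a homomorphism. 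This proves the translation-plus-homomorphism statement and reduces the task to identifying $\Hom(\wt A,\wt A')$ as a subgroup of $\Hom(\wt E,\wt E')$ via restriction.

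To identify this subgroup, I use that diagram~\eqref{eq:wtE-wtA-vs-Ainfty} exhibits $\wt A$ as the pushout of $M\xrightarrow{g}\wt E$ and $M\hookrightarrow M_p$. Thus a homomorphism $\varphi_E\colon \wt E\to \wt E'$ extends to $\wt A\to \wt A'$ if and only if the composite $M\xrightarrow{g}\wt E\xrightarrow{\varphi_E}\wt E'\hookrightarrow \wt A'$ extends along $M\hookrightarrow M_p$ to a homomorphism $\beta\colon M_p\to \wt A'$. To analyse $\beta$, I project via the short exact sequence $0\to \wt E'\to \wt A'\to M_p'/M'\to 0$: by Lemma~\ref{l:isoms-of-Zp/Z}, the image of $\beta$ in $\Hom(M_p,M_p'/M')$ is classified by a $\Z\tf$-linear map $\varphi_M\colon M\tf\to M'\tf$, and since $\wt E'$ is uniquely $p$-divisible as a $v$-sheaf, $g'$ extends canonically to $M'\tf\to \wt E'$, making $g'\circ\varphi_M$ well defined. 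The residual ambiguity in lifting from $M_p'/M'$ back to $\wt A'$ is controlled by $\Hom(M_p,\wh{\wt E'})$, and matching $\beta|_M$ with $\varphi_E\circ g$ modulo this ambiguity translates precisely to the requirement that $\varphi_E\circ g$ and $g'\circ\varphi_M$ agree up to topological $p$-torsion in $\wt E'$. Conversely, given $\varphi_M$ with this property, such a $\beta$ is constructed from $\varphi_M$ composed with $g'$ together with a $\wh{\wt E'}$-valued correction, which extends continuously from $M$ to $M_p$ by Proposition~\ref{p:morphisms-from-Z,Z_p,Z_p/Z}.

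Finally, the equivalences $1\Leftrightarrow 2\Leftrightarrow 3$ follow formally from~\eqref{eq:isog-in-main-thm}: $1\Rightarrow 2$ is trivial, $2\Rightarrow 1$ invokes the first assertion to promote a bare isomorphism of perfectoid spaces to one of groups, and $1\Leftrightarrow 3$ is obtained by substituting into~\eqref{eq:isog-in-main-thm} the description of $\Hom(\wt E,\wt E')$ as pairs $(\varphi_B,\varphi_{M^\vee})$ from Theorem~\ref{t:isoms-of-universal-covers-of-Raynaud-ext}, so that this pair together with $\varphi_M$ assembles into the triple $(\varphi_B,\varphi_T,\varphi_M)$ of condition~3. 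The main obstacle is the second paragraph: precisely reconciling the $\Hom(M_p,\wh{\wt E'})$-ambiguity in $\beta$ with the ``up to topological $p$-torsion'' slack in the displayed diagram, and checking compatibility with the variant formulation in condition 3(c), which involves $\wh{E'}$ rather than $\wh{\wt E'}$; the two are matched through the surjection $\wt E'\to E'$ whose kernel $T_pE'$ is itself topologically $p$-torsion.
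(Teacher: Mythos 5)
Your overall architecture is the same as the paper's: reduce to homomorphisms via the uniqueness assertion of Theorem~\ref{t:isom-of-tilde-A-fixes-G} applied to $\varphi(x+y)-\varphi(x)-\varphi(y)$, then describe $\Hom(\wt A,\wt A')$ by analyzing the middle of the sequence $0\to\wt E\to\wt A\to M_p/M\to 0$, and finally combine with Theorem~\ref{t:isoms-of-universal-covers-of-Raynaud-ext} to get the triple in condition~3. The one genuine difference in route is that where the paper works with extension classes, boundary maps, and Corollary~\ref{c:interpret-of-boundary-map-for-wt-E} — it writes the condition as $\varphi_{E\ast}[\wt A]=\varphi_M^\ast[\wt A']$ in $\Ext^1_v(M_p/M,\wt E')$ and unwinds this via the long exact sequences for $\Hom(-,\wt E)$ and $\Hom(-,\wt E')$ — you instead work directly with the pushout universal property from diagram~\eqref{eq:wtE-wtA-vs-Ainfty}, analyzing when the map $M\to\wt E'\hookrightarrow\wt A'$ extends along $M\hookrightarrow M_p$. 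These two formulations are equivalent (the corollary is itself proved immediately from that diagram), so this is a presentational difference: your version is more concrete, while the paper's homological phrasing makes the ``up to $\Hom(M_p,\wt E')$'' slack drop out mechanically from exactness of the rows.

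Within the second paragraph there are two points you should make more precise. First, $\Hom(M_p,M_p'/M')$ is strictly bigger than $\Hom(M,M')\tf$ (by Lemma~\ref{l:isoms-of-Zp/Z}.5 it is $\Hom(M,M')\otimes\Q_p$), so to get a $\Z\tf$-linear $\varphi_M$ you need to observe that the projection $\bar\beta\colon M_p\to M_p'/M'$ vanishes on $M$ — because $\beta|_M$ lands in $\wt E'$ by construction — and hence factors through $M_p/M$; then you can invoke Lemma~\ref{l:isoms-of-Zp/Z}.6. Second, the phrase ``$\beta$ is constructed from $\varphi_M$ composed with $g'$ together with a $\wh{\wt E'}$-valued correction, which extends continuously from $M$ to $M_p$'' conflates two maps: $g'\circ\varphi_M$ takes values in $\wt E'$ and does \emph{not} extend over $M_p$ (that is the whole content of the condition). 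What does extend is the composite $M_p\to M_p'\to\wt A'$ (for an integral $\varphi_M$, obtained by scaling by $p^n$ and undoing the scaling afterward using unique $p$-divisibility of the covers), whose restriction to $M$ is $g'\circ\varphi_M$; the $\wh{\wt E'}$-valued correction is then added to make the restriction to $M$ equal $\varphi_E\circ g$. With those two clarifications, and after the $p^n$-scaling bookkeeping for non-integral $\varphi_M$, your argument matches the paper's.

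Your final paragraph handles $1\Leftrightarrow 2\Leftrightarrow 3$ correctly, and you are right to flag the $\wh{E'}$ versus $\wh{\wt E'}$ discrepancy in condition~3(c): the reconciliation is exactly the observation that $\wh{\wt E'}$ is the preimage of $\wh{E'}$ under $\wt E'\to E'$, since the kernel $T_pE'$ is itself topologically $p$-torsion, so $\Hom(M_p,\wt E')=\Hom(M,\wh{\wt E'})$ maps onto $\Hom(M,\wh{E'})$ compatibly with the condition.
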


\begin{Corollary}\label{c:Hom-wtA-wtA'-fg}
	$\Hom(\wt A,\wt A')$ is a finite free $\Z[\tfrac{1}{p}]$-module.
\end{Corollary}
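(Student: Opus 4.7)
The plan is to bootstrap from the just-proved Main Theorem~\ref{t:main-theorem}, which exhibits $\Hom(\wt A,\wt A')$ as a subset of $\Hom(\wt E,\wt E')$, and then appeal to Corollary~\ref{c:Hom(wtE,wtE')-fin-free}, which tells us $\Hom(\wt E,\wt E')$ is a finite free $\Z\tf$-module. Concretely, the displayed description in \eqref{eq:isog-in-main-thm} presents $\Hom(\wt A,\wt A')$ as the subset of those $\varphi_E \in \Hom(\wt E,\wt E')$ for which a compatible $\varphi_M$ exists, so forgetting the existence witness $\varphi_M$ gives a canonical map
\[\rho \colon \Hom(\wt A,\wt A')\hookrightarrow \Hom(\wt E,\wt E'),\]
which is moreover injective: indeed Theorem~\ref{t:isom-of-tilde-A-fixes-G} is precisely the statement that a homomorphism $\wt A \to \wt A'$ is determined by its restriction to $\wt E$.

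Next I would verify that $\rho$ is $\Z\tf$-linear for the natural $\Z\tf$-module structure on both sides. Since $\wt A = \varprojlim_{[p]}A$ and $\wt E = \varprojlim_{[p]}E$, multiplication by $p$ is an isomorphism on $\wt A$, $\wt A'$, $\wt E$ and $\wt E'$, so each of the Hom groups carries a natural $\Z\tf$-module structure via post-composition with $[p]^{-1}$. Restriction to $\wt E$ clearly commutes with post-composition, so $\rho$ is $\Z\tf$-linear.

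Finally, I would invoke structure theory: $\Z\tf$ is a localisation of $\Z$, hence a PID, and any submodule of a finitely generated free module over a PID is again finitely generated and free. Combining this with Corollary~\ref{c:Hom(wtE,wtE')-fin-free} gives the claim. There is no real obstacle here — all the serious work has been done in Theorems~\ref{t:isoms-of-universal-covers-of-Raynaud-ext} and \ref{t:isom-of-tilde-A-fixes-G}, which respectively underpin the target being finite free and the map $\rho$ being injective. The only point to double-check is the $\Z\tf$-linearity of $\rho$, which is formal once one notes that $[p]$ acts invertibly on all four perfectoid covers in sight.
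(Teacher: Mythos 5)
Your proof is correct and follows exactly the paper's (very terse) argument: the paper's proof is the single sentence ``The $\Z\tf$-module $\Hom(\wt E,\wt E')$ is finite free by Corollary~\ref{c:Hom(wtE,wtE')-fin-free},'' with the embedding of $\Hom(\wt A,\wt A')$ as a $\Z\tf$-submodule via Theorem~\ref{t:isom-of-tilde-A-fixes-G} left implicit. You have simply made that embedding, its $\Z\tf$-linearity, and the appeal to the PID structure theorem explicit.
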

\begin{proof}
	The $\Z\tf$-module $\Hom(\wt E,\wt E')$ is finite free by Corollary~\ref{c:Hom(wtE,wtE')-fin-free}.
\end{proof}
\begin{Remark}
	We will see in \cite{heuer-Picard_perfectoid_covers} that the N\'eron--Severi group of $\wt A$ is precisely the symmetric subgroup of $\Hom(\wt A,\wt A^\vee)$. In particular, it is finitely generated by Corollary~\ref{c:Hom-wtA-wtA'-fg}.
\end{Remark}
\begin{Remark}
In the case that $E=E'$, condition $(c)$ can be rephrased without the choice of lifts $g$ and $g'$, by asking that $m-\varphi_M(m)\in \widehat{E}(K)\tf$ for all $m \in M$. Since we can picture $\widehat{E}(K)$ as being a small open neighbourhood of the identity in $E$, this roughly says  that the two lattices $M$ and $M'$ are ``$p$-adically close'' to each other in the ambient space $E$.
\end{Remark}

\begin{Example}\label{ex:non-trivial-morph-between-A_1-and-A_2}
	\begin{enumerate}
		\item 
		In order to illustrate condition $(c)$, we note that for Tate curves, it means the following: For any $q,q'\in K^\times$ with $|q|,|q'|<1$, there is an isomorphism 
		\[\widetilde{\G_m/q^\Z}\isomarrow \widetilde{\G_m/q'^\Z}\]
		of perfectoid spaces over $K$ if and only if $q'\in (1+\m)\cdot q^{\Z[\frac{1}{p}]}$.
		\item The isomorphisms between universal covers are really a feature of the $p$-adic universal covers; the analogous result for intermediate perfectoid covers does not hold. For example, for Tate curves in characteristic $p$, there is an isomorphism
		\[(\G_m/q^\Z)^\perf\isomarrow (\G_m/q'^\Z)^\perf\]
		if and only if $q'\in q^{\Z[\frac{1}{p}]}$. Indeed, this can be seen using the short exact sequence
		\[0\to q^{\Z}\to \wt\G_m\to (\G_m/q^{\Z})^\perf\to 0,\]
		the fact that $\Ext^1_v(\wt\G_m,\Z)=0$ by Corollary~\ref{c:ext(E,Z)}, and $\End(\wt\G_m)=\Z\tf$ by Lemma~\ref{l:H0(wtT,wtT')}.
	\item 
	In order to give a more elementary perspective on the additional isomorphisms between universal covers, we now describe them on the level of the underlying topological group in the case of Tate curves:
	Let $q,q'\in K^\times$ be such that $|q|=|q'|<1$ and such that there is $z\in 1+\m$ with $q=q'z$. We construct a continuous homomorphism
	\[ \textstyle\varprojlim_{[p]}K^\times/q^\mathbb Z\to K^\times/q'^{\mathbb Z}\]
	as follows: Let $(x_n)_{n\in\N}$ be an element of the left hand side. Lifting each element individually to $K^\times$ yields a sequence $(\tilde{x}_n)_{n\in\N}$ in $K^\times$ satisfying
	\[\tilde{x}_{n+1}^p\tilde{x}^{-1}_n\in q^\Z\quad \text{ for all }n\in \N.\]
	We claim that $(\tilde{x}_n^{p^n})_{n\in\N}$ is a convergent sequence in the topological group $K^\times/q'^{\mathbb Z}$, with limit independent of the choice of lift. Indeed, if $\tilde{x}'_n$ is any other lift of $x_n$, then $\tilde{x}'_n = \tilde{x}_n\cdot q^k$  for some $k\in \Z$.
	Consequently, we have
	\[\tilde{x}'^{p^n}_n=\tilde{x}^{p^n}_n z^{kp^n}q'^{kp^n}.\]
	Applying this to $\tilde{x}_n':=\tilde{x}_{n+m}^{p^m}$ for any $m>0$, we see that inside $K^\times/q'$ we have
	\[\tilde{x}^{p^{n+m}}_{n+m}/\tilde{x}^{p^n}_n=z^{kp^n}\to 1 \quad \text{ for }n\to \infty.\]
 Hence $(\tilde{x}_n^{p^n})_{n\in\N}$ is a Cauchy sequence in $K^\times/q'^\Z$. Sending  $(x_n)_{n\in\N}$ to the limit defines a map
	\[ f:\textstyle\varprojlim_{[p]}K^\times/q^\Z\to K^\times/q'^\Z.\]
	Taking the inverse limit over $[p]$ on the target  gives the desired isomorphism. 
	
	That said, it is of course not clear from this computation that $f$ comes from an isomorphism of perfectoid groups, and there are indeed many automorphisms of the topological  group $\textstyle\varprojlim_{[p]}K^\times/q^\Z$ that do not come from morphisms of perfectoid groups.
		\item We get even more isomorphisms if we leave the category of perfectoid spaces over $K$, and instead consider absolute isomorphisms of diamonds over $\F_p$. This is a general phenomenon that is already true for the abeloids themselves: For example, any Tate curve over  $K$ of characteristic $p$ is already defined over a subfield generated by $q$ isomorphic to $K_0:=\F_p(\!(q^{1/p^\infty})\!)$, and the completed algebraic closure of $K_0$ has many field automorphisms over $\F_p$, identifying Tate curves for many different $q$.
		
		However, it is clear that even in this absolute setting there are abeloids of the same dimension that cannot have isomorphic universal covers, as is evidenced by invariants of $\wt A$ like, say, the Picard rank (see Theorem~\ref{t:Pic(wt B)}).
	\end{enumerate}
\end{Example}

\begin{proof}[Proof of Theorem~\ref{t:main-theorem}]
	That any morphism $\varphi:\wt A\to \wt A'$ such that $\varphi(0)=0$ is a homomorphism follows from the corresponding statement in Theorem~\ref{t:isoms-of-universal-covers-of-Raynaud-ext} by applying Theorem~\ref{t:isom-of-tilde-A-fixes-G} to the map $m\circ (\varphi\times \varphi)-\varphi\circ m:\wt A\times \wt A\to \wt A'$ which is trivial if and only if $\varphi$ is a homomorphism.
	
	To show the description of $\Hom(\wt A,\wt A')$,  we start with a homomorphism $\varphi:\wt A\to \wt A'$ of perfectoid groups.
	By Theorem~\ref{t:isom-of-tilde-A-fixes-G}, $\varphi$ restricts to a homomorphism  $\varphi_E:\wt E\to \wt E'$. Using the sequence of Theorem~\ref{t:perfectoid-covers-of-abelian-varieties}.3, we can thus interpret $\varphi$ as a morphism of short exact sequences
		\begin{center}
		\begin{tikzcd}
		0\arrow[r]&\wt E\arrow[r] \arrow[d,"\varphi_E",dashed] &  \widetilde{A} \arrow[r] \arrow[d,"\varphi"] &  M_p/M \arrow[d,"\varphi_{M}",dashed]\arrow[r]&0\\
		0\arrow[r]&\wt E'\arrow[r]&  \widetilde{A}' \arrow[r] &  M'_p/M'\arrow[r]&0.
		\end{tikzcd}
	\end{center}
	To describe $\varphi_{M}$, we first note that
	Lemma~\ref{l:isoms-of-Zp/Z}.\ref{enum:End(uZp/uZ)} implies that we have 
	\[\Hom(M_p/M,M'_p/M')=\Hom(M,M')[\tfrac{1}{p}],\]
	which shows that $\varphi_{M}$ is induced by an isogeny $\varphi_M:M\tf\to M'\tf$. 
	
	We note in passing that this gives another way to prove the uniqueness assertion of Theorem~\ref{t:isom-of-tilde-A-fixes-G} in the case that $\varphi$ is a homomorphism: By Lemma~\ref{l:H^0(Z_p/Z,G)}, every homomorphism $M_p/M\to \wt E'$ is trivial. Second, that $\varphi_M$ is determined by $\varphi_E$ as asserted by Theorem~\ref{t:isom-of-tilde-A-fixes-G} is clear from the short exact sequence
	\[ 0\to M\to M_p\times \wt E\to \wt A\to 0.\]
	
	It remains to explain why the commutativity condition in \eqref{eq:isog-in-main-thm} holds: In terms of extension classes in $\Ext^1_v(M_p/M,\wt E')$, given $\varphi_E$ and $\varphi_{M}$, for these to extend to a morphism $\varphi$ in the above diagram, it is a necessary and sufficient condition to have
	\[\varphi_{E\ast}[\wt A]=\varphi_{M}^{\ast}[\wt A'].\]
	More precisely, this means that in the following commutative diagram
	
	\[
	\begin{tikzcd}
	0\arrow[r]&\Hom(M_p,\wt E) \arrow[r] \arrow[d,"\varphi_{E\ast}"]&  \Hom(M,\wt E) \arrow[r]\arrow[d,"\varphi_{E\ast}"]& \Ext^1(M_p/M,\wt E)\arrow[d,"\varphi_{E\ast}"]\\
	0\arrow[r]&\Hom(M_p,\wt E') \arrow[r] &  \Hom(M,\wt E') \arrow[r]& \Ext^1(M_p/M,\wt E')\\
	0\arrow[r]&\Hom(M'_p,\wt E')\arrow[r] \arrow[u,"\varphi_{M}^{\ast}"']&  \Hom(M',\wt E')\arrow[u,"\varphi_{M}^{\ast}"'] \arrow[r] & \Ext^1(M'_{p}/M',\wt E')\arrow[u,"\varphi_{M}^{\ast}"'],
	\end{tikzcd}\]
	the terms $[\wt A]$ in the top right and $[\wt A']$ in the bottom right are sent to the same element in the middle. By Corollary~\ref{c:interpret-of-boundary-map-for-wt-E} and exactness of the rows, this is equivalent to the difference
	\begin{equation}\label{eq:commutativity-condition-in-proof-of-Main-Theorem}
	\varphi_M^{\ast}g'-\varphi_{E\ast}g=g'\circ \varphi_M-\varphi_E\circ g\in \Hom(M_p,\wt E')
	\end{equation}
	landing in the left of the middle row. Since
	\[\Hom(M_p,\wt E')=\Hom(M,\Hom(\uZp,\wt E'))=\Hom(M,\wh{\wt E'})\]
	by Proposition~\ref{p:morphisms-from-Z,Z_p,Z_p/Z} and Definition~\ref{d:G-hat},
 	this is precisely the condition on commutativity up to topological $p$-torsion difference displayed in \eqref{eq:isog-in-main-thm}.
	
	Finally, if $\varphi$ is an isomorphism, then clearly $\varphi_E$ is an isomorphism and $\varphi_M$ is an isogeny of degree $p$. Moreover, condition~\eqref{eq:commutativity-condition-in-proof-of-Main-Theorem} is equivalent to condition $3(c)$. This together with Theorem~\ref{t:isoms-of-universal-covers-of-Raynaud-ext} shows that conditions 1 and 3 in the Theorem are equivalent.
\end{proof}

\begin{Remark}
	The left-exact sequence in the proof is in fact a short  exact sequence
	\[0\to \Hom(M_p,\wt T)\to \Hom(M,\wt T)\to \Ext^1_v(M_p/M,\wt T)\to \Ext^1_v(M_p,\wt T)=0.\] This does not mean, however, that every extension class appears as $[\wt A]$ for some $A$, as not every $M\to T$ corresponds to an abeloid. For instance, $\wt E\to  \widetilde{A} \to M_p/M$ is never split.
\end{Remark}

\subsection{Adelic covers}
Finally, in this section, we discuss analogues of Theorem~\ref{t:main-theorem} for the adelic cover
\[ \wtt A=\textstyle\varprojlim_{[N]}A,\]
where $N$ ranges through $\N$. 
We first prove the following analogue of Theorem~\ref{t:isoms-of-universal-covers-of-Raynaud-ext}:
\begin{Theorem}\label{t:Hom(wttE,wttE')}
	Let $E$ and $E'$ be Raynaud extensions over $K$ with special fibres $\overbar E$ and $\overbar{E}'$ over $k$. Then
	\[\Hom(\wtt E,\wtt E')=\Hom(\overbar E,\overbar E')\otimes \Q.\]
\end{Theorem}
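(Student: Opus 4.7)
The plan is to bootstrap from Theorem~\ref{t:isoms-of-universal-covers-of-Raynaud-ext} by viewing $\wtt E$ as the pro-prime-to-$p$ finite \'etale cover of $\wt E$. Interchanging limits gives $\wtt E = \varprojlim_{[N],\, p \nmid N} \wt E$, so $\wtt E \to \wt E$ is a pro-\'etale torsor under the profinite group $\Gamma := \prod_{\ell \neq p} T_\ell \overbar E$, and analogously $\wtt E' \to \wt E'$ has kernel $\Gamma' := \prod_{\ell \neq p} T_\ell \overbar E'$. Since each $[N]$ is an automorphism of both $\wtt E$ and $\wtt E'$, the abelian group $\Hom(\wtt E, \wtt E')$ is naturally a $\Q$-vector space.

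First I would construct the natural map $\Hom(\overbar E, \overbar E') \otimes \Q \to \Hom(\wtt E, \wtt E')$. Given an integral $\overbar \varphi : \overbar E \to \overbar E'$, Lemma~\ref{l:lifting-from-mod-m} yields a homomorphism $\mathfrak E/\varpi^\epsilon \to \mathfrak E'/\varpi^\epsilon$ of formal group schemes for some $\epsilon > 0$; applying $\varprojlim_{[N]}$ termwise and invoking the tilting equivalence exactly as in the proof of Theorem~\ref{t:isomorphisms-between-ab-var-of-good-reduction} (with $[p^n]$ replaced by $[N!]$) produces a lift $\wtt E \to \wtt E'$, and dividing by $N$ via $[N]$-invertibility on the target extends this to a $\Q$-linear map. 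Injectivity follows because the lifting procedure is compatible with the $p$-adic subtower, so the map factors through the injection $\Hom(\overbar E, \overbar E')\tf = \Hom(\wt E, \wt E') \hookrightarrow \Hom(\wtt E, \wt E')$ given by pullback along the $\Gamma$-torsor $\wtt E \to \wt E$.

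For surjectivity, given $\varphi : \wtt E \to \wtt E'$, let $\bar\varphi : \wtt E \to \wt E'$ be the post-composition with $\wtt E' \to \wt E'$, and restrict to $\Gamma \subseteq \wtt E$. Unpacking $\wt E' = \varprojlim_{[p^n]} E'$ and applying Lemma~\ref{l:morphism-from-pro-coprime-to-p} at each stage, one has $\Hom(\underline \Gamma, \wt E') = \varprojlim_{[p^n]} \varinjlim_i \Hom(\Gamma_i, E'(K))$; since the $\Gamma_i$ have order coprime to $p$ the transition $[p]$ is bijective, and this identifies with the prime-to-$p$ torsion of $E'(K)$. Hence some $N$ coprime to $p$ kills $\bar\varphi|_\Gamma$, so $N\bar\varphi$ descends to a homomorphism $\psi : \wt E \to \wt E'$, which by Theorem~\ref{t:isoms-of-universal-covers-of-Raynaud-ext} comes from a unique $\bar h \in \Hom(\overbar E, \overbar E')\tf$. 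Writing $\tilde\psi : \wtt E \to \wtt E'$ for the lift of $\bar h$ constructed in the previous paragraph, the difference $\tilde\psi - N\varphi$ agrees after composition with $\wtt E' \to \wt E'$, and so lies in $\Hom(\wtt E, \Gamma')$, which vanishes because $\wtt E$ is connected and $\Gamma'$ is a discrete profinite group (any $v$-sheaf morphism is constant, hence zero as an abelian group homomorphism). Dividing by $N$ exhibits $\varphi$ as the image of $\bar h / N \in \Hom(\overbar E, \overbar E') \otimes \Q$.

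The main obstacle I anticipate is cleanly establishing the identification of $\Hom(\underline \Gamma, \wt E')$ with the prime-to-$p$ torsion of $E'(K)$, together with the connectedness of $\wtt E$ needed to kill $\Hom(\wtt E, \Gamma')$. Both should follow from the tower structure $\wtt E \to \wt E \to E$ combined with Lemma~\ref{l:morphism-from-pro-coprime-to-p}, the connectedness of $E$ and $\wt E$, and a standard inverse-limit argument, but they must be checked carefully to justify the reduction to the pro-$p$ setting of Theorem~\ref{t:isoms-of-universal-covers-of-Raynaud-ext}.
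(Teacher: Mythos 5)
Your argument is essentially the paper's. The paper factors the statement through Lemma~\ref{l:Hom(wtt A)-vs-Hom(wtA)}, which asserts $\Hom(\wtt E,\wtt E')=\Hom(\wt E,\wt E')\otimes_{\Z}\Q$, and then concludes by Theorem~\ref{t:isoms-of-universal-covers-of-Raynaud-ext}; the proof of that lemma uses exactly the same ingredients you do — the short exact sequence $0\to T^pE\to\wtt E\to\wt E\to 0$ from \eqref{eq:ses-of-wttA}, and the observation that $\Hom(T^pE,\wt E')$ is torsion, proved via $\Hom(T^pE,\wt E')=\Hom(T^pE,E')$ (unique $p$-divisibility of $T^pE$) combined with Lemma~\ref{l:morphism-from-pro-coprime-to-p}. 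Your reconstruction of the map from $\Hom(\overbar E,\overbar E')\otimes\Q$ by rerunning the formal-model lifting argument is more laborious than necessary: it suffices to observe, as the paper does, that any homomorphism $\wt E\to\wt E'$ extends functorially to $\wtt E\to\wtt E'$ by taking $\varprojlim_{[N],\,p\nmid N}$, then invert $[N]$ on the target, and feed in Theorem~\ref{t:isoms-of-universal-covers-of-Raynaud-ext}. A small wording issue: $\underline{\Gamma'}$ is a profinite (not discrete) object, but your conclusion is right — a morphism from a connected space to $\varprojlim\underline{\Gamma'_i}$ is levelwise locally constant, hence constant, hence zero as a homomorphism; you could equally use connectedness of $\wt E$ rather than $\wtt E$, since the difference vanishes on $T^pE$ and therefore factors through $\wt E\to\underline{\Gamma'}$.
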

\begin{proof}
	This is immediate from from Theorem~\ref{t:isoms-of-universal-covers-of-Raynaud-ext} and the following lemma:
\end{proof}
\begin{Lemma}\label{l:Hom(wtt A)-vs-Hom(wtA)}
	Let $E$, $E'$ be Raynaud extensions over $K$. Then we have
	\[\Hom(\wtt E,\wtt E')=\Hom(\wt E,\wt E')\otimes_{\Z}\Q.\]
	The analogous result is true for abeloids $A$ and $A'$, i.e.\ we have
	\[\Hom(\wtt A,\wtt A')=\Hom(\wt A,\wt A')\otimes_{\Z}\Q.\]
\end{Lemma}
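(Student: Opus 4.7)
The key observation is that $\wtt E=\varprojlim_{[N]}E$ is \emph{uniquely divisible}: the relation $N\cdot x_{NM}=x_M$ defining an element $(x_N)\in \wtt E$ shows that $[N]:\wtt E\to \wtt E$ is bijective for every $N\in \N$. Hence $\wtt E$ and $\wtt E'$ are sheaves of $\Q$-vector spaces and $\Hom(\wtt E,\wtt E')$ is automatically a $\Q$-vector space, while $\Hom(\wt E,\wt E')$ is only $\Z\tf$-linear (and finite free of rank $\rk_\Z\Hom(\overbar E,\overbar E')$ by Theorem~\ref{t:isoms-of-universal-covers-of-Raynaud-ext}). Restricting the inverse system to $p$-power indices yields a natural short exact sequence of $v$-sheaves
\[ 0\to T^{(p')}E\to \wtt E\to \wt E\to 0, \]
where $T^{(p')}E=\prod_{l\neq p}T_lE$ is the prime-to-$p$ Tate module, and analogously for $E'$.

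The plan is to construct a natural map $\alpha:\Hom(\wt E,\wt E')\otimes_\Z\Q\to\Hom(\wtt E,\wtt E')$ and show it is an isomorphism. For the construction, given $\varphi:\wt E\to\wt E'$, I would compose with $\wtt E\twoheadrightarrow\wt E$ to obtain a map $\wtt E\to \wt E'$, and then lift along $\wtt E'\twoheadrightarrow\wt E'$ to a group homomorphism $\tilde\varphi:\wtt E\to\wtt E'$. Such a lift is unique because $\Hom(\wtt E,T^{(p')}E')=0$ ($\wtt E$ being connected and $T^{(p')}E'$ a profinite pro-constant sheaf, every group homomorphism is trivial), while its existence amounts to the vanishing of $\Ext^1_v(\wtt E,T^{(p')}E')$, which should follow from the topological universal property of the adelic cover —  an analogue of Proposition~\ref{p:H^1_v(A,Zp)=0} giving $H^i_v(\wtt E,\underline{\wh\Z})=0$ for $i>0$, extended from abeloids to Raynaud extensions. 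Since $[N]$ is invertible on $\wtt E$, the assignment $\varphi\mapsto\tilde\varphi$ extends $\Q$-linearly to the desired $\alpha$.

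Injectivity of $\alpha$ is immediate: if $\tilde\varphi=0$, then the composite $\wtt E\twoheadrightarrow \wt E\xrightarrow{\varphi}\wt E'$ vanishes, and surjectivity of $\wtt E\to \wt E$ forces $\varphi=0$. For surjectivity, given $\psi:\wtt E\to\wtt E'$, composing with $\wtt E'\to\wt E'$ yields $\psi':\wtt E\to\wt E'$; the task is to show that $\psi'$ factors through $\wt E$ after multiplication by some positive integer $N$, producing an element of $\Hom(\wt E,\wt E')\otimes \tfrac{1}{N}$ whose $\alpha$-image is $\psi$. The main obstacle will be this factorisation, equivalently the claim that every group homomorphism $T^{(p')}E\to\wt E'$ is annihilated by an integer. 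I would analyse this prime-by-prime using the short exact sequence $0\to T_lE\xrightarrow{[l]}T_lE\to E[l]\to 0$ for $l\neq p$, together with the fact that the $l^n$-torsion subgroup of $\wt E'$ equals the finite group $E'[l^n]$ (since $\wt E'$ is uniquely $p$-divisible but has $\ker[l]=E'[l]$), forcing every such map to land in $l^\infty$-torsion and hence to be of finite $l$-power order.

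The abeloid statement $\Hom(\wtt A,\wtt A')=\Hom(\wt A,\wt A')\otimes_\Z\Q$ then follows by combining the Raynaud extension case with Main Theorem~\ref{t:main-theorem}: its description of $\Hom(\wt A,\wt A')$ as pairs $(\varphi_E,\varphi_M)$ with $\varphi_E\in\Hom(\wt E,\wt E')$, $\varphi_M:M\tf\to M'\tf$ and a topological $p$-torsion commutativity condition, tensored with $\Q$, yields the analogous description with $\varphi_E\in\Hom(\wtt E,\wtt E')$ and $\varphi_M:M\otimes\Q\to M'\otimes\Q$, which is precisely the description one obtains by replaying the proof of Main Theorem~\ref{t:main-theorem} for $\wtt A$ via the adelic analogue of the short exact sequence of Theorem~\ref{t:perfectoid-covers-of-abelian-varieties}.3.
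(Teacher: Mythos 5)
Your overall strategy -- use the sequence $0\to T^pE\to\wtt E\to\wt E\to 0$ and reduce to showing that every homomorphism $T^pE\to\wt E'$ is killed by some integer -- is exactly the paper's approach. But two links in your chain need repair.

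First, the construction of the map $\alpha$ is unnecessarily involved and incomplete as written. The map $\Hom(\wt E,\wt E')\to\Hom(\wtt E,\wtt E')$ is just functoriality: apply $\varprojlim_{[N],(N,p)=1}$ to $\varphi:\wt E\to\wt E'$, giving $\wtt E=\varprojlim_{[N]}\wt E\to\varprojlim_{[N]}\wt E'=\wtt E'$. No lifting is needed. Your alternative route -- lift $\wtt E\twoheadrightarrow\wt E\xrightarrow{\varphi}\wt E'$ along $\wtt E'\twoheadrightarrow\wt E'$ -- does require $\Ext^1_v(\wtt E,T^pE')=0$, which you flag as a conjectured extension of Proposition~\ref{p:H^1_v(A,Zp)=0} from abeloids to Raynaud extensions and from $\uZp$ to the prime-to-$p$ profinite coefficients. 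That proposition is only stated for $\wt A$ and $\wtt A$ with $A$ abeloid, not for $\wt E$ or $\wtt E$; you would have to prove it separately, and then you would still need the Breen--Deligne input (Lemma~\ref{l:E_2-of-discrete-torsor-over-connected}) to pass from $H^1$ to $\Ext^1$. Better to avoid the detour.

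Second, the reduction $\Hom(T^pE,\wt E')$ being torsion is where the real content lives, and your prime-by-prime sketch is too loose. The clean observation (which the paper uses) is that since $[p]$ is an isomorphism on $T^pE$, the projection $\wt E'=\varprojlim_{[p]}E'\to E'$ induces an isomorphism $\Hom(T^pE,\wt E')\cong\Hom(T^pE,E')$, and the latter is torsion by Lemma~\ref{l:morphism-from-pro-coprime-to-p} (applied to the quasi-compact smooth rigid group $E'$). Your version -- arguing via the $l^n$-torsion subgroups of $\wt E'$ and the sequence $T_lE\xrightarrow{[l]}T_lE\to E[l]$ -- does not yet explain why such a homomorphism is annihilated by a single power of $l$, and the quoted Lemma~\ref{l:morphism-from-pro-coprime-to-p} is exactly the device that makes this precise; you should just invoke it after reducing to $E'$.

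Finally, the abeloid case. The paper settles it by the same short exact sequence $0\to T^pA\to\wtt A\to\wt A\to 0$ together with the same torsion argument (one line). Your proposal instead tries to deduce it from the description of $\Hom(\wt A,\wt A')$ in Theorem~\ref{t:main-theorem} tensored with $\Q$, matched against an "analogous description" of $\Hom(\wtt A,\wtt A')$ that you propose to obtain by replaying the proof of Theorem~\ref{t:main-theorem} for $\wtt A$. But that "analogous description" is Theorem~\ref{t:main-theorem-adelic-case}, and in the paper its proof \emph{uses} this very lemma -- so as stated your argument for the abeloid half is circular. You would either have to reprove the adelic main theorem from scratch (a much larger task) or, more simply, run the same $T^pA$-torsion argument you already used for $E$. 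The latter is what the paper means by ``completely analogous''.
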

\begin{Remark}
This is in contrast to the situation for the cover $\wt A\to A$, where we have seen that the map
$\Hom(A,A')\tf\to \Hom(\wt A,\wt A')$
is still injective, but  often not surjective. As the proof shows, the reason is that $\wt A\to A$ is pro-$p$, whereas $\wtt A\to \wt A$ is pro-coprime-to-$p$.
\end{Remark}

\begin{Definition}\label{d:coprime-to-p-Tate-module}
	For any abeloid $A$, we denote by $T^{p}A:=\varinjlim_{(N,p)=1}A[N]=TA/T_pA$ the adelic Tate module away from $p$. This fits into a short exact sequence of $v$-sheaves
	\begin{equation}\label{eq:ses-of-wttA}
		0\to T^pA\to \wtt A\to \wt A\to 0.
	\end{equation}
	For a Raynaud extension $E$, we similarly define $T^{p}E$ which fits into an analogous sequence.
\end{Definition}
\begin{proof}
	Taking the limit over $[N]$ for $N\in \N$ with $(N,p)=1$ defines a natural morphism
	\[\Hom(\wt E,\wt E')\otimes \Q \to \Hom(\wtt E,\wtt E').\]
	To see that this is an isomorphism, it suffices to see that for any homomorphism $\varphi:\wtt E\to \wtt E'$ there is $N\in \N$ such that $[N]\varphi$ induces a morphisms of the short exact sequences from \eqref{eq:ses-of-wttA}:
	\[
		\begin{tikzcd}
			0\arrow[r]&T^{p}E\arrow[r] \arrow[d,dashed] &  \wtt E \arrow[r] \arrow[d] &  \wt E \arrow[d,dashed]\arrow[r]&0\\
			0\arrow[r]&T^{p}E'\arrow[r]&  \wtt E' \arrow[r] &  \wt E'\arrow[r]&0.
		\end{tikzcd}\]
	To see this, we need to prove that any homomorphism $T^pE\to \wt E'$ is torsion. But this follows from the fact that $p$ is an isomorphism on $T^pE$ and thus $\varprojlim_{[p]}$ induces an isomorphism
	\[\Hom(T^pE,\wt E')=\Hom(T^pE,E'),\]
	which indeed is torsion by Lemma~\ref{l:morphism-from-pro-coprime-to-p}.
	The case of abeloids is completely analogous.
\end{proof}

To deduce the adelic version of the Main Theorem, we first need to introduce the adelic analogue of the topological $p$-torsion subgroup $\wh G\subseteq G$ of an adic group $G$ from  Definition~\ref{d:G-hat}:
\begin{Definition}\label{d:top-tor}
	Let $G$ be any rigid or perfectoid group over $K$. We denote by $G^\tt\subseteq G$ the topological torsion subsheaf, defined as the image in $v$-sheaves of the evaluation map
	\[ \HOM(\underline{\whZ},G)\to G, \quad \psi\mapsto \psi(1).\]
	Let $F$ be another rigid or perfectoid group. We say that two maps $f,g:F\to G$ agree up to topological torsion if $f-g$ factors through $G^\tt$.
	In particular, this gives a notion of what it means for a diagram of abelian $v$-sheaves to commute up to topological torsion.
\end{Definition}
We always have  $\wh G\subseteq G^\tt$. In all situations we encounter, the cokernel will be torsion:
\begin{Proposition}[{\cite[Proposition~2.19.2]{heuer-diamantine-Picard}}]\label{p:-Att-for-abeloids}
	Let $A$ be an abeloid over $K$. Then 
	\[
	A^\tt=\wh A\cdot \bigcup_{\substack{N\in \N\\(N,p)=1}} A[N].
	\]
\end{Proposition}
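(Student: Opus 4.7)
The plan is to decompose $\underline{\whZ}$ into its $p$-part and prime-to-$p$ part via the Chinese remainder theorem, identify the image of the evaluation map on each factor separately, and combine.

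Using the topological product decomposition $\whZ=\Z_p\times \whZ^p$ with $\whZ^p:=\prod_{\ell\neq p}\Z_\ell$, the associated profinite sheaves satisfy $\underline{\whZ}=\uZp\times \underline{\whZ^p}$. Consequently the internal Hom decomposes as
\[
\HOM(\underline{\whZ},A)=\HOM(\uZp,A)\times \HOM(\underline{\whZ^p},A),
\]
and the element $1\in \whZ$ corresponds to $(1,1)$, so evaluation at $1$ factors as a sum of the evaluations on the two factors. It therefore suffices to compute the image of each factor in $A$.

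For the $p$-part, the image of $\HOM(\uZp,A)\to A$ at $1$ is by Definition~\ref{d:G-hat} precisely $\wh A$. For the prime-to-$p$ part, applying Lemma~\ref{l:morphism-from-pro-coprime-to-p} (which extends from $\Hom$ to $\HOM$ by testing on affinoid perfectoid covers of the base, since $A$ is quasi-compact and smooth) shows that any morphism $\underline{\whZ^p}\to A$ factors through a finite quotient $\whZ^p\to \whZ^p/N\whZ^p$ for some $N$ coprime to $p$. Evaluation at $1$ therefore lands in $A[N]$, and running over all such $N$ yields that the image of $\HOM(\underline{\whZ^p},A)\to A$ is contained in $\bigcup_{(N,p)=1}A[N]$. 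Conversely every element of $\wh A$ arises by extending a map $\uZp\to A$ by zero, and every $N$-torsion point ($N$ coprime to $p$) is hit by a map $\whZ\twoheadrightarrow \Z/N\to A$.

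Combining the two images gives the claimed equality $A^\tt=\wh A\cdot \bigcup_{(N,p)=1}A[N]$. The only subtle point is checking that the arguments (the CRT decomposition and the finite factorisation of Lemma~\ref{l:morphism-from-pro-coprime-to-p}) are compatible with sheafification and with passing from $\Hom$ to the image sheaf of $\HOM$; this follows because the decomposition $\underline{\whZ}=\uZp\times \underline{\whZ^p}$ holds on the nose as $v$-sheaves and the image of a map of $v$-sheaves may be computed locally.
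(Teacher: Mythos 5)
Your proposal is correct in outline and takes the natural approach: decompose $\underline{\whZ}=\uZp\times\underline{\whZ^p}$ via the Chinese remainder theorem, identify the $p$-part image as $\wh A$ using Definition~\ref{d:G-hat}, and control the prime-to-$p$ image via Lemma~\ref{l:morphism-from-pro-coprime-to-p}. Note that this proposition is cited verbatim from \cite{heuer-diamantine-Picard} rather than proved in the present paper, so there is no in-text proof to compare against, but this is almost certainly what that reference does.

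One step deserves more care than you give it. Lemma~\ref{l:morphism-from-pro-coprime-to-p} as stated computes the global sections $\Hom(\underline{H},G)=\varinjlim_i\Hom(H_i,G(K))$. To describe the \emph{image sheaf} of the evaluation map, you need the relative statement: for an arbitrary affinoid perfectoid $X$, any homomorphism of abelian $v$-sheaves $\underline{\whZ^p}|_X\to A|_X$ factors through $\underline{\Z/N}|_X$ for some $N$ coprime to $p$. Your parenthetical ``extends from $\Hom$ to $\HOM$ by testing on affinoid perfectoid covers'' gestures at this but does not actually establish it; the base-changed version does hold (the quasi-compactness argument underlying the lemma is insensitive to the base), but it is not a formal consequence of the global statement, and a sentence of justification, or a direct appeal to the relative version in \cite{heuer-diamantine-Picard}, would close the gap. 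The remaining points (that the $\uZp$-image is $\wh A$ by definition, that the $\Z/N$-image is exactly $A[N]$, and that the two images generate $A^\tt$) are all fine as you present them.
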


\begin{Theorem}\label{t:main-theorem-adelic-case}
	Let $A=E/M$ and $A'=E'/M'$ be two abeloids over $K$ and let  $g:M\to \wtt E$ and $g':M\to \wtt E'$ be any choices of lifts, then there is a natural isomorphism
	\begin{equation*}
		\Hom(\wtt A,\wtt A')={\left\{\begin{array}{@{}l@{}l}	
				\varphi_E&\in \Hom(\wtt E,\wtt E')\otimes \Q\end{array}\middle|\, 
			\begin{array}{lr}
				\exists \,\varphi_M:M\otimes \Q\to M'\otimes \Q\\	
				\text{such that the following }\\
				\text{diagram commutes up}\\\text{to topological torsion:}\end{array}
			\quad 
			{\begin{tikzcd}
					M\otimes \Q \arrow[d,"\varphi_M"'{yshift=0.36ex}] \arrow[r,"g"] & \wtt E \arrow[d," \varphi_E"'{yshift=-0.36ex}] \\
					M'\otimes \Q \arrow[r,"g'"]& \wtt E'   
			\end{tikzcd}}
			\right \}}
	\end{equation*}
(see  Definition~\ref{d:top-tor} for ``topological torsion'').
	In particular, the following are equivalent:
	\begin{enumerate}
		\item There is an isomorphisms of perfectoid groups $\varphi:\wtt A\isomarrow \wtt A'$ over $K$.
		\item There is a triple $(\varphi_B,\varphi_T,\varphi_M)$ consisting of
		\begin{enumerate}
			\item a quasi-isogeny $\varphi_B:\overbar B\to \overbar B'$ between the special fibres over the residue field $k$.
			\item a quasi-isogeny $\varphi_T:T\to T'$  s.t.\ the following diagram commutes:
			\[
			\begin{tikzcd}
			M^\vee \otimes \Q \arrow[r,"\overbar f"]& \overbar{B}^\vee(k)\otimes \Q   \\
			M'^\vee \otimes \Q \arrow[r,"\overbar f'"]\arrow[u,"\varphi_{M^\vee}"',"\sim"labelrotate]           & \overbar{B}'^\vee(k)\otimes \Q \arrow[u,"\varphi_B^\vee"',"\sim"labelrotate],
			\end{tikzcd}
			\]
			where $\varphi_{M^\vee}$ is the map induced by $\varphi_T$ on character lattices.
			Equivalently, $(\varphi_B,\varphi_T)$ defines a quasi-isogeny $\overbar E\to \overbar E'$ which in turn gives an isomorphism $\varphi_E:\wtt E\to \wtt E'$.
			\item an isomorphism $\varphi_M:M\otimes \Q\rightarrow M'\otimes \Q$ for which the difference 
			\[\varphi_E\circ g- g'\circ\varphi_M:M\otimes \Q\to \wtt E'(K)\to E'(K)\]
			has image contained in $E'^\tt(K)$, i.e.\ is topologically torsion.
		\end{enumerate}
	\end{enumerate}
\end{Theorem}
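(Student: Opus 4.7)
The plan is to mirror the proof of Theorem~\ref{t:main-theorem} in the adelic setting, systematically replacing $\uZp$ and $M_p$ by $\underline{\whZ}$ and $M\otimes_{\Z}\underline{\whZ}$ throughout. As a first step I would establish the adelic analogue of Theorem~\ref{t:perfectoid-covers-of-abelian-varieties}.3: any choice of lift $\tilde g:M\to \wtt E$ of $g:M\to E$ yields a short exact sequence
\[ 0\to \wtt E \to \wtt A \to M\otimes_{\Z}(\underline{\whZ}/\uZ)\to 0 \]
on $\Perf_{K,v}$, obtained via the pushout of $M\hookrightarrow M\otimes_{\Z}\underline{\whZ}$ along $\tilde g$, in direct parallel to diagram~\eqref{eq:wtE-wtA-vs-Ainfty}.

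Next, I would show that every morphism $\varphi:\wtt A\to \wtt A'$ with $\varphi(0)=0$ is a homomorphism and restricts uniquely to $\wtt E\to \wtt E'$. By Lemma~\ref{l:Hom(wtt A)-vs-Hom(wtA)}, there is an integer $N$ coprime to $p$ such that $N\varphi$ is the image of some $\psi:\wt A\to \wt A'$, and Theorem~\ref{t:isom-of-tilde-A-fixes-G} applies to $\psi$; inverting $[N]$ on the adelic covers then produces the desired restriction $\varphi_E:\wtt E\to \wtt E'$. Uniqueness follows from the $M$-torsor argument of Lemmas~\ref{l:res-of-wtA-to-wtE} and \ref{l:M-stable-morphisms-from-M_pxX}, applied to $(M\otimes_{\Z}\underline{\whZ})\times \wtt E\to \wtt A$. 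Thus $\varphi$ fits into a ladder of short exact sequences
\[\begin{tikzcd}
0 \arrow[r] & \wtt E \arrow[r] \arrow[d, "\varphi_E"] & \wtt A \arrow[r] \arrow[d, "\varphi"] & M\otimes(\underline{\whZ}/\uZ) \arrow[r] \arrow[d, "\overline{\varphi_M}", dashed] & 0 \\
0 \arrow[r] & \wtt E' \arrow[r] & \wtt A' \arrow[r] & M'\otimes(\underline{\whZ}/\uZ) \arrow[r] & 0,
\end{tikzcd}\]
in which the adelic analogue $\End(\underline{\whZ}/\uZ)=\Q$ of Lemma~\ref{l:isoms-of-Zp/Z}.\ref{enum:End(uZp/uZ)} forces $\overline{\varphi_M}$ to be induced by a unique $\Q$-linear map $\varphi_M:M\otimes\Q\to M'\otimes\Q$. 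The boundary-map analysis of Corollary~\ref{c:interpret-of-boundary-map-for-wt-E}, transposed to the adelic setting, then translates commutativity of the ladder into the condition that $g'\circ \varphi_M - \varphi_E\circ g$ factors through
\[ \Hom(M\otimes_{\Z}\underline{\whZ},\wtt E')=\Hom(M,(\wtt E')^\tt), \]
which is precisely commutativity up to topological torsion in the sense of Definition~\ref{d:top-tor}.

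For the equivalence with condition 2, I would invoke Theorem~\ref{t:Hom(wttE,wttE')} to identify $\Hom(\wtt E,\wtt E')$ with pairs $(\varphi_B,\varphi_T)$ making the character-lattice diagram commute, and note that $\varphi$ is an isomorphism exactly when $\varphi_B$, $\varphi_T$ and $\varphi_M$ are all quasi-isogenies. The main technical obstacle is the bookkeeping of the adelic analogues of Lemma~\ref{l:isoms-of-Zp/Z}, in particular $\End(\underline{\whZ}/\uZ)=\Q$ and $\Hom(\underline{\whZ},\underline{\whZ}/\uZ)=\underline{\A_f}$; these are obtained by splitting $\underline{\whZ}=\uZp\times\prod_{\ell\ne p}\underline{\Z_\ell}$ and handling the pro-$p$ factor by Lemma~\ref{l:isoms-of-Zp/Z} itself and each pro-$\ell$ factor ($\ell\ne p$) by Lemma~\ref{l:morphism-from-pro-coprime-to-p}. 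The same decomposition underlies Proposition~\ref{p:-Att-for-abeloids} and ensures that the displayed topological-torsion condition reduces to condition $3(c)$ of Theorem~\ref{t:main-theorem} upon restriction to the pro-$p$ part.
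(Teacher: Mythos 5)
Your approach is sound but takes a genuinely different, and considerably heavier, route than the paper's: you rebuild the entire $p$-adic argument in the adelic setting, which requires establishing adelic analogues of Theorem~\ref{t:perfectoid-covers-of-abelian-varieties}.3, of Lemma~\ref{l:isoms-of-Zp/Z} (in particular $\End(\underline{\whZ}/\uZ)=\Q$ and $\Hom(\underline{\whZ},\underline{\whZ}/\uZ)=\A_f$, the latter of which you write as a sheaf $\underline{\A_f}$ but should be the group), and of Corollary~\ref{c:interpret-of-boundary-map-for-wt-E}, followed by re-running the whole boundary-map analysis. The paper instead deduces the adelic statement from Theorem~\ref{t:main-theorem} in a single step: it repackages Theorem~\ref{t:main-theorem} as saying $\Hom(\wt A,\wt A')$ is the pullback of a Cartesian square relating $\Hom(\wt E,\wt E')\times\Hom(M\tf,M'\tf)$, $\Hom(M\tf,E'(K)\tf)$ and $\Hom(M\tf,\wh E'(K)\tf)$; since $-\otimes_{\Z}\Q$ is exact it preserves Cartesian squares, and Lemma~\ref{l:Hom(wtt A)-vs-Hom(wtA)} identifies the rationalised corners with the adelic Hom-groups, giving the claimed description directly. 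The equivalence with condition~$2$ is handled the same way by tensoring Theorem~\ref{t:isoms-of-universal-covers-of-Raynaud-ext} with $\Q$. Both are viable --- the paper even remarks (in the discussion around Theorem~\ref{t:morphis-is-comp-of-hom-with-trans-adelic-case}) that one could prove these adelic statements ``in exactly the same way as Theorem~\ref{t:main-theorem}, replacing each $p$-adic cover by an adelic cover'' but deliberately avoids doing everything twice. Your route buys a genuinely adelic short exact sequence $0\to\wtt E\to\wtt A\to M\otimes(\underline{\whZ}/\uZ)\to 0$ and intrinsic adelic Ext-computations, which is tidier if one wants those statements for their own sake; the paper's route is far shorter and makes the logical dependence on the $p$-adic case transparent. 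One small observation: you already invoke Lemma~\ref{l:Hom(wtt A)-vs-Hom(wtA)} to obtain the restriction $\varphi_E$, which effectively reduces you to the $p$-adic case; at that point the rationalisation trick would let you stop without redoing the ladder/boundary-map step, and you would also need to be a bit careful in the adelic analogue of Corollary~\ref{c:interpret-of-boundary-map-for-wt-E}, where the vanishing needed is $\Ext^i_v(\underline{\whZ}/\uZ,TE')=0$ for the full Tate module $TE'=\prod_\ell T_\ell E'$, which one checks by splitting into $\ell$-primary factors and applying Lemma~\ref{l:ext-from-p-divisible-to-complete} at each $\ell$.
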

\begin{proof}[Proof of Theorem~\ref{t:main-theorem-adelic-case}]
	By Theorem~\ref{t:main-theorem}, sending a pair $(\varphi:\wt E\to \wt E',\varphi:M\tf\to M'\tf)$ to the difference $\varphi_E\circ g-g'\circ\varphi_M:M\tf\to E'(K)\otimes\tf$ defines a Cartesian square
	\[
	\begin{tikzcd}
		{\Hom(\wt E,\wt E')\times \Hom(M\tf,M'\tf)} \arrow[r]           & {\Hom(M\tf,E'(K)\tf)}               \\
		{\Hom(\wt A,\wt A')} \arrow[r] \arrow[u] & {\Hom(M\tf,\wh E'(K)\tf)}. \arrow[u]
	\end{tikzcd}\]
	After applying the exact functor $-\otimes_{\Z}\Q$, the diagram is still Cartesian. This characterises $\Hom(\wt A_1,\wt A_2)\otimes \Q$, which by Lemma~\ref{l:Hom(wtt A)-vs-Hom(wtA)} equals $\Hom(\wtt A_1,\wtt A_2)$. 
	
	The same argument applies to $\Hom(\wt E_1,\wt E_2)\otimes \Q=\Hom(\wtt E_1,\wtt E_2)$ and the description of $\Hom(\wt E_1,\wt E_2)$ given in Theorem~\ref{t:isoms-of-universal-covers-of-Raynaud-ext}. This shows that 1 and 2 are equivalent.
\end{proof}

It remains to prove Theorem~\ref{t:morph-is-comp-intro} in the adelic case, which is less straight-forward:
\begin{Theorem}\label{t:morphis-is-comp-of-hom-with-trans-adelic-case}
	Let $A=E/M$ and $A'=E'/M'$ be two abeloids over $K$. Then any morphism $\wtt A\to \wtt A'$ over $K$ is the composition of a homomorphism and a translation.
\end{Theorem}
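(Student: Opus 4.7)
The plan is to mimic the proof of Theorem~\ref{t:main-theorem}, replacing $\wt{\,}$ by $\wtt{\,}$ throughout. After composing with a translation, we may assume $\varphi(0) = 0$. Define
\[f := m \circ (\varphi \times \varphi) - \varphi \circ m : \wtt A \times \wtt A \to \wtt A';\]
then $\varphi$ is a homomorphism if and only if $f = 0$, and $f(0,0) = 0$ automatically. As in the $p$-adic case, it suffices to establish (i) any morphism $\psi : \wtt A \to \wtt A'$ with $\psi(0) = 0$ restricts uniquely to a morphism $\wtt E \to \wtt E'$; and (ii) any morphism $\wtt E \to \wtt E'$ over $K$ with $\psi(0) = 0$ is a homomorphism. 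Then $f$ restricts to $f_E = m \circ (\varphi_E \times \varphi_E) - \varphi_E \circ m$ on $\wtt E \times \wtt E$, and $f_E = 0$ by (ii), whence $f = 0$ by the uniqueness in (i).

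For (i), I would adapt the proof of Theorem~\ref{t:isom-of-tilde-A-fixes-G}, using the adelic analogue of Theorem~\ref{t:perfectoid-covers-of-abelian-varieties}.3, namely the short exact sequence
\[0 \to \wtt E \to \wtt A \to M \otimes (\underline{\wh\Z}/\uZ) \to 0,\]
obtained by taking limits over all $N \in \N$ rather than only $p$-powers. Applying $H^0(\wtt E, -)$ to the corresponding sequence for $A'$ together with the long exact sequence attached to $0 \to \uZ \to \underline{\wh\Z} \to \underline{\wh\Z}/\uZ \to 0$, existence of the restriction reduces to the vanishing $H^1_v(\wtt E, \uZ) = 0$. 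This follows via Proposition~\ref{p:Z-torsors-over-perfectoids} from the tilde-limit identity $H^1_{\an}(\wtt E, \uZ) = \varinjlim_{[N]} H^1_{\an}(E, \uZ) = 0$ using Theorem~\ref{t:BL-12c}, exactly paralleling Proposition~\ref{p:ext-E-Z-vanishes}. Uniqueness of the restriction comes from the adelic analogue of Lemma~\ref{l:M-stable-morphisms-from-M_pxX} applied to the $M$-torsor $M \otimes \underline{\wh\Z} \times \wtt E \to \wtt A$.

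For (ii), I would follow Proposition~\ref{p:morphisms-between-univ-covers-of-Raynaud-extensions}: such a morphism $\psi$ splits into a pair $(\psi_T, \psi_B)$ with $\psi_T : \wtt T \to \wtt T'$ and $\psi_B : \wtt B \to \wtt B'$, each automatically a homomorphism. The required inputs are adelic analogues of Lemma~\ref{l:H0(wtT,wtT')}, Theorem~\ref{t:isomorphisms-between-ab-var-of-good-reduction}, and Proposition~\ref{p:Map(wtT,B)}, obtainable by the same arguments as the $p$-adic case: the extra prime-to-$p$ content in passing from $\wt{\,}$ to $\wtt{\,}$ is absorbed by $\varinjlim$ over $[N]$-pullbacks and by the prime-to-$p$ Kummer sequence, yielding in particular $\Pic(\wtt B) = \Pic(\overbar B) \otimes \Q$ as the adelic extension of Theorem~\ref{t:Pic(wt B)}. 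The main obstacle will be verifying this adelic extension of Theorem~\ref{t:isomorphisms-between-ab-var-of-good-reduction}: the diamantine-Picard-functor and formal-model arguments go through, but require careful bookkeeping of the prime-to-$p$ torsion to ensure compatibility of the various Cartan--Leray spectral sequences and torsor structures.
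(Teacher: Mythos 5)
Your strategy --- replicating the proof of Theorem~\ref{t:main-theorem}, replacing each $p$-adic cover $\wt A$ by the adelic cover $\wtt A$ and each $\tf$ by $\otimes\Q$ --- is acknowledged by the paper as possible, but the paper deliberately takes a shorter route: it proves Lemma~\ref{l:fact-of-morph-from-wttA}, namely $H^0(\wtt A, A') = \varinjlim_{(N,p)=1} H^0(\wt A, A')$, and then reduces the statement to Theorem~\ref{t:main-theorem}, since any morphism $\wtt A\to\wtt A'$ is determined by its composite $\wtt A\to A'$, which by the lemma factors through some copy of $\wt A$. That lemma has a substantive proof of its own (the topological torsion subsheaf $A'^{\tt}$, the diamantine Picard functor of $\wtt A'^\vee$, a \cH{} limit argument for $\bOx$-cohomology along the tower), but it replaces the wholesale adelic re-derivation of the whole chain of \S4 that your route requires. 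Your sketch correctly identifies what you would need --- the sequence $0\to\wtt E\to\wtt A\to M\otimes(\underline{\wh\Z}/\uZ)\to 0$, the vanishing $H^1_v(\wtt E,\uZ)=0$, and adelic analogues of Lemma~\ref{l:H0(wtT,wtT')}, Proposition~\ref{p:Map(wtT,B)} and Theorem~\ref{t:isomorphisms-between-ab-var-of-good-reduction} --- but the last of these (all morphisms $\wtt B\to\wtt B'$ are affine) is not merely bookkeeping: the asserted $\Pic(\wtt B)=\Pic(\overbar B)\otimes\Q$ requires a Cartan--Leray argument for the pro-finite-\'etale $T^pB$-torsor $\wtt B\to \wt B$, and the contribution $\Hom_{\cts}(T^pB,K^\times)$ there interacts nontrivially with $\Pic^0(\overbar B)\tf$ in a way that depends on the residue field $k$; you would need to check that it cancels exactly. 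The reduction via Lemma~\ref{l:fact-of-morph-from-wttA} sidesteps these issues entirely, which is precisely the efficiency the paper had in mind.
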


 It is possible to prove this in exactly the same way as in Theorem~\ref{t:main-theorem}, replacing each $p$-adic cover by an adelic cover, each $\tf$ by $\otimes \Q$, etc. However, in order to avoid having to do everything twice, we instead explain in this section how one can deduce everything from the $p$-adic case, which we think is conceptually interesting in itself.

We first prove a version of Lemma~\ref{l:Hom(wtt A)-vs-Hom(wtA)} for morphisms that are not necessarily linear, which is more work: For this we use results on the Picard functor of $\wtt A_2$ from \cite[\S4]{heuer-diamantine-Picard}.
\begin{Lemma}\label{l:fact-of-morph-from-wttA}
	Let $A_1$, $A_2$ be abeloids over $K$, then the following natural map is an isomorphism:
	\[ \textstyle H^0(\wtt A_1,A_2)\isomarrow \varinjlim_{[N]}H^0(\wt A_1,A_2),\]
	where the system on the right ranges through $[N]$ on $\wt A_1$ for $N\in \N$ with $(N,p)=1$.
\end{Lemma}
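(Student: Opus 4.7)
The plan is to present $\wtt A_1$ as a perfectoid tilde-limit along finite \'etale transition maps and invoke a standard approximation principle for morphisms into a finite-type target.

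First I would factor the adelic cover into $p$-part and prime-to-$p$-part to obtain the perfectoid tilde-limit identification
\[ \wtt A_1 \sim \textstyle\varprojlim_{(M,p)=1,\,[M]} \wt A_1, \]
where the system ranges over $M \in \N$ coprime to $p$ ordered by divisibility, with transition maps $[M'/M] : \wt A_1 \to \wt A_1$ for $M \mid M'$. These transition maps are finite \'etale surjections, hence in particular affine. Crucially, $\wt A_1$ is a qcqs perfectoid space: its underlying topological space is the inverse limit along $[p]$ of the compact Hausdorff space $|A_1|$, hence compact Hausdorff.

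Next I would invoke the following approximation principle: for any perfectoid tilde-limit $X \sim \varprojlim X_i$ of qcqs perfectoid spaces with affine transition maps, and any locally-of-finite-type adic space $Y$ over $K$, the natural pullback map $\varinjlim_i H^0(X_i, Y) \to H^0(X, Y)$ is a bijection. Applied with $X = \wtt A_1$, $X_i = \wt A_1$ and $Y = A_2$, this yields the claimed isomorphism (the stated direction being the inverse).

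To justify the approximation, the plan is to cover $A_2$ by finitely many affinoid opens $V_k = \Spa(R_k, R_k^+)$; given $f : \wtt A_1 \to A_2$, the preimages $U_k = f^{-1}(V_k)$ form a finite open cover of $\wtt A_1$ by qcqs opens which, by qcqs-ness and the tilde-limit property, are themselves tilde-limits of qcqs opens $U_{k,M} \subseteq \wt A_1$. Using that $R_k$ is topologically of finite type over $K$ and that $\O(U_k)$ is the $\varpi$-adic completion of $\varinjlim_M \O(U_{k,M})$, the pullback map $R_k \to \O(U_k)$ factors, up to a $\varpi$-adic error absorbed in the completion, through $\O(U_{k, M_k})$ for some $M_k$. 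Taking a common multiple $M = \mathrm{lcm}_k M_k$ and gluing the resulting local morphisms produces the required factorization $\wt A_1 \to A_2$.

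The main obstacle will be the approximation step itself, which is an adic-space analogue of Lemma~\ref{l:morph-proj-lim-schemes-to-ft-scheme} in the non-affinoid perfectoid setting. In particular, one must check that the local factorizations are compatible on overlaps so that they glue; this is standard but requires some bookkeeping to track the finite indices $M_k$ uniformly and to handle the $\varpi$-adic approximation of the ring maps $R_k \to \O(U_k)$ into topologically finitely generated subalgebras.
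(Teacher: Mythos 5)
Your proposed ``approximation principle''---that for a perfectoid tilde-limit $X\sim\varprojlim X_i$ with affine transition maps and $Y$ locally of finite type over $K$, the map $\varinjlim_i H^0(X_i,Y)\to H^0(X,Y)$ is a bijection---is not a standard result and is in fact false. The step where you write ``the pullback map $R_k\to\O(U_k)$ factors, up to a $\varpi$-adic error absorbed in the completion, through $\O(U_{k,M_k})$'' is exactly where the argument breaks down: an approximate factorization of a ring map is not a factorization, and there is no general mechanism to upgrade a $\varpi$-adically close map into an exact one. This is a genuine feature of the adic setting that has no counterpart in the scheme-theoretic Lemma~\ref{l:morph-proj-lim-schemes-to-ft-scheme}, where rings are honest colimits rather than completed colimits.

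There is also a hard obstruction showing the principle cannot hold in the form you stated: apply it to the $[p]$-tower $\wt B\sim\varprojlim_{[p]}B$ for $B$, $B'$ abeloids of good reduction. It would give $\Hom(\wt B,\wt B')=\varinjlim_{[p]}\Hom(B,B')=\Hom(B,B')[\tfrac{1}{p}]$. But Theorem~\ref{t:isomorphisms-between-ab-var-of-good-reduction} shows $\Hom(\wt B,\wt B')=\Hom(\overbar B,\overbar B')[\tfrac{1}{p}]$, which is strictly larger whenever $\overbar B$ and $\overbar B'$ are isogenous over $k$ without $B$ and $B'$ being isogenous over $K$ (e.g.\ two non-isogenous supersingular elliptic curves over $\C_p$). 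So any correct proof must use something specific to the prime-to-$p$ nature of the transition maps in this lemma, and your argument does not: the only property of the transition maps you invoke is that they are affine, which holds equally for $[p]$.

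For comparison, the paper's proof is structured exactly around this difficulty. It splits $A_2$ along the short exact sequence $0\to A_2^\tt\to A_2\to A_2/A_2^\tt\to 0$, uses $H^1_v(\wt A_1,\wh A_2)=0$ (Corollary~\ref{c:H^1(wt A,wh A')}) plus the fact that $A_2^\tt/\wh A_2$ is torsion to reduce the lifting obstruction to a prime-to-$p$ torsion group that can be killed by $[N]$, and then handles the quotient $A_2/A_2^\tt$ by identifying it inside a diamantine Picard functor, where $\bOx$-cohomology is known to commute with the tilde-limit (a carefully proved special-case approximation, not a general one). If you want a more elementary route you would need, at minimum, to isolate why the prime-to-$p$ tower behaves better than the $p$-tower; as written the proposal does not.
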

\begin{proof}
	Let $\varphi:\wtt A_1\to A_2$ be any morphism. We need to see that this factors through some projection to $\wt A_1$. Let $A_2^\tt\subseteq A_2$ be the topological torsion subgroup and consider the short exact sequence of $v$-sheaves
	\begin{equation}\label{eq:A_2^tt->A_2-quot-ses}
	0\to A_2^\tt\to A_2\to A_2/A_2^\tt\to 0.
	\end{equation}
	By Proposition~\ref{p:-Att-for-abeloids}, the first term fits into a short exact sequence with torsion cokernel
	\[ 0\to \wh A^{\phantom{tt}}_2\to A_2^\tt\to A_2^\tt/\wh A^{\phantom{tt}}_2\to 0.\]
	By Corollary~\ref{c:H^1(wt A,wh A')}, this shows that the term $H^1_v(\wt A_1,A_2^\tt)$
	is coprime-to-$p$-torsion, and thus any morphism $h:\wt A_1\to A_2/A_2^\tt$ lifts to a morphism $\wt A_1\to A_2$ after composing with some $[N]:A_2/A_2^\tt\to A_2/A_2^\tt$. Since we already know that any morphism $\wt A_1\to A_2$ is the composition of a homomorphism with a translation, this is easily seen to be equivalent to saying that $h$ lifts after precomposing with $[N]:\wt A_1\to \wt A_1$.
	
	 It therefore suffices to prove that the composition
	\[ \overbar{\varphi}:\wtt A_1\to A_2\to A_2/A_2^\tt\]
	factors through $\wt A_1$ via some projection $\wtt A_1=\varprojlim_{[N]}\wt A_1\to \wt A_1$.
	
	We now again use an interpretation of this in terms of line bundles: By \cite[Theorem~4.2.1]{heuer-diamantine-Picard}, the sequence \eqref{eq:A_2^tt->A_2-quot-ses} injects into a left-exact sequence of diamantine Picard functors
	\[0\to A^\tt_2\to \uP_{A_2^\vee}\to  \uP_{\wtt A_2^\vee}.\]
	It follows that $\varphi$ corresponds to a line bundle $L$ on $\wtt A_1\times A_2^\vee$, and its image $\overbar{\varphi}$ in $\uP_{\wtt A_2^\vee}(\wtt A_1)$ corresponds to the pullback of $L$ to $\wtt A_1\times \wtt A_2^\vee$.
	
	But since $H^i_v(\wtt A_1\times \wtt A_2^\vee,\wh \G_m)= 1$ for $i\geq 1$ exactly as in Corollary~\ref{c:H^1(wt A,wh A')}, we see that
	\[ \Pic(\wtt A_1\times \wtt A_2^\vee)=H^1(\wtt A_1\times \wtt A_2^\vee,\bOx),\quad \Pic(\wt A_1\times \wtt A_2^\vee)=H^1(\wt A_1\times \wtt A_2^\vee,\bOx)\]
	where $\bOx$ is the sheaf from Definition~\ref{d:bOx}
	(see \cite[Proposition~4.6]{heuer-diamantine-Picard} for more details).
	By \cite[Corollary~3.26]{heuer-diamantine-Picard} and a \cH-argument, $\bOx$-cohomology can be approximated in limits, and we deduce:
	\[ \textstyle\Pic(\wtt A_1\times \wtt A_2^\vee)=\varinjlim_{[N]} \Pic(\wt A_1\times \wtt A_2^\vee).\]
	In terms of Picard functors, this means that the morphism $\overbar{\varphi}$ fits into a commutative square
	\[\begin{tikzcd}
		\wtt A_1 \arrow[r] \arrow[d,"\overbar{\varphi}"'] & \wt A_1 \arrow[d] \arrow[ld, dotted] \\
		A_2/A_2^\tt \arrow[r, hook] & \uP_{\wtt A_2},
	\end{tikzcd}\]
	and we thus obtain the dotted arrow using that $\wtt A_1\to \wt A_1$ surjects by the sequence \eqref{eq:ses-of-wttA}.
\end{proof}

\begin{proof}[Proof of Theorem~\ref{t:morphis-is-comp-of-hom-with-trans-adelic-case}]
	We need to show that every morphism $\wtt A_1\to \wtt A_2$ is the composition of a translation and a homomorphism. This is equivalent to the same property for $\wtt A_1\to A_2$. But this factors through $\wt A_1$ by Lemma~\ref{l:fact-of-morph-from-wttA}, so the statement follows from Theorem~\ref{t:main-theorem}.
\end{proof}

 The following pleasant special case was mentioned as Theorem~\ref{t:intro-adelic-covers-over-C_p} in the introduction:
\begin{Corollary}\label{c:adelic-covers-over-C_p}
	Let $K=\C_p$ or $\C_p^\flat$. Then the following are equivalent:
	\begin{enumerate}
	\item There is an isomorphism of perfectoid groups $\wtt A\isomarrow \wtt A'$ over $K$.
	\item There is an isomorphism of perfectoid spaces $\wtt A\isomarrow \wtt A'$ over $K$.
	\item There is an isogeny $\overbar B\to \overbar B'$ and $\rk(T)=\rk(T')$.
	\end{enumerate}
\end{Corollary}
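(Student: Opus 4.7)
The plan is to derive Corollary~\ref{c:adelic-covers-over-C_p} from Theorem~\ref{t:main-theorem-adelic-case} by exploiting two features specific to $K=\C_p$ (and $\C_p^\flat$): the residue field is $\overline{\F}_p$, and the value group is $\Q$. The equivalence of (1) and (2) is essentially formal: (1)$\Rightarrow$(2) is trivial, and (2)$\Rightarrow$(1) follows from Theorem~\ref{t:morphis-is-comp-of-hom-with-trans-adelic-case}, which decomposes any morphism of perfectoid spaces as a translation composed with an isomorphism of perfectoid groups. The implication (1)$\Rightarrow$(3) is immediate from Theorem~\ref{t:main-theorem-adelic-case}: the quasi-isogeny $\overbar B\to \overbar B'$ becomes an isogeny after clearing denominators, and the existence of the quasi-isogeny $T\to T'$ forces $\rk(T)=\rk(T')$.

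The substantive direction is (3)$\Rightarrow$(1). By Theorem~\ref{t:main-theorem-adelic-case} it suffices to construct a triple $(\varphi_B,\varphi_T,\varphi_M)$ satisfying conditions (a)--(c); take $\varphi_B$ to be the given isogeny. The key observation for (b) is that every point of an abelian variety over $\overline{\F}_p$ is torsion, so
\[
\overbar{B}^\vee(\overline{\F}_p)\otimes \Q \;=\; 0 \;=\; \overbar{B}'^\vee(\overline{\F}_p)\otimes \Q,
\]
making the character-lattice diagram in (b) commute trivially. We may therefore choose any isomorphism $\varphi_{M^\vee}:M'^\vee\otimes\Q\isomarrow M^\vee\otimes\Q$, which exists since $\rk(T)=\rk(T')$, and which produces a quasi-isogeny $\varphi_T:T\to T'$. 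Combined with $\varphi_B$, Theorem~\ref{t:Hom(wttE,wttE')} yields an isomorphism $\varphi_E:\wtt E\isomarrow \wtt E'$.

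For condition~(c) we analyse the quotient $E'(\C_p)/E'^\tt(\C_p)$. Since $\overbar{B}'(\overline{\F}_p)$ is torsion, Propositions~\ref{p:ses-whB-B-barBtf} and~\ref{p:-Att-for-abeloids} give $B'(\C_p)=B'^\tt(\C_p)$. On the toric side, $\G_m^\tt(\C_p)=\mu\cdot(1+\m)$ together with $|\C_p^\times|=p^\Q$ give $T'(\C_p)/T'^\tt(\C_p)\cong \Q^{\rk T'}$. The Raynaud exact sequence $0\to T'\to E'\to B'\to 0$ combined with the identity $T'(\C_p)\cap E'^\tt(\C_p)=T'^\tt(\C_p)$---which holds because any continuous $\psi:\underline{\whZ}\to E'$ with $\psi(1)\in T'$ is forced to factor through $T'$ by continuity of the projection $E'\to B'$---then yields
\[
E'(\C_p)/E'^\tt(\C_p)\;\cong\; \Q^{\rk T'}.
\]
Non-degeneracy of the Raynaud uniformization says that $g'$ descends to an isomorphism $\overbar{g'}:M'\otimes\Q\isomarrow E'(\C_p)/E'^\tt(\C_p)$, and similarly for $g$ on the $E$-side. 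Since $\varphi_E$ is an isomorphism of perfectoid groups, it maps $\wtt E^\tt$ onto $\wtt E'^\tt$, and using the vanishing of the relevant $H^1$-obstruction for profinite coefficients (Proposition~\ref{p:higher-cohomolgy-of-profinite=0}) the natural map $\wtt E/\wtt E^\tt\to E/E^\tt$ is an isomorphism, hence $\varphi_E$ descends to an isomorphism $E(\C_p)/E^\tt(\C_p)\isomarrow E'(\C_p)/E'^\tt(\C_p)$. It follows that $\overbar{\varphi_E\circ g}:M\otimes\Q\to E'(\C_p)/E'^\tt(\C_p)$ is an isomorphism, and setting $\varphi_M:=(\overbar{g'})^{-1}\circ \overbar{\varphi_E\circ g}$ provides the required isomorphism with $g'\circ \varphi_M\equiv \varphi_E\circ g$ modulo $E'^\tt(\C_p)$.

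The main obstacle is the identification $E'(\C_p)/E'^\tt(\C_p)\cong \Q^{\rk T'}$ together with the descent of $\varphi_E$ to these $\Q$-vector-space quotients; once both $M\otimes\Q$ and its image land in $\Q^{\rk T'}$ via the non-degeneracy of the Raynaud uniformization, the existence of the isomorphism $\varphi_M$ is forced. The two inputs specific to $\C_p$---torsion of $\overbar B(\overline{\F}_p)$ on the abeloid side and divisibility of the value group on the toric side---are precisely what collapse the two commutativity obstructions of Theorem~\ref{t:main-theorem-adelic-case} into the single numerical condition $\rk(T)=\rk(T')$ together with the existence of an isogeny $\overbar B\to \overbar B'$.
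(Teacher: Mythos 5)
Your proof follows the same skeleton as the paper's: reduce to Theorem~\ref{t:main-theorem-adelic-case}, observe that condition~3(b) is vacuous because $\overbar B^\vee(\overline{\F}_p)$ is torsion, and then handle condition~3(c) by exhibiting the needed $\varphi_M$. The one real difference is how you handle 3(c). The paper simply invokes \cite[Lemma~B.7]{heuer-diamantine-Picard}, which states $(\wh E'(K)\cdot M')\otimes\Q=E'(K)\otimes\Q$; you instead re-derive this fact for $K=\C_p$ by directly computing $E'(\C_p)/E'^\tt(\C_p)\cong\Q^{\rk T'}$ from the Raynaud sequence and the divisibility of $|\C_p^\times|$, and then showing that $g'$ induces an isomorphism $M'\otimes\Q\isomarrow E'(\C_p)/E'^\tt(\C_p)$. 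These are the same statement: your claim that $\overbar{g'}$ is an isomorphism (which you attribute to ``non-degeneracy of the Raynaud uniformization'') is exactly what Lemma~B.7 encodes, so you have self-contained a citation rather than taken a genuinely new route. Your computation is essentially correct, and the supporting claims you need ($T'(\C_p)\cap E'^\tt(\C_p)=T'^\tt(\C_p)$ via projection to $B'$; lifting $E^\tt$ to $\wtt E^\tt$ using $H^1_v(\uwhZ,TE)=0$; $\varphi_E$ preserving $(-)^\tt$ by functoriality) all hold, though they are stated somewhat tersely--in particular the non-degeneracy assertion deserves at least a pointer to the valuation pairing $M'\times M'^\vee\to\Q$ from Bosch--L\"utkebohmert, as it is not automatic. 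You also correctly observe that (2)$\Rightarrow$(1) follows from Theorem~\ref{t:morphis-is-comp-of-hom-with-trans-adelic-case}, a point the paper's proof omits as being immediate. So the student's approach is a valid, slightly more self-contained version of the paper's argument, at the cost of redoing the bookkeeping that the cited lemma packages up.
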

\begin{Remark}
	This corollary only holds over $\C_p$ and  $\C_p^\flat$, and not over any non-trivial non-archimedean extension thereof: Indeed, we need both the value group to be $\Q$ and the residue field $k$ to have torsion unit group. If $K$ is a field in which either does not hold, there are already Tate curves whose adelic covers are non-isomorphic over $K$.
\end{Remark}

\begin{proof}[Proof of Corollary~\ref{c:adelic-covers-over-C_p}]
	If $K=\C_p$ or $\C_p^\flat$, then $k=\overline{\F}_p$, and thus $\overbar B(k)$ is a torsion group. It follows that  condition 3.(b) in Theorem~\ref{t:main-theorem-adelic-case} is vacuous.
	
	We are left to see that condition 3.(c) is equivalent to $\rk(T)=\rk(T')$. But by \cite[Lemma~B.7]{heuer-diamantine-Picard}, we have 
	\[(\wh E'(K)\cdot M')\otimes \Q=E'(K)\otimes \Q,\]
	so the condition is  satisfied up to translation by $M'\otimes \Q$. We can thus find an isomorphism $M\otimes \Q\to M'\otimes \Q$ such that the desired condition holds if and only if $\rk T=\rk T'$.
\end{proof}

\section{Open questions: Universal covers of curves}\label{s:curves}
Let $C$ be a connected smooth projective curve over $K$ of genus $g\geq 1$. Fix a base-point $x\in C(K)$. It was first observed by Hansen \cite{Hansen-blog}  that if $\Char K=0$,  there is a perfectoid universal pro-\'etale cover
\[ \wtt C\sim \varprojlim_{C'\to C}C'\]
where the index category consists of  all connected finite \'etale covers $(C',x')\to (C,x)$ equipped with a lift $x'$ of $x$. See \cite[Corollary~5.7]{perfectoid-covers-Arizona} for details. 

In the case of $\Char K=p$, one obtains an analogous construction by setting
\[ \wtt C\sim \varprojlim_{C'\to C}C'^{\perf}\]
with the same index category as above (see Definition~\ref{d:perf} for the perfection $-^\perf$).
Here we note that by \cite[Proposition 5.4.54]{GabberRamero} we have $C^\perf_{\fet}=C_{\fet}$, i.e. one could equivalently define $\wtt C$ by forming the limit over connected finite \'etale covers of $C^\perf$ with a choice of lift of $x'$.

In either case, $\wtt C$ is the (adelic) ``universal cover'' of $C$ in analogy to the cover $\wtt A\to A$ for abeloid varieties. Like in the abeloid case, we can characterise $\wtt C$ by saying that it has a universal property for pro-finite-\'etale morphisms over $C$, respectively $C^\perf$. Or one can characterise it by a topological universal property, analogously to Proposition~\ref{p:H^1_v(A,Zp)=0}:
\begin{Proposition}[{\cite[Proposition~2.1, Corollary~2.3]{heuer-Picard_perfectoid_covers}}]
	We have
	\[H^i_v(\wtt C,\underline{\whZ})=\begin{cases}\whZ&\quad \text{ for }i=0\\0& \quad \text{ for }i>0,\end{cases}\quad H^i_{v}(\wtt C,\O^+)\aeq \begin{cases}\O_K&\quad \text{ for }i=0\\0& \quad \text{ for }i>0.\end{cases}\]
\end{Proposition}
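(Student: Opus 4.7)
The plan is to follow the template of Proposition \ref{p:H^1_v(A,Zp)=0}: first establish vanishing with $\underline{\Z/N}$-coefficients on finite levels, then bootstrap to $\underline{\whZ}$ via repleteness of the $v$-site, and finally pass from $\O^+/p$ to $\O^+$ using Scholze's Primitive Comparison Theorem on the finite-level smooth proper curves $C'$.

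First, I would show $H^i_v(\wtt C, \underline{\Z/N}) = 0$ for every $N \geq 1$ and $i \geq 1$. Since $v$-cohomology with discrete torsion coefficients on a qcqs diamond agrees with \'etale cohomology and commutes with cofiltered limits along affine transitions, we have
\[H^i_v(\wtt C, \underline{\Z/N}) = \varinjlim_{C'\to C} H^i_{\et}(C', \underline{\Z/N}),\]
where the colimit runs over the tower of connected finite \'etale covers of $C$ with a chosen lift of the base point. Any class in $H^1_{\et}(C', \Z/N)$ corresponds to a $\Z/N$-torsor and becomes trivial upon pullback to that torsor, which appears further up the tower; the top class in $H^2_{\et}(C', \Z/N) \cong \Z/N$ pulls back along a degree-$d$ cover to $d$ times a generator, and since $g' \geq g \geq 1$ the curve $C'$ admits abelian covers of arbitrary $N$-power degree, killing this class in the colimit; and $H^i_{\et}(C', \underline{\Z/N}) = 0$ for $i \geq 3$ since $C'$ is a curve.

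Next, writing $\underline{\whZ} = \varprojlim_N \underline{\Z/N}$ and using repleteness of the $v$-site on $\wtt C$ (Lemma \ref{l:qproet-replete}), the Milnor-type exact sequence
\[0 \to {R^1\!\lim}_N H^{i-1}_v(\wtt C, \underline{\Z/N}) \to H^i_v(\wtt C, \underline{\whZ}) \to \lim_N H^i_v(\wtt C, \underline{\Z/N}) \to 0\]
combined with the previous step yields $H^i_v(\wtt C, \underline{\whZ}) = 0$ for $i \geq 1$ (Mittag--Leffler holds because the transitions $\Z/N' \twoheadrightarrow \Z/N$ are surjective on $H^0$), and $\whZ$ for $i = 0$ by connectedness of $\wtt C$. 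For $\O^+$-cohomology I would apply Scholze's Primitive Comparison Theorem on each finite level: the smooth proper rigid curve $C'$ satisfies
\[H^i_{\et}(C', \O^+/p) \aeq H^i_{\et}(C', \underline{\F_p}) \otimes_{\F_p} \O_K/p.\]
Passing to the filtered colimit along the tower, together with continuity of $\O^+/p$-cohomology along the affinoid perfectoid tilde-limit $\wtt C \sim \varprojlim C'$, then yields $H^i_v(\wtt C, \O^+/p) \aeq 0$ for $i \geq 1$ and $\aeq \O_K/p$ for $i = 0$. In characteristic $p$, the tilting equivalence reduces the statement to the characteristic-$0$ case on a suitable untilt.

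Finally, induction on $n$ via $0 \to \O^+/p \to \O^+/p^n \to \O^+/p^{n-1} \to 0$ extends this to all $\O^+/p^n$, and $p$-adic completeness of $\O^+$ combined with repleteness then gives
\[R\Gamma_v(\wtt C, \O^+) = R\lim_n R\Gamma_v(\wtt C, \O^+/p^n),\]
which is almost concentrated in degree $0$ with value $\aeq \O_K$. The comparisons with analytic and quasi-pro-\'etale cohomology follow from the standard result that these three topologies compute the same structure sheaf cohomology on affinoid perfectoids up to almost isomorphism. The main obstacle I anticipate is justifying the continuity statement $H^i_v(\wtt C, \O^+/p) = \varinjlim H^i_{\et}(C', \O^+/p)$ along the tilde-limit: this should follow from a \cH-to-sheaf spectral sequence argument applied to a compatible affinoid open cover of the tower, combined with the almost-vanishing of higher $\O^+$-cohomology on affinoid perfectoids, in close analogy to the argument behind Proposition \ref{p:H^1_v(A,Zp)=0}.
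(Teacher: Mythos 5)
The paper does not prove this Proposition: it is cited from \cite[Proposition~2.1, Corollary~2.3]{heuer-Picard_perfectoid_covers} without an in-text argument, just as the analogous abeloid result (Proposition~\ref{p:H^1_v(A,Zp)=0}) is cited from \cite[Proposition~4.2]{heuer-Picard-good-reduction}. So there is no in-paper proof to compare against; what follows assesses your reconstruction on its own terms.

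Your outline is the natural one and is sound in characteristic $0$. The decomposition is correct: (i) $H^i_v(\wtt C,\underline{\Z/N})$ is computed as a filtered colimit of \'etale cohomology over the tower, with $H^1$ dying on its own torsor, $H^2$ dying by the degree argument on the trace class, and $H^{\geq 3}=0$ by cohomological dimension; (ii) the $\Rlim$ sequence with repleteness upgrades to $\underline{\whZ}$; (iii) the Primitive Comparison Theorem on each finite level $C'$ combined with continuity along the tilde-limit handles $\O^+/p$; (iv) the standard $p$-adic completeness devissage gives $\O^+$. You also correctly identify the continuity statement for $\O^+/p$-cohomology along the tilde-limit as the genuine technical crux; that lemma is indeed where the real work lies, and your proposed Čech argument with a compatible affinoid cover and almost-acyclicity is the right shape for it.

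There is, however, a real gap in the characteristic-$p$ case. You write that ``the tilting equivalence reduces the statement to the characteristic-$0$ case on a suitable untilt.'' For $\underline{\whZ}$-coefficients this is fine (the $v$-site is literally the same after tilting), and in fact unnecessary because your colimit argument works directly in characteristic $p$. But for $\O^+$-coefficients the reduction requires knowing that \emph{some} untilt of $\wtt C$ is itself the universal pro-\'etale cover of a smooth proper curve over an untilt of $K$, so that the Primitive Comparison Theorem (a characteristic-$0$ statement about smooth proper rigid spaces) applies at the finite levels of the untilted tower. The paper establishes exactly such an untilting statement for abeloids in Theorem~\ref{t:tilting-abeloids}, but states no analogue for curves, and it is not obvious. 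Either a curve analogue of Theorem~\ref{t:tilting-abeloids} must be supplied, or the $\O^+/p$-vanishing in characteristic $p$ needs an independent argument (for instance via almost-acyclicity of $\O^+$ on the affinoid perfectoid pieces of $\varprojlim C'^{\perf}$ directly, rather than via Primitive Comparison). Relatedly, a small imprecision in the $H^2$ step: in characteristic $p$ with $p\mid N$, one has $H^2_{\et}(C',\Z/N)\cong\Z/(N/p^{v_p(N)})$ rather than $\Z/N$, since the $p$-primary part of $H^2$ of a curve in characteristic $p$ already vanishes; the conclusion is unchanged but the statement as written is not quite right.
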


In analogy to \eqref{eq:A=wt A/T_pA}, the cover $\wtt C$ gives a uniformisation of $C$ as a diamond
\[ C^\diamondsuit = \wtt C/\pi_1(C,x)\]
where $\pi_1(C,x)$ is the \'etale fundamental group.

In the case $g=1$ of elliptic curves, this is precisely the (adelic) abeloid pro-\'etale uniformisation of $C$. In this case, Theorem~\ref{t:main-theorem} shows that the isomorphism class of $\wtt C$ is locally constant in the moduli space $\mathcal M_1(K)$ of elliptic curves. This motivates the following question that was independently also asked by Hansen (unpublished), and Litt:
\begin{Question}\label{q:isom-classes-of-covers-of-curves}
	Is the isomorphism class of $\wtt C$ locally constant in the moduli space $\mathcal M_g(K)$ of connected smooth projective curves of genus $g\geq 2$, with its natural topology on $K$-points? More precisely, if $K=\C_p$, then does  $\wtt C$ only depend on the semi-stable reduction of $C$?
\end{Question}

We note that this question has a very interesting connection to the Ehrenpreis conjecture in complex geometry, now a theorem by Kahn and Markovic \cite{ehrenpreis}, which roughly states that for any two compact Riemann surfaces $C_1$, $C_2$ of genus $g\geq 2$, there are finite degree covers $C_1'\to C_1$ and $C_2'\to C_2$ such that $C_1'$ and $C_2'$ are ``close'' in the complex moduli space. One could ask whether the following $p$-adic version of this statement holds:

\begin{Question}[Daniel Litt, \cite{Litt-MO-question}]\label{q:p-adic-Ehrenpreis}
	Let $n\in \N$. Given two connected smooth projective curves $C_1,C_2$ over $K$ of genus $g\geq 2$, are there finite \'etale covers $C_1'\to C_1$ and $C_2'\to C_2$ whose stable integral models become isomorphic mod $\varpi^n$? (Litt asks this for $K$ replaced by $\Z_p^{\mathrm{ur}}$).
\end{Question}
Should both questions have an affirmative answer, then $\wtt C$ would be isomorphic among all connected smooth projective curves of genus $\geq 2$. This would be an extremely close analogue of the Uniformisation Theorem in complex geometry saying that all compact Riemann surfaces of genus $\geq 2$ have isomorphic universal cover given by the upper half plane.
\appendix
\section{Appendix}
\subsection{Torsors, extensions, and the Breen--Deligne resolution}\label{s:BD}
We start by recalling a few basic notions on extensions in an abelian category $\mathcal A$.

\begin{Definition}
	Let $A,C\in \mathcal A$. By an extension of $C$ by $A$ we mean a short exact sequence $0\to A\to B\to C\to 0$ in $\mathcal A$. Let us write $\mathrm{EXT}(C,A)$ for the set of isomorphism classes of extensions. We write $[A\to B\to C]$, or just $[B]$, for the isomorphism class of the extension. 
\end{Definition}
\begin{Lemma}\label{l:interpretation-of-Ext1-as-extensions}
	Assume that $\mathcal A$ has enough injectives, then there is a natural identification
	\[ \mathrm{EXT}(C,A)=\Ext^1_{\mathcal C}(C,A).\]
	If $0\to A\to B \to C\to 0$ is any short exact sequence and $D\in \mathcal A$, then the boundary map
	\[ \Hom(A,D)\to\Ext^1_{\mathcal C}(C,D)\]
	is given by sending $\varphi:A\to D$ to the pushout of $[A\to B\to C]$ along $\varphi$.
\end{Lemma}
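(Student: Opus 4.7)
The plan is to exhibit the bijection concretely via injective resolutions and then read off the boundary-map description from the very same formula. I would fix an injective resolution $A\hookrightarrow I^0\xrightarrow{d}I^1\to\cdots$, so by definition $\Ext^1_{\mathcal A}(C,A)$ is the first cohomology of $\Hom(C,I^\bullet)$. Given an extension $[0\to A\xrightarrow{\iota}B\xrightarrow{\pi}C\to 0]$, injectivity of $I^0$ allows me to extend $A\hookrightarrow I^0$ to some $\tilde\iota\colon B\to I^0$; the composite $d\circ\tilde\iota\colon B\to I^1$ then vanishes on $A$ and hence factors uniquely as $\alpha\circ\pi$ for a cocycle $\alpha\colon C\to I^1$. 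A different choice of lift $\tilde\iota'$ differs from $\tilde\iota$ by a map $B\to I^0$ killing $A$, i.e.\ by $\beta\circ\pi$ for some $\beta\colon C\to I^0$, which changes $\alpha$ by the coboundary $d\beta$. Since isomorphic extensions yield the same class, this defines $\Phi\colon\mathrm{EXT}(C,A)\to\Ext^1_{\mathcal A}(C,A)$.

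Next I would construct an inverse. Given a cocycle $\alpha\colon C\to I^1$, form the pullback $B:=I^0\times_{I^1}C$ along $d$ and $\alpha$; exactness of the resolution at $I^0$ identifies the kernel of $B\to C$ with $A$, yielding an extension. A change $\alpha\mapsto\alpha+d\beta$ induces an isomorphism of pullbacks via $(x,c)\mapsto(x-\beta(c),c)$, so this descends to a well-defined $\Psi$ on cohomology classes. Checking that $\Phi\Psi$ and $\Psi\Phi$ are the identity is routine and amounts to unwinding the pullback diagram, producing the required bijection.

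For the second assertion I would apply the same formula with an injective resolution $D\hookrightarrow J^\bullet$. The long exact sequence of $\Ext$ arises from the short exact sequence of complexes $0\to\Hom(C,J^\bullet)\to\Hom(B,J^\bullet)\to\Hom(A,J^\bullet)\to 0$ (the last surjection uses pointwise injectivity of $J^n$), and the connecting map sends $\varphi\colon A\to D$ to the cocycle $\alpha\colon C\to J^1$ characterised by $\alpha\circ\pi=d\circ\widetilde\varphi$ for any lift $\widetilde\varphi\colon B\to J^0$ of the composite $A\to D\hookrightarrow J^0$. But this is exactly the formula that $\Phi$ assigns to the pushout extension $[D\to D\sqcup_A B\to C]$: the pair $(D\hookrightarrow J^0,\widetilde\varphi)$ glues along $A$ to a lift $D\sqcup_A B\to J^0$ of $D\hookrightarrow J^0$, whose postcomposition with $d$ factors through $C$ as the same $\alpha$. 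Hence $\partial\varphi$ and the pushout class agree under $\Phi$, as claimed.

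The only real obstacle is bookkeeping, in particular making sure that the various choices of lifts, the pullback/pushout identifications, and the coboundary ambiguities all match up consistently; no substantial new idea is needed beyond standard homological algebra.
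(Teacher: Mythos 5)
Your proof is correct. The paper states this lemma without proof, treating it as a standard fact of homological algebra (cf.\ the Yoneda/injective-resolution description of $\Ext^1$ in any textbook), and your argument via injective resolutions --- constructing $\Phi$ by lifting along an injective, constructing the inverse by pullback along $d$ and $\alpha$, and identifying the connecting map with the pushout by gluing the lift $\widetilde\varphi$ with $D\hookrightarrow J^0$ over $A$ --- is precisely that standard argument, carried out cleanly.
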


\begin{Remark}\label{r:not-enough-projectives-np-though}
	The category of abelian sheaves on a site always has enough injectives, so Lemma~\ref{l:interpretation-of-Ext1-as-extensions} applies in this situation. However, it rarely has enough projectives, so that one cannot define $\Ext^i(-,G)$ as a derived functor. Nevertheless, one can still prove ``by hand'' that any short exact sequence $0\to F_0\to F_1\to F_2\to 0$ gives rise to a long exact sequence of $\Ext^i(-,G)$ for any $G$: One takes an injective resolution $G\to I^\bullet$ and applies $\Hom(-,I^\bullet)$.
\end{Remark}

\begin{Definition}
	Let $\mathcal C$ be a site, let $X$ be a sheaf on $\mathcal C$ and let $G$ be  a sheaf of groups on $\mathcal C$. Then a $G$-torsor on $X$ is a sheaf of sets $f: F\to X$ together with an action $m:G\times  F\to F$ fixing $f$ for which there is a surjection $Y\to X$ such that the pullback $f|_Y$ is isomorphic to $G\times Y\to Y$ with its $G$-action via the first factor. We recall \cite[03AG]{StacksProject}:
\end{Definition}
\begin{Lemma}
	The group $H^1_{\mathcal C}(X,G)$ classifies $G$-torsors on $X$ up to isomorphism.
\end{Lemma}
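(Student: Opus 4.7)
The plan is to construct mutually inverse maps between $H^1_{\mathcal C}(X,G)$ and the set of isomorphism classes of $G$-torsors on $X$, using Čech cohomology as an intermediary.

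Starting from a $G$-torsor $f\colon F \to X$, I would choose a cover $\{U_i \to X\}_{i\in I}$ on which $F$ admits local sections $s_i \in F(U_i)$, and define cocycles $g_{ij} \in G(U_{ij})$ by the equation $s_i = g_{ij}\cdot s_j$ on $U_{ij} := U_i\times_X U_j$. The torsor structure forces the identity $g_{ij}g_{jk} = g_{ik}$ on triple overlaps, producing a class in $\cH^1(\{U_i\}, G)$. Different choices of sections alter this class by a coboundary, and different covers give compatible classes after refinement, so passing to the colimit over refinements yields a well-defined element of $\cH^1(X, G)$.

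Conversely, from a $1$-cocycle $(g_{ij})$ on a cover $\{U_i\}$, I would glue the trivial torsors $G\times U_i$ along the isomorphisms on overlaps given by left translation by $g_{ij}$. The cocycle condition produces an effective descent datum for sheaves of sets, whose glued sheaf inherits a canonical $G$-action making it a $G$-torsor over $X$. A direct verification then shows these two assignments are mutually inverse on isomorphism classes, and that isomorphic torsors yield cohomologous cocycles.

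Finally, to identify $\cH^1(X, G)$ with $H^1_{\mathcal C}(X, G)$ in the abelian case, one invokes the Čech-to-derived spectral sequence, which degenerates in degree one since $\underline{H}^0(G) = G$. The argument contains no conceptual obstacle; the only nontrivial bookkeeping is the well-definedness of the Čech class under refinement and reparametrisation, which is entirely standard and amounts to reproducing \cite[03AG]{StacksProject}. Since this lemma is only being recalled as a foundational tool in the appendix, I would simply refer the reader to loc.\ cit.\ rather than write out the verification in full.
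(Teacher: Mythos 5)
The paper itself offers no proof here: the lemma is simply recalled with a citation to \cite[03AG]{StacksProject}, which is also where you end up. So at the level of what gets written, the two coincide. That said, the Čech sketch you outline is a different argument from the one in the cited reference, which works directly with a free abelian sheaf $\Z[\mathcal F]$ on the torsor and injective resolutions, never passing through Čech covers; yours is the more classical descent-theoretic route. Two small caveats on your sketch, if one were to write it out: first, $X$ is a sheaf on $\mathcal C$, not an object of $\mathcal C$, so ``cover of $X$'' has to mean an epimorphism of sheaves $\coprod U_i \twoheadrightarrow X$ (equivalently, one works in the slice site over $X$); second, the degeneration claim ``$\underline{H}^0(G)=G$'' identifies $E_2^{1,0}$ with $\check H^1(X,G)$ but does not by itself give $\check H^1 = H^1$ in degree one --- for that one needs the vanishing of $E_2^{0,1} = \check H^0(X, \underline{H}^1(G))$, which holds because the sheafification of the presheaf $U\mapsto H^1(U,G)$ is zero. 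Neither of these affects the correctness of the conclusion, and deferring to the Stacks Project is the right call for a recalled appendix lemma.
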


Let now $\mathcal C$ be a site and let $A,C$ be abelian sheaves on $\mathcal C$. Then for every extension
\[0\to A\to B\to C\to 0, \]
we can regard $B$ as an $A$-torsor on $C$ in a natural way. This defines a map
\[ \Ext^1(C,A)\to H^1(C,A),\]
which can be interpreted in cohomological terms as follows:
\begin{Definition}
	For any sheaf of sets $X$ on $\mathcal C$, we write $\Z[X]$ for the sheaf of sets given by sheafifying the presheaf $\Z[X](U)=\Z[X(U)]$. This defines a functor $\Z[-]$ from sheaves of sets to sheaves of abelian groups. For any sheaf of sets $X$, we have a natural equivalence $\Hom(\Z[X],-)=\Gamma(X,-)$ which upon derivation gives
	\[\RHom(\Z[X],-)= \RGamma(X,-).\]
\end{Definition}
Let now $X$ be a sheaf of abelian groups, then composition with the pullback along the counit map $\Z[X]\to X$ yields a natural transformation
\begin{equation}\label{eq:ext-to-torsors}
	\Phi:\Ext^1(X,-)\to \Ext^1(\Z[X],-)=H^1(X,-),
\end{equation}
which is the homological interpretation of the above map of sets of isomorphism classes.

The map $\Phi$ in \eqref{eq:ext-to-torsors} is in general neither injective nor surjective: An extension $[B]$ might be split as a torsor, but not in a way respecting the group structures. On the other hand, there might be non-trivial torsors that do not admit a group structure.

We now recall an explicit functorial way in which one can describe the kernel of $\Phi$ using the Breen--Deligne resolution (\cite[\S2.1]{BBM}, see \cite[Appendix to IV]{Scholze-Condensed} for a proof).
\begin{Theorem}[Breen--Deligne]\label{t:BD}
	For abelian sheaves $G,F$ on $\mathcal C$, there is a functorial resolution
	\[ C(G)_{\bullet}:=(\dots \to \Z[X_1]\to \Z[X_0])\cong G \]
	where each $X_i$ is a finite product of copies of $G$. It gives rise to a functorial spectral sequence
	\[E_1^{ij}(G,F):=\Ext^j(C(G)_i,F)=H^j(X_i,F)\Rightarrow \Ext^{i+j}(G,F)\]
	for any abelian sheaf $F$. The associated five-term exact sequence is given by
	\[0\to E_2^{10}(G,F)\to \Ext^1(G,F)\xrightarrow{\Phi} E_2^{01}=\ker(H^1(G,F)\xrightarrow{d_1} H^1(G\times G,F)) \to E_2^{20}(G,F), \]
	where $d_1$ is the map induced by $\Z[G]^2\to \Z[G], [x,y]\mapsto [x+y]-[x]-[y].$
\end{Theorem}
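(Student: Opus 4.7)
The plan is to take the existence of the functorial Breen--Deligne resolution $C(G)_\bullet \to G$ as a black box, invoking \cite[\S2.1]{BBM} and the modern treatment in \cite[Appendix to IV]{Scholze-Condensed}. Constructing a functorial resolution whose terms are finite direct sums of sheaves of the form $\Z[G^n]$ is the genuinely difficult combinatorial content of the theorem, and I would not attempt to reprove it.

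Given the resolution, the spectral sequence will follow from standard hyper-$\Ext$ machinery. I would pick an injective resolution $F \to I^\bullet$ in abelian sheaves on $\mathcal{C}$, which exists as recorded in Remark~\ref{r:not-enough-projectives-np-though}, and form the double complex $K^{\bullet,\bullet} := \Hom(C(G)_\bullet, I^\bullet)$. The two standard filtrations give two spectral sequences abutting to the cohomology of the total complex. Filtering by columns, the fact that each $I^j$ is injective means $\Hom(-, I^j)$ sends the exact resolution $C(G)_\bullet \to G$ to an exact complex, so this spectral sequence collapses on $E_2$ and its abutment is $\Ext^\ast(G, F)$. Filtering by rows gives the spectral sequence in the statement with $E_1^{ij} = \Ext^j(C(G)_i, F)$; the identification $\Ext^j(\Z[X_i], F) = H^j(X_i, F)$ then follows by deriving the adjunction $\Hom(\Z[X], -) = \Gamma(X, -)$ that is already recorded above the theorem.

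The five-term exact sequence is the standard one associated to any first-quadrant cohomological spectral sequence, so the only remaining task is to make the differential $d_1$ explicit. For this I would use the concrete form of the beginning of the resolution, which reads
\[ \Z[G\times G] \xrightarrow{\partial} \Z[G] \to G \to 0, \]
with $\partial[x,y] = [x+y] - [x] - [y]$ and the second map being the counit of the adjunction $\Z[-] \dashv U$. Applying $\Hom(-, I^\bullet)$ and passing to $H^1$ in the second variable then turns $\partial$ into the map $H^1(G, F) \to H^1(G \times G, F)$ induced by $[x,y] \mapsto [x+y] - [x] - [y]$, matching the claim. The initial term $E_2^{10} \hookrightarrow \Ext^1(G,F)$ and the description of $\Phi$ as the edge map $\Ext^1(G,F) \to E_\infty^{01} \subseteq E_2^{01}$ are then automatic from the spectral sequence formalism.

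The main obstacle is entirely subsumed in the Breen--Deligne resolution itself, which I would simply cite. Everything beyond that is formal hyper-cohomology together with the standard edge-map description of the five-term exact sequence; the only calculation genuinely requiring attention is reading off the explicit shape of the first differential of the resolution, which is forced by the combinatorics of the counit $\Z[G] \to G$.
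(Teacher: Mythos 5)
Your proposal is correct and takes essentially the same approach as the paper: the paper itself only states the theorem and cites \cite[\S2.1]{BBM} and \cite[Appendix to IV]{Scholze-Condensed} for the existence of the Breen--Deligne resolution, with the spectral sequence and five-term exact sequence being the standard hyper-$\Ext$ consequences that you spell out. Your identification of the first differential $\partial_1\colon\Z[G^2]\to\Z[G]$, $[x,y]\mapsto[x+y]-[x]-[y]$, and the resulting description of $d_1$ on $E_1^{01}=H^1(G,F)$ via the adjunction $\Hom(\Z[X],-)=\Gamma(X,-)$ matches what the paper uses in \eqref{eq:BD-first-row} and Lemma~\ref{l:E_2-of-discrete-torsor-over-connected}.
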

We note that the $0$-th row $E_1^{\bullet 0}(G,F)$ of the spectral sequence reads
\begin{equation}\label{eq:BD-first-row}
	E_1^{00}=H^0(G,F)\xrightarrow{d_1} E_1^{10}=H^0(G^2,F)\xrightarrow{d_2} E_1^{20}=H^0(G^3,F)\oplus H^0(G^2,F)\xrightarrow{d_3} \dots
\end{equation}
where $d_1$ is like above, and there are explicit formulas for $d_2$ and $d_3$, see \cite[\S  2.1.5]{BBM}.

\begin{Lemma}\label{l:E_1-insensitive-to-topology}
	Let $\mathcal C$ be any site and let $\mathcal C'$ be a site with the same underlying category but a finer topology. Assume that $\mathrm{Sh}_{\mathcal C'}\to \mathrm{Sh}_{\mathcal C}$ is fully faithful.  Then for any abelian sheaves $F,G$ on $\mathcal C'$, the complex $E_1^{\bullet 0}(G,F)$ is the same whether it is computed with respect to $\mathcal C$ or $\mathcal C'$.
\end{Lemma}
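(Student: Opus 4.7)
The plan is to identify each term $E_1^{i0}(G,F) = H^0(X_i, F)$ via the free/forgetful adjunction with a $\Hom$-set of \emph{sheaves of sets}, thereby reducing everything to the fully-faithfulness assumption and to the functoriality of the Breen--Deligne resolution.

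First I would recall that $E_1^{i0}(G,F) = \Hom(\mathbb{Z}[X_i], F)$, where $X_i$ is a finite power of $G$, and use the adjunction between $\mathbb{Z}[-]$ and the forgetful functor to rewrite
\[ H^0(X_i, F) = \Hom(\mathbb{Z}[X_i], F) = \Hom_{\mathrm{Set}}(X_i, F), \]
the right-hand side being morphisms of sheaves of sets (endowed with the abelian group structure induced from $F$). The key observation is that this formula bypasses any sheafification of the presheaf $U \mapsto \mathbb{Z}[X_i(U)]$, which is precisely what \emph{would} depend on the topology.

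Next I would argue that $\Hom_{\mathrm{Set}}(X_i, F)$ is topology-independent. The hypothesis that $\mathrm{Sh}_{\mathcal{C}'} \to \mathrm{Sh}_{\mathcal{C}}$ is fully faithful restricts to the same property on sheaves of sets, since in both categories morphisms of sheaves coincide with morphisms of the underlying presheaves. The forgetful functor also commutes with finite products, so $X_i = G^{n_i}$ has the same interpretation in either topology. Hence $\Hom_{\mathrm{Set}}(X_i, F)$ computed in $\mathcal{C}$ agrees with the same group computed in $\mathcal{C}'$, identifying the terms of $E_1^{\bullet 0}$.

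Finally, for the differentials, I would invoke the functoriality of the Breen--Deligne resolution $C(G)_\bullet$: the maps $\mathbb{Z}[X_{i+1}] \to \mathbb{Z}[X_i]$ are prescribed by universal formulas built from the group operation, inversion, and projections on copies of $G$ (as in \cite[\S2.1]{BBM} and spelled out in \eqref{eq:BD-first-row}). Applying $\Hom(-, F)$ and tracing through the adjunction, the induced differentials on $\Hom_{\mathrm{Set}}(X_i, F)$ are given by the same universal formulas in either topology. There is no real obstacle here beyond carefully setting up the adjunction so as to avoid sheafification; once this is done, both terms and differentials of $E_1^{\bullet 0}(G,F)$ are manifestly intrinsic to $G$ and $F$ as sheaves of sets, and hence independent of the choice between $\mathcal{C}$ and $\mathcal{C}'$.
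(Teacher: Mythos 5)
Your proposal is correct and takes essentially the same approach as the paper's own (terse, two-line) proof: that $E_1^{\bullet 0}$ involves only $H^0$-groups, which by fully faithfulness agree in both topologies. Your additional observations --- that the adjunction $H^0(X_i,F)=\Hom_{\mathrm{Set}}(X_i,F)$ sidesteps the topology-dependent sheafification of $\Z[X_i]$, and that the differentials are given by universal formulas --- merely make explicit what the paper leaves implicit.
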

\begin{proof}
	As we see from Theorem~\ref{t:BD}, we only need $H^0$ to compute $E_1^{\bullet 0}$. The fully faithfulness assumption guarantees that this is the same for $\mathcal C$ and $\mathcal C'$.
\end{proof}

\subsection{$p$-divisible sheaves}
\begin{Definition}\label{d:p-div-sheaf}
	\begin{enumerate}
		\item An abelian sheaf $F$ on a site $\mathcal C$ is called $p$-divisible if $[p]:F\to F$
		is surjective. It is called uniquely $p$-divisible if $[p]$ is an isomorphism.
		\item A sheaf $F$ on $\mathcal C$ is called derived $p$-complete if $\Rlim_n H/p^n=H$.
	\end{enumerate}
\end{Definition}

\begin{Lemma}\label{l:ext-from-p-divisible-to-complete}
	Let $G,H$ be abelian sheaves on a site $\mathcal C$. Assume that $G$ is uniquely p-divisible and $H$ is derived $p$-complete. Then
	$\Ext^i(G,H)=0 \text{ for all }i\geq 0$.
\end{Lemma}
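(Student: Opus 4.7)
The strategy is to combine unique $p$-divisibility on the source with derived $p$-completeness on the target: the first forces $p$ to act invertibly on every $\Ext$-group, while the second forces these groups to be ``$p$-complete'', and both conditions together leave only zero.

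More precisely, the plan is as follows. Since $[p]\colon G\to G$ is an isomorphism, applying $\RHom(-,H)$ shows that $[p]$ acts as an isomorphism on $\RHom(G,H)$, and therefore on each cohomology group $\Ext^i(G,H)$. Separately, since $H$ is derived $p$-complete, we have $H=\Rlim_n H/p^n$ in the derived category, and since $\RHom(G,-)$ preserves derived limits in its second argument, we obtain
\[
\RHom(G,H)=\RHom(G,\Rlim_n H/p^n)=\Rlim_n\RHom(G,H/p^n).
\]
Hence it suffices to prove that $\RHom(G,H/p^n)=0$ for every $n\geq 1$.

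For this step, one observes that on $H/p^n$ the endomorphism $p^n$ is zero, so $p^n$ also acts as zero on $\RHom(G,H/p^n)$ and hence on each $\Ext^i(G,H/p^n)$. But by the first paragraph, $p$ (and hence $p^n$) acts invertibly on $\Ext^i(G,H/p^n)$ as well, because $G$ is still uniquely $p$-divisible. An abelian group on which $p^n$ is simultaneously zero and an isomorphism must vanish, so $\Ext^i(G,H/p^n)=0$ for all $i$, i.e.\ $\RHom(G,H/p^n)=0$. Taking $\Rlim_n$ gives $\RHom(G,H)=0$, and in particular $\Ext^i(G,H)=0$ for every $i\geq 0$.

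There is no serious obstacle; the only point worth noting is the formal compatibility $\RHom(G,\Rlim_n -)=\Rlim_n\RHom(G,-)$, which holds because $\Rlim_n$ is a (homotopy) limit and $\RHom(G,-)$ is a right derived functor and hence preserves limits in the second variable. Everything else is elementary manipulation of the two given hypotheses.
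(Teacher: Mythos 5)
Your proof is correct and follows essentially the same route as the paper: observe that unique $p$-divisibility of $G$ makes $p$ invertible on $\RHom(G,-)$, rewrite $\RHom(G,H)$ as $\Rlim_n\RHom(G,H/p^n)$ using derived $p$-completeness and the fact that $\RHom(G,-)$ commutes with derived limits, and kill each $\RHom(G,H/p^n)$ by the simultaneous $p^n$-torsion / $p$-invertibility conflict. The paper's proof is a compressed version of exactly this argument.
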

\begin{proof}
	Since each $G$ is uniquely $p$-divisible, so is each $\Ext^i(G,-)$, and thus
	\[\RHom(G,H)=\RHom(G,\Rlim H/p^n)=\Rlim \RHom(G,H/p^n)=0.\qedhere\] 
\end{proof}

\begin{Proposition}\label{p:higher-cohomolgy-of-profinite=0}
	
	\begin{enumerate}
	\item\label{enum:H^i(profinite,profinite)} Let $H$ be a profinite topological space and let $G$ be a locally profinite abelian topological group, then for any $i>0$, we have 
	$H^i_v(\underline H,\underline{G})=0$.
	\item\label{enum:Ext(constant,profinite)} If $G$ is discrete and $H=\varprojlim H_j$ is a profinite topological group, then for $i\geq 0$: \[\Ext^i_v(\underline{H},\underline{G})=\textstyle\varinjlim_{j} \Ext^i_{\Z}(H_j,G).\]
	\end{enumerate}
\end{Proposition}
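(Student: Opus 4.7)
The strategy is to first establish part 1, which is the principal input, and then derive part 2 from it via the Breen--Deligne spectral sequence of Theorem~\ref{t:BD}.

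For part 1, my plan is to reduce in stages on the coefficient sheaf. Given a compact open subgroup $U\subseteq G$, the short exact sequence $0\to U\to G\to G/U\to 0$ has $U$ profinite and $G/U$ discrete, so the long exact sequence of $\R\Gamma_v(\underline{H},-)$ reduces to these two cases separately. For $U=\varprojlim_k U_k$ profinite with $U_k$ finite, repleteness of $\Perf_{K,v}$ (Lemma~\ref{l:qproet-replete}) gives $\R\Gamma_v(\underline{H},\underline{U})=\Rlim_k\R\Gamma_v(\underline{H},\underline{U_k})$, reducing to the case of finite $U_k$. For $G$ discrete, expressing $G$ as a filtered colimit of finitely generated subgroups and using that on the qcqs perfectoid space $\underline{H}$ cohomology commutes with filtered colimits of coefficients reduces further to $G=\Z$ or $G=\Z/n$.

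Once the coefficient is fixed discrete, I would write $\underline{H}=\varprojlim_j\underline{H_j}$ with $H_j$ finite. Each $\underline{H_j}$ is then a finite disjoint union of copies of $\Spa K$, and since $K$ is algebraically closed, $H^i_v(\underline{H_j},\underline{G})=0$ for $i>0$. The critical step is the cofiltered-limit exchange formula
\[ H^i_v(\underline{H},\underline{G})=\varinjlim_j H^i_v(\underline{H_j},\underline{G}), \]
valid for cofiltered limits of qcqs perfectoid spaces along finite \'etale transition maps with discrete constant coefficients. This is the main technical input and the principal obstacle; I would verify it by a \cH-cohomology computation, using that each projection $\underline{H}\to\underline{H_j}$ is a pro-finite-\'etale cover together with Lemma~\ref{l:underline-is-left-adjoint} to identify sections of $\underline{G}$ over products $\underline{H}^n$.

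For part 2, I would apply the Breen--Deligne resolution to $\underline{H}$. The resulting spectral sequence has $E_1^{ij}=H^j(X_i,\underline{G})$ with each $X_i$ a finite product of copies of $\underline{H}$, so again of the form $\underline{H'}$ for some profinite set $H'$. Part 1 then forces $E_1^{ij}=0$ for $j>0$, so the spectral sequence degenerates onto the row $E_1^{\bullet 0}$. By Lemma~\ref{l:E_1-insensitive-to-topology} combined with Lemma~\ref{l:underline-is-left-adjoint}, each entry of this row is $\Map_{\cts}(H^{b},G)=\varinjlim_j\Map(H_j^{b},G)$ for $G$ discrete, since continuous maps from a compact space to a discrete group factor through finite quotients. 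Exactness of filtered colimits of abelian groups then yields
\[ \Ext^i_v(\underline{H},\underline{G})=H^i(E_1^{\bullet 0}(\underline{H},\underline{G}))=\varinjlim_j H^i(E_1^{\bullet 0}(\underline{H_j},\underline{G}))=\varinjlim_j\Ext^i_\Z(H_j,G), \]
where the last equality uses that for each finite $H_j$ the Breen--Deligne complex applied to $G$ computes $\Ext^i_\Z(H_j,G)$ in abelian groups; equivalently, one may take a two-term free resolution $0\to\Z^m\to\Z^n\to H_j\to 0$, sheafify to a v-exact sequence $0\to\underline{\Z}^m\to\underline{\Z}^n\to\underline{H_j}\to 0$, and use that $\underline{\Z}$ is the free abelian v-sheaf on $\Spa K$ so $\Ext^i_v(\underline{\Z},\underline{G})=H^i_v(\Spa K,\underline{G})$ vanishes for $i>0$.
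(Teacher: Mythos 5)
Your overall skeleton matches the paper's: reduce the coefficient group to the discrete case via an extension and the derived-limit argument for the profinite part, then use the Breen--Deligne spectral sequence and the identification $H^0(\underline{H}^{n},\underline{G})=\Map_{\cts}(H^n,G)=\varinjlim_j \Map(H_j^n,G)$ to pass to the colimit and identify the answer in Part~2. That half is essentially identical to the paper.

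The genuine divergence, and the gap, is in your treatment of the discrete case of Part~1. You reduce to constant coefficients and then appeal to a cofiltered-limit exchange
\[H^i_v(\underline{H},\underline{G})=\textstyle\varinjlim_j H^i_v(\underline{H_j},\underline{G}),\]
which you yourself flag as the principal obstacle. This is where your argument is incomplete. First, your base case $H^i_v(\underline{H_j},\underline{G})=0$ (for $\underline{H_j}$ a finite disjoint union of copies of $\Spa K$) is already an instance of the statement you are trying to prove; it is not automatic that the v-cohomology of $\Spa K$ with discrete constant coefficients vanishes in positive degrees, and justifying it uses exactly the same ingredients you would need for the general case. Second, the limit exchange itself is not a Čech-level fact. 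The projection $\underline{H}\to\underline{H_j}$ is a torsor under the profinite group $G_j=\ker(H\to H_j)$, so the associated Čech complex computes continuous group cohomology $H^{\bullet}_{\cts}(G_j,\Map_{\cts}(H,G))$, which does not obviously vanish or collapse; and passing from Čech to true $v$-cohomology needs control of higher cohomology of the fibre products, which is again circular. Third, your auxiliary reduction of discrete $G$ to $\Z$ and $\Z/n$ relies on the claim that $v$-cohomology of the qcqs space $\underline{H}$ commutes with filtered colimits of coefficient sheaves; for the (non-coherent) $v$-topology this also needs an argument. The paper avoids all three issues at once by citing Scholze's structural results: for discrete $G$, $\underline{G}$ is a locally constant sheaf, so $H^i_v(\underline{H},\underline{G})=H^i_{\et}(\underline{H},\underline{G})$ by \cite[Propositions~14.7, 14.8]{etale-cohomology-of-diamonds}, and $\underline{H}$ is strictly totally disconnected by \cite[Proposition~7.16]{etale-cohomology-of-diamonds}, so every \'etale cover splits and $H^i_{\et}(\underline{H},F)=0$ for $i>0$ for \emph{any} abelian $F$ -- which also makes your extra colimit reduction on $G$ unnecessary. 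You should replace your limit-exchange step with this $v=\et$ comparison plus strict total disconnectedness (or supply a genuine proof of the limit exchange, which would in practice run through the same comparison). Your free-resolution alternative for the final identification in Part~2 is a viable variant, but it is no simpler than the paper's observation that for discrete $H$ the complex $E_1^{\bullet 0}$ is literally the one computing $\Ext^i_{\Z}(H,G)$ over a point.
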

\begin{proof}
	For part 1, since any locally profinite group is an extension of a discrete group by a profinite group, we can  reduce to $G$ discrete or profinite. We first assume that $G=\varprojlim G_k$ is profinite.
	Then $\underline{G}=\Rlim \underline{G}_k$ by Lemma~\ref{l:qproet-replete}. 
	The Grothendieck spectral sequence for $\lim \circ \Gamma(\underline{H},-)$ thus gives for $i>0$ a short exact sequence
	\[0\to R^1\varprojlim  H^{i-1}_{v}(\underline{H},\underline{G}_k)\to H^{i}_{v}(\underline{H},\underline{G})\to \varprojlim  H^i_{v}(\underline{H},\underline{G}_k)\to 0. \]
	For $i=1$, the first term vanishes since $H^0(\underline{H},\underline{G}_k)=\Map_{\cts}(H,G_k)$ is a Mittag-Leffler system by Lemma~\ref{l:underline-is-left-adjoint}. For general $i$, this reduces us to the case that $G$ is discrete.
	
	In this case, $\underline{G}$ is the pullback of the locally constant sheaf associated to $G$ on $\underline{H}_{\et}$.
	By \cite[Propositions 14.7 and 14.8]{etale-cohomology-of-diamonds}, we thus have $H^i_v(\underline{H},\underline{G})=H^i_{\et}(\underline{H},\underline{G})$.
	But by \cite[Proposition 7.16]{etale-cohomology-of-diamonds}, the perfectoid space $\underline{H}$ is strictly totally disconnected, i.e.\ every \'etale cover of $\underline{H}$ splits. Thus $H^i_{\et}(\underline{H},F)=0$ holds for any abelian sheaf $F$ on $\underline{H}_{\et}$.
	
	We deduce part 1 from this using Theorem~\ref{t:BD}: By part 1, the Breen--Deligne spectral sequence degenerates and gives an isomorphism for any $i\geq 0$:
	\[ \Ext^i_v(\underline{H},\underline{G})=E_2^{i0}(\underline{H},\underline{G}).\] 
	The latter is by definition $H^i(E_1^{\bullet0})$ for the complex $E_1^{\bullet0}$ which consists of terms of the form
	\[E_1^{i0}=\oplus_{k}H^0(\underline{H}^{n_k},\underline{G})=\oplus_{k}\Map_{\cts}(H^{n_k},G)=\textstyle\varinjlim_j\oplus_{k}\Map_{\cts}(H_j^{n_k},G),\]
	where the second equality uses Lemma~\ref{l:underline-is-left-adjoint}. This reduces us to the case that $H$ is discrete.
	
	If $H$ is discrete, we see from this equation that
	$E_1^{\bullet 0}$ agrees with the complex that computes $\Ext^{i}_{\mathcal C}(H,G)$ for the site $\mathcal C$ that consists of just a point, and whose category of abelian sheaves is equivalent to abelian groups. It follows that  $H^i(E_1^{\bullet 0})=\Ext^i_{\Z}(H,G)$, as desired.
\end{proof}

\begin{Lemma}\label{l:E_2-of-discrete-torsor-over-connected}
	Let $G$ be a connected adic group and let $H$ be a locally profinite group, both abelian. Then $E_2^{\bullet 0}=0$. In particular, the map $\Phi:\Ext^1_v(G,\underline{H})\hookrightarrow H^1_v(G,\underline{H})$ is injective.
\end{Lemma}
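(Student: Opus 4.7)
The plan is to apply the Breen--Deligne spectral sequence of Theorem~\ref{t:BD}, combined with a reduction to the zero sheaf. First I would identify the complex $E_1^{\bullet 0}(G,\underline H)$. By Lemma~\ref{l:E_1-insensitive-to-topology}, $E_1^{\bullet 0}$ may be computed in the analytic topology, where Lemma~\ref{l:underline-is-left-adjoint} identifies $H^0(G^n,\underline H)=\Map_{\cts}(|G^n|,H)$. Since $G$ is connected and $K$ is algebraically closed, each power $G^n$ is geometrically connected, hence connected; as $H$ is totally disconnected, every such continuous map is constant. Therefore $H^0(G^n,\underline H)=H$ for every $n\geq 0$, and pullback along any morphism $G^m\to G^n$ acts as the identity on this $H$.

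Next I would invoke functoriality of the Breen--Deligne resolution applied to the unique abelian-sheaf morphisms $0\to G$ and $G\to 0$. These induce maps of cochain complexes
\[E_1^{\bullet 0}(0,\underline H)\rightleftarrows E_1^{\bullet 0}(G,\underline H),\]
whose two compositions correspond to $\mathrm{id}_0$ and to the zero endomorphism of $G$ respectively. By the previous step, on each summand $H$ both maps act as the identity (constant sections being unchanged by pullback along any morphism). Hence they are mutually inverse, so $E_1^{\bullet 0}(G,\underline H)\cong E_1^{\bullet 0}(0,\underline H)$ as complexes.

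The third step is to show $E_2^{\bullet 0}(0,\underline H)=0$. Applying Proposition~\ref{p:higher-cohomolgy-of-profinite=0}.\ref{enum:H^i(profinite,profinite)} to the one-point profinite set gives $H^j_v(\Spa K,\underline H)=0$ for all $j\geq 1$. Consequently every row $E_1^{\bullet j}(0,\underline H)$ with $j\geq 1$ vanishes, so the spectral sequence for $(0,\underline H)$ is concentrated in the row $j=0$ and degenerates at $E_2$ in that row. Since it converges to $\Ext^\bullet_v(0,\underline H)=0$, we conclude $E_2^{i0}(0,\underline H)=0$ for every $i\geq 0$; combining with the preceding step, $E_2^{\bullet 0}(G,\underline H)=0$. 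The injectivity of $\Phi$ then follows immediately from the five-term sequence in Theorem~\ref{t:BD}, where $\ker\Phi=E_2^{10}=0$. The main obstacle is the bookkeeping in the second step, namely checking that the two pullback maps restrict to the identity on the constant-section summands, and justifying that $G^n$ is connected for each $n$, which I would derive from geometric connectedness of $G$ over the algebraically closed field $K$.
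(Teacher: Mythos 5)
Your argument is correct and is essentially the paper's proof: both reduce via functoriality of the Breen--Deligne complex to the trivial group $G=0$, where $\Ext^\bullet(0,\underline H)=0$ forces $E_2^{\bullet 0}=0$, and your explicit retraction $E_1^{\bullet 0}(0,\underline H)\rightleftarrows E_1^{\bullet 0}(G,\underline H)$ together with the degeneration argument using Proposition~\ref{p:higher-cohomolgy-of-profinite=0} simply spell out steps that the paper's terse proof leaves implicit. One small inaccuracy: Lemma~\ref{l:underline-is-left-adjoint} computes maps \emph{out of} $\underline H$, not into it, so the identification $H^0(G^n,\underline H)=\Map_{\cts}(|G^n|,H)$ should instead be cited from the description of the $v$-sheaf $\underline H$ itself (and the detour through Lemma~\ref{l:E_1-insensitive-to-topology} is unnecessary, since that identification already holds in the $v$-topology); neither point affects the validity of the conclusion $H^0(G^n,\underline H)=H$.
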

\begin{proof}
	We learnt the following argument from \cite[\S4.5]{A-LB}:
	We need to see that the complex $E_1^{\bullet 0}$ in  \eqref{eq:BD-first-row} is exact. Since $G$ is connected and $H$ is locally profinite, any morphism
	$G^n\to \underline{H}$
	is necessarily constant, so that $H^0(G^n,\underline{H})=H$. By functoriality of~\eqref{eq:BD-first-row}, this shows that the sequence is identical for \textit{any} connected adic group. In particular, this holds for the trivial group $G:=0=\Spa(K,\O_K)$, in which case clearly $\Ext(0,\underline{H})=0$, thus $E_2^{\bullet 0}=0$.
	
	The injection now follows from the 5-term exact sequence in Theorem~\ref{t:BD}.
\end{proof}
\bibliographystyle{abbrv}
\bibliography{../../universal.bib}
\end{document}